\numberwithin{equation}{section}
\newtheorem{thm}{Theorem}[section]
\newtheorem{lem}[thm]{Lemma}
\newtheorem{prop}[thm]{Proposition}
\theoremstyle{remark}
\newtheorem{remark}[thm]{Remark}
\theoremstyle{definition}
\newcommand{\bke}[1]{\left ( #1 \right )}
\newcommand{\bkt}[1]{\left [ #1 \right ]}
\newcommand{\bket}[1]{\left \{ #1 \right \}}
\newcommand{\norm}[1]{\left \| #1 \right \|}
\newcommand{\R}{\mathbb{R}}
\newcommand{\N}{\mathbb{N}}
\renewcommand{\div}{\mathop{\rm div}\nolimits}
\newcommand{\curl} {\mathop{\rm curl}}
\newcommand{\pd}{\partial}
\newcommand\Ga{\Gamma}
\newcommand\La{\Lambda}
\newcommand\Om{\Omega}
\newcommand{\si}{\sigma}
\newcommand\De{\Delta}
\newcommand\de{\delta}
\newcommand{\nb}{\nabla}
\newcommand{\lec}{{\ \lesssim \ }}
\newcommand{\gec}{{\ \gtrsim \ }}
\newcommand{\bka}[1]{{\langle #1 \rangle}}
\newcommand{\abs}[1]{\left | #1 \right |}
\newcommand\al{\alpha}
\newcommand\be{\beta}
\newcommand\ep{\epsilon}
\newcommand\ve{\varepsilon}
\newcommand\e {\varepsilon}  %
\newcommand\Si{\Sigma}
\newcommand{\NN}{\mathbb{N}}
\newcommand{\LN}  {\mathrm{L\mkern-0.5mu N}}
\newcommand{\na}{\nabla}
\newcommand{\td}{\tilde}
\newcommand{\wt}{\widetilde}
\newcommand{\II}{I\!I}
\newcommand{\EQ}[1]{\begin{equation} #1 \end{equation}}
\newcommand{\EQS}[1]{\begin{equation}\begin{split} #1 \end{split}\end{equation}}
\newcommand{\EQN}[1]{\begin{equation*}\begin{split} #1 \end{split}\end{equation*}}
\newcommand{\uloc}{{\mathrm{uloc}}}
\newcommand{\loc}{{\mathrm{loc}}}
\newcommand{\one}{\mathbbm{1}}
\begin{document}

\title{Applications of the Green tensor estimates of the nonstationary Stokes system in the half space}

\author[1]{\rm Kyungkeun Kang}
\author[2]{\rm Baishun Lai}
\author[3]{\rm Chen-Chih Lai}
\author[4]{\rm Tai-Peng Tsai}
\affil[1]{\footnotesize Department of Mathematics, Yonsei University, Seoul 120-749, South Korea}
\affil[2]{\footnotesize LCSM (MOE) and School of Mathematics and Statistics, Hunan Normal University, Changsha 410081, Hunan, China}
\affil[3]{\footnotesize Department of Mathematics, Columbia University, New York, NY 10027, USA}
\affil[4]{\footnotesize Department of Mathematics, University of British Columbia, Vancouver, BC V6T 1Z2, Canada}

\date{}
\maketitle

\vspace{-1.4cm}
\begin{abstract}

In this paper, we present a series of applications of the pointwise estimates of the (unrestricted) Green tensor of the nonstationary Stokes system in the half space, established in our previous work [CMP 2023].
First, we show the $L^1$-$L^q$ estimates for the Stokes flow with possibly non-solenoidal $L^1$ initial data, generalizing the results of Giga--Matsui--Shimizu [Math.~Z.~1999] and Desch--Hieber--Pr\"uss [J.~Evol.~Equ.~2001]. 
Second, we construct mild solutions of the Navier--Stokes equations in the half space with mixed-type pointwise decay or with pointwise decay alongside boundary vanishing.
Finally, we explore various coupled fluid systems in the half space 
including viscous resistive magnetohydrodynamics equations, a coupled system for the flow and the magnetic field of MHD type, and the nematic liquid crystal flow. For each of these systems, we construct mild solutions in $L^q$, pointwise decay, and uniformly local $L^q$ spaces.

\end{abstract}

\vspace{-0.55cm}

\setcounter{tocdepth}{2}
\tableofcontents

\section{Introduction}

Building upon the foundational study \cite{Green} concerning the Green tensor of the nonstationary Stokes system in the half space, we advance our research by exploring further applications of the pointwise estimates of the Green tensor derived in the aforementioned reference.
This paper is aimed at demonstrating the considerable potential of the Green tensor estimates for a range of diverse applications.
For instance, we delve into its application to Navier-Stokes equations, magnetohydrodynamics equations, and nematic liquid crystal flows, showcasing the versatility and significance of these estimates across various fields of study.

To begin with, we present the background of our study.

\medskip
\noindent{\bf The Green tensor.}
Denote $x=(x_1,\ldots,x_{n-1},x_n) = (x',x_n)$ for $x \in \R^n$, $n \ge 2$,
 $\R^n_+ = \{  (x',x_n)\in \R^n \ | \ x_n>0\}$, and $\Si=\pd\R^n_+$.
 We consider the nonstationary Stokes system in the half space $\R^n_+$, $n\ge2$,
\begin{align}\label{E1.1}
\begin{split}
\left.
\begin{aligned}
\pd_t u -\Delta u+\nabla \pi = f + \na\cdot F\\
 \div u=0
\end{aligned}\ \right\}\ \ \mbox{in}\ \ \R_{+}^{n}\times (0,\infty),
\end{split}
\end{align}
with initial and \emph{no-slip} boundary conditions
\begin{align}\label{E1.2}
u(\cdot,0)=u_0; \qquad  %
u(x',0,t)=0\ \ \mbox{on}\ \ \Si\times (0,\infty).
\end{align}
Here  $u=(u_{1},\ldots,u_{n})$ is the velocity, $\pi$ is the pressure, $f=(f_{1},\ldots,f_{n})$, and $F=(F_{ij})_{i,j=1}^n$. They are defined for $(x,t)\in \R_{+}^{n}\times (0,\infty)$.

The \emph{Green tensor} $ G_{ij}(x,y,t)$ of the Stokes system \eqref{E1.1}
is defined for $(x,y,t) \in\R^n _+ \times \R^n_+ \times \R$ and
 $1\le i,j\le n$ so that,
for suitable $f$, $F$ and $u_0$,
the solution of \eqref{E1.1} is given by
\EQS{
\label{E1.3}
u_i(x,t)& = \sum_{j=1}^n\int_{\R^n_+}G_{ij}(x,y,t)(u_0)_j(y)\,dy+
\sum_{j=1}^n\int_0^t \int_{\R^n_+} G_{ij}(x,y,t-s) f_j(y,s)\,dy\,ds\\
&\quad - \sum_{j,k=1}^n \int_0^t \int_{\R^n_+} \pd_{y_k} G_{ij}(x,y,t-s) F_{kj}(y,s)\, dyds.
}
Here, weird solutions of \eqref{E1.1} that are unbounded at spatial infinity,~e.g. Tikhonov example and parasitic solutions, are excluded.

\medskip
\noindent{\bf Restricted Green tensors.}
A \emph{solenoidal} vector field $u=(u_1,\ldots,u_n)$ in $\R^n_+$ is a vector field satisfying
\begin{align}\label{solenoidal}
\div u=0,\ \ \ u_{n}|_{\Si}=0.
\end{align}
A tensor $\bar G_{ij}(x,y,t)$ is called a \emph{restricted Green tensor} if for any solenoidal $u_0$, the vector field $u_i(x,t) = \sum_{j=1}^n \int_{\R^n_+} \bar G_{ij}(x,y,t)(u_0)_j(y)dy$ is a solution of the Stokes system \eqref{E1.1}-\eqref{E1.2} for $f=0$ and $F=O$.
There are infinitely many restricted Green tensors (see Comment 2 of \cite[Theorem 1.2]{Green}).

In \cite[(3.12)]{MR0415097} and \cite{MR0460931, MR1992567}, Solonnikov obtained the following explicit formula of one particular restricted Green tensor, which we refer to as the \emph{restricted Green tensor of Solonnikov}.
\EQS{\label{E1.6}
\breve G_{ij}(x,y,t)& = \de_{ij}\Ga(x-y,t) + G_{ij}^*(x,y,t),
\\
G_{ij}^*(x,y,t) &=  -\de_{ij}\Ga(x-y^*,t)
 \\
&\quad
- 4(1-\de_{jn})\frac {\pd}{\pd x_j}
 \int_{\Si \times [0,x_n]} \frac {\pd}{\pd x_i} E(x-z) \Ga(z-y^*,t)\,dz,
}
where $y^*=(y',-y_n)$ for $y=(y',y_n)$, and $\Ga(x,t)$ and $E(x) $ are heat kernel and the fundamental solution of the Laplace equation in $\R^n$, respectively, given by
$$
\Gamma(x,t)=\left\{\begin{array}{ll}(4\pi t)^{-\frac{n}{2}}e^{\frac{-|x|^{2}}{4t}}&\ \text{ for }t>0,\\[3pt] 0&\ \text{ for }t\le0,\end{array}\right.\ \text{ and }\ E(x)=\left\{\begin{array}{ll}\frac1{n\,(n-2)|B_1|}\,\frac1{|x|^{n-2}}&\ \text{ for }n\ge3,\\[3pt] -\frac1{2\pi}\,\log|x|&\ \text{ if }n=2.\end{array}\right.
$$
Moreover, $G_{ij}^*$
satisfy the pointwise bound (\cite[(2.38), (2.32)]{MR1992567}) for $n \ge2$,
\EQ{
\label{Solonnikov.est}
|\pd_{x',y'}^l\pd_{x_n}^k \pd_{y_n}^q\pd_t^m G_{ij}^*(x,y,t)| \lesssim \frac{e^{-\frac{cy_n^2}t}}{t^{m+\frac{q}2}(|x^*-y|^2+t)^{\frac{l+n}2}(x_n^2+t)^{\frac{k}2}}.
}
In \cite[Theorem 1.2]{Green}, we showed that the action of the Green tensor on a solenoidal vector field yields the same results as when the restricted Green tensor of Solonnikov is applied.
In particular, if the initial data $u_0$ is solenoidal, the solution formula \eqref{E1.3} becomes
\EQS{
\label{eq-Stokes-sol-formula-divu0=0}
u_i(x,t)& = \sum_{j=1}^n\int_{\R^n_+} \breve G_{ij}(x,y,t)(u_0)_j(y)\,dy+
\sum_{j=1}^n\int_0^t \int_{\R^n_+} G_{ij}(x,y,t-s) f_j(y,s)\,dy\,ds\\
&\quad - \sum_{j,k=1}^n \int_0^t \int_{\R^n_+} \pd_{y_k} G_{ij}(x,y,t-s) F_{kj}(y,s)\, dyds.
}

\medskip
\noindent{\bf Pointwise estimates of the Green tensor.}
In our work \cite[(4.10)]{Green}, we obtained a formula of the Green tensor $G_{ij}(x,y,t)$ that is well-suited for pointwise estimation purposes.
Employing the derived formula, we established the following pointwise estimates for the Green tensor $G_{ij}$ and its derivatives (see \cite[Theorem 1.5]{Green}).

\bigskip
\mbox{}\hfill
\begin{minipage}{0.93\textwidth}
\emph{Let $n\ge 2$, $x,y\in\R^n_+$, $t>0$, $i,j=1,\ldots,n$, and $l,k,q,m \in \N_0$.
We have
\EQS{\label{eq_Green_estimate}
|\pd_{x',y'}^l \pd_{x_n}^k \pd_{y_n}^q \pd_t^m &G_{ij}(x,y,t)|\lec\frac1{(|x-y|^2+t)^{\frac{l+k+q+n}2+m}}
\\&+\frac{\LN_{ijkq}^{mn} + \LN_{jiqk}^{mn}}
{t^{m}(|x^*-y|^2+t)^{\frac{l+k-k_i+q-q_j+n}2}(x_n^2+t)^{\frac{k_i}2} (y_n^2+t)^{\frac{q_j}2} },
}
where $k_i=(k-\de_{in})_+$, $q_j=(q-\de_{jn})_+$, and
\[
\LN_{ijkq}^{mn}:= 1 + \de_{n2} \mu_{ik}^m \bkt{\log\bke{\nu_{ijkq}^m |x'-y'| + x_n+y_n+\sqrt t} - \log(\sqrt t)},
\]
in which $\mu_{ik}^m = 1-(\de_{k0} + \de_{k1}\de_{in})\de_{m0}$ and $\nu_{ijkq}^m = \de_{q0}\de_{jn} \de_{k(1+\de_{in})}\de_{m0} + \de_{m>0}$.
}
\end{minipage}

\bigskip

In view of the fact that the Green tensor $G_{ij}$ is associated with the no-slip boundary condition, $u|_\Si=0$, it has to satisfy $G_{ij}(x',0,y,t) = G_{ij}(x,y',0,t) = 0$.
In light of this property, we also derived the following boundary vanishing estimates in \cite[Theorem 1.6]{Green}.

\bigskip
\mbox{}\hfill
\begin{minipage}{0.93\textwidth}
\emph{Let $n\ge 2$, $x,y\in\R^n_+$, $t>0$, $i,j=1,\ldots,n$, and $l,q,m \in \N_0$.
Let $0\le\alpha\le1$.
If $k=0$, we have
\EQS{\label{eq_thm3_al_yn}
\left|\pd_{x',y'}^l\pd_{y_n}^q\pd_t^mG_{ij}(x,y,t)\right|
&\lesssim\frac{x_n^\al}{(|x-y|^2+t)^{\frac{l+q+n}2+m}(|x-y^*|^2+t)^{\frac\al2}}
\\
&\quad +\frac{x_n^\al\,\LN}
{t^{m+\frac{\al}2}(|x-y^*|^2+t)^{\frac{l+q-q_j+n}2}(y_n^2+t)^{\frac{q_j}2}},
}
with $\LN= {\textstyle\sum_{k=0}^1}( \LN_{ijkq}^{mn} + \LN_{jiqk}^{mn})(x,y,t)$.
A similar estimate for $\pd_{x',y'}^l\pd_{x_n}^k\pd_t^mG_{ij}(x,y,t)$ $($with no $\pd_{y_n})$ can be obtained by the symmetry of $G_{ij}(x,y,t)$.
}
\end{minipage}

\bigskip

When $l+q=1$, we have an improved version of \eqref{eq_thm3_al_yn}:
\EQS{\label{eq-improved-bdry-vanish-est-new}
|\pd_{y_p}G_{ij}(x,y,t)|
&\lesssim\frac{x_n^\al}{(|x-y|+\sqrt t)^{n+1}(|x-y^*|+\sqrt t)^\al} \\
&\quad + \frac{x_n^\al\wt{\rm Ln}}{\sqrt t^\al(|x-y^*|+\sqrt t)^{n+1-\de_{pn}(1-\de_{jn})}(y_n+\sqrt t)^{\de_{pn}(1-\de_{jn})}},
}
where $1\le p \le n$ and
\[
\wt{\rm Ln} = 1 + \de_{n2}(1-\de_{in}\de_{jn}) \bkt{\log\bke{(1-\de_{in}\de_{jn})(1-\de_{pn})|x'-y'|+x_n+y_n+\sqrt t} - \log(\sqrt t) }.
\]
Note that when $p=n$, there is no $|x'-y'|$ in $\wt{\rm Ln}$. It is the key to our proof of the endpoint case $n=a=2$ of Theorem \ref{thm-mild-bdry}. We will prove \eqref{eq-improved-bdry-vanish-est-new} in Subsection \ref{app-improved-bdry-vanish-est}.

\medskip

The pointwise estimates \eqref{eq_Green_estimate}-\eqref{eq_thm3_al_yn} hold significant potential for diverse applications.
In \cite{Green}, we employed the estimates to establish linear theories for the Stokes system in various function spaces, including $L^p$ (\cite[Lemma 9.1]{Green}),
$Y_a$ (\cite[Lemma 9.2]{Green}), $Z_a$ (\cite[Lemma 9.3]{Green}), and
$L^q_\uloc$ (\cite[Lemma 9.4]{Green}).
Here,
\EQS{\label{def-Ya-Za}
Y_a &= \bigg\{ f \in L^\infty_\loc(\R^n_+) \, \bigg|\, \norm{f}_{Y_a} = \sup_{x \in \R^n_+} |f(x)|\bka{x}^a<\infty
\bigg\},\ a\ge0,\\
Z_a &= \bigg\{ f \in L^\infty_\loc(\R^n_+) \, \bigg|\, \norm{f}_{Z_a} = \sup_{x \in \R^n_+} |f(x)|\bka{x_n}^a<\infty,
\bigg\},\ a\ge0,
}
and
\[
L^q_\uloc(\R^n_+) = \bket{f\in L^q_\loc(\R^n_+) : \sup_{x\in\R^n_+} \norm{f}_{L^q(B_1(x)\cap\R^n_+)}<\infty}.
\]
Utilizing these linear theories, we were able to prove the existence of mild solutions for the Navier-Stokes equations within their corresponding function spaces. See \cite[Theorems 1.7--1.10]{Green}.
Furthermore, in our previous work \cite{KLLT-TAMS2022}, the pointwise estimates were used to construct a finite energy Navier-Stokes flow with a localized H\"older continuous boundary flux while its gradient blows up everywhere on the boundary away from the flux at a finite time.

In this paper, we delve into further applications of the pointwise estimates \eqref{eq_Green_estimate}-\eqref{eq_thm3_al_yn} and the linear theories developed in \cite[Lemma 9.1--9.4]{Green}.
Firstly, we use the pointwise estimates to study the Stokes flow with $L^1$ initial data in Theorem \ref{thm0.1-GMS} that extends the results of \cite[Theorem 0.1]{GMS-MZ1999} and \cite[Theorem 2.1]{Maremonti-JMFM2011} to encompass non-solenoidal initial data.
Additionally, we utilize the pointwise estimates to establish linear theories for the Stokes system in a space of functions with mixed pointwise decay
\EQ{\label{eq-def-Yab}
Y_{a,b} = \bket{f\in L^\infty_{\loc}(\R^n_+) : \norm{f}_{Y_{a,b}} = \sup_{\R^n_+} |f(x)|\, \bka{x}^a \bka{x_n}^b <\infty },\quad a,b\ge0,
}
and in a space of functions with pointwise decay and boundary vanishing estimate
\EQ{\label{eq-def-Zaal}
Z_{a,\al} = \bket{f\in L^\infty_{\loc}(\R^n_+) : \norm{f}_{Z_{a,\al}} = \sup_{\R^n_+}  \frac{ |f(x)|\, \bka{x}^a \bka{x_n}^\al}{x_n^\al} <\infty },\quad a,\al\ge0.
}
With the aid of these linear theories, we proceed to construct mild solutions of the Navier-Stokes equations in $Y_{a,b}$ and $Z_{a,\al}$ in Theorems \ref{thm-mix-decay} and \ref{thm-mild-bdry}, respectively.

We also explore applications of the estimates established in \cite[Lemma 9.1--9.4]{Green} to various fluid-coupled systems.
The fluid-coupled systems discussed in this paper include the viscous resistive MHD equations, a system for the flow and magnetic field of MHD type (abbreviated F-M system of MHD type), and the nematic liquid crystal flows (see \S\ref{sec-MHD}--\ref{sec:NLCF-Dirichlet} for a brief literature review of these systems).
For the nematic liquid crystal flows, we consider both Neumann and Dirichlet boundary conditions for orientation fields.
Using the estimates in \cite[Lemma 9.1--9.4]{Green}, we are able to construct mild solutions of these fluid-coupled systems in the half within the $L^q$, $Y_a$, and $L^q_\uloc$ frameworks.

Finally, we conclude this section by giving the structure of this paper.
In Section \ref{sec-results}, we present the main results of this paper and some background on the corresponding topics.
In Section \ref{sec-prelim}, we first recall some preliminary integral estimates and develop a new Lemma \ref{lem-heat-int-decay-est}.
We then establish linear theories, Lemmas \ref{lem-heat-est-Lq}\,--\,\ref{lem-heat-est-Lq-uloc}, for the Dirichlet and the Neumann problems of heat equation in the half space within $L^q$, $Y_a$, and $Z_a$
which are essential for constructing mild solutions of the MHD equations and nematic liquid crystal flow.

Moving on to Section \ref{sec:app-NSE}, we first prove the estimates of the Stokes flow with $L^1$ initial data, as stated in Theorem \ref{thm0.1-GMS}.
Furthermore, We construct mild solutions of Navier--Stokes equations in $Y_{a,b}$ and $Z_{a,\al}$, proving Theorems \ref{thm-mix-decay} and \ref{thm-mild-bdry}.
In Section \ref{sec:mild-MHD-Ya}, we focus on constructing mild solutions of the viscous resistive MHD equations and the F-M system of MHD type in the half space in $L^q$, $Y_a$, $Z_a$, and $L^q_\uloc$.
These give Theorems \ref{mild-MHD-Lq}, \ref{mild-MHD-Ya}\,--\, 
\ref{mild-MHD-Lq-uloc}, and 
\ref{mild-mMHD-Lq}\,--\,\ref{mild-mMHD-Lq-uloc}. Section \ref{sec:mild-NLCF} is dedicated to the construction of mild solutions of nematic liquid crystal flows in the half space, considering either Neumann or Dirichlet boundary orientation fields.
We construct mild solutions for each problem in $L^q$, $Y_a$, and $L^q_\uloc$.
These results are summarized in \ref{mild-NLCF-N-Ya}\,--\,\ref{mild-NLCF-N-Lq} and \ref{mild-NLCF-D-Ya}\,--\,\ref{mild-NLCF-D-Lq-uloc}.


\section{Main results}\label{sec-results}
In this section we present our main results.

\subsection{Estimates of the Stokes flow with $L^1$ initial data}

The first application of the pointwise estimate of (unrestricted) Green tensor is $L^1\to L^q$ estimates 
for solutions for non-solenoidal initial data $u_0\in L^1(\R^n_+)$ of the Stokes system \eqref{E1.1}-\eqref{E1.2} in $\R^n_+$.

In \cite{Maremonti-JMFM2011}, Maremonti considered the Navier-Stokes equations in bounded and exterior domains and constructed, for any possibly non solenoidal $L^1$ initial data, a $L^q$ solutions with $q>1$.
$L^q$ decay estimates of the solution are given in \cite[Theorem 2.1]{Maremonti-JMFM2011}.
In its Introduction, the author mentioned that the result also holds in the half-space case by a suitable modification of the proof.
Below in Section \ref{sec-L1-grad-est}, we give a proof of the following half-space version of \cite[Theorem 2.1]{Maremonti-JMFM2011} using the Green tensor estimates. We remark that our result
also include the case $q=1$ for the temporal decay for $\nabla u$. More precisely, we obtain

\begin{thm}\label{thm0.1-GMS}
Let $n\ge2$.
Then for any $u_0\in L^1(\R^n_+)$, not necessarily solenoidal, there exists a unique solution represented by \eqref{E1.3} of the Stokes system \eqref{E1.1}-\eqref{E1.2} in $\R^n_+$ with $f=0$ and $F=0$ such that the following estimates are satisfied for any $t>0$;
\begin{equation}\label{thm2.1-10}
\norm{u(t)}_{L^q(\R^n_+)} \le Ct^{-\frac{n}2\bke{1-\frac1q}} \norm{u_0}_{L^1(\R^n_+)}, \qquad q\in (1, \infty],
\end{equation}
\begin{equation}\label{thm2.1-20}
\norm{\nb u(t)}_{L^q(\R^n_+)} \le Ct^{-\frac12-\frac{n}2\bke{1-\frac1q}} \norm{u_0}_{L^1(\R^n_+)},\qquad q\in [1, \infty],
\end{equation}
\begin{equation}\label{thm2.1-30}
\norm{\pd_t u(t)}_{L^q(\R^n_+)} \le Ct^{-1-\frac{n}2\bke{1-\frac1q}} \norm{u_0}_{L^1(\R^n_+)},\qquad q\in (1, \infty],
\end{equation}
where the constant $C$ is independent of $u_0$.
\end{thm}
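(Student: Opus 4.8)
The plan is to extract everything from the solution formula \eqref{E1.3} with $f=0$ and $F=0$, namely from $u_i(x,t)=\sum_{j=1}^n\int_{\R^n_+}G_{ij}(x,y,t)(u_0)_j(y)\,dy$, and to reduce the three estimates to mixed $L^1_y$--$L^q_x$ bounds on $G_{ij}$ and its derivatives. It is essential that $u_0$ be treated without any solenoidal projection (which is unbounded on $L^1$): one works with the \emph{unrestricted} Green tensor $G_{ij}$, for which the pointwise estimates \eqref{eq_Green_estimate} are available. As a preliminary, \eqref{eq_Green_estimate} with $l=k=q=m=0$ gives $|G_{ij}(x,y,t)|\lesssim t^{-n/2}$ (both terms on the right are $\lesssim t^{-n/2}$, and the logarithmic factor is absent since $\mu_{i0}^0=\mu_{j0}^0=0$), so the integral converges absolutely for every $t>0$ and $|u_i(x,t)|\lesssim t^{-n/2}\norm{u_0}_{L^1}$. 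That this $u$ is the unique solution of \eqref{E1.1}--\eqref{E1.2} representable by \eqref{E1.3}, with $u(t)\to u_0$ as $t\to0^+$ in the appropriate sense, follows from the construction and mapping properties of the Green tensor in \cite{Green} together with a standard approximation of $u_0$ by smooth, compactly supported data; only a line or two is needed here.

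For the quantitative bounds, Minkowski's integral inequality gives
\[
\norm{u(t)}_{L^q(\R^n_+)}\le\sum_{i,j}\int_{\R^n_+}\norm{G_{ij}(\cdot,y,t)}_{L^q_x(\R^n_+)}\,|(u_0)_j(y)|\,dy\le\Big(\max_{i,j}\sup_{y\in\R^n_+}\norm{G_{ij}(\cdot,y,t)}_{L^q_x(\R^n_+)}\Big)\norm{u_0}_{L^1(\R^n_+)},
\]
and similarly with $G_{ij}$ replaced by $\nb_x G_{ij}$ or $\pd_t G_{ij}$. Hence \eqref{thm2.1-10}--\eqref{thm2.1-30} follow once one shows, uniformly in $y\in\R^n_+$,
\[
\norm{G_{ij}(\cdot,y,t)}_{L^q_x}\lesssim t^{-\frac n2(1-\frac1q)}\ (q>1),\qquad
\norm{\nb_x G_{ij}(\cdot,y,t)}_{L^q_x}\lesssim t^{-\frac12-\frac n2(1-\frac1q)}\ (q\ge1),\qquad
\norm{\pd_t G_{ij}(\cdot,y,t)}_{L^q_x}\lesssim t^{-1-\frac n2(1-\frac1q)}\ (q>1).
\]

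Each of these follows from \eqref{eq_Green_estimate} with the appropriate multi-indices ($l+k=1$, $m=0$ for $\nb_x$; $l=k=q=0$, $m=1$ for $\pd_t$). In every case the pointwise bound on the relevant derivative of $G_{ij}$ is a sum of a \emph{diagonal} term $(|x-y|^2+t)^{-\al}$ and an \emph{image} term $\LN\,(|x^*-y|^2+t)^{-\be_1}(x_n^2+t)^{-\be_2}$ (with an extra factor $t^{-1}$ in the $\pd_t$ case), where $\al,\be_1,\be_2\ge0$ are read off from \eqref{eq_Green_estimate}. Raising to the $q$-th power and integrating in $x$ over $\R^n_+$: the diagonal part is $\le\int_{\R^n}(|z|^2+t)^{-q\al}\,dz=C\,t^{n/2-q\al}$, which is finite precisely when $q\al>n/2$; and the image part is handled by using $|x^*-y|^2\ge|x'-y'|^2+x_n^2$ to reduce it to an integral over $\R^n$, then rescaling $z=\sqrt t\,w$ — at which point, since $|x'-y'|$, $x_n$, $y_n$ are all $\le\sqrt{|x^*-y|^2+t}$, the factor $\LN$ is $\lesssim\log(2+|w|)$ and the resulting integral $\int_{\R^n}[\log(2+|w|)]^q(1+|w|^2)^{-q\be_1}(1+w_n^2)^{-q\be_2}\,dw$ is finite exactly on the stated range of $q$. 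In each case the two parts carry the same power of $t$, and taking $q$-th roots reproduces the asserted temporal decay. The endpoint $q=1$ is genuinely excluded for $u$ and $\pd_t u$ (there the integrals diverge, logarithmically, and worse when $n=2$ because of $\LN$), but is admissible for $\nb_x u$ thanks to the additional half-power of decay; $q=\infty$ requires no integration, only $\sup_x$ of the pointwise bound.

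The one genuinely delicate point is the logarithmic correction in dimension $n=2$ and, specifically, its dependence on $y_n$: a priori $\LN=1+\de_{n2}\mu\big[\log\big((\nu|x'-y'|+x_n+y_n+\sqrt t)/\sqrt t\big)\big]$ contains $y_n$ inside the logarithm, which could jeopardize uniformity in $y$. I expect this to be handled by the elementary observation that $s\mapsto s^{-2q\be_1}[\log(2+s/\sqrt t)]^q$ is nonincreasing on $s\ge\sqrt t$ (for the exponents at hand, where $2q\be_1\ge2$), which permits replacing $s^2=|x^*-y|^2+t$ by the smaller quantity $|x'-y'|^2+x_n^2+t$ in numerator and denominator simultaneously, thereby eliminating $y_n$ before integrating. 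The remainder is bookkeeping with \eqref{eq_Green_estimate}: tracking for which multi-indices the switches $\mu_{ik}^m$ and $\nu_{ijkq}^m$ actually activate the logarithm — it is absent for $u$ itself, present for $\pd_t u$, and for $\nb_x u$ it appears only through $\pd_{x_n}G_{ij}$ with $i\ne n$, where it comes accompanied by the extra decay $(x_n^2+t)^{-1/2}$ (i.e.\ $\be_2=\tfrac12$) — and then assembling the diagonal and image contributions.
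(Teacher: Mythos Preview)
Your approach is correct and coincides with the paper's: both reduce everything via Minkowski's inequality to uniform-in-$y$ bounds on $\norm{G_{ij}(\cdot,y,t)}_{L^q_x}$, $\norm{\nb_x G_{ij}(\cdot,y,t)}_{L^q_x}$, and $\norm{\pd_t G_{ij}(\cdot,y,t)}_{L^q_x}$, which are then read off from the pointwise estimate \eqref{eq_Green_estimate}. The only cosmetic difference is in the treatment of the $n=2$ logarithm --- the paper bounds $\log s\lesssim (s/\sqrt t)^\alpha$ for small $\alpha>0$ and then drops $y_n$ by the trivial inequality $(x_n+y_n+\sqrt t)^{-(1-\alpha)}\le(x_n+\sqrt t)^{-(1-\alpha)}$, whereas you eliminate $y_n$ via the monotonicity of $s\mapsto s^{-2q\be_1}[\log(2+s/\sqrt t)]^q$; both devices are elementary and interchangeable.
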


The proof of Theorem \ref{thm0.1-GMS} is in Section \ref{sec-L1-grad-est}.

In case that $u_0 \in L^1_{\sigma}(\R^n_+)$,
the estimate \eqref{thm2.1-20} with $q=1$ was proved by Giga, Matsui, and Shimizu  \cite{GMS-MZ1999} through the rewriting of Ukai's formula using the pseudo-differential operator $\La$ whose symbol equals $|\xi'|$, $(\xi',\xi_n)=\xi\in\R^n$, followed by deriving the estimate
$$
\|\Lambda u(t)\|_{\mathcal{H}^{1}(\R^n)}\lec t^{-\frac{1}{2}}\|u_0\|_{L^1(\R^n)},
$$
where $\mathcal H^1(\R^n)$ is the Hardy space in $\R^n$.
A couple of years later, Desch, Hieber, and Pr\"{u}ss \cite{DHP} gave an alternative proof of the estimate 
of \cite{GMS-MZ1999}.
In particular, their derivation of \eqref{thm2.1-20} is based on a detailed analysis of the resolvent problem for the Stokes system using Fourier transform.

Our establishment of Theorem \ref{thm0.1-GMS} not only extends the results of \cite{DHP, GMS-MZ1999} to non-solenoidal initial data $u_0 \in L^1(\R^n_+)$ but also provide a simple and direct proof to the $L^1-L^q$ estimates using the Green tensor estimates.

On the other hand,
Desch, Hieber, and Pr\"{u}ss \cite{DHP} provided an example showing that the estimate \eqref{thm2.1-10} with $q=1$ isn't valid, unlike the case of whole space $\R^n$.
Nevertheless, it is known that some weighted $L^1-L^1$ estimates are available under some assumptions
on $u_0\in L^1(\R^n_+)$.
For example, Bae proved in \cite[Corollary 3.6]{B2006}, that if 
$u_0$ is divergence free and satisfies $\int_{\R^{n-1}} u_0(x', x_n ) dx'=0$ and $|x'|u_0(x', x_n)\in L^1(\R^n_+)$, then
\[
\norm{u(t)}_{L^1(\R^n_+)} \le Ct^{-\frac{1}{2}} \norm{|x'| u_0}_{L^1(\R^n_+)},
\]
In this directions, there have been many diverse and improved results (see  e.g  \cite{H2014}, \cite{H2018},  \cite{B2022} and  references therein) but we are not pursuing such directions in this paper.

\subsection{Mild solutions of Navier-Stokes equations in the half space}

For applications to Navier-Stokes equations,
\EQ{\label{NS}
\pd_t u-\Delta u+ \nabla \pi=-u \cdot \nb u ,\quad
 \div u=0, \quad \mbox{in}\ \ \R_{+}^{n}\times (0,\infty),
}
with zero boundary condition.
Suppose the initial data $u_0$ is solenoidal, a solution of \eqref{NS} is called a \emph{mild solution} if it satisfies \eqref{eq-Stokes-sol-formula-divu0=0} with suitable estimates for
$f + \na\cdot F = -u \cdot \nb u$, or equivalently,
\EQS{
\label{eq-mild-NS}
u_i(x,t) = \sum_{j=1}^n\int_{\R^n_+} \breve G_{ij}(x,y,t)(u_0)_j(y)\,dy
 - \sum_{j,k=1}^n \int_0^t \int_{\R^n_+} \pd_{y_k} G_{ij}(x,y,t-s) (u_ku_j)(y,s)\, dyds
}
since $u\cdot\nb u = \nb\cdot(u\otimes u)$ for solenoidal $u$, where $(u\otimes v)_{jk}:= u_jv_k$.

\subsubsection{Navier-Stokes flows with mixed pointwise decay}
For $a,b\ge0$, recall $Y_{a,b}$ defined in \eqref{eq-def-Yab}.
Note that $\bka{x}^a \bka{x_n}^b\not\approx \bka{x'}^a \bka{x_n}^{a+b}$. In particular, they are not comparable when $|x'|=x_n$.

\begin{thm}\thlabel{thm-mix-decay}
Let $n\ge2$, $a>0$, $0<b\le 1$, $a+b\le n$ and $(a,b)\neq (n-1,1)$. For any vector field $u_0\in Y_{a,b}$ with $\div u_0=0$ and $u_{0,n}|_\Si=0$, there is a mild solution $u\in L^\infty(0,T;Y_{a,b})$ of \eqref{NS} with initial data $u_0$ for some time interval $(0,T)$.
Moreover, the mild solution is unique in the class $L^\infty(\R^n_+\times (0,T))$.
\end{thm}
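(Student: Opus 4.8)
The plan is to set up a standard Picard iteration / fixed-point argument for the integral equation \eqref{eq-mild-NS} in the Banach space $X_T = L^\infty(0,T;Y_{a,b})$, exactly in the spirit of the mild-solution constructions in \cite[Theorems 1.7--1.10]{Green}, but now using the weighted space $Y_{a,b}$ with its two-parameter decay $\bka{x}^a\bka{x_n}^b$. Write $u = u^{\mathrm{lin}} + B(u,u)$, where $u^{\mathrm{lin}}_i(x,t) = \sum_j \int_{\R^n_+} \breve G_{ij}(x,y,t)(u_0)_j(y)\,dy$ is the Stokes flow of the solenoidal datum $u_0$, and $B(u,v)_i(x,t) = -\sum_{j,k}\int_0^t\int_{\R^n_+}\pd_{y_k}G_{ij}(x,y,t-s)(u_k v_j)(y,s)\,dy\,ds$ is the bilinear term. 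The standard abstract lemma then gives a unique small solution once we prove: (i) $\|u^{\mathrm{lin}}\|_{X_T} \lesssim \|u_0\|_{Y_{a,b}}$ uniformly in $T$, and (ii) $\|B(u,v)\|_{X_T}\le C(T)\|u\|_{X_T}\|v\|_{X_T}$ with $C(T)\to 0$ as $T\to 0$ (or at least $C(T)$ finite for small $T$, which combined with smallness is enough for short time). Uniqueness in the larger class $L^\infty(\R^n_+\times(0,T))$ follows from a separate Gronwall-type argument using only the bilinear estimate in $L^\infty$, which is already available from the linear theory in \cite{Green}.

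For step (i), I would invoke the linear theory: the restricted Green tensor $\breve G_{ij} = \de_{ij}\Ga(x-y,t) + G^*_{ij}(x,y,t)$ splits into the heat kernel, which preserves $Y_{a,b}$-type decay by the integral estimates recalled in Section \ref{sec-prelim} (and in particular the new Lemma \ref{lem-heat-int-decay-est}), plus the boundary-correction term $G^*_{ij}$ governed by Solonnikov's bound \eqref{Solonnikov.est}; the latter has the extra Gaussian factor $e^{-cy_n^2/t}$ and the $(|x^*-y|^2+t)$-type spatial weight, which one checks transfers the $\bka{x}^a$ decay in $x$ while the factor $(x_n^2+t)^{-k/2}$ with $k=0$ is harmless and the Gaussian in $y_n$ together with $u_0\in Z_b$-type control near the boundary supplies the $\bka{x_n}^b$ factor. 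This is essentially a repackaging of \cite[Lemma 9.2 and 9.3]{Green}; the mild novelty is keeping the $x'$- and $x_n$-decay simultaneously, which is why the hypothesis is $a+b\le n$ and $(a,b)\ne(n-1,1)$ rather than simply $a\le n$.

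The genuine obstacle is step (ii), the bilinear estimate, and this is where the precise hypotheses $a>0$, $0<b\le 1$, $a+b\le n$, $(a,b)\ne(n-1,1)$ are consumed. One must show that if $u,v\in Y_{a,b}$ then $u_k v_j \in Y_{2a,2b}$ (pointwise product of weights), and then that convolving the nonlinearity against $\pd_{y_k}G_{ij}$ over $\R^n_+\times(0,t)$ lands back in $Y_{a,b}$ with a gain of a positive power of $t$. The required pointwise kernel estimate is \eqref{eq_Green_estimate} with $(l,k,q,m)$ corresponding to one $y$-derivative, which near the boundary is better captured by the improved estimate \eqref{eq-improved-bdry-vanish-est-new}; the heat-kernel part of the kernel handles the interior decay via the integral lemmas of Section \ref{sec-prelim}, and one must separately bound the contribution of $G^*_{ij}$ using \eqref{Solonnikov.est}. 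The delicate points are: (a) the time integral $\int_0^t (t-s)^{-1/2-\cdots} s^{-\cdots}\,ds$ must converge, which forces $b\le 1$ (so that the boundary-vanishing power $x_n^\al$ with $\al\le 1$ can absorb one half-power of time without over-decaying), and the interior spatial convolution must not diverge at infinity, which forces $a+b\le n$; (b) the borderline logarithmic factors $\LN$ and $\wt{\rm Ln}$ (present only when $n=2$) produce a harmless $\log$ loss as long as we are not exactly at the critical exponent, which is precisely the excluded case $(a,b)=(n-1,1)$. I expect roughly half the proof length to go into carefully splitting $\R^n_+ \times (0,t)$ into regions ($|y|\le |x|/2$, $|y-x|\le |x|/2$, $|y|\ge 2|x|$, and near/far from the boundary in $y_n$), applying the appropriate form of the kernel bound in each, and checking the resulting integrals via the preliminary estimates; the logic is routine Kato-type iteration, but the bookkeeping of the two weights against the two-sided structure of $G_{ij}$ is the part that requires care.
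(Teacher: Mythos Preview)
Your overall strategy is correct and matches the paper exactly: the proof reduces to the linear and bilinear estimates of \thref{lem-mix-decay}, after which Picard iteration in $L^\infty(0,T;Y_{a,b})$ is routine. However, you have misidentified where the hypotheses are consumed. The exclusion $(a,b)\neq(n-1,1)$ is needed for the \emph{linear} estimate \eqref{eq-mix-linear}, not the bilinear one, and it has nothing to do with the $n=2$ logarithms: as Remark~\ref{rem4.3} shows, the $G^*_{ij}$ contribution (bounded via \eqref{Solonnikov.est} by the integral $J$ of \eqref{J.def}) fails to lie in $Y_{a,b}$ at $(a,b)=(n-1,1)$ for \emph{every} $n\ge 2$, while the bilinear estimate \eqref{eq-mix-bilinear} holds on the full closed trapezoid including that vertex. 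Your mention of a ``boundary-vanishing power $x_n^\al$'' conflates $Y_{a,b}$ with the unrelated space $Z_{a,\al}$; the weight here is $\bka{x_n}^b$, and the restriction $b\le 1$ arises from one-dimensional $y_n$-integrals (e.g.\ \eqref{eq4.7} and \eqref{eq-est-I0-yn}), not from convergence of the time integral, which is simply $\int_0^t(t-s)^{-1/2}ds$. Finally, for the bilinear term the paper deliberately uses the first-estimate form \cite[Proposition~1.1]{Green} with $k=0$ so that $\LN=1$ and no log factor appears; the improved boundary-vanishing estimate \eqref{eq-improved-bdry-vanish-est-new} is not used here at all (it is for \thref{lem-decay-bdry}).
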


The proof of \thref{thm-mix-decay} is in Subsection \ref{sec-mix-nse}. The main task is to establish the linear estimates \thref{lem-mix-decay} for the initial data and the Duhamel term.

The borderline cases $b=0$ or $a=0$ were previously known: The case $b=0<a\le n$ is proved
 in \cite[Theorem 1]{CJ-NA2017} and \cite[Theorem 2.1]{CM2004}, and reproved
 in \cite[Theorem 1.8]{Green}. The case $a=0<b$ is proved in \cite[Theorem 1.9]{Green}.  When $a=b=0$ and $Y_{a,b} = L^\infty$, it is proved in
 \cite{BJ2012}, and reproved in \cite[Theorem 1.7]{Green} assuming further $n\ge3$. The
additional assumption in \cite[Theorem 1.7]{Green} that $u_0$ is in the $L^\infty$-closure of $C^1_{c,\si}(\overline{\R^n_+})$ ensures strong continuity at time zero, and is unnecessary for the existence of $L^\infty$-mild solutions. All cases with $a,b>0$ in Theorem \ref{thm-mix-decay} are new. See Remark \ref{rem4.3} for the restrictions on $a,b$.


\begin{remark}[Strong solution]
To upgrade the mild solution to a strong solution by applying the Serrin's regularity result \cite{Serrin-ARMA1962} as in \cite[Remark 2]{CJ-NA2017}, gradient estimates of the solution is needed.
Moreover, a classical solution has to be strongly continuous at time zero.
Therefore, in non-decaying cases, i.e.~$a=0$ or \cite[Theorem 1.9]{Green}, further assumption, e.g.~$u_0$ being in the $L^\infty$-closure of $C^1_{c,\si}(\overline{\R^n_+})$, may be required to ensure that the mild solution solves the Navier-Stokes equations in the classical sense.

\end{remark}

\subsubsection{Navier-Stokes flows with pointwise decay and boundary vanishing estimate}
For $a,\al\ge0$, 
recall $Z_{a,\al}$ defined in \eqref{eq-def-Zaal}.

\begin{thm}\thlabel{thm-mild-bdry}
Let $n\ge2$, $\al\in[0,1]$, and $\al<a\le n$.
For any vector field $u_0\in Z_{a,\al}$ with $\div u_0=0$ and $u_{0,n}|_\Si=0$, there is a mild solution $u\in L^\infty(0,T;Z_{a,\al})$ of \eqref{NS} with initial data $u_0$ for some time interval $(0,T)$.
Moreover, the mild solution is unique in the class $L^\infty(\R^n_+\times (0,T))$.
\end{thm}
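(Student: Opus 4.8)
The plan is to solve the integral equation \eqref{eq-mild-NS} by a contraction mapping argument in the Banach space $X_T := L^\infty(0,T;Z_{a,\al})$ with norm $\norm{u}_{X_T} := \sup_{0<t<T}\norm{u(t)}_{Z_{a,\al}}$; note $Z_{a,\al}\hookrightarrow L^\infty(\R^n_+)$ since $\bka{x}^a\ge1$ and $\bka{x_n}^\al/x_n^\al\ge1$, so a solution produced in $X_T$ automatically lies in $L^\infty(\R^n_+\times(0,T))$. Writing $u=\mathcal L u_0+B(u,u)$ with $(\mathcal L u_0)_i(x,t)=\sum_j\int_{\R^n_+}\breve G_{ij}(x,y,t)(u_0)_j(y)\,dy$ and $B(u,v)_i(x,t)=-\sum_{j,k}\int_0^t\int_{\R^n_+}\pd_{y_k}G_{ij}(x,y,t-s)(u_kv_j)(y,s)\,dy\,ds$, it suffices to prove a linear estimate $\norm{\mathcal L u_0(t)}_{Z_{a,\al}}\lec\norm{u_0}_{Z_{a,\al}}$ for $0<t<T$ and a bilinear estimate $\norm{B(u,v)}_{X_T}\lec\eta(T)\norm{u}_{X_T}\norm{v}_{X_T}$ with $\eta(T)\to0$ as $T\to0^+$. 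Granting these, a standard fixed point lemma yields, for $T$ small depending on $\norm{u_0}_{Z_{a,\al}}$, a solution of \eqref{eq-mild-NS} in a small ball of $X_T$. These two estimates constitute the linear theory for the Stokes system in $Z_{a,\al}$, whose proof is the bulk of the work, paralleling \cite[Lemma 9.1--9.4]{Green}.

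For the linear estimate I would use the Solonnikov decomposition $\breve G_{ij}=\de_{ij}\Ga(x-y,t)+G^*_{ij}$ of \eqref{E1.6} with the pointwise bound \eqref{Solonnikov.est}, together with the heat-type integral estimate Lemma \ref{lem-heat-int-decay-est}: the spatial weight $\bka{y}^{-a}$ is propagated to $\bka{x}^{-a}$ precisely because $a\le n$ (the borderline $a=n$ producing only a logarithmic far-field factor that is absorbed, since $\int_{|y|<|x|}\bka{y}^{-n}\,dy$ grows like $|x|^n$ near the origin), exactly as in the $Y_a$ theory \cite[Lemma 9.2]{Green}. The new point is the factor $x_n^\al$: each component $\breve G_{ij}(x,y,t)$ vanishes on $\Si$ --- by \eqref{E1.6} the $[0,x_n]$-integral drops out and $\Ga((x',0)-y,t)=\Ga((x',0)-y^*,t)$ --- so $\breve G_{ij}(x,y,t)=\int_0^{x_n}\pd_{x_n}\breve G_{ij}((x',s),y,t)\,ds$; bounding $\pd_{x_n}\breve G_{ij}$ by \eqref{Solonnikov.est} gives the $\al=1$ endpoint, and interpolating against the $\al=0$ bound yields the analogue of \eqref{eq_thm3_al_yn} for $\breve G_{ij}$, whence the $x_n^\al/\bka{x_n}^\al$ factor after integrating in $y$.

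The crux is the bilinear estimate. Since $|(u_kv_j)(y,s)|\le\norm{u(s)}_{Z_{a,\al}}\norm{v(s)}_{Z_{a,\al}}\,\bka{y}^{-2a}\,y_n^{2\al}\bka{y_n}^{-2\al}$ and the kernel $\pd_{y_k}G_{ij}$ has exactly one $y$-derivative ($l+q=1$), I would invoke the improved boundary vanishing estimate \eqref{eq-improved-bdry-vanish-est-new}, which both supplies the target factor $x_n^\al$ and contributes an extra $(t-s)^{-1/2}$. Splitting \eqref{eq-improved-bdry-vanish-est-new} into its interior piece (with $|x-y|$) and its boundary piece (with $|x-y^*|$), splitting the weight into the usual dyadic regions in $|y'|$ and in $y_n$, carrying out the $y'$-integrals with Lemma \ref{lem-heat-int-decay-est} and the $y_n$-integrals by hand, one uses $2a>a$ and $a\le n$ to collapse the spatial decay into $\bka{x}^{-a}$ and $\al<a$ (with $2\al\ge\al$) to absorb the $y_n$-weight while keeping the factor $x_n^\al/\bka{x_n}^\al$; this gives $|B(u,v)(x,t)|\lec\norm{u}_{X_t}\norm{v}_{X_t}\,\tfrac{x_n^\al}{\bka{x_n}^\al}\,\bka{x}^{-a}\int_0^t(t-s)^{-\be}\,ds$ with $\be<1$, so $\eta(T)\sim T^{1-\be}\to0$.

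I expect the genuine obstacle to be the endpoint $n=a=2$, where the logarithmic factors $\wt{\rm Ln}$ in \eqref{eq-improved-bdry-vanish-est-new} are active and a crude bound would lose a power of $\log|x'-y'|$ inside the $y'$-integral. The resolution, as flagged after \eqref{eq-improved-bdry-vanish-est-new}, is that for the $\pd_{y_n}$-component of the kernel ($p=n$) the logarithm contains no $|x'-y'|$ and reduces to $\log\bigl((x_n+y_n+\sqrt t)/\sqrt t\bigr)$, which is harmless after the $y_n$-integration; the tangential components $p<n$ either have $\wt{\rm Ln}=1$ (when $i=j=n$) or carry enough additional decay to keep $\log|x'-y'|$ integrable against $\bka{y'}^{-2a}$ in the one-dimensional $y'$-integral. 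With the bilinear estimate in place, the contraction argument closes the existence part, and uniqueness in $L^\infty(\R^n_+\times(0,T))$ follows by writing $u-v=B(u-v,u)+B(v,u-v)$ for two such solutions and using the $L^\infty$ bilinear bound $\norm{B(w,z)}_{L^\infty(0,\tau;L^\infty)}\lec\tau^{1/2}\norm{w}_{L^\infty}\norm{z}_{L^\infty}$ from \cite[Lemma 9.1]{Green}, which forces $u\equiv v$ first on a short interval and then on all of $(0,T)$.
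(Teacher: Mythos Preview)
Your plan is correct and matches the paper's approach: the paper reduces \thref{thm-mild-bdry} to a fixed-point argument based on linear and bilinear estimates in $Z_{a,\al}$ (its \thref{lem-decay-bdry}), and the bilinear part indeed hinges on the improved boundary-vanishing estimate with the key observation that for $p=n$ the logarithm carries no $|x'-y'|$, which is exactly what rescues the endpoint $n=a=2$.

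One technical point you gloss over: the boundary-vanishing kernel bounds carry a factor $x_n^\al/t^{\al/2}$ (see the second term in \eqref{eq_thm3_al_yn} or \eqref{eq-improved-bdry-vanish-est-new}), which is harmless for $x_n<1$ but blows up for large $x_n$; the interpolation you describe does not by itself produce the factor $x_n^\al/\bka{x_n}^\al$. The paper handles this by splitting into $x_n\ge1$ and $x_n<1$. For $x_n\ge1$ one has $x_n^\al/\bka{x_n}^\al\sim1$, and the required bound reduces to the $Y_a=Z_{a,0}$ theory (\thref{lem-mix-decay} with $b=0$) via $\norm{u_0}_{Z_{a,0}}\le\norm{u_0}_{Z_{a,\al}}$ and $\norm{F}_{Z_{2a,0}}\le\norm{F}_{Z_{2a,2\al}}$; only for $x_n<1$ does one invoke the boundary-vanishing estimates. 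With this split in place your argument goes through. Also note that the condition $\al<a$ is not actually used in the paper's proof of the linear and bilinear estimates (\thref{lem-decay-bdry} only assumes $0\le a\le n$ and $0\le\al\le1$), so your attribution of the $y_n$-weight absorption to $\al<a$ is not how the paper proceeds.
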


The proof of \thref{thm-mild-bdry} is in Section \ref{sec-bdry-nse}. The case $a=n=2$ is the most delicate.

\subsection{Mild solutions of MHD equations and F-M system of MHD type}

\subsubsection{The viscous resistive MHD equations}\label{sec-MHD}

In a magnetofluid, the motion of an electrically conducted fluid is governed by the system of magnetohydrodynamic (MHD) equations which features the interaction between the fluid flow and the magnetic field.
The dynamics of the fluid particle and the magnetic field is described by the coupling between the Navier-Stokes equations of fluid dynamics and the Maxwell's equations of electromagnetism (see e.g.~\cite{Davidson-book2001}).
Here, we consider the system of viscous resistive incompressible MHD equations in the half space:
\EQ{\label{eq-MHD}
\setlength\arraycolsep{1.5pt}\def\arraystretch{1.2}
\left.\begin{array}{rl}
\pd_tu - \De u + \na \pi&= - u\cdot \na u + b\cdot\na b \\
\pd_tb - \De b &= - u\cdot\na b + b\cdot\na u \\
\div u &= \div b=0\
\end{array}\right\} \text{ in }\R^n_+\times(0,\infty).
}
Here  $u=(u_{1},\ldots,u_{n})$ is the fluid velocity, $\pi$ is the pressure, and
$b=(b_{1},\ldots,b_{n})$ is the magnetic field.
The system is coupled with initial conditions
\EQ{\label{eq-MHD-IC}
(u,b)(\cdot,0) = (u_0,b_0),\qquad \div u_0 = \div b_0 = 0,\quad (u_0)_n|_\Si = (b_0)_n|_\Si = 0,
}
\emph{no-slip} boundary condition for the fluid velocity $u$
\EQ{\label{eq-MHD-uBC}
u(x',0,t)=0\ \text{ on }\ \Si\times(0,\infty),
}
and \emph{slip} boundary condition for the magnetic field $b$
\EQ{\label{eq-MHD-bBC}
b_n(x',0,t)=0,\quad
\pd_nb_i(x',0,t)=0,\ i=1,\ldots,n-1,\quad \text{ on }\ \Si\times(0,\infty).
}
Let $\nu$ denote the outer normal on $\Si$.
In the case of dimension three, the boundary condition \eqref{eq-MHD-bBC} is equivalent to
\EQ{\label{eq-MHD-bBC-explicit}
b(x',0,t)\cdot\nu=0,\quad
\curl b(x',0,t)\times \nu=0\quad \text{ on }\ \Si\times(0,\infty),
}
corresponding to a perfectly conducting wall (see \cite[(5.23)]{DL-ARMA1972} and \cite[(1.3)]{ST-CPAM1983}). It is a special case of the \emph{Navier boundary condition} with  \emph{infinite} slip length.
Note that the boundary condition \eqref{eq-MHD-bBC} preserves the solenoidality of $b$ (see Lemma \ref{lem-b-solenoidal}).

There have been extensive studies on the MHD equations.
In the three-dimensional case, weak solutions with the boundary conditions \eqref{eq-MHD-uBC} and \eqref{eq-MHD-bBC-explicit} were constructed by Ladyzhenskaya and Solonnikov \cite{LS-1960}.
Their result was generalized by Duvaut and Lions in \cite{DL-ARMA1972} to the case of dimension $n$, $n=2,3$,
where they constructed a global weak solution with finite energy and a local strong solution, and showed that weak solutions become regular when $n=2$.
Sermange and Temam \cite{ST-CPAM1983} showed that such solution becomes regular if the solution is additionally in $L^\infty(0,T;H^1)$.
Over the decades, there have been many significant results on the problem of regularity of weak solutions of MHD equations.
We only present a brief list of results in the half space that are related to our consideration.
In \cite{KK-JDE2012}, the first author and J.-M.~Kim gave two regularity criteria of the MHD equations in the three-dimensional half space.
Both regularity criteria use Serrin's type conditions -- one is on $u$ alone, and the other one is on tangential components of $u$ and normal component of $b$.
In Kim \cite{Kim-AdvMathPhys2022}, a regularity criterion for weak solutions is given in terms of the pressure.
Concerning suitable weak solutions in $\R^3_+$, there are several boundary regularity criteria (see \cite{VS-POMI2010, KK-JFA2014, Kim-ActaMathSci2017, NY-JMAA2021}).

Attention here is focused on mild solutions of the viscous resistive incompressible MHD equations in the half space, \eqref{eq-MHD}-\eqref{eq-MHD-bBC}.
In order to give a proper definition of mild solutions, we first give the following solution formula for sufficiently good solutions.

\begin{prop}\thlabel{prop-MHD-sol-formula}
Let $(u,b)$ be a sufficiently smooth and bounded solution of the viscous resistive MHD equations \eqref{eq-MHD}-\eqref{eq-MHD-bBC} in $\R^n_+$, $n \ge 2$. Then we have the following solution formula:
\EQS{\label{u-mild}
u_i(x,t) &= \sum_{j=1}^n \int_{\R^n_+} \breve G_{ij}(x,y,t) (u_0)_j(y)\, dy \\
&\quad + \sum_{j,k=1}^n \int_0^t \int_{\R^n_+} \pd_{y_k}G_{ij}(x,y,t-s)\bke{u_ku_j - b_kb_j}(y,s)\, dyds,\ i=1,\ldots,n,
}
and
\EQS{\label{b-mild_1}
b_i(x,t) &= \int_{\R^n_+} G^N(x,y,t) (b_0)_i(y)\, dy \\
&\quad + \sum_{k=1}^n \int_0^t \int_{\R^n_+} \pd_{y_k} G^N(x,y,t-s)\bke{u_kb_i - b_ku_i}(y,s)\, dyds,\ i=1,\ldots,n-1,
}
\EQS{\label{b-mild_n}
b_n(x,t) = \int_{\R^n_+} G^D(x,y,t) (b_0)_n(y)\, dy  + \sum_{k=1}^n \int_0^t \!\!\int_{\R^n_+} \pd_{y_k}G^D(x,y,t-s)\bke{u_kb_n-b_ku_n}(y,s)\, dyds,
}
where $G^N(x,y,t)$ and $G^D(x,y,t)$ are the Green functions of the heat equation in the half space for Neumann and Dirichlet boundary conditions, respectively, defined by
 \EQ{\label{eq-GN-def}
 G^N(x,y,t) = \Ga(x-y,t)+\Ga(x-y^*,t),
 }
 and
 \EQ{\label{eq-GD-def}
 G^D(x,y,t) = \Ga(x-y,t)-\Ga(x-y^*,t).
 }
\end{prop}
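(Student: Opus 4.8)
The plan is to derive the solution formula by decoupling the MHD system into a Stokes problem for $u$ and heat problems for the components of $b$, treating the coupling terms as given forcing terms, and then invoking the known solution representations. First I would rewrite the nonlinear terms in divergence form. Since $\div u = \div b = 0$, the momentum equation's right-hand side $-u\cdot\na u + b\cdot\na b = -\na\cdot(u\otimes u) + \na\cdot(b\otimes b) = \na\cdot(b\otimes b - u\otimes u)$, so the $u$-equation is exactly the Stokes system \eqref{E1.1} with $f=0$ and $F = b\otimes b - u\otimes u$, i.e. $F_{kj} = b_kb_j - u_ku_j$. Because $u_0$ is solenoidal by \eqref{eq-MHD-IC} and $u$ satisfies the no-slip condition \eqref{eq-MHD-uBC}, the solution formula \eqref{eq-Stokes-sol-formula-divu0=0} applies directly; substituting $F_{kj}$ and noting the sign from the $-\pd_{y_k}G_{ij}F_{kj}$ term flips $b_kb_j - u_ku_j$ to $u_ku_j - b_kb_j$ gives \eqref{u-mild}.

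Next I would handle the induction equation similarly. Using $\div u = \div b = 0$, the right-hand side $-u\cdot\na b + b\cdot\na u = -\na\cdot(u\otimes b) + \na\cdot(b\otimes u) = \na\cdot(b\otimes u - u\otimes b)$, so each component $b_i$ solves a scalar heat equation $\pd_t b_i - \De b_i = \sum_k \pd_{y_k}(b_ku_i - u_kb_i)$. The boundary conditions \eqref{eq-MHD-bBC} say that the tangential components $b_1,\dots,b_{n-1}$ satisfy the homogeneous Neumann condition $\pd_n b_i|_\Si = 0$ while the normal component $b_n$ satisfies the homogeneous Dirichlet condition $b_n|_\Si = 0$. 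For a scalar heat equation with forcing $\na\cdot(\text{vector field})$ and homogeneous Neumann (resp. Dirichlet) boundary data, Duhamel's principle gives the representation with kernel $G^N$ (resp. $G^D$) as in \eqref{eq-GN-def}–\eqref{eq-GD-def}; one must be mildly careful that the $\pd_{y_k}$ for $k=n$ falls on the kernel and that the boundary term in the integration by parts vanishes — for $G^N$ this uses $\pd_{y_n}G^N|_{y_n=0}=0$ together with the forcing being divergence-form, and for $G^D$ it uses $G^D|_{y_n=0}=0$. This yields \eqref{b-mild_1} and \eqref{b-mild_n}.

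The main obstacle — really the only nontrivial point — is justifying the integrations by parts and the application of the linear solution formulas: one needs the "sufficiently smooth and bounded" hypothesis to guarantee that all the spatial and temporal integrals converge, that the boundary terms at $y_n=0$ and at spatial infinity vanish, and that the weird (Tikhonov-type) solutions excluded in the definition of the Green tensor do not interfere. Concretely, I would check that $u\otimes u$, $b\otimes b$, and $b\otimes u - u\otimes b$ are bounded with enough decay (or simply invoke boundedness plus the known decay of $\pd_{y_k}G_{ij}$, $\pd_{y_k}G^N$, $\pd_{y_k}G^D$ from \eqref{eq_Green_estimate} and the Gaussian bounds on heat kernels) so that Fubini and the Duhamel formula are legitimate, and that the representation \eqref{eq-Stokes-sol-formula-divu0=0} — which is stated for suitable $f,F,u_0$ — genuinely applies in this regularity class. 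The uniqueness implicit in "the" solution formula also relies on the exclusion of the parasitic solutions, consistent with the discussion following \eqref{E1.3}. Once these standard but necessary verifications are in place, \eqref{u-mild}, \eqref{b-mild_1}, and \eqref{b-mild_n} follow.
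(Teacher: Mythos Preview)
Your approach is correct and essentially the same as the paper's: treat $u$ as a Stokes solution with $F=b\otimes b-u\otimes u$ and invoke \eqref{eq-Stokes-sol-formula-divu0=0}; treat each $b_i$ as a scalar heat solution with Neumann ($i<n$) or Dirichlet ($i=n$) boundary data, apply Duhamel, and then integrate by parts to put $\pd_{y_k}$ on the kernel. The paper organizes the last step in the opposite order (Duhamel with $g=-u\cdot\nb b+b\cdot\nb u$ first, then rewrite $g$ in divergence form and integrate by parts), but the content is identical.

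One small correction to your boundary-term justification in the Neumann case. When you integrate by parts in $y_n$, the boundary contribution at $y_n=0$ is $-G^N(x,y',0,t-s)\,(b_nu_i-u_nb_i)(y',0,s)$, and this vanishes because $u_n|_\Si=b_n|_\Si=0$, not because $\pd_{y_n}G^N|_{y_n=0}=0$. (The fact $\pd_{y_n}G^N|_{y_n=0}=0$ is true, but it is $G^N$ itself, not its normal derivative, that multiplies the trace of the flux in the boundary term.) In the Dirichlet case your reason $G^D|_{y_n=0}=0$ is fine, and in fact the flux $b_nu_n-u_nb_n$ is identically zero anyway. With this fix your argument goes through as written.
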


The proof of \thref{prop-MHD-sol-formula} is in Section \ref{sec-MHD-sol-formula}.

For solenoidal initial data $u_0$ and $b_0$, a solution of the viscous resistive MHD equations \eqref{eq-MHD}-\eqref{eq-MHD-bBC} is called a \emph{mild solution} if it satisfies \eqref{u-mild}-\eqref{b-mild_n} with suitable estimates.
Our goal for the viscous resistive MHD equations \eqref{eq-MHD}-\eqref{eq-MHD-bBC} is to construct mild solutions in various function spaces.

For $1\le q\le\infty$, denote
\[
L^q_\si(\R^n_+) = \bket{f\in L^q(\R^n_+;\R^n):\div f = 0,\ f_n(x',0)= 0 }.
\]

\begin{thm}\thlabel{mild-MHD-Lq}
Let $2\le n\le q\le\infty$ and $u_0, b_0\in L^q_\si(\R^n_+)$.
Then there exist $T = T(n,q,u_0,b_0)>0$ and a unique mild solution $(u,b)\in L^\infty(0,T; L^q)$ of the viscous resistive MHD equations \eqref{eq-MHD}-\eqref{eq-MHD-bBC} in the class
\EQN{
\sup_{0<t<T} &\bkt{\norm{u(t)}_{L^q} + \norm{b(t)}_{L^q} + t^{\frac{n}{2q}} \bke{ \norm{u(t)}_{L^\infty} + \norm{b(t)}_{L^\infty}} + t^{1/2} \bke{ \norm{\nb u(t)}_{L^q} + \norm{\nb b(t)}_{L^q} } }\\
&\qquad\qquad\qquad\qquad\qquad\qquad\qquad\qquad\qquad\qquad\qquad\qquad\qquad \le C_* \bke{ \norm{u_0}_{L^q} + \norm{b_0}_{L^q} }.
}
We can take $T=T(n,q,\norm{u_0}_{L^q},\norm{b_0}_{L^q})$ if $n<q\le\infty$.

Moreover, the solution $(u,b)$ is continuous at $t=0$, i.e. $(u,b)\in C([0,T];L^q)$, provided $q<\infty$ or $u_0$ and $b_0$ are in the $L^\infty$-closure of $C^1_{c,\si}(\overline{\R^n_+})$ if $q=\infty$.
\end{thm}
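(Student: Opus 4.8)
The plan is to set up a standard Picard iteration / fixed-point argument in the Banach space defined by the norm appearing in the statement, using the linear $L^q$ theory for the Stokes system (\cite[Lemma 9.1]{Green}) together with the analogous linear $L^q$ theory for the Neumann and Dirichlet heat semigroups in the half space (Lemmas \ref{lem-heat-est-Lq}\,--\,\ref{lem-heat-est-Lq-uloc}), as already recorded in Proposition \thref{prop-MHD-sol-formula} to write the integral system \eqref{u-mild}--\eqref{b-mild_n}. First I would fix the iteration space $X_T$ to be pairs $(u,b)$ of functions on $\R^n_+\times(0,T)$ with
\[
\norm{(u,b)}_{X_T} = \sup_{0<t<T}\bkt{\norm{u(t)}_{L^q}+\norm{b(t)}_{L^q} + t^{\frac{n}{2q}}\bke{\norm{u(t)}_{L^\infty}+\norm{b(t)}_{L^\infty}} + t^{1/2}\bke{\norm{\nb u(t)}_{L^q}+\norm{\nb b(t)}_{L^q}}}<\infty,
\]
and verify that the linear (initial-data) maps $u_0\mapsto \sum_j\int \breve G_{ij}(x,y,t)(u_0)_j\,dy$ and $b_0\mapsto \int G^N,\int G^D$ are bounded from $L^q_\si$ into $X_T$ with norm controlled by $C(\norm{u_0}_{L^q}+\norm{b_0}_{L^q})$; these are exactly the homogeneous estimates in the cited linear lemmas, noting the scaling exponents $n/2q$, $1/2$ are the natural ones and that $q\ge n$ is what makes them subcritical/critical.

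Next I would estimate the bilinear Duhamel terms. The fluid component involves $\int_0^t\int \pd_{y_k}G_{ij}(x,y,t-s)(u_ku_j-b_kb_j)(y,s)\,dyds$, and by \cite[Lemma 9.1]{Green} (the $\nb\cdot F$ part of the Stokes estimate) one controls $\norm{\cdot}_{L^q}$, $\norm{\cdot}_{L^\infty}$ and $\norm{\nb\,\cdot}_{L^q}$ of this term by $\int_0^t (t-s)^{-1/2-\frac{n}{2}(1/r-1/q)-\cdots}\norm{(u\otimes u - b\otimes b)(s)}_{L^r}\,ds$ for appropriate $r$; choosing $r$ so that the time singularity is integrable and the resulting power of $t$ matches the weight, and bounding $\norm{u\otimes u}_{L^r}\lesssim \norm{u}_{L^\infty}\norm{u}_{L^q}$ (or $\norm{u}_{L^q}^{1/2}\norm{u}_{L^\infty}^{1/2}$-type interpolation when $q=n$), one gets a bound $\lesssim T^{\theta}\norm{(u,b)}_{X_T}^2$ with $\theta>0$ when $q>n$ and $\theta=0$ (so one needs the data small, i.e.\ $T$ small enough that the $X_T$-norm of the linear part is small) when $q=n$; this is the reason $T$ depends only on the norms of the data when $n<q$ but on the data themselves when $q=n$. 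The magnetic components \eqref{b-mild_1}--\eqref{b-mild_n} are handled identically using the Neumann/Dirichlet heat-kernel gradient estimates, which are strictly easier. Uniqueness in the stated class follows from the same bilinear estimate applied to the difference of two solutions via a Gronwall / contraction-on-small-interval argument, then iterated to cover $[0,T]$.

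Finally, for continuity at $t=0$: when $q<\infty$, $C_c^\infty$-data are dense in $L^q_\si$, the linear part is strongly continuous on such data (standard heat/Stokes semigroup facts) hence on all of $L^q_\si$ by the uniform bound, and the Duhamel part is $O(T^\theta)\to0$; so $(u,b)\in C([0,T];L^q)$. When $q=\infty$ strong $L^\infty$-continuity at $t=0$ genuinely fails in general, which is why one must restrict $u_0,b_0$ to the $L^\infty$-closure of $C^1_{c,\si}(\overline{\R^n_+})$ — on that subspace the approximation argument goes through because the linear evolution of a $C^1_{c,\si}$ field is uniformly continuous down to $t=0$.

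The main obstacle I expect is the endpoint $q=n$: there the bilinear estimate has no positive power of $T$ to spare, so one must run the fixed point on a ball of small radius (depending on the actual data, via a careful splitting into the "already small" high-frequency/late-time part and a genuinely small piece), and one must check that the $L^\infty$- and $\nb$-in-$L^q$ weighted norms close without losing integrability in the time convolution — this forces a slightly delicate choice of the intermediate exponent $r$ and use of the sharp exponents in \cite[Lemma 9.1]{Green} rather than crude bounds. Everything else (the magnetic equations, uniqueness, and $t=0$ continuity for $q<\infty$) is routine given the linear theory already established in the paper.
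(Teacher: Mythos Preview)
Your proposal is correct and matches the paper's approach: the paper writes the abstract integral equation \eqref{eq-MHD-abstract} and then simply says it is straightforward to run the Picard contraction using \thref{lem-heat-est-Lq} for the heat (magnetic) part and \cite[Lemma 9.1]{Green} for the Stokes (velocity) part, omitting all details. Your outline is in fact more explicit than the paper's own proof; the only small point you did not mention is that when $n=2$ and $q=\infty$ the paper invokes the improved Lemma~\ref{lem-9.1} (from \cite{Green-errata}) to cover that endpoint of the bilinear Stokes estimate.
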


\begin{remark}
(i) Mild solutions of MHD has been constructed in \cite{TWZ-CMS2020}  in the whole space case, and in
\cite{MR4109418} for stochastic MHD in bounded domains.
Theorem \ref{mild-MHD-Lq} seems to be the first mild solution result for the half space.

(ii)
If $u_0,b_0\in L^q_\si(\R^n_+)\cap L^2_\si(\R^n_+)$, then the
unique mild solution in Theorem \ref{mild-MHD-Lq} of the
system \eqref{eq-MHD}-\eqref{eq-MHD-bBC} is also a weak solution as in 
\cite{DL-ARMA1972} and \cite[Theorem 3.1]{ST-CPAM1983}, 
by the same argument of Kato \cite[Theorem 3]{Kato1984}.
This remark also applies to the whole space case.
\end{remark}

Recall, for $a\ge0$, $Y_a$ and $Z_a$ are defined in \eqref{def-Ya-Za}.

\begin{thm}\thlabel{mild-MHD-Ya}
Let $n\ge2$ and $0\le a\le n$. For any pair of vector fields $u_0,b_0\in Y_a$ with $\div u_0 = \div b_0 = 0$ and $(u_0)_n|_\Si=(b_0)_n|_\Si=0$, there is a
 mild solution $(u,b)\in L^\infty(0,T; Y_a^2)$ of the viscous resistive MHD equations \eqref{eq-MHD}-\eqref{eq-MHD-bBC} for some time interval $(0,T)$. Moreover, the mild solution is unique in the class $L^\infty(\R^n_+\times(0,T))$.
\end{thm}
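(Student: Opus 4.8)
The plan is to construct the mild solution $(u,b)$ by a standard Picard iteration (contraction mapping) in the Banach space
\[
\mathcal{X}_T = \bket{(u,b)\in L^\infty(0,T;Y_a^2) : \norm{(u,b)}_{\mathcal{X}_T} := \esssup_{0<t<T}\bke{\norm{u(t)}_{Y_a} + \norm{b(t)}_{Y_a}}<\infty},
\]
applied to the integral map whose components are the right-hand sides of \eqref{u-mild}--\eqref{b-mild_n}. Write $\Phi(u,b) = (\Phi_1(u,b),\Phi_2(u,b))$, where $\Phi_1$ is the $u$-map (Stokes linear term plus the Duhamel term with bilinear data $u\otimes u - b\otimes b$, integrated against $\pd_{y_k}G_{ij}$) and $\Phi_2$ is the $b$-map built from $G^N$ for the tangential components and $G^D$ for the normal component, with bilinear data $u_kb_i - b_ku_i$. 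The two bilinearities are of exactly the same type (products of two unknowns differentiated once in $y$), so all nonlinear estimates reduce to a single model estimate.

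The key steps, in order: (1) \textbf{Linear term.} For the Stokes part, invoke the linear theory already proved in \cite[Lemma 9.2]{Green}, which gives $\norm{\int_{\R^n_+}\breve G_{ij}(x,y,t)(u_0)_j(y)\,dy}_{Y_a}\lesssim\norm{u_0}_{Y_a}$ for $0\le a\le n$ when $u_0$ is solenoidal; for the magnetic part, use the heat-kernel linear theory \thref{lem-heat-est-Lq}--\thref{lem-heat-est-Lq-uloc} established in Section~\ref{sec-prelim} for the Neumann problem ($G^N$) and the Dirichlet problem ($G^D$) to get $\norm{\int_{\R^n_+}G^{N}(x,y,t)(b_0)_i(y)\,dy}_{Y_a}\lesssim\norm{b_0}_{Y_a}$ and likewise for $G^D$; here the condition $(b_0)_n|_\Si=0$ is what makes the Dirichlet datum for $b_n$ admissible. (2) \textbf{Bilinear term.} Establish the model estimate: for $w,\tilde w\in L^\infty(0,T;Y_a)$,
\[
\norm{\int_0^t\!\!\int_{\R^n_+}\pd_{y_k}K(x,y,t-s)(w\tilde w)(y,s)\,dy\,ds}_{Y_a}\lesssim T^{1/2}\sup_{0<s<T}\norm{w(s)}_{Y_a}\sup_{0<s<T}\norm{\tilde w(s)}_{Y_a},
\]
where $K$ is any of $G_{ij}$, $G^N$, $G^D$. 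For $K=G^N,G^D$ this follows from $|\pd_{y}G^{N/D}(x,y,\tau)|\lesssim \tau^{-1/2}(|x-y|^2+\tau)^{-n/2}$ together with the decay bookkeeping $\bka{x}^{-a}$-type estimates of \cite[Lemma 9.2]{Green} (the same integrals that appear for the Stokes Duhamel term), and for $K=G_{ij}$ it is exactly the Duhamel half of \cite[Lemma 9.2]{Green}; the time integrability $\int_0^t(t-s)^{-1/2}\,ds = 2\sqrt t$ produces the gain $T^{1/2}$. (3) \textbf{Contraction.} Combining (1)--(2), $\norm{\Phi(u,b)}_{\mathcal{X}_T}\le C_0\norm{(u_0,b_0)}_{Y_a^2} + C_1 T^{1/2}\norm{(u,b)}_{\mathcal{X}_T}^2$ and $\norm{\Phi(u,b)-\Phi(v,c)}_{\mathcal{X}_T}\le C_1 T^{1/2}\bke{\norm{(u,b)}_{\mathcal{X}_T}+\norm{(v,c)}_{\mathcal{X}_T}}\norm{(u,b)-(v,c)}_{\mathcal{X}_T}$; choosing $T$ small (depending on $\norm{(u_0,b_0)}_{Y_a^2}$, hence on $u_0,b_0$) makes $\Phi$ a contraction on a closed ball of $\mathcal{X}_T$, yielding a fixed point. (4) \textbf{Solenoidality and boundary conditions.} Check that the fixed point satisfies $\div b = 0$ and the slip condition \eqref{eq-MHD-bBC} via Lemma~\ref{lem-b-solenoidal}, and that $(u,b)$ indeed solves \eqref{eq-MHD}--\eqref{eq-MHD-bBC} in the mild sense by \thref{prop-MHD-sol-formula}; uniqueness in $L^\infty(\R^n_+\times(0,T))$ follows from the same bilinear estimate applied in the larger space $L^\infty$ (i.e.\ $Y_0$), a Gronwall/absorption argument on a possibly shorter interval, and the fact that a bounded mild solution lies in that class.

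I expect the main obstacle to be the endpoint case $a=n$ in the bilinear estimate of Step~(2). There the target weight $\bka{x}^{-n}$ is the borderline at which the spatial convolution integrals $\int_{\R^n_+}(|x-y|^2+\tau)^{-n/2}\bka{y}^{-n}\,dy$ (and their reflected $|x^*-y|$ analogues coming from the boundary terms of $G_{ij}$, $G^N$) develop logarithmic divergences, so one cannot simply close with $\bka{x}^{-n}\cdot\bka{x}^{-n}$; one must track that the product of two $Y_n$ functions behaves like $\bka{y}^{-2n}$, which is more than enough decay, and show the logarithms are absorbed by the extra decay rather than the critical one — this is precisely the kind of careful bookkeeping already carried out in \cite[Lemma 9.2]{Green} for the pure Stokes case, so the real work is verifying that the Neumann/Dirichlet heat kernels $G^N,G^D$ (with their reflection term $\Ga(x-y^*,t)$) obey the same decay estimates, which they do because $|x^*-y|\ge \max(x_n,y_n)$ and the Gaussian controls the rest. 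A secondary, more bureaucratic point is justifying that the slip boundary condition for $b$ is preserved under the iteration, i.e.\ that each iterate produced by the $G^N$/$G^D$ representation automatically satisfies \eqref{eq-MHD-bBC}; this is where Lemma~\ref{lem-b-solenoidal} and the structure of the nonlinearity $u_kb_i-b_ku_i$ (which is the $k$-th component of a field whose normal trace vanishes when $u$ and $b$ have vanishing normal traces) must be used.
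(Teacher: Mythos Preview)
Your proposal is correct and follows the paper's approach exactly: Picard contraction in $L^\infty(0,T;Y_a^2)$ using the Stokes linear and bilinear estimates of \cite[Lemma~9.2]{Green} together with the heat-kernel estimates of \thref{lem-heat-est-Ya} for $G^N,G^D$. The one point you miss is the endpoint $n=2$, $a=0$ (not $a=n$, where the bilinear input lies in $Y_{2a}$ and no log appears): there the bilinear Stokes estimate in $Y_0=L^\infty$ is not covered by the original \cite[Lemma~9.1(b)]{Green} and the paper invokes the improved Lemma~\ref{lem-9.1} instead.
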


\begin{thm}\thlabel{mild-MHD-Za}
Let $n\ge2$ and $0\le a\le 1$. For any pair of vector fields $u_0,b_0\in Z_a$ with $\div u_0 = \div b_0 = 0$ and $(u_0)_n|_\Si=(b_0)_n|_\Si=0$, there is a
mild solution $(u,b)\in L^\infty(0,T; Z_a^2)$ of the viscous resistive MHD equations \eqref{eq-MHD}-\eqref{eq-MHD-bBC} for some time interval $(0,T)$. Moreover, the mild solution is unique in the class $L^\infty(\R^n_+\times(0,T))$.
\end{thm}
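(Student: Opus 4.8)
The plan is to run a standard Picard iteration / contraction-mapping argument in the Banach space $X_T = L^\infty(0,T; Z_a^2)$ equipped with the norm $\|(u,b)\|_{X_T} = \sup_{0<t<T}\bke{\norm{u(t)}_{Z_a} + \norm{b(t)}_{Z_a}}$, exactly as in the proofs of \thref{mild-MHD-Ya} and \cite[Theorem 1.8]{Green}. Define the map $\Phi(u,b) = (\Phi_1, \Phi_2)$ where $\Phi_1$ is the right-hand side of \eqref{u-mild} and $\Phi_2 = (\Phi_2', \Phi_{2,n})$ is given by \eqref{b-mild_1}--\eqref{b-mild_n}, with the nonlinear tensor built from the bilinear forms $u\otimes u - b\otimes b$ (for the $u$-equation) and $u\otimes b - b\otimes u$ (for the $b$-equation). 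The goal is to show $\Phi$ maps a ball $\overline{B}_R \subset X_T$ into itself and is a contraction there for $T$ small, which yields a fixed point; uniqueness in $L^\infty(\R^n_+\times(0,T))$ then follows by the usual Gronwall argument since any $L^\infty$ mild solution lies in $X_{T'}$ for some $T'$.

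The linear estimates needed are: (i) the Stokes initial-data bound $\norm{\sum_j \int_{\R^n_+}\breve G_{ij}(x,y,t)(u_0)_j(y)\,dy}_{Z_a} \lesssim \norm{u_0}_{Z_a}$, which is \cite[Lemma 9.3]{Green}; (ii) the analogous bound for the heat semigroup with Neumann data $G^N$ and Dirichlet data $G^D$ in $Z_a$, which is Lemma \ref{lem-heat-est-Lq} (the $Z_a$ part of the heat linear theory recalled in the introduction) — here one uses that $Z_a$ with $0 \le a \le 1$ is adapted to these kernels since $\Ga(x-y,t)$ and $\Ga(x-y^*,t)$ both obey the Gaussian bounds and $\bka{x_n}^a$-weighting is handled by $\int \Ga(x-y,t)\bka{y_n}^{-a}dy \lesssim \bka{x_n}^{-a}$ (and likewise for $y^*$, using $x_n^2 + t \gtrsim$ relevant quantities); and (iii) the Duhamel estimates $\norm{\sum_{j,k}\int_0^t\int_{\R^n_+}\pd_{y_k}G_{ij}(x,y,t-s)(g_kh_j)(y,s)\,dyds}_{Z_a} \lesssim t^{\sigma}\sup_{s}\norm{g(s)}_{Z_a}\norm{h(s)}_{Z_a}$ with a positive power $\sigma = \sigma(a)>0$, together with the parallel estimate for $\pd_{y_k}G^N$ and $\pd_{y_k}G^D$ against $g\otimes h - h\otimes g$. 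Estimate (iii) for the Stokes part is precisely the bilinear estimate already proved for \cite[Theorem 1.8]{Green}; for the $b$-equation it is the heat-kernel bilinear estimate, which is easier since $\pd_{y_k}G^{N,D}$ satisfies a clean Gaussian derivative bound $|\pd_{y_k}G^{N/D}(x,y,t)| \lesssim (|x-y|+\sqrt t)^{-n-1} + (|x-y^*|+\sqrt t)^{-n-1}$ and one integrates against a product of two $Z_a$ functions using the splitting $\bka{y_n}^{-2a} \le \bka{y_n}^{-a}$ when $a\le 1$ wait — more carefully, one bounds the product $|g(y,s)||h(y,s)| \lesssim \norm{g}_{Z_a}\norm{h}_{Z_a}\bka{y_n}^{-2a}$ and then must produce $\bka{x_n}^{-a}$, which is where the restriction $a\le 1$ enters (so that $2a$-decay in $y_n$, which is stronger, is more than enough, while the $a\le1$ cap keeps the time integral $\int_0^t (t-s)^{-1/2}s^{-(\text{something})}ds$ convergent with a positive leftover power of $t$).

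I expect the main obstacle to be the Duhamel estimate for the $b_n$ component, \eqref{b-mild_n}, in the $Z_a$ norm: the Dirichlet kernel $G^D$ vanishes on $\Si$, so $b_n$ inherits the extra boundary decay $\bka{x_n}^{-a}$ with $x_n^a$ in the numerator automatically, but one must verify that the source term $u_k b_n - b_k u_n$ — which contains $u_n$ and $b_n$, both of which vanish at $\Si$ — interacts correctly with $\pd_{y_k}G^D$ so that no logarithmic loss or borderline divergence occurs near $x_n = 0$ or as $|x'|\to\infty$; the terms $u_k b_n$ and $b_k u_n$ with $k < n$ differentiate tangentially, which is favorable, while the $k=n$ terms need the $\pd_{y_n}G^D$ bound. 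A secondary point requiring care is that $\Phi_2'$ lands in $Z_a$ rather than in the naive class, i.e. checking that the Neumann Duhamel term genuinely gains the weight $\bka{x_n}^a$ in the denominator rather than only $\bka{x'}^a$; this follows from the Gaussian factor in $\pd_{y_k}G^N$ but should be stated explicitly. Once these bilinear bounds are in hand, the contraction estimates $\norm{\Phi(u,b) - \Phi(\td u,\td b)}_{X_T} \le C T^\sigma (R)\norm{(u,b)-(\td u,\td b)}_{X_T}$ are routine, and choosing $R \sim \norm{u_0}_{Z_a}+\norm{b_0}_{Z_a}$ and $T$ with $CT^\sigma R \le \tfrac12$ completes the argument.
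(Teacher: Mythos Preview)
Your approach matches the paper's: run Picard iteration in $L^\infty(0,T;Z_a^2)$ using \cite[Lemma 9.3]{Green} for the Stokes part and \thref{lem-heat-est-Za} (not \thref{lem-heat-est-Lq}) for the heat parts $G^N,G^D$; the bilinear estimates give the clean power $\sigma=\tfrac12$ uniformly in $a$, so the anticipated subtleties for $b_n$ and $\Phi_2'$ do not arise. One point you should add is that for the borderline case $n=2$, $a=0$ (i.e.\ $Z_0=L^\infty$), the Stokes bilinear estimate is taken from Lemma~\ref{lem-9.1} rather than \cite[Lemma 9.3]{Green}, as the paper notes.
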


For $1\le q\le\infty$, let
\EQN{
L^q_\uloc(\R^n_+) &= \bket{u\in L^q_\loc(\R^n_+) : \sup_{x\in\R^n_+} \norm{u}_{L^q(B_1(x)\cap\R^n_+)}<\infty},\\
L^q_{\uloc,\si}(\R^n_+) &= \bket{u\in L^q_\uloc(\R^n_+;\R^n) : \div u = 0,\ u_n|_\Si = 0}.
}

\begin{thm}\thlabel{mild-MHD-Lq-uloc}
Let $2\le n\le q\le\infty$ and $u_0,b_0\in L^q_{\uloc,\si}(\R^n_+)$.

(a) If $n<q\le\infty$
, then there exist $T=T(n,q,\norm{u_0}_{L^q_\uloc},\norm{b_0}_{L^q_\uloc})>0$ and a unique mild solution $(u,b)$ of the viscous resistive MHD equations \eqref{eq-MHD}-\eqref{eq-MHD-bBC} with
\EQS{\label{eq-mhd-uloc-class}
&u(t), b(t) \in L^\infty(0,T; L^q_{\uloc,\si})\cap C((0,T); W^{1,q}_{\uloc,0}(\R^n_+)^n),\\
\sup_{0<t<T} &\bigg[ \norm{u(t)}_{L^q_\uloc} + \norm{b(t)}_{L^q_\uloc} + t^{\frac{n}{2q}} \bke{\norm{u(t)}_{L^\infty} + \norm{b(t)}_{L^\infty} }
\\
&\hspace{10mm} + t^{\frac12} \bke{\norm{\nb u(t)}_{L^q_\uloc} + \norm{\nb b(t)}_{L^q_\uloc} } \bigg]
\le C_* \bke{\norm{u_0}_{L^q_\uloc} + \norm{b_0}_{L^q_\uloc} }.
}

(b) If $n=q$, for any $0<T<\infty$, there are $\ep(T), C_*(T)>0$ such that if $\norm{u_0}_{L^n_\uloc} + \norm{b_0}_{L^n_\uloc}\le\ep(T)$, then there is a unique mild solution $(u,b)$ of the viscous resistive MHD equations \eqref{eq-MHD}-\eqref{eq-MHD-bBC} in the class \eqref{eq-mhd-uloc-class}.
\end{thm}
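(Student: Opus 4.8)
The plan is to run a fixed-point iteration in the Banach space determined by the norm appearing in \eqref{eq-mhd-uloc-class}, using \eqref{u-mild}--\eqref{b-mild_n} as the defining integral system. First I would set up the solution map: given $(v,c)$ in the class, define $(u,b) = \Phi(v,c)$ by replacing $(u,b)$ on the right-hand sides of \eqref{u-mild}, \eqref{b-mild_1}, \eqref{b-mild_n} with $(v,c)$, so that the linear (initial-data) terms are the Stokes flow $e^{-tA}u_0$ and the heat flows $e^{t\De_N}(b_0)_i$, $e^{t\De_D}(b_0)_n$, while the bilinear terms involve $\pd_{y_k}G_{ij}$, $\pd_{y_k}G^N$, $\pd_{y_k}G^D$ paired against the quadratic quantities $v_kv_j - c_kc_j$, $v_kc_i - c_kv_i$, $v_kc_n - c_kv_n$. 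The linear estimates for the Stokes part in $L^q_\uloc$ are exactly \cite[Lemma 9.4]{Green}; for the Neumann/Dirichlet heat flows I would invoke the corresponding linear theory developed in Section \ref{sec-prelim} (Lemmas \ref{lem-heat-est-Lq}--\ref{lem-heat-est-Lq-uloc}), which give the $L^q_\uloc\to L^q_\uloc$ bound with the $t^{n/2q}L^\infty$ and $t^{1/2}\nb L^q_\uloc$ gain. These already control the first term in the claimed inequality by $C(\norm{u_0}_{L^q_\uloc}+\norm{b_0}_{L^q_\uloc})$, and — importantly for part (b) — show that on a fixed window $(0,T)$ this linear contribution to the $L^\infty(0,T)$ norm of the iteration space can be made $\le \ep(T)$ when the data are that small, since the only non-scaling-invariant piece is the time window in $\sup_{0<t<T}$ and the $L^q_\uloc$ norm itself localizes.

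The core work is the bilinear estimate: I must show that the map
\[
B\big((v,c),(w,d)\big) \;\longmapsto\; \Big(\;\int_0^t\!\!\int_{\R^n_+}\pd_{y_k}G_{ij}(x,y,t-s)\,(v_kw_j)(y,s)\,dyds,\; \ldots\Big)
\]
is bounded from the iteration space to itself, with a constant carrying a positive power of $T$ (for part (a)) or a bound uniform on $(0,T)$ times a small factor (for part (b), $q=n$). Here the Stokes kernel derivative estimate I would use is the $l=q=m=0$, $k=0$ boundary-vanishing-free case of \eqref{eq_Green_estimate}, namely $|\pd_{y_k}G_{ij}(x,y,\tau)|\lesssim (|x-y|+\sqrt\tau)^{-n-1} + (\text{image-point term with a }\LN\text{ factor})$, and the much simpler Gaussian-derivative bounds $|\pd_{y_k}G^N|, |\pd_{y_k}G^D|\lesssim \tau^{-1/2}(\Ga(x-y,c\tau)+\Ga(x-y^*,c\tau))$. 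Feeding these into the uniformly-local integral machinery of \cite[Lemma 9.4]{Green} and Lemma \ref{lem-heat-est-Lq-uloc}, the $y$-integral of the kernel derivative against an $L^q_\uloc$ function of $y$ produces $(t-s)^{-1/2}$ times (a constant or a log, handled by the usual $\ep$-loss $(t-s)^{-1/2-\ep}$) times the relevant output norm of the quadratic term; then $\norm{(v_kw_j)(s)}_{L^q_\uloc}\lesssim \norm{v(s)}_{L^\infty}\norm{w(s)}_{L^q_\uloc}$ (or $\norm{v(s)}_{L^q_\uloc}\norm{w(s)}_{L^\infty}$ for the gradient output), and the $L^\infty$ factor costs $s^{-n/2q}$ while the $L^q_\uloc$ factor is bounded, so the $s$-integral is $\int_0^t (t-s)^{-1/2-\ep} s^{-n/2q}\,ds \lesssim t^{1/2-\ep-n/2q}$ when $n<q$ (a genuinely positive power), and $= t^{-\ep}$-type borderline when $n=q$, which is where the smallness/fixed-$T$ hypothesis of part (b) is needed. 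The $t^{n/2q}L^\infty$ and $t^{1/2}\nb L^q_\uloc$ components of the bilinear output are estimated the same way with the appropriate extra kernel derivative or the $L^\infty$ linear bound, each again yielding a positive power of $T$ for $q>n$. Standard contraction (Banach fixed point / Picard) on a small ball then gives existence and uniqueness within the class, and uniqueness in the stated class follows by the usual Gronwall-type argument on the difference of two solutions.

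The remaining points are comparatively routine. Continuity at $t=0$ for $q<\infty$ — and the membership $u(t),b(t)\in C((0,T);W^{1,q}_{\uloc,0})$ — follows from the corresponding continuity statements in the linear theory (Stokes: \cite[Lemma 9.4]{Green}; heat: Lemma \ref{lem-heat-est-Lq-uloc}) together with the fact that the bilinear term vanishes like a positive power of $t$ as $t\to 0^+$ and is continuous in $t$ by dominated convergence; the boundary conditions $u_n|_\Si = b_n|_\Si = 0$ and $\pd_n b_i|_\Si = 0$ are built into the kernels $\breve G_{ij}$ (resp.\ $G_{ij}$ with the $u|_\Si=0$ property), $G^D$ (Dirichlet), $G^N$ (Neumann), so they are automatically satisfied, and preservation of $\div b = 0$ is Lemma \ref{lem-b-solenoidal}. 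The main obstacle I anticipate is not any single estimate but the bookkeeping in the uniformly local setting: unlike the $L^q(\R^n_+)$ case, the kernel integrals do not decay at spatial infinity, so one must use the localized estimates of \cite[Lemma 9.4]{Green} and handle the logarithmic factor $\LN$ (present only when $n=2$) in the Stokes kernel by absorbing it into an arbitrarily small power loss — and then, in part (b) with $n=q=2$, verify that after this $\ep$-loss the time integral $\int_0^t(t-s)^{-1/2-\ep}s^{-1/2}\,ds \sim t^{-\ep}$ is still controllable on a fixed finite window $(0,T)$ with a constant that can be beaten by the smallness $\ep(T)$ of the data, which is the delicate endpoint balance that forces the statement of part (b) to fix $T$ in advance.
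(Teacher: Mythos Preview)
Your proposal is correct and follows essentially the same approach as the paper: the paper's proof (Section 5.2) simply invokes Picard iteration on the abstract form \eqref{eq-MHD-abstract}, citing \cite[Lemma 9.4]{Green} for the Stokes part and Lemma \ref{lem-heat-est-Lq-uloc} for the heat parts $G^N$, $G^D$, and omits all details. One small deviation: for the $n=2$ logarithmic factor in the Stokes bilinear term you propose absorbing $\LN$ into an $\ep$-power loss, whereas the paper handles the borderline case $n=2$, $q=\infty$ by invoking Lemma \ref{lem-9.1} (the improved \cite[Lemma 9.1(b)]{Green} allowing $p=q=\infty$), which avoids the $\ep$-loss entirely; either route works.
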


\begin{remark}
In the critical case $n=q$, one can use a density argument to establish the local-in-time existence of the mild solution without the smallness of the initial data if $u_0,b_0\in E^n = \overline{C^\infty_c}^{L^n_\uloc}$.
\end{remark}

\begin{remark}
One can easily establish the same existence and uniqueness as in Theorems \ref{mild-MHD-Lq} to \ref{mild-MHD-Lq-uloc} for the incompressible viscoelastic Navier-Stokes equations with damping (see \cite{Hynd-SIMA2013,Kim-AML2017,LLW-SIMA2017,Lai-JMFM2019,TWZ-CMS2020}) in the half space.
That is, using a similar fixed point argument, we can construct mild solutions of the following system in $L^q$, $Y_a$, $Z_a$, and $L^q_\uloc$.
\EQ{\label{eq-vNSEd}
\setlength\arraycolsep{1.5pt}\def\arraystretch{1.2}
\left.\begin{array}{rl}
\pd_tu - \De u + \na \pi&= - u\cdot \na u + \nb\cdot({\bf F}{\bf F}^\top)\\
\pd_t{\bf F} - \De{\bf F} &= - u\cdot\na{\bf F} + (\nb u){\bf F} \\
\div u &= \div{\bf F}=0\
\end{array}\right\} \text{ in }\R^n_+\times(0,\infty).
}
Here  $u:\R^n_+\times(0,\infty)\to\R^n$ is the velocity, $\pi:\R^n_+\times(0,\infty)\to\R$ is the pressure, and
${\bf F} = (F_{ij})_{i,j=1}^n:\R^n_+\times(0,\infty)\to\R^{n\times n}$ represents the local deformation tensor.
The system is coupled with initial conditions
\EQ{\label{eq-vNSEd-IC}
(u,{\bf F})(\cdot,0) = (u_0,{\bf F}_0),\quad \div u_0 = 0,\quad \div {\bf F}_0 = 0,\quad (u_0)_n|_\Si = (F_0)_{nj}|_\Si = 0,\ j=1,\ldots,n,
}
no-slip boundary condition for the fluid velocity $u$
\EQ{\label{eq-vNSEd-uBC}
u(x',0,t)=0\ \text{ on }\ \Si\times(0,\infty),
}
and slip boundary condition for the local deformation tensor ${\bf F}$
\EQS{\label{eq-vNSEd-FBC}
F_{nj}(x',0,t)&=0,\ \qquad\qquad\qquad\quad\ \,j=1,\ldots,n,\quad \text{ on }\ \Si\times(0,\infty),\\
\pd_nF_{ij}(x',0,t)&=0,\ i=1,\ldots,n-1,\ j=1,\ldots,n,\quad \text{ on }\ \Si\times(0,\infty).
}

\end{remark}

\subsubsection{A system for the flow and the magnetic field of MHD type}\label{sec-FM-system}

As pointed out in the introduction of \cite{DXX-SciChinaMath2022}, there are various types of boundary conditions for the magnetic field $b$, and a no-slip boundary condition may lead to an overdetermined problem. In fact, if $b$ is assumed to satisfy the homogeneous Dirichlet boundary condition, a potential is needed in the magnetic equation (see \cite{Lassner-ARMA1967}) because the divergence-free condition $\div b=0$ is not preserved by the heat evolution under no-slip boundary condition.
In this case, we consider the following system in the half space
\EQ{\label{eq-mMHD}
\setlength\arraycolsep{1.5pt}\def\arraystretch{1.2}
\left.\begin{array}{rl}
\pd_tu - \De u + \na \pi&= - u\cdot \na u + b\cdot\na b \\
\pd_tb - \De b + \na \phi&= - u\cdot\na b + b\cdot\na u \\
\div u &=\div b=0\
\end{array}\right\} \text{ in }\R^n_+\times(0,\infty),
}
Here  $u=(u_{1},\ldots,u_{n})$ is the fluid velocity, $\pi$ is the fluid pressure,
$b=(b_{1},\ldots,b_{n})$ is the magnetic field, $\phi$ is related to the motion of heavy ions.
The system is coupled with initial conditions
\EQ{\label{eq-mMHD-IC}
(u,b)(\cdot,0) = (u_0,b_0),\qquad \div u_0 = \div b_0 = 0,\quad (u_0)_n|_\Si = (b_0)_n|_\Si = 0,
}
and no-slip boundary conditions for both velocity and magnetic fields
\EQ{\label{eq-mMHD-BC}
u(x',0,t)= b(x',0,t) = 0\ \text{ on }\ \Si\times(0,\infty).
}

The system \eqref{eq-mMHD}-\eqref{eq-mMHD-BC} describes the motion of an incompressible electrical conducting fluid in the presence of a magnetic field when Maxwell's displacement currents are considered \cite{Batchelor-book1967,Shercliff-book1965}, i.e.~there is free motion of heavy ions, not directly due to the electric field (see \cite{Schluter-plasma1950,Schluter-plasma1951,Pikelner-book1966}).
The system \eqref{eq-mMHD}-\eqref{eq-mMHD-BC} also arises from geophysics that governs the flow and the magnetic field within the earth \cite{Hide-geophyics1971}.

Some authors called the system \eqref{eq-mMHD}-\eqref{eq-mMHD-BC} \emph{the evolution equations of MHD type} \cite{RB-Proyec1994,DR-Proyec1997,RB-JAustMathSocB1997,Zhao-MMAS2003}, while others named it as \emph{the equations for the flow and the magnetic field (abbreviated F-M equations)} \cite{QKC-AML2000}.
Note that the system is different from the incompressible MHD-type system considered in \cite{LZ-CPAM2014}.
To avoid confusion, we shall call the system \eqref{eq-mMHD}-\eqref{eq-mMHD-BC} the flow and the magnetic field of magnetohydrodynamic type and abbreviated it as \textit{\textbf{F-M system of MHD type}}.

Many significant contributions have been made for the F-M system of MHD type \eqref{eq-mMHD}-\eqref{eq-mMHD-BC} concerning the existence of weak and strong solutions, uniqueness and regularity criteria; see for instance \cite{Lassner-ARMA1967,RB-Proyec1994,QSW-JMAA1994,DR-Proyec1997,RB-JAustMathSocB1997,CWQ-IJMMS1998,Zhao-MMAS2003} and the references therein.

For solenoidal initial data $u_0$ and $b_0$, a solution of the F-M system of MHD type \eqref{eq-mMHD}-\eqref{eq-mMHD-BC} is called a \emph{mild solution} if it satisfies the following identities with suitable estimates.
\EQS{\label{u-mild-m}
u_i(x,t) &= \sum_{j=1}^n \int_{\R^n_+} \breve G_{ij}(x,y,t) (u_0)_j(y)\, dy \\
&\quad + \sum_{j,k=1}^n \int_0^t \int_{\R^n_+} \pd_{y_k}G_{ij}(x,y,t-s)\bke{u_ku_j - b_kb_j}(y,s)\, dyds,\ i=1,\ldots,n,
}
\EQS{\label{b-mild-m}
b_i(x,t) &= \sum_{j=1}^n \int_{\R^n_+} \breve G_{ij}(x,y,t) (b_0)_j(y)\, dy \\
&\quad + \sum_{j,k=1}^n \int_0^t \int_{\R^n_+} \pd_{y_k}G_{ij}(x,y,t-s)\bke{u_kb_j - b_ku_j}(y,s)\, dyds,\ i=1,\ldots,n.
}

\begin{thm}\thlabel{mild-mMHD-Lq}
Let $2\le n\le q\le\infty$ and $u_0, b_0\in L^q_\si(\R^n_+)$.
Then there exist $T = T(n,q,u_0,b_0)>0$ and a unique mild solution $(u,b)\in L^\infty(0,T; L^q)$ of the F-M system of MHD type \eqref{eq-mMHD}-\eqref{eq-mMHD-BC} in the class
\EQN{
\sup_{0<t<T} &\bkt{\norm{u(t)}_{L^q} + \norm{b(t)}_{L^q} + t^{\frac{n}{2q}} \bke{ \norm{u(t)}_{L^\infty} + \norm{b(t)}_{L^\infty}} + t^{1/2} \bke{ \norm{\nb u(t)}_{L^q} + \norm{\nb b(t)}_{L^q} } }\\
&\qquad\qquad\qquad\qquad\qquad\qquad\qquad\qquad\qquad\qquad\qquad\qquad\qquad \le C_* \bke{ \norm{u_0}_{L^q} + \norm{b_0}_{L^q} }.
}
We can take $T=T(n,q,\norm{u_0}_{L^q},\norm{b_0}_{L^q})$ if $n<q\le\infty$.

Moreover, the solution $(u,b)$ is continuous at $t=0$, i.e. $(u,b)\in C([0,T];L^q)$, provided $q<\infty$ or $u_0$ and $b_0$ are in the $L^\infty$-closure of $C^1_{c,\si}(\overline{\R^n_+})$ if $q=\infty$.
\end{thm}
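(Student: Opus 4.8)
The plan is to run a standard Kato-type fixed point argument in the Banach space defined by the $\sup_{0<t<T}[\cdots]$ norm appearing in the statement. I would first set up the solution map: given $(u,b)$ in the solution space with norm $\le M$, define $(\wt u,\wt b)$ by the right-hand sides of \eqref{u-mild-m}-\eqref{b-mild-m}. The key point specific to this system, as opposed to the genuine viscous resistive MHD system of \thref{mild-MHD-Lq}, is that \emph{both} evolutions are governed by the Stokes Green tensor: the linear parts are $\breve G_{ij}$ acting on $u_0$ and on $b_0$, and the bilinear terms are Duhamel integrals against $\pd_{y_k}G_{ij}$ of $u\otimes u - b\otimes b$ and $u\otimes b - b\otimes u$ respectively. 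Consequently the entire analysis reduces to the $L^q$ linear theory for the Stokes system already established in \cite[Lemma 9.1]{Green}, applied separately in each slot.

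The estimates I would invoke are: (i) for the initial-data term, $\norm{\breve G(t)u_0}_{L^q}\lesssim\norm{u_0}_{L^q}$ and $\norm{\breve G(t)u_0}_{L^\infty}\lesssim t^{-\frac n{2q}}\norm{u_0}_{L^q}$ and $\norm{\nb\breve G(t)u_0}_{L^q}\lesssim t^{-1/2}\norm{u_0}_{L^q}$, with identical bounds for $b_0$; (ii) for the Duhamel term, writing $\mathcal N(f)(t)=\int_0^t\nb_y G(t-s)f(s)\,ds$ against a quadratic source $f\in\{u\otimes u,\,b\otimes b,\,u\otimes b,\,b\otimes u\}$, the bounds $\norm{\mathcal N(f)(t)}_{L^q}\lesssim\int_0^t(t-s)^{-\frac12-\frac n{2q}}\norm{f(s)}_{L^{q/2}}\,ds$ together with $\norm{f(s)}_{L^{q/2}}\le\norm{u(s)}_{L^q}\norm{u(s)}_{L^\infty}$-type splittings (and the $L^\infty$ and $\nb L^q$ analogues), exactly as in the proof of \cite[Theorem 1.7]{Green} and \thref{mild-MHD-Lq}. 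Since $q\ge n$, the time integrals $\int_0^t(t-s)^{-\frac12-\frac n{2q}}s^{-\frac n{2q}}\,ds$ and their variants converge and produce either a factor $C T^{\theta}$ (when $n<q$, $\theta>0$) or a small constant after choosing $T$ so that the linear term is small (when $n=q$). This gives a self-map of the ball of radius $M\sim C_*(\norm{u_0}_{L^q}+\norm{b_0}_{L^q})$ for $T$ small, and the bilinearity yields the contraction estimate $\norm{\Phi(u,b)-\Phi(u',b')}\le \tfrac12\norm{(u,b)-(u',b')}$ on that ball; uniqueness in $L^\infty(0,T;L^q)$ follows from the same bilinear bound plus a Gronwall/continuity-in-$T$ argument. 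Continuity at $t=0$ when $q<\infty$ follows from density of $C^1_{c,\si}$ in $L^q_\si$ together with the already-established strong $L^q$-continuity of the Stokes semigroup on $L^q_\si$ and the fact that the Duhamel term is $O(T^\theta)\to 0$; when $q=\infty$ one uses the extra hypothesis that $u_0,b_0$ lie in the $L^\infty$-closure of $C^1_{c,\si}(\overline{\R^n_+})$, as in \cite[Theorem 1.7]{Green}.

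There are only two genuinely new bookkeeping points relative to \thref{mild-MHD-Lq}. First, the second equation here carries the Lagrange multiplier $\phi$ and the no-slip condition $b|_\Si=0$, so one must check that the formula \eqref{b-mild-m} is indeed the correct mild formulation; but since $b$ satisfies a Stokes-type system $\pd_t b-\De b+\nb\phi = \nb\cdot(u\otimes b - b\otimes u)$ with $\div b=0$ and $b|_\Si=0$ and solenoidal $b_0$ (with $b_{0,n}|_\Si=0$), the solution formula \eqref{eq-Stokes-sol-formula-divu0=0} applies verbatim with $\breve G$ and $G$ — this is why $\breve G_{ij}$ and not the heat Green functions $G^N,G^D$ appear, in contrast to \thref{prop-MHD-sol-formula}. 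Second, one must verify that $u\otimes b - b\otimes u$ really has the divergence-form structure $\nb\cdot(\cdot)$ that makes the $\pd_{y_k}G$ Duhamel term legitimate: indeed $u\cdot\nb b - b\cdot\nb u = \nb\cdot(u\otimes b) - \nb\cdot(b\otimes u)$ using $\div u=\div b=0$. I do not expect a serious obstacle: the only mildly delicate estimate is the borderline time integrability at $q=n$, handled by the smallness/short-time mechanism stated in the theorem, and the whole proof is parallel to that of \thref{mild-MHD-Lq} with the heat-kernel estimates for $b$ replaced by Stokes-kernel estimates. I would therefore present the proof compactly, stating the linear estimates as consequences of \cite[Lemma 9.1]{Green}, and refer to the proof of \thref{mild-MHD-Lq} for the routine contraction-mapping details.
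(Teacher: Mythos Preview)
Your proposal is correct and matches the paper's approach: the paper also writes \eqref{u-mild-m}--\eqref{b-mild-m} in the abstract form \eqref{eq-mMHD-abstract} with both components governed by the Stokes semigroup $e^{-tA}$, then invokes Picard iteration using the $L^q$ estimates of \cite[Lemma 9.1]{Green} (together with Lemma~\ref{lem-9.1} in the endpoint case $n=2$, $q=\infty$) and skips the details. The only small addition to your sketch is that explicit reference to Lemma~\ref{lem-9.1} for the $n=2$, $q=\infty$ case.
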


\begin{thm}\thlabel{mild-mMHD-Ya}
Let $n\ge2$ and $0<a\le n$. For any pair of vector fields $u_0,b_0\in Y_a$ with $\div u_0 = \div b_0 = 0$ and $(u_0)_n|_\Si=(b_0)_n|_\Si=0$, there is a 
mild solution $(u,b)\in L^\infty(0,T; Y_a^2)$ of the F-M system of MHD type \eqref{eq-mMHD}-\eqref{eq-mMHD-BC} for some time interval $(0,T)$. Moreover, the mild solution is unique in the class $L^\infty(\R^n_+\times(0,T))$.
\end{thm}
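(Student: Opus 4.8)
The plan is to solve the integral system \eqref{u-mild-m}--\eqref{b-mild-m} by a contraction (Picard) argument in
\[
X_T:=\bket{(u,b)\in L^\infty(0,T;Y_a^2):\ \norm{(u,b)}_{X_T}:=\sup_{0<t<T}\bke{\norm{u(t)}_{Y_a}+\norm{b(t)}_{Y_a}}<\infty},
\]
along the lines of the proof of \thref{mild-MHD-Ya} for the viscous resistive MHD system and of \cite[Theorem 1.8]{Green} for the Navier--Stokes equations in $Y_a$; what makes the F--M system of MHD type no harder is that \emph{both} $u$ and $b$ are propagated by the \emph{same} Stokes semigroup, so only one linear theory is needed. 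Write $(S(t)v)_i(x)=\sum_{j}\int_{\R^n_+}\breve G_{ij}(x,y,t)v_j(y)\,dy$ and, for vector fields $v,w$,
\[
\mathcal B(v,w)_i(x,t)=\sum_{j,k=1}^n\int_0^t\!\!\int_{\R^n_+}\pd_{y_k}G_{ij}(x,y,t-s)\,(v_kw_j)(y,s)\,dy\,ds ;
\]
then \eqref{u-mild-m}--\eqref{b-mild-m} is the fixed-point system $u=S(t)u_0+\mathcal B(u,u)-\mathcal B(b,b)$, $b=S(t)b_0+\mathcal B(u,b)-\mathcal B(b,u)$, and the fixed point will be sought in the ball of $X_T$ of radius comparable to $\norm{u_0}_{Y_a}+\norm{b_0}_{Y_a}$. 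Since $S(t)$ produces a Stokes flow from a solenoidal datum and $\mathcal B$ is the Stokes flow driven by a force of the form $\na\cdot F$, the constraints $\div u=\div b=0$ and the full no-slip $u|_\Si=b|_\Si=0$, matching \eqref{eq-mMHD-BC}, are automatically propagated, and the potential $\phi$ is just the pressure generated by the Leray projection built into $G_{ij}$; no constraint needs to be checked separately.

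The two analytic ingredients are: (i) the linear bound $\sup_{t>0}\norm{S(t)v}_{Y_a}\lesssim\norm{v}_{Y_a}$, which is \cite[Lemma 9.2]{Green}; and (ii) the bilinear bound $\sup_{0<t<T}\norm{\mathcal B(v,w)(t)}_{Y_a}\lesssim T^{1/2}\bke{\sup_{0<t<T}\norm{v(t)}_{Y_a}}\bke{\sup_{0<t<T}\norm{w(t)}_{Y_a}}$. For (ii) one uses the algebra property that $v\otimes w(s)\in Y_{2a}$ with $\norm{v\otimes w(s)}_{Y_{2a}}\le\norm{v(s)}_{Y_a}\norm{w(s)}_{Y_a}$, so it suffices to prove the kernel bound $\bka{x}^{a}\int_{\R^n_+}|\pd_y G(x,y,\tau)|\,\bka{y}^{-2a}\,dy\lesssim\tau^{-1/2}$ and then integrate $\int_0^t(t-s)^{-1/2}\,ds=2\sqrt t$. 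That kernel bound is read off from \eqref{eq_Green_estimate} by splitting $\pd_yG$ into its interior part $(|x-y|^2+\tau)^{-(n+1)/2}$ and its boundary part, which carries $|x^*-y|$, $(x_n^2+\tau)$, $(y_n^2+\tau)$ and, when $n=2$, the logarithmic factors $\LN_{ijkq}^{mn}$; this is the same computation as the Duhamel estimate in the proof of \cite[Theorem 1.8]{Green}, and the preliminary integral lemmas of Section \ref{sec-prelim} carry the bookkeeping.

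Granting (i)--(ii), $\Phi(u,b):=\bke{S(t)u_0+\mathcal B(u,u)-\mathcal B(b,b),\ S(t)b_0+\mathcal B(u,b)-\mathcal B(b,u)}$ maps a suitable ball of $X_T$ into itself and is a contraction there once $T=T(n,a,\norm{u_0}_{Y_a},\norm{b_0}_{Y_a})$ is small; its fixed point is the desired mild solution $(u,b)\in L^\infty(0,T;Y_a^2)$. For uniqueness in $L^\infty(\R^n_+\times(0,T))$, note first that $Y_a\subset L^\infty$ with $\norm{f}_{L^\infty}\le\norm{f}_{Y_a}$, so the constructed solution lies in that class; given a second bounded solution, subtracting the two formulas yields a linear integral system for the difference $(w,c)$, each of whose bilinear terms is controlled by the crude estimate $\int_{\R^n_+}|\pd_y G(x,y,\tau)|\,dy\lesssim\tau^{-1/2}$ (the $n=2$ logarithms in \eqref{eq_Green_estimate} survive, as they integrate to a bounded multiple of $\tau^{-1/2}$). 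The resulting factor $\sqrt t$ forces $\sup_{0<s<t}\bke{\norm{w(s)}_{L^\infty}+\norm{c(s)}_{L^\infty}}$ to vanish on a short interval, and a standard continuation covers $(0,T)$.

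The crux is ingredient (ii): the far-field floor of $G_{ij}(x,y,t)$ is $\bka{x}^{-n}$ (up to a logarithm when $n=2$), so the Stokes Duhamel operator applied to a force in $Y_{2a}$ produces a field decaying at best like $\bka{x}^{-\min(2a,n)}$ with at most polynomial-logarithmic corrections; because $0<a\le n$ one has $\min(2a,n)>a$, so every such correction is $\lesssim\bka{x}^{-a}$ and the estimate closes in $Y_a$. This is precisely why the hypothesis must require $a>0$ strictly (at $a=0$ there is no decay margin to swallow the $n=2$ logarithm) and $a\le n$ (the Green-tensor floor cannot be beaten). Beyond that, the only work is the purely technical half-space bookkeeping behind the kernel bound in (ii), which duplicates what is already done for \thref{mild-MHD-Ya} and \cite[Theorem 1.8]{Green}.
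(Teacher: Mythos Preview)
Your proposal is correct and follows essentially the same approach as the paper: the paper likewise writes \eqref{u-mild-m}--\eqref{b-mild-m} as the abstract fixed-point system \eqref{eq-mMHD-abstract} with both components driven by the Stokes semigroup, and invokes \cite[Lemma 9.2]{Green} for the linear and bilinear $Y_a$ estimates together with a Picard contraction, deferring details to \cite[\S2.9.3]{lai-thesis2021}. Your uniqueness argument via the $L^\infty$ kernel bound (handled in the paper by Lemma~\ref{lem-9.1} when $n=2$) also matches.
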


\begin{thm}\thlabel{mild-mMHD-Za}
Let $n\ge2$ and $0<a\le 1$. For any pair of vector fields $u_0,b_0\in Z_a$ with $\div u_0 = \div b_0 = 0$ and $(u_0)_n|_\Si=(b_0)_n|_\Si=0$, there is a 
mild solution $(u,b)\in L^\infty(0,T; Z_a^2)$ of the F-M system of MHD type \eqref{eq-mMHD}-\eqref{eq-mMHD-BC} for some time interval $(0,T)$. Moreover, the mild solution is unique in the class $L^\infty(\R^n_+\times(0,T))$.
\end{thm}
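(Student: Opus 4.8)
The plan is a contraction-mapping argument for the coupled mild system \eqref{u-mild-m}--\eqref{b-mild-m}, running parallel to the proof of Theorem \thref{mild-MHD-Za} for the viscous resistive system and to the Navier--Stokes construction in $Z_a$ in \cite{Green}. Because both $u$ and $b$ in \eqref{eq-mMHD} carry the no-slip boundary condition, after eliminating the pressures $\pi,\phi$ the two equations are driven by the \emph{same} kernels: the restricted Green tensor $\breve G_{ij}$ of Solonnikov on the initial data and $\pd_{y_k}G_{ij}$ on the Duhamel term, exactly as for a single Stokes equation. Using $\div u_0=\div b_0=0$ together with $-u\cdot\na u+b\cdot\na b=\na\cdot(-u\otimes u+b\otimes b)$ and $-u\cdot\na b+b\cdot\na u=\na\cdot(-u\otimes b+b\otimes u)$, the system \eqref{u-mild-m}--\eqref{b-mild-m} takes the abstract form $(u,b)=(\mathcal Lu_0,\mathcal Lb_0)+\mathcal B\big((u,b),(u,b)\big)$, where $\mathcal L$ is the Stokes solution operator with kernel $\breve G_{ij}$ and $\mathcal B$ is the symmetric bilinear map obtained by applying the single operator $N_i[V](x,t)=\sum_{j,k}\int_0^t\int_{\R^n_+}\pd_{y_k}G_{ij}(x,y,t-s)\,V_{kj}(y,s)\,dy\,ds$ to $V=-u\otimes u+b\otimes b$ in the first slot and to $V=-u\otimes b+b\otimes u$ in the second.

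I would work in $\mathcal X_T=\big\{(u,b):u,b\in L^\infty\big((0,T);Z_a\big)\big\}$ with norm $\|(u,b)\|_{\mathcal X_T}=\sup_{0<t<T}\big(\|u(t)\|_{Z_a}+\|b(t)\|_{Z_a}\big)$, augmenting it if needed with the companion weighted-gradient norm $\sup_{0<t<T}t^{1/2}\big(\|\na u(t)\|_{Z_a}+\|\na b(t)\|_{Z_a}\big)$ as in \cite[Lemma 9.3]{Green}. Two estimates are needed. First, the linear bound $\|\mathcal Lu_0(t)\|_{Z_a}\lec\|u_0\|_{Z_a}$ uniformly in $t$ (and likewise for $b_0$), which is precisely the $Z_a$ linear theory of \cite[Lemma 9.3]{Green}. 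Second, the bilinear bound: since $0<a\le1$, for $V,W\in Z_a$ one has $|V_kW_j|\le\|V\|_{Z_a}\|W\|_{Z_a}\bka{x_n}^{-2a}\le\|V\|_{Z_a}\|W\|_{Z_a}\bka{x_n}^{-a}$, so the product tensors again lie in $Z_a$ with the expected norm; feeding this into the $Z_a$ Duhamel estimate of \cite[Lemma 9.3]{Green} (the extra $y$-derivative on the kernel produces a factor $(t-s)^{-1/2}$, integrable in $s$) yields $\|N[V](t)\|_{Z_a}\lec T^{\gamma}\sup_{0<s<T}\|V(s)\|_{Z_a}$ for some $\gamma=\gamma(n,a)>0$, hence $\|\mathcal B\big((u,b),(u',b')\big)\|_{\mathcal X_T}\lec T^{\gamma}\|(u,b)\|_{\mathcal X_T}\|(u',b')\|_{\mathcal X_T}$. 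These are the exact analogues of the estimates already established for a single Stokes equation in \cite{Green} and in the proof of Theorem \thref{mild-MHD-Za}; the only new bookkeeping is the $b\otimes b$, $b\otimes u$ and $u\otimes b$ cross terms, which are harmless since $\mathcal B$ is bilinear.

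With these in hand the conclusion follows from the standard abstract fixed-point lemma: choosing $T$ small in terms of $n$, $a$, and $\|u_0\|_{Z_a}+\|b_0\|_{Z_a}$ makes $\mathcal B$ a contraction on a small ball of $\mathcal X_T$ centred at $(\mathcal Lu_0,\mathcal Lb_0)$, producing a mild solution $(u,b)\in L^\infty(0,T;Z_a^2)$. For uniqueness in the larger class $L^\infty(\R^n_+\times(0,T))$ I would rerun the bilinear estimate with the $L^\infty$-norm in place of $\|\cdot\|_{Z_a}$ (only easier, and controlled directly from the pointwise bound \eqref{eq_Green_estimate}) and close a Gronwall inequality for the difference of two solutions, exactly as in \cite{Green}. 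The one genuinely technical step is the bilinear estimate: controlling the Oseen part $\pd_{y_k}G^*_{ij}$ of $\pd_{y_k}G_{ij}$ — with its slowly decaying spatial weight built from $|x^*-y|^2+t$ and, when $n=2$, the logarithmic factor $\LN$ appearing in \eqref{eq_Green_estimate} — acting on a tensor that decays only in the normal variable $x_n$, and recovering the weight $\bka{x_n}^{-a}$ together with a positive power of $T$. This is carried out by splitting the $y$-integral according to whether $y_n\le x_n$ or $y_n>x_n$ and whether $|x'-y'|\le x_n$ or not, and invoking the preliminary integral estimates, in particular Lemma \ref{lem-heat-int-decay-est}; structurally it is identical to the $Z_a$ case treated in \cite{Green}, and the hypothesis $a\le1$ is exactly what makes $\bka{x_n}^{-2a}\le\bka{x_n}^{-a}$, i.e.\ what lets the quadratic nonlinearity close in the same space.
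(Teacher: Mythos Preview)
Your proposal is essentially the paper's own argument: write the system in the abstract form \eqref{eq-mMHD-abstract}, apply the $Z_a$ linear and bilinear estimates of \cite[Lemma~9.3]{Green} for the Stokes semigroup $e^{-tA}$ and for the Duhamel operator with kernel $\pd_{y_k}G_{ij}$, and close by Picard iteration; the paper does exactly this and skips the details.

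One small correction worth making: the inequality $\bka{x_n}^{-2a}\le\bka{x_n}^{-a}$ holds for every $a\ge0$, so that is not the source of the restriction $a\le1$. The actual role of $a\le1$ lies inside the bilinear estimate of \cite[Lemma~9.3]{Green} (compare the analogous \thref{lem-heat-est-Za} here), which maps $Z_{2a}$ into $Z_a$ with a factor $t^{-1/2}$ and needs $a\le1$ to control the $y_n$-integral. Accordingly, you should feed the product tensor directly as an element of $Z_{2a}$ into that estimate rather than first embedding it in $Z_a$; the latter step is harmless but unnecessary, and obscures where the hypothesis is used.
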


Theorems \ref{mild-mMHD-Ya} and \ref{mild-mMHD-Za} are \cite[Theorem 2.1.11]{lai-thesis2021}, but $\na q$ is missing in \cite[(2.31)-(2.32)]{lai-thesis2021}.
The proof of Theorems \ref{mild-mMHD-Ya} and \ref{mild-mMHD-Za} uses \cite[Lemmas 9.2 and 9.3]{Green} and a fixed point argument. It can be found in \cite[$\S$2.9.3]{lai-thesis2021}.

\begin{thm}\thlabel{mild-mMHD-Lq-uloc}
Let $2\le n\le q\le\infty$ and $u_0,b_0\in L^q_{\uloc,\si}(\R^n_+)$.

(a) If $n<q\le\infty$
, then there exist $T=T(n,q,\norm{u_0}_{L^q_\uloc},\norm{b_0}_{L^q_\uloc})>0$ and a unique mild solution $(u,b)$ of the F-M system of MHD type \eqref{eq-mMHD}-\eqref{eq-mMHD-BC} with
\EQS{\label{eq-mMHD-uloc-class}
&u(t), b(t) \in L^\infty(0,T; L^q_{\uloc,\si})\cap C((0,T); W^{1,q}_{\uloc,0}(\R^n_+)^n\cap BUC_\si(\R^n_+)),\\
\sup_{0<t<T} &\Big[ \norm{u(t)}_{L^q_\uloc} + \norm{b(t)}_{L^q_\uloc} + t^{\frac{n}{2q}} \bke{\norm{u(t)}_{L^\infty} + \norm{b(t)}_{L^\infty} } 
\\
&\quad + t^{\frac12} \bke{\norm{\nb u(t)}_{L^q_\uloc} + \norm{\nb b(t)}_{L^q_\uloc} } \Big]
\le C_* \bke{\norm{u_0}_{L^q_\uloc} + \norm{u_0}_{L^q_\uloc} }.
}

(b) If $n=q$, for any $0<T<\infty$, there are $\ep(T), C_*(T)>0$ such that if $\norm{u_0}_{L^n_\uloc} + \norm{b_0}_{L^n_\uloc}\le\ep(T)$, then there is a unique mild solution $(u,b)$ of the F-M system of MHD type \eqref{eq-mMHD}-\eqref{eq-mMHD-BC} in the class \eqref{eq-mMHD-uloc-class}.
\end{thm}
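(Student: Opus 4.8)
The plan is to construct the solution by a contraction-mapping (Picard) argument in the complete metric space $\mathcal{X}_T$ of pairs $(u,b)$ on $\R^n_+\times(0,T)$ for which the quantity on the left-hand side of \eqref{eq-mMHD-uloc-class} is finite, following the scheme used for \cite[Theorem 1.10]{Green} and for \thref{mild-MHD-Lq-uloc}. The decisive structural observation is that, because the potential term $\na\phi$ is included in \eqref{eq-mMHD}, the magnetic equation is itself of Stokes type; accordingly, in the mild formulation \eqref{u-mild-m}--\eqref{b-mild-m} both $u$ and $b$ are expressed through the restricted Green tensor $\breve G_{ij}$ applied to the initial data and through $\pd_{y_k}G_{ij}$ applied to the quadratic terms $u_ku_j-b_kb_j$ and $u_kb_j-b_ku_j$. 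Hence a single linear theory, namely \cite[Lemma 9.4]{Green}, governs both components, and no separate heat-kernel estimates (which were needed for the resistive system in \thref{mild-MHD-Lq-uloc}) enter here.

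Concretely, I would set $\Phi(u,b)=(\Phi_1(u,b),\Phi_2(u,b))$, where $\Phi_1(u,b)$ is the right-hand side of \eqref{u-mild-m} and $\Phi_2(u,b)$ that of \eqref{b-mild-m}, so that a fixed point of $\Phi$ is a mild solution. By \cite[Lemma 9.4]{Green} the linear parts satisfy $\norm{\Phi_1(0,0)}_{\mathcal{X}_T}\lesssim C_0(T)\norm{u_0}_{L^q_\uloc}$ and $\norm{\Phi_2(0,0)}_{\mathcal{X}_T}\lesssim C_0(T)\norm{b_0}_{L^q_\uloc}$, and the iterates lie in $C((0,T);W^{1,q}_{\uloc,0}\cap BUC_\si)$. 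For the bilinear part, the same lemma -- applied with source $\na\cdot F$, $F=v\otimes w$, using the $l+q=1$ case of the pointwise bound \eqref{eq_Green_estimate} for $\pd_{y_k}G_{ij}$ -- yields
\[
\norm{B(v,w)}_{\mathcal{X}_T}\le C_1(T)\,\norm{v}_{\mathcal{X}_T}\norm{w}_{\mathcal{X}_T},
\]
where for $n<q\le\infty$ one has $C_1(T)\to 0$ as $T\to 0$ (sub-critical time gain), whereas for $q=n$ the constant $C_1(T)$ is finite but does not vanish as $T\to 0$ (critical scaling). Combining these for the four quadratic combinations $u\otimes u$, $b\otimes b$, $u\otimes b$, $b\otimes u$ and invoking the standard contraction lemma gives: in case (a), $n<q\le\infty$, a time $T=T(n,q,\norm{u_0}_{L^q_\uloc},\norm{b_0}_{L^q_\uloc})$ and a unique fixed point in a small ball of $\mathcal{X}_T$; in case (b), $q=n$, for each fixed $T<\infty$ a threshold $\ep(T)$ so that $\norm{u_0}_{L^n_\uloc}+\norm{b_0}_{L^n_\uloc}\le\ep(T)$ produces a unique fixed point. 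Membership in the class \eqref{eq-mMHD-uloc-class} is inherited by the limit since that space is closed under $\Phi$, and uniqueness within the class follows by applying the bilinear estimate to the difference of two solutions and absorbing the resulting term.

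I expect the only genuinely delicate point to be the bilinear estimate in the critical case $q=n$ of part (b): there the time integral controlling $B(v,w)$ is scale-invariant, so no smallness is produced by shrinking $T$, which is exactly why the hypothesis forces $\norm{u_0}_{L^n_\uloc}+\norm{b_0}_{L^n_\uloc}$ below a $T$-dependent threshold; moreover, since the $L^n_\uloc$-Stokes semigroup is only exponentially bounded, not uniformly bounded in time, the constants $C_0,C_1,C_*,\ep$ must be allowed to depend on $T$. This is handled exactly as in the corresponding critical case of \cite[Theorem 1.10]{Green} and \thref{mild-MHD-Lq-uloc}. A secondary bookkeeping issue is the near-boundary factors $(y_n^2+t)^{-q_j/2}$ and, when $n=2$, the logarithmic $\LN$-weights in \eqref{eq_Green_estimate} for $\pd_{y_k}G_{ij}$; these are already absorbed in \cite[Lemma 9.4]{Green}, so they add no new difficulty beyond careful accounting.
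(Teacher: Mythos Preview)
Your proposal is correct and follows essentially the same approach as the paper: write the system in the abstract form \eqref{eq-mMHD-abstract}, observe that both components are governed by the Stokes Green tensor (so only \cite[Lemma 9.4]{Green} is needed, in contrast to the resistive MHD case), and run the Picard iteration in the subcritical and critical cases separately. One small point: for the endpoint $n=2$, $q=\infty$, the paper additionally invokes Lemma~\ref{lem-9.1} (the improved version of \cite[Lemma 9.1(b)]{Green}) rather than relying on \cite[Lemma 9.4]{Green} alone, so your claim that the $n=2$ logarithmic factors are ``already absorbed'' there is slightly optimistic; you should cite Lemma~\ref{lem-9.1} for that case.
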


\begin{remark}
In the critical case $n=q$, we do not need the smallness of the initial data to prove the local-in-time existence of the mild solution if $u_0,b_0\in E^n = \overline{C^\infty_c}^{L^n_\uloc}$.
\end{remark}

\subsection{Mild solutions of nematic liquid crystal flows}\label{sec:NLCF}

The mathematical study of the liquid crystal flow was initiated by Ericksen \cite{Ericksen-ARMA1962} and Leslie \cite{Leslie-ARMA1968} in the 1960s.
Over several decades, their works have attracted considerable interest.
In the late 1980s, Lin and Liu \cite{Lin-CPAM1989, LL-CPAM1995} proposed a simplified model to describe the flow phenomena of nematic liquid crystals in dimension $n=2,3$.
The model of \emph{nematic liquid crystal flow} (abbreviated as NLCF) delineates the time evolution of the materials under the influence of velocity field and the macroscopic description of the microscopic orientation field of liquid crystals.
It can be viewed as a coupled system of the non-homogeneous incompressible Naiver-Stokes equations and the transported harmonic map heat flow to $S^2$.

In various scenarios, one can consider different boundary conditions for the nematic liquid crystal flow.
When the fluid velocity field is taken to be no-slip on the boundary, one can impose either the Dirichlet or the Neumann boundary condition for the orientation vector field of the liquid crystals.
We refer the former case as the \emph{Dirichlet problem of NLCF} and the latter case as the \emph{Neumann problem of NLCF}.

\subsubsection{The nematic liquid crystal flow with Neumann boundary condition for orientation field}\label{sec:NLCF-Neumann}

We focus on the Neumann problem in this subsection, and the Dirichlet problem of NLCF is discussed in Subsection \ref{sec:NLCF-Dirichlet} below.

In \cite{HWW-JDE2019}, the authors studied the Neumann problem of NLCF in the upper-half three-dimensional space where the macroscopic orientation field of liquid crystal molecules aligns with $e_3=(0,0,1)$ at spatial infinity.
Here, we consider a general setting of the Neumann problem of NLCF in the half space when the orientation field $d:\R^n_+\times(0,\infty)\to S^{L-1}$, $n,L\ge2$ ($n=2,3$ and $L=3$ most studied), the velocity field $u:\R^n_+\times(0,\infty)\to\R^n$ and the pressure $\pi:\R^n_+\times(0,\infty)\to\R$ satisfy
\EQ{\label{eq-NLCF-N}
\setlength\arraycolsep{1.5pt}\def\arraystretch{1.2}
\left.\begin{array}{rl}
\pd_t u - \De u + \nb\pi &= -u\cdot\nb u -\nb\cdot(\nb d\odot\nb d)\\
\pd_t d - \De d &= -u\cdot\nb d + |\nb d|^2 d\\
\div u &= 0
\end{array}\right\} \text{ in } \R^n_+\times(0,\infty),
}
where $(\nb d\odot\nb d)_{ij} = \bka{\pd_i d,\pd_j d}$, $i,j=1,\ldots,n$.
The system is supplemented with initial condition
\EQ{\label{eq-NLCF-IC-N}
(u(\cdot,0),d(\cdot,0))= (u_0, d_0),\quad \div u_0 = 0,\quad (u_0)_n|_\Si = 0,\quad |d_0| = 1,
}
boundary conditions
\EQ{\label{eq-NLCF-NBC}
u(x',0,t) = 0,\quad \pd_nd(x',0,t) = 0\ \text{ on }\ \Si\times(0,\infty),
}
and far-field condition for some constant $d_\infty \in S^{L-1}$
\EQ{\label{eq-NLCF-far-field-N}
d(x,t) , d_0(x) \to d_\infty\ \text{ as }\ |x|\to\infty.
}

The Neumann boundary condition for $d$ in \eqref{eq-NLCF-NBC} means that there is no contribution to the surface force from $d$.

In a three-dimensional bounded domain $\Om\subset\R^3$, if $d:\Om\to S^2$, local strong solutions with large data and global strong solution with small data of the Neumann problem of NLCF were constructed in \cite{LW-JDE2012}.
For general dimensions, the existence of a unique local strong solution in a bounded domain was proved in \cite{HNPS-AnnIHPoincareAN2016} by analyzing the system as a quasilinear parabolic equation.
Such solution exists globally in time under certain smallness conditions.

In the half space case, the existence and uniqueness of the global strong solution to \eqref{eq-NLCF-N}-\eqref{eq-NLCF-far-field-N} when $n=L=3$ has been established in \cite{HWW-JDE2019} under certain smallness conditions.
They also obtained optimal time-decay rates of such solution in $L^p(\R^3_+)$, $p\ge1$.

We direct our attention towards investigating mild solutions of the Neumann problem of NLCF, \eqref{eq-NLCF-N}-\eqref{eq-NLCF-far-field-N}.
To properly define mild solutions of such system, we begin by presenting a solution formula applicable to sufficiently well-behaved solutions below.

\begin{prop}\thlabel{prop-NLCF-sol-formula-N}
Let $(u,d)$ be a sufficiently smooth and bounded solution of the Neumann problem of NLCF,
\eqref{eq-NLCF-N}-\eqref{eq-NLCF-far-field-N}.
If $\nb d$ is bounded and $(u,\nb d)(x,t)\to 0$ as $|x|\to \infty$ in $L^\infty$ or $L^q$ sense, then we have the following solution formula:
\EQS{\label{u-mild-NLCF-N}
u_i(x,t) &= \sum_{j=1}^n \int_{\R^n_+} \breve G_{ij}(x,y,t) (u_0)_j(y)\, dy \\
&\quad + \sum_{j,k=1}^n \sum_{\ell=1}^L \int_0^t \int_{\R^n_+} \pd_{y_k}G_{ij}(x,y,t-s)\bkt{u_ku_j + (\pd_kd_\ell) \pd_jd_\ell}(y,s)\, dyds,
}
for $i=1,\ldots,n$, and
\EQS{\label{d-mild-NLCF-N}
d_\ell(x,t) &= (d_\infty)_\ell + \int_{\R^n_+} G^N(x,y,t) \bkt{(d_0)_\ell(y) - (d_\infty)_\ell} dy \\
&\quad + \int_0^t \int_{\R^n_+} G^N(x,y,t-s)\bkt{ -(u\cdot\nb)d_\ell + |\nb d|^2 d_\ell}(y,s)\, dyds,\ \ell=1,\ldots,L.
}

\end{prop}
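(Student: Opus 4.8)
The plan is to derive the two solution formulas by applying the Duhamel principle separately to the velocity equation (a Stokes system) and to the orientation equation (a Neumann heat equation), after recasting the nonlinear terms in divergence form. First I would rewrite the forcing in the $u$-equation. Since $\div u=0$ and $|d|=1$, we have $u\cdot\nb u = \nb\cdot(u\otimes u)$ and, using $\sum_\ell \pd_i d_\ell\,\pd_j d_\ell = (\nb d\odot\nb d)_{ij}$, the term $\nb\cdot(\nb d\odot\nb d)$ is already in divergence form, so the right-hand side of the $u$-equation equals $\nb\cdot F$ with $F_{kj} = -(u_k u_j + \sum_\ell \pd_k d_\ell\,\pd_j d_\ell)$. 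Because $u_0$ is solenoidal with $(u_0)_n|_\Si=0$, I would invoke the solution representation \eqref{eq-Stokes-sol-formula-divu0=0} for the Stokes system with $f=0$ and this $F$; the Green-tensor term acting on $u_0$ becomes the restricted Green tensor $\breve G_{ij}$, and the force term produces exactly $\sum_{j,k}\sum_\ell \int_0^t\int_{\R^n_+}\pd_{y_k}G_{ij}(x,y,t-s)[u_ku_j + (\pd_kd_\ell)\pd_jd_\ell](y,s)\,dy\,ds$, which is \eqref{u-mild-NLCF-N}. The sign works out because of the minus sign in the $\pd_{y_k}G_{ij}F_{kj}$ term of \eqref{E1.3} combined with the minus sign in $F$.

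For the $d$-equation I would treat each component $d_\ell$ as a solution of the inhomogeneous heat equation $\pd_t d_\ell - \De d_\ell = g_\ell$ with $g_\ell = -u\cdot\nb d_\ell + |\nb d|^2 d_\ell$ and Neumann boundary data $\pd_n d_\ell|_\Si=0$. Subtracting the constant $d_\infty$ (which solves the homogeneous Neumann heat equation and is consistent with the far-field condition \eqref{eq-NLCF-far-field-N}), the function $d_\ell - (d_\infty)_\ell$ has decaying initial data $(d_0)_\ell - (d_\infty)_\ell$ and I would apply Duhamel's formula with the Neumann heat kernel $G^N$ of \eqref{eq-GN-def}: $d_\ell - (d_\infty)_\ell = \int_{\R^n_+}G^N(x,y,t)[(d_0)_\ell - (d_\infty)_\ell]\,dy + \int_0^t\int_{\R^n_+}G^N(x,y,t-s)g_\ell(y,s)\,dy\,ds$. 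Adding back $(d_\infty)_\ell$ gives \eqref{d-mild-NLCF-N}.

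The main obstacle is justifying the representation formulas rigorously rather than formally — that is, verifying that the boundary terms arising from integration by parts in the Duhamel/Green's-identity argument actually vanish, and that the space-time integrals converge. This is where the hypotheses on decay enter: boundedness of $\nb d$ and the decay $(u,\nb d)(x,t)\to 0$ as $|x|\to\infty$ (together with the pointwise Green tensor estimate \eqref{eq_Green_estimate} and the Gaussian decay of $G^N$) are used to control the integrals at spatial infinity and to discard boundary-at-infinity contributions, while the no-slip condition $u|_\Si=0$, the condition $(u_0)_n|_\Si = 0$ and the Neumann condition $\pd_n d|_\Si = 0$ kill the boundary terms on $\Si$. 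I would carry this out by testing the equations against the Green tensors (resp.\ $G^N$), integrating over $\R^n_+\times(\varepsilon,t-\varepsilon)$, integrating by parts in space and time, using the adjoint/semigroup properties of $G_{ij}$ and $G^N$ together with their boundary conditions, and then passing to the limit $\varepsilon\to0$ and in the spatial cutoff; the compatibility $\div b$-type issue that plagued the MHD case does not arise here since for the Stokes part we only need $u_0$ solenoidal, and $d$ is scalar-componentwise so no solenoidality of $d$ is needed.
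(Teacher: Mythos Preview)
Your plan is correct and matches the paper's approach: for $u$ you apply the Stokes representation \eqref{eq-Stokes-sol-formula-divu0=0} after writing the forcing in divergence form, and for $d$ you subtract the constant $d_\infty$ and apply the Duhamel formula for the Neumann heat equation, exactly as the paper does. The only addition in the paper's proof is a verification, via the parabolic maximum principle and Hopf lemma applied to $w=|d|^2-1$, that the constraint $|d|=1$ is preserved; since your hypothesis already has $d$ mapping into $S^{L-1}$ this is not strictly needed for the stated formula, but you may wish to include it for completeness.
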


The solution formula \eqref{u-mild-NLCF-N}-\eqref{d-mild-NLCF-N} is displayed in \cite[(1.8)]{HWW-JDE2019} when $n=3$ and $d_\infty=e_3$.
We give a proof of \thref{prop-NLCF-sol-formula-N} in Section \ref{sec-NLCF-sol-formula}.

For initial data $(u_0, d_0)$ satisfying \eqref{eq-NLCF-IC-N}, a solution of the nematic liquid crystal flow with Neumann boundary condition for orientation field, \eqref{eq-NLCF-N}-\eqref{eq-NLCF-far-field-N}, is called a \emph{mild solution} if it satisfies \eqref{u-mild-NLCF-N}-\eqref{d-mild-NLCF-N} with suitable estimates. 
Note that the far field condition \eqref{eq-NLCF-far-field-N} of a mild solution follows from \eqref{d-mild-NLCF-N} if $u$ and $\nabla d$ have certain spatial decay, but is unclear when they belong to $L^\infty=L^\infty_\uloc$ or $L^q_\uloc$. Hence we will only consider $Y_a$ with $a>0$ in \thref{mild-NLCF-N-Ya} and $L^q$ with $q<\infty$ in \thref{mild-NLCF-N-Lq}. A bounded function $f(x)$ with $\nb f \in Y_a$, $0<a\le1$ or $\nb f \in L^q$, $q \ge n$, may not have a limit as $|x|\to\infty$, for example, $f(x)=\sin \log \log (e+|x|)$. We will show \eqref{eq-NLCF-far-field-N} using \eqref{d-mild-NLCF-N}, see Remark \ref{rem6.1}.

\begin{thm}\thlabel{mild-NLCF-N-Ya}
Let $n,L\ge2$ and $0< a\le n$. For any $u_0\in Y_a$ with $\div u_0 = 0$ and $(u_0)_n|_\Si=0$, any constant unit vector $d_\infty\in\R^L$, any $d_0\in L^\infty(\R^n_+;\R^L)$ with $|d_0| = 1$, $\nb d_0\in Y_a$,
and $\lim_{|x|\to \infty} d_0 (x)= d_\infty$,
there is a
mild solution $(u,d)$ of the Neumann problem of NLCF,
\eqref{eq-NLCF-N}-\eqref{eq-NLCF-far-field-N}, for some time interval $(0,T)$, such that $(u,d,\nb d)\in L^\infty(0,T; Y_a\times L^\infty\times Y_a)$. Moreover, $ d(x,t) -d_\infty  \to 0$ as $|x|\to \infty$, uniformly in $t \in (0,T)$.
\end{thm}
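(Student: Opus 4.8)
The plan is to set up a fixed-point iteration for the pair $(v,e) := (u, d-d_\infty)$ in the Banach space $X_T = L^\infty(0,T;Y_a) \times L^\infty(0,T;Y_a)$, where the second component records $\nb d$ rather than $d$ itself; we track $d-d_\infty$ separately in $L^\infty(0,T;L^\infty)$ to keep $|d|$ under control. Introduce the map $\Phi(u,d) = (\Phi_1, \Phi_2)$ whose components are the right-hand sides of \eqref{u-mild-NLCF-N} and \eqref{d-mild-NLCF-N}. The nonlinearities to estimate are (i) the Oseen term $u\otimes u$, handled by the Stokes linear theory \cite[Lemma 9.2]{Green} for $Y_a$; (ii) the coupling term $\nb d\odot\nb d$, which after integration by parts against $\pd_{y_k}G_{ij}$ is again of the form $\nb\cdot(\text{quadratic in }\nb d)$, so the same Stokes estimate in $Y_a$ applies with input $\nb d\in Y_a$; (iii) the transport term $u\cdot\nb d$, which I would write as $\nb\cdot(d\, u)$ using $\div u=0$ and feed into the Neumann heat estimate; and (iv) the reaction term $|\nb d|^2 d$, which is quadratic in $\nb d\in Y_a$ times the bounded factor $d$, handled by the $Y_a$ mapping property of the Neumann heat semigroup from Lemma \ref{lem-heat-est-Ya} (the $Y_a$ case of Lemmas \ref{lem-heat-est-Lq}--\ref{lem-heat-est-Lq-uloc}).

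The key steps, in order, are: First, record the linear estimates: $\norm{S(t)u_0}_{Y_a}\lesssim\norm{u_0}_{Y_a}$ for the restricted Stokes operator and $\norm{G^N(t)(d_0-d_\infty)}_{L^\infty}\lesssim\norm{d_0-d_\infty}_{L^\infty}$, together with $\norm{\nb e^{t\De_N}g}_{Y_a}\lesssim t^{-1/2}\norm{g}_{Y_a}$ and the corresponding Duhamel bounds. Second, differentiate the $d$-equation \eqref{d-mild-NLCF-N} in $x$ and estimate $\nb d$ in $Y_a$: the worst term is $\int_0^t \nb e^{(t-s)\De_N}(|\nb d|^2 d)\,ds$, bounded by $\int_0^t (t-s)^{-1/2}\,ds \cdot \norm{\nb d}_{L^\infty_s Y_a}^2\norm{d}_{L^\infty} \lesssim T^{1/2}(\cdots)$, which is subcritical. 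Third, assemble the bilinear/quadratic estimate $\norm{\Phi(w_1)-\Phi(w_2)}_{X_T}\le C(T)\,(\norm{w_1}+\norm{w_2})\norm{w_1-w_2}$ with $C(T)\to 0$ as $T\to 0$ (all powers of $T$ are positive because $a\le n$ keeps every Duhamel integral convergent), so a contraction on a small ball of $X_T$ gives existence and uniqueness there. Fourth, verify the constraint $|d|\equiv1$ is inherited: since $(u,d)$ solves the PDE with $\pd_t|d|^2 - \De|d|^2 = -u\cdot\nb|d|^2 - 2|\nb d|^2(|d|^2-1) + \ldots$ — more carefully, the standard computation shows $\rho:=|d|^2-1$ satisfies a linear parabolic equation with zero initial data and Neumann data, hence $\rho\equiv0$; this requires the solution to be regular enough, which one gets by a bootstrap from the mild formulation exactly as in the interior theory. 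Fifth, for uniqueness in the larger class and for the far-field claim, use \eqref{d-mild-NLCF-N} directly: split the Duhamel integral at $|y-x|<\delta|x|$ and its complement, use the $Y_a$ decay of $u,\nb d$ (with $a>0$) to show $d(x,t)-d_\infty\to0$ uniformly in $t\in(0,T)$; this is the content of Remark \ref{rem6.1}.

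The main obstacle I anticipate is controlling $\nb d$ in $Y_a$ rather than merely $d-d_\infty$ in $L^\infty$: the Neumann heat kernel gains only $t^{-1/2}$ from one spatial derivative, and one must check that the weight $\bka{x}^a$ survives the Duhamel convolution against $\nb e^{(t-s)\De_N}$ when acting on the quadratic source $|\nb d|^2 d$ and on $\nb\cdot(u\,d)$ — i.e., that Lemma \ref{lem-heat-est-Ya} genuinely applies to these source terms with the stated weights, and that the transport term's extra derivative (total of two derivatives hitting $d$, one from $\nb d$ and one from the divergence form) does not produce a non-integrable $t^{-1}$ singularity. It does not, because one derivative lands on the kernel and the $L^\infty$ factor $d$ absorbs no weight, leaving an integrable $t^{-1/2}$; but making this bookkeeping precise across all the cross terms, uniformly for $0<a\le n$ including the endpoint $a=n$, is where the technical care is needed. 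A secondary subtlety is the rigorous justification that the mild solution satisfies $|d|=1$: one must first upgrade regularity (parabolic smoothing away from $t=0$, plus continuity up to $t=0$ for $d-d_\infty$ in $L^\infty$) before the maximum-principle argument for $|d|^2-1$ can be invoked.
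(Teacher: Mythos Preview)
Your plan has the right architecture (Picard iteration in $L^\infty(0,T;Y_a)\times L^\infty(0,T;L^\infty)\times L^\infty(0,T;Y_a)$ for $(u,\tilde d,\nabla\tilde d)$, $\tilde d=d-d_\infty$), and your treatment of the Duhamel terms matches the paper's. But there is a genuine gap in the linear part: you never explain how to bound $\norm{\nabla e^{t\De^N}(d_0-d_\infty)}_{Y_a}$ uniformly for $t\in(0,T)$. The only gradient estimate you list is $\norm{\nabla e^{t\De_N}g}_{Y_a}\lesssim t^{-1/2}\norm{g}_{Y_a}$, which (i) blows up at $t=0$, and (ii) would require $d_0-d_\infty\in Y_a$, which is not assumed---only $\nabla d_0\in Y_a$ and $d_0-d_\infty\in L^\infty$ are given, and for $0<a\le 1$ the former implies no decay of $d_0-d_\infty$. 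Indeed, the hypothesis $\nabla d_0\in Y_a$ appears nowhere in your plan. The paper closes this by the integration-by-parts identity \eqref{eq-nb-d-linear}: since $\Ga(x-y)=\Ga(x-y^*)$ on $\Si$, one obtains $\pd_{x_j}e^{t\De^N}\tilde d_0=\int_{\R^n_+}\bke{\Ga(x-y,t)+\ep_j\Ga(x-y^*,t)}\pd_j\tilde d_0(y)\,dy$, so the derivative lands on the data and \eqref{eq-linear-Ya} gives $\norm{\nabla e^{t\De^N}\tilde d_0}_{Y_a}\lesssim\norm{\nabla d_0}_{Y_a}$ with no $t^{-1/2}$ loss.

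A smaller issue: rewriting $u\cdot\nabla d=\nabla\cdot(d\,u)$ is counterproductive here. For $\nabla F_2$ in $Y_a$ the paper puts one $\pd_x$ on $G^N$ and uses directly that $u\cdot\nabla\tilde d\in Y_{2a}$ (product of two $Y_a$ factors), so \eqref{eq-bilinear-Ya} yields $(t-s)^{-1/2}$. Your divergence form leaves only $d\,u\in Y_a$ as source while adding a $\pd_y$ to the kernel; combining with the outer $\pd_x$ gives $\pd_x\pd_yG^N$ and the non-integrable $(t-s)^{-1}$ singularity that you flag but then dismiss. Drop the rewriting and use $u\cdot\nabla\tilde d\in Y_{2a}$ as the paper does. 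Finally, the $|d|=1$ step is not part of the definition of mild solution in the paper, so it need not be verified for Theorem~\ref{mild-NLCF-N-Ya}.
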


\begin{thm}\thlabel{mild-NLCF-N-Lq}
Let $n,L\ge2$, $n\le q<\infty$, $u_0\in L^q_\si(\R^n_+)$, 
constant unit vector $d_\infty\in\R^L$, $d_0\in L^\infty(\R^n_+;\R^L)$ with $|d_0| = 1$, $\nb d_0\in L^q(\R^n_+)$, 
and $\lim_{|x|\to \infty} d_0 (x)= d_\infty$.
Then there exist $T = T(n,L,q,u_0,d_0,\nb d_0)>0$ and a unique mild solution $(u,d)$ of the Neumann problem of NLCF, \eqref{eq-NLCF-N}-\eqref{eq-NLCF-far-field-N} in the class 
\EQN{
\sup_{0<t<T} &\Big[\norm{u(t)}_{L^q} + \norm{d(t) - d_\infty}_{L^\infty} + \norm{\nb d(t)}_{L^q}\\
&\ + t^{\frac{n}{2q}}\bke{\norm{u(t)}_{L^\infty} + \norm{\nb d(t)}_{L^\infty} } + t^{\frac12} \bke{\norm{\nb u(t)}_{L^\infty} + \norm{\nb^2 d(t)}_{L^\infty} } \Big]\\
 \le&\, C_* \bke{ \norm{u_0}_{L^q} + \norm{d_0-d_\infty}_{L^\infty} + \norm{\nb d_0}_{L^q}}.
}
Moreover, $ d(x,t) -d_\infty  \to 0$ as $|x|\to \infty$, uniformly in $t \in (0,T)$.
\end{thm}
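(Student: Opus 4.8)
\emph{Reduction.} Put $v=d-d_\infty$, so $\nb d=\nb v$ and $d=d_\infty+v$; a mild solution is a pair $(u,v)$ solving the system obtained from \eqref{u-mild-NLCF-N}--\eqref{d-mild-NLCF-N} under this substitution. Because $\div u=0$, every nonlinearity is in divergence form, $u\cdot\nb u=\nb\cdot(u\otimes u)$ and $u\cdot\nb d_\ell=\pd_{y_k}(u_kv_\ell)$, while $(\pd_kd_\ell)\pd_jd_\ell$ is already of the shape on which $\pd_{y_k}G_{ij}$ acts. I would write the two right-hand sides as a map $\Phi(u,v)=(\Phi_1,\Phi_2)$,
\[
\Phi_1=e^{-tA}u_0+B_{\mathrm{St}}[u\otimes u+\nb v\odot\nb v],\qquad
\Phi_2=e^{t\Delta_N}v_0+B_N[u\,v,\ |\nb v|^2(v+d_\infty)],
\]
where $e^{-tA}$ is the no-slip Stokes semigroup (kernel $\breve G_{ij}$), $e^{t\Delta_N}$ the Neumann heat semigroup (kernel $G^N$), $B_{\mathrm{St}}$ the Duhamel operator with kernel $\pd_{y_k}G_{ij}$, and $B_N$ the heat--Neumann Duhamel operator applied to the first slot through the kernel $\pd_{y_k}G^N$ (after writing $u\cdot\nb d=\nb\cdot(ud)$) and to the second through $G^N$. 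A mild solution is a fixed point of $\Phi$, and I would produce it by the contraction mapping principle in the Banach space $\mathcal X_T$ whose norm is the quantity $\sup_{0<t<T}[\cdots]$ in the statement (with $d-d_\infty$ read as $v$); for the critical exponent $q=n$ I would further restrict to the closed subspace on which the time-weighted $L^\infty$-parts of that norm vanish as $t\to0^+$.

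\emph{Linear part.} The free Stokes flow is handled by the $L^q$ linear theory \cite[Lemma 9.1]{Green}: $\norm{e^{-tA}u_0}_{L^q}\lesssim\norm{u_0}_{L^q}$, $\norm{e^{-tA}u_0}_{L^\infty}\lesssim t^{-n/2q}\norm{u_0}_{L^q}$ and the matching $\nb$-bounds; the free heat flow by the $L^q$ linear theory for the Neumann problem (Lemma \ref{lem-heat-est-Lq}): $\norm{e^{t\Delta_N}v_0}_{L^\infty}\le\norm{v_0}_{L^\infty}$, $\norm{\nb e^{t\Delta_N}v_0}_{L^q}\lesssim\norm{\nb v_0}_{L^q}$, $\norm{\nb e^{t\Delta_N}v_0}_{L^\infty}\lesssim t^{-n/2q}\norm{\nb v_0}_{L^q}$, and the $\nb^2$-bound with one extra $t^{-1/2}$. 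Together these place the free part of $\Phi$ in $\mathcal X_T$ with norm $\lesssim\norm{u_0}_{L^q}+\norm{d_0-d_\infty}_{L^\infty}+\norm{\nb d_0}_{L^q}$; density of $C^\infty_c$ in $L^q$ ($q<\infty$) further shows its time-weighted $L^\infty$-parts vanish as $t\to0^+$, which is what forces $T$ to depend on the data rather than only on norms when $q=n$.

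\emph{Nonlinear part and the fixed point.} For $B_{\mathrm{St}}$ I would use the divergence-form Stokes estimates of \cite[Lemma 9.1]{Green} applied to $F=u\otimes u+\nb v\odot\nb v$, whose $L^q$ and $L^\infty$ norms are controlled by products like $\norm{u}_{L^\infty}\norm{u}_{L^q}$, $\norm{u}_{L^\infty}^2$, $\norm{\nb v}_{L^\infty}\norm{\nb v}_{L^q}$, $\norm{\nb v}_{L^\infty}^2$, i.e.\ by the $\mathcal X_T$-norm, with the kernel bounds of type $L^q\to L^\infty$ used on $[0,t/2]$ and $L^\infty\to L^\infty$ on $[t/2,t]$. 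For $B_N$ I would treat $\pd_{y_k}(u_kv_\ell)$ through $\pd_{y_k}G^N$ and $|\nb v|^2(v+d_\infty)$ through $G^N$ (using $\norm{v+d_\infty}_{L^\infty}\le1+\norm v_{L^\infty}$); to recover $\nb d$ and especially $\nb^2 d$ in $L^\infty$ I would differentiate the kernel further, split $\int_0^t=\int_0^{t/2}+\int_{t/2}^t$, and — keeping at most one derivative on the source via $u\cdot\nb d=\nb\cdot(ud)$ and transferring a spatial derivative between $x$ and $y$ on $G^N$ (up to the boundary reflection) — keep every time integral of the form $\int(t-s)^{-\theta}s^{-\sigma}\,ds$ with $\theta,\sigma<1$. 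These estimates, and the analogous difference estimates, show that $\Phi$ maps the ball $B_R\subset\mathcal X_T$ with $R\simeq\norm{u_0}_{L^q}+\norm{d_0-d_\infty}_{L^\infty}+\norm{\nb d_0}_{L^q}$ into itself and is a contraction there — for $q>n$ because every nonlinear bound carries a positive power of $T$, and for $q=n$ because every nonlinear bound carries a factor of the (small) scaling-invariant part of the norm (the Kato scheme). The Banach fixed point theorem then gives the unique mild solution in $B_R$, hence the claimed bound with a universal $C_*$; uniqueness within the full class of the statement follows from the contraction estimate on a short interval together with the usual open--closed argument on the maximal interval of coincidence.

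\emph{Far-field, and the main obstacle.} Finally, $d(x,t)-d_\infty=v(x,t)\to0$ as $|x|\to\infty$ uniformly in $t\in(0,T)$: the free part $e^{t\Delta_N}v_0$ vanishes at infinity uniformly in $t$ because $v_0\to0$ at infinity and the Neumann heat semigroup preserves $C_0(\overline{\R^n_+})$, and the Duhamel part vanishes at infinity uniformly in $t$ by splitting each spatial integral into a compactly supported piece (killed by the Gaussian localization of $G^N$) plus a remainder small in a suitable Lebesgue norm, uniformly in $t$ thanks to the $\mathcal X_T$-bounds; the details are carried out in Remark \ref{rem6.1}. The main obstacle in the whole argument is the orientation nonlinearity $|\nb d|^2 d$: it is quadratic in $\nb d$ and is not a divergence, hence brings no gain of regularity, which is exactly why the solution space must contain the time-weighted $L^\infty$-norms of $\nb d$ and $\nb^2 d$, why the $\int_0^{t/2}+\int_{t/2}^t$ splitting with careful distribution of derivatives is needed, and why the endpoint $q=n$ has to be closed by the Kato smallness device.
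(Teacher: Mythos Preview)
Your proposal is correct and follows the paper's route (fixed point in a Kato-type space after the substitution $v=d-d_\infty$), but two points of execution differ from the paper and deserve comment.

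\medskip
\textbf{The critical case $q=n$.} The paper does \emph{not} close the contraction inside the subspace of $\mathcal X_T$ on which the time-weighted norms vanish. Instead it introduces an auxiliary space $Y_T$ carrying the intermediate norms $t^{1/4}\|u\|_{L^{2n}}$ and $t^{1/4}\|\nabla\tilde d\|_{L^{2n}}$ (see \S\ref{sec:NLCF-N-Ln}); density gives $\|(e^{-tA}u_0,e^{t\Delta_N}\tilde d_0)\|_{Y_T}\to0$ as $T\to0$, and every nonlinear estimate then carries exactly one $Y_T$ factor. The reason is that with only $L^n$ and $L^\infty$ at hand, several time integrals land at the borderline (for example $t^{1/2}\|\nabla F_1\|_{L^n}$ after integration by parts leads to $\int_0^t(t-s)^{-1/2}\|u\|_{L^\infty}\|\nabla u\|_{L^n}\,ds$ with integrand $\sim s^{-1}$ near $0$), and your $[0,t/2]\cup[t/2,t]$ splitting only rescues this if one also switches representations on the two halves. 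It can be made to work, but the $L^{2n}$ auxiliary norm is what the paper uses to avoid that juggling.

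\medskip
\textbf{The far-field limit.} Your pointer is to Remark~\ref{rem6.1}, which handles the $Y_a$ setting where the pointwise weight $\langle y\rangle^{-2a}$ immediately gives spatial decay of the Duhamel term. In the $L^q$ setting the paper's argument is Remark~\ref{rem6.3}, and it is more involved: one shows by induction on the Picard iterates that each $\nabla\tilde d^m$ satisfies the uniform tail condition \eqref{LooLq-decay}, and then passes to the limit using the $X_T$-convergence of the sequence (Remark~\ref{XT-converge}). Your outline (compactly supported piece plus $L^q$-small remainder) is the right picture, but the inductive step on the iterates is the missing ingredient; $\nabla\tilde d\in L^\infty(0,T;L^q)$ alone does not give uniform-in-$t$ spatial decay.
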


The proofs of Theorems \ref{mild-NLCF-N-Ya} and \ref{mild-NLCF-N-Lq} are given in Section \ref{sec:mild-NLCF-N}.

\subsubsection{The nematic liquid crystal flow with Dirichlet boundary condition for orientation field}\label{sec:NLCF-Dirichlet}

When the surface of a container holding liquid crystals is specially treated, the molecules of the liquid crystals near the container's surface can align with the treatment, allowing their orientation to be specified near the boundary. This phenomenon is commonly referred to as the \emph{strong anchoring condition} \cite{LL-JPDE2001}.
Mathematically, the dynamics of the liquid crystal flow in this scenario can be described as the following Dirichlet boundary value problem.
Consider $d:\R^n_+\times(0,\infty)\to S^{L-1}$ and $u:\R^n_+\times(0,\infty)\to\R^n$ for $n,L\ge2$
\EQ{\label{eq-NLCF-D}
\setlength\arraycolsep{1.5pt}\def\arraystretch{1.2}
\left.\begin{array}{rl}
\pd_t u - \De u + \nb\pi &= -u\cdot\nb u -\nb\cdot(\nb d\odot\nb d)\\
\pd_t d - \De d &= -u\cdot\nb d + |\nb d|^2 d\\
\div u &= 0
\end{array}\right\} \text{ in } \R^n_+\times(0,\infty),
}
where $(\nb d\odot\nb d)_{ij} = \bka{\pd_i d,\pd_j d}$, $i,j=1,\ldots,n$.
Here $u$ is the fluid velocity field, $d$ denotes the macroscopic orientation field of the liquid crystal molecules, and $\pi$ represents the pressure.
The system has to be solved with initial condition
\EQ{\label{eq-NLCF-IC-D}
(u(\cdot,0),d(\cdot,0))= (u_0, d_0),\qquad \div u_0 = 0,\quad (u_0)_n|_\Si = 0,\quad |d_0| = 1,
}
boundary conditions
\EQ{\label{eq-NLCF-DBC}
u(x',0,t) = 0,\quad d(x',0,t) = d_*(x',t)\ \text{ on }\ \Si\times(0,\infty),\qquad |d_*|=1.
}
Assume the compatibility condition on $\Si\times\{t=0\}$:
\EQ{\label{eq-NLCF-D-compatible}
d_0(x',x_n=0) = d_*(x',t=0).
}
For simplicity, we will assume $d_* \in S^{L-1}$ is a constant vector.

Compared to the Neumann problem of the nematic liquid crystal flow discussed in Section \ref{sec:NLCF-Neumann}, the Dirichlet problem has received relatively more research attention.
However, as far as we are aware, the studies and results of the Dirichlet problem of NLCF are currently limited to bounded domains.
Following Lin and Liu's \cite{LL-CPAM1995} establishment of the existence of a unique global classical solution of the Ericksen-Leslie system with a variable length of orientations in dimension two and three, substantial research has been conducted in this area.
We provide below a non-exhaustive list of results for the Dirichlet problem of NLCF in bounded domains.

In the two-dimensional case, the global existence of Leray-Hopf-Struwe-type weak solutions was proved in \cite{LLW-ARMA2010}.
The associated uniqueness problem was shown in \cite{LW-ChineseAnnMath2010}.
In regards to singularity formation, infinite-time singularities of $m$-equivariant solutions were constructed in \cite{CY-ARMA2017}.
Moreover, the third author and collaborators \cite{LLWWZ-CPAM2022} constructed solutions that blow up at prescribed interior points in finite time.

For the case of dimension three,
under an assumption that the initial orientation field $d_0(\Om)\subset S^2_+$, a global existence of weak solutions satisfying the global energy inequality was proved in \cite{LW-CPAM2016}.
Additionally, two examples of finite time singularity have been constructed in \cite{HLLW-ARMA2016}.

The primary focus of this study lies in exploring mild solutions of NLCF in the half space, \eqref{eq-NLCF-D}-\eqref{eq-NLCF-D-compatible}. To provide a proper definition of mild solutions, we begin by presenting the following solution formula.

\begin{prop}\thlabel{prop-NLCF-sol-formula-D}
Let $(u,d)$ be a sufficiently smooth and bounded solution of the Dirichlet problem of NLCF,
\eqref{eq-NLCF-D}-\eqref{eq-NLCF-D-compatible}.
If $\nb d$ is bounded and $d_*$ is constant, then we have the following solution formula:
\EQS{\label{u-mild-NLCF-D}
u_i(x,t) &= \sum_{j=1}^n \int_{\R^n_+} \breve G_{ij}(x,y,t) (u_0)_j(y)\, dy \\
&\quad + \sum_{j,k=1}^n \sum_{\ell=1}^L \int_0^t \int_{\R^n_+} \pd_{y_k}G_{ij}(x,y,t-s)\bkt{u_ku_j + (\pd_kd_\ell) \pd_jd_\ell}(y,s)\, dyds,
}
for $i=1,\ldots,n$,
and for $\ell=1,\ldots,L$,
\EQS{\label{d-mild-NLCF-D}
d_\ell(x,t) &= (d_*)_\ell + \int_{\R^n_+} G^D(x,y,t) \bkt{(d_0)_\ell(y) - (d_*)_\ell} dy\\
&\quad + \int_0^t \int_{\R^n_+} G^D(x,y,t-s)\bkt{ -(u\cdot\nb)d_\ell + |\nb d|^2 d_\ell }(y,s)\, dyds.
}
\end{prop}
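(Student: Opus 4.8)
The strategy is to derive \eqref{u-mild-NLCF-D}--\eqref{d-mild-NLCF-D} by applying the two relevant Green representations --- the (unrestricted) Stokes Green tensor formula \eqref{E1.3}/\eqref{eq-Stokes-sol-formula-divu0=0} for the velocity equation, and the Dirichlet heat Green function $G^D$ for the orientation equation --- after rewriting the nonlinearities in divergence form. The key algebraic observation for the velocity equation is that, because $\div u=0$, we have $u\cdot\nb u=\nb\cdot(u\otimes u)$, and similarly the Ericksen stress $\nb\cdot(\nb d\odot\nb d)$ is already a pure divergence: $\bke{\nb\cdot(\nb d\odot\nb d)}_i=\sum_{j}\pd_j\sum_\ell(\pd_id_\ell)(\pd_jd_\ell)$, which after one integration by parts in $y_k$ in Duhamel's formula produces exactly the kernel $\pd_{y_k}G_{ij}$ against $u_ku_j+\sum_\ell(\pd_kd_\ell)\pd_jd_\ell$ appearing in \eqref{u-mild-NLCF-D}. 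Since $u_0$ is solenoidal with $(u_0)_n|_\Si=0$, the initial-data term is given by the restricted Green tensor $\breve G_{ij}$ by \cite[Theorem 1.2]{Green}, yielding the first line of \eqref{u-mild-NLCF-D}.

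For the $d$-equation, first I would set $e=d-d_*$, which satisfies the inhomogeneous heat equation $\pd_te-\De e = -u\cdot\nb d+|\nb d|^2 d$ with \emph{homogeneous} Dirichlet boundary data $e|_\Si=d_*-d_*=0$ (using that $d_*$ is constant and the compatibility condition \eqref{eq-NLCF-D-compatible}, which gives $e(x',0,0)=0$ as well) and initial data $e_0=d_0-d_*$. Then Duhamel's principle with the Dirichlet heat kernel $G^D$ of \eqref{eq-GD-def} gives
\[
e_\ell(x,t)=\int_{\R^n_+}G^D(x,y,t)(e_0)_\ell(y)\,dy+\int_0^t\int_{\R^n_+}G^D(x,y,t-s)\bkt{-(u\cdot\nb)d_\ell+|\nb d|^2d_\ell}(y,s)\,dyds,
\]
which is precisely \eqref{d-mild-NLCF-D} after adding back $(d_*)_\ell$. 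The boundedness of $\nb d$ and the decay/integrability of $(u,\nb d)$ (hypotheses of the Proposition) are used to justify that all the space-time integrals converge absolutely and that the boundary terms in the integrations by parts vanish, so that $(u,d)$ actually \emph{equals} the right-hand sides rather than merely solving the same PDE with the same data.

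\textbf{Main obstacle.} The genuine work is the justification of the representation formulas, i.e.\ showing that a smooth bounded solution of the PDE system coincides with the claimed Green-tensor integrals. This requires: (i) controlling the behavior at spatial infinity so that Green's identity / Duhamel's formula has no contribution from $|y|\to\infty$ --- here one multiplies by cutoffs and uses the pointwise Green tensor decay from \eqref{eq_Green_estimate} together with the assumed decay of $u$ and $\nb d$; (ii) handling the boundary terms on $\Si$ in the integration by parts that converts $\nb\cdot(\nb d\odot\nb d)$ and $u\cdot\nb u$ into the $\pd_{y_k}G_{ij}$ form --- these vanish because $G_{ij}(x,y',0,t)=0$ (the no-slip property of the Green tensor) and $u|_\Si=0$; and (iii) verifying that the Stokes pressure $\pi$ and the Lagrange multiplier-type pressure never appear, which follows from the defining property \eqref{E1.3} of $G_{ij}$ (the pressure is already ``built in''). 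This part parallels the proof of \thref{prop-NLCF-sol-formula-N} in Section \ref{sec-NLCF-sol-formula}; indeed the only difference from the Neumann case is that the orientation equation now carries homogeneous Dirichlet rather than Neumann data, so $G^N$ is replaced by $G^D$ and the reduction uses $e=d-d_*$ in place of $d-d_\infty$. I would therefore organize the proof to follow that of \thref{prop-NLCF-sol-formula-N} step by step, flagging only the modification in the heat part.
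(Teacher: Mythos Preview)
Your approach is correct and matches the paper's: apply \eqref{eq-Stokes-sol-formula-divu0=0} for $u$ with $F=-(u\otimes u+\nb d\odot\nb d)$, and shift $\td d=d-d_*$ to reduce the $d$-equation to a homogeneous Dirichlet heat problem solved by $G^D$. The paper additionally appends a short maximum-principle argument (applied to $w=|d|^2-1$) verifying that the sphere constraint $|d|\equiv 1$ is preserved, though since the proposition already hypothesizes that $(u,d)$ is an NLCF solution this is a consistency check rather than a required step; also note that, unlike the Neumann analogue, the Dirichlet proposition assumes only boundedness of $(u,\nb d)$, not spatial decay.
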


The proof of \thref{prop-NLCF-sol-formula-D} is contained in Section \ref{sec-NLCF-sol-formula}.

For initial data $(u_0, d_0)$ satisfying \eqref{eq-NLCF-IC-D}, a solution of the nematic liquid crystal flow with Dirichlet boundary condition for orientation field, \eqref{eq-NLCF-D}-\eqref{eq-NLCF-D-compatible}, is called a \emph{mild solution} if it satisfies \eqref{u-mild-NLCF-D}-\eqref{d-mild-NLCF-D} with suitable estimates.

\begin{thm}\thlabel{mild-NLCF-D-Ya}
Let $n,L\ge2$ and $0\le a\le n$. For any $u_0\in Y_a$ with $\div u_0=0 $ and $(u_0)_n|_\Si=0$, constant unit vector $d_*\in\R^L$, $d_0\in L^\infty(\R^n_+;\R^L)$ with $|d_0| = 1$,  $\nb d_0\in Y_a$, and  $d_0|_{\Si} = d_*$,
 there is a 
 mild solution $(u,d)$ of the Dirichlet problem of NLCF,
\eqref{eq-NLCF-D}-\eqref{eq-NLCF-D-compatible}, assuming the constant boundary condition \eqref{eq-NLCF-D-compatible}, for some time interval $(0,T)$, such that $(u,d,\nb d)\in L^\infty(0,T; Y_a\times L^\infty\times Y_a)$.
\end{thm}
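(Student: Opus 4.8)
\smallskip
\noindent\textbf{Proof strategy.}
The plan is to solve the integral system \eqref{u-mild-NLCF-D}--\eqref{d-mild-NLCF-D} by a contraction mapping argument. Set $e := d - d_*$; since $d_*$ is constant and, by the compatibility condition \eqref{eq-NLCF-D-compatible} together with $d_0|_\Si = d_*$, we have $e|_\Si = 0$ and $\nb e = \nb d$. We work in the complete metric space
\[
\mathcal X_T = \bigl\{ (u,d) :\ \norm{u}_{L^\infty(0,T;Y_a)} + \norm{d - d_*}_{L^\infty(0,T;L^\infty)} + \norm{\nb d}_{L^\infty(0,T;Y_a)} \le M \bigr\},
\]
a complete metric space under the norm on the left, with $M$ comparable to $\norm{u_0}_{Y_a} + \norm{d_0 - d_*}_{L^\infty} + \norm{\nb d_0}_{Y_a}$ and $T = T(M,n,a) > 0$ to be chosen small. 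Let $\Phi(u,d) = (\Phi_1(u,d),\Phi_2(u,d))$ be the map whose components are the right-hand sides of \eqref{u-mild-NLCF-D} and \eqref{d-mild-NLCF-D}. The goal is to show $\Phi : \mathcal X_T \to \mathcal X_T$ and that $\Phi$ contracts the $\mathcal X_T$-metric for $T$ small, so that its fixed point is the desired mild solution.

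\smallskip
\noindent\textbf{The velocity component.}
Equation \eqref{u-mild-NLCF-D} is structurally the $Y_a$ Navier--Stokes problem of \cite[Theorem 1.8]{Green} (and, when $a = 0$, the $L^\infty$ problem of \cite[Theorem 1.7]{Green}), with the additional forcing $-\nb\cdot(\nb d\odot\nb d)$ entering the Duhamel integral as $\sum_{j,k,\ell}\pd_{y_k}G_{ij}(x,y,t-s)(\pd_kd_\ell)(\pd_jd_\ell)(y,s)$, which has exactly the form of the $u_ku_j$ term. Since $\nb d\in Y_a$, both $u\otimes u$ and $\nb d\odot\nb d$ lie in $Y_{2a}\subset Y_a$, with $Y_{2a}$-norms bounded by $\norm{u}_{Y_a}^2$ and $\norm{\nb d}_{Y_a}^2$; hence the $Y_a$-boundedness of the $\breve G_{ij}$-initial term and the $Y_a$-bilinear estimate for the Stokes Duhamel operator from \cite[Lemma 9.2]{Green} (which drives \cite[Theorem 1.8]{Green}, logarithmic factors in dimension $n=2$ included) apply term by term, treating $\nb d$ as a second velocity-like field. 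This bounds $\Phi_1(u,d)$ in $L^\infty(0,T;Y_a)$ with a positive power of $T$ gained on the two quadratic contributions.

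\smallskip
\noindent\textbf{The orientation component.}
Write the source of the $d$-equation as $h := -(u\cdot\nb)d + |\nb d|^2 d \in L^\infty(0,T;Y_{2a})$, with $\norm{h}_{Y_{2a}} \lesssim \norm{u}_{Y_a}\norm{\nb d}_{Y_a} + \norm{\nb d}_{Y_a}^2(\norm{d - d_*}_{L^\infty} + 1)$ since $|d_*|=1$. For $d-d_*$ itself, $\norm{G^D(t)*(d_0-d_*)}_{L^\infty}\lesssim\norm{d_0-d_*}_{L^\infty}$ and $\norm{\int_0^t G^D(t-s)*h(s)\,ds}_{L^\infty}\lesssim T\norm{h}_{L^\infty(0,T;L^\infty)}$. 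For $\nb d = \nb\Phi_2$, differentiate \eqref{d-mild-NLCF-D} and let the $x$-derivative fall on the kernel. In the initial-data term, integrate by parts to move $\pd_{x_k}$ onto $d_0-d_*$: for tangential $k$ this is immediate ($\pd_{x_k}G^D = -\pd_{y_k}G^D$), while for $k=n$ one uses the identity $\pd_{x_n}G^D = -\pd_{y_n}G^N$ together with $(d_0-d_*)|_\Si = 0$ to annihilate the boundary contribution, leaving $\int_{\R^n_+}G^N(\cdot,y,t)\pd_{y_n}d_0(y)\,dy$; as $\nb d_0\in Y_a$, the $Y_a$-preservation of the Dirichlet and Neumann heat semigroups (Lemmas \ref{lem-heat-est-Lq}--\ref{lem-heat-est-Lq-uloc}) bounds this in $Y_a$. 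In the Duhamel term we keep $\nb_x G^D$, use $|\nb_x G^D(x,y,\tau)| \lesssim (|x-y|+\sqrt\tau)^{-(n+1)} + (|x^*-y|+\sqrt\tau)^{-(n+1)}$, and invoke the integral decay Lemma \ref{lem-heat-int-decay-est} against $h\in Y_{2a}$ to obtain $\norm{\int_0^t \nb_x G^D(\cdot,\cdot,t-s)*h(s)\,ds}_{L^\infty(0,T;Y_{2a})} \lesssim \sqrt T\,\norm{h}_{L^\infty(0,T;Y_{2a})} \le \sqrt T\,\norm{h}_{L^\infty(0,T;Y_a)}$. Combining the three bounds gives $\Phi(\mathcal X_T)\subset\mathcal X_T$ once $M$ is fixed (in terms of the data) and $T$ is taken small.

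\smallskip
\noindent\textbf{Contraction and conclusion.}
Repeating the linear and bilinear estimates for the differences $\Phi(u_1,d_1) - \Phi(u_2,d_2)$ -- using $|\nb d_1|^2 d_1 - |\nb d_2|^2 d_2 = |\nb d_1|^2(d_1-d_2) + \bka{\nb(d_1+d_2),\nb(d_1-d_2)}d_2$ and the analogous splittings of the other products -- yields a contraction for $T$ small. The fixed point $(u,d)$ then satisfies $(u,d-d_*,\nb d)\in L^\infty(0,T;Y_a\times L^\infty\times Y_a)$ and is by construction a mild solution of \eqref{eq-NLCF-D}--\eqref{eq-NLCF-D-compatible}; the contraction also yields uniqueness within $\mathcal X_T$. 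The step I expect to be the main obstacle is the $\nb d$ bound for the orientation Duhamel term: differentiating $G^D$ costs half a power of time integrability, and one must still land in a subspace of $Y_a$ with a constant small in $T$, which is exactly what the new Lemma \ref{lem-heat-int-decay-est} is meant to deliver; a secondary delicate point is the use of the compatibility \eqref{eq-NLCF-D-compatible} to kill the boundary term in the linear part of $\nb_x d$. By contrast the velocity equation is a routine variant of \cite[Theorem 1.8]{Green}, since $\nb d$ enters the quadratic nonlinearity with the same decay and regularity as $u$, and the endpoint $a = n$ is already covered there.
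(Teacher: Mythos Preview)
Your proposal follows essentially the same route as the paper: set $e=d-d_*$, run a Picard iteration in the space $\{u\in Y_a,\ e\in L^\infty,\ \nb e\in Y_a\}$, treat the velocity equation with \cite[Lemma 9.2]{Green} exactly as you say, and for $\nb e^{t\De^D}\td d_0$ use the integration-by-parts identity $\pd_{x_j}G^D=-\pd_{y_j}G^D$ for $j<n$, $\pd_{x_n}G^D=-\pd_{y_n}G^N$, killing the boundary term via the compatibility $\td d_0|_\Si=0$ (this is precisely \eqref{eq-db-d-linear} in the paper).

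Two small corrections. First, for the Duhamel gradient $\int_0^t\nb_xG^D(\cdot,\cdot,t-s)h(s)\,ds$ you invoke \thref{lem-heat-int-decay-est}, but that lemma is about $\int\Ga(x-y,t)\bka{y}^{-a}dy$, not about $\int(|x-y|+\sqrt\tau)^{-(n+1)}\bka{y}^{-2a}dy$; the correct tool is the bilinear estimate \eqref{eq-bilinear-Ya} of \thref{lem-heat-est-Ya} (whose proof uses Lemma \ref{lemma2.2}), which gives output in $Y_a$ (not $Y_{2a}$ as you wrote) with the factor $t^{-1/2}$ needed for the $\sqrt T$ gain. Second, the paper notes that when $n=2$ and $a=0$ the Stokes bilinear estimate from \cite[Lemma~9.2]{Green} needs to be supplemented by Lemma~\ref{lem-9.1}; you should flag this endpoint as well.
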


Note that $a=0$ is allowed in Theorem \ref{mild-NLCF-D-Ya}, in contrast to Theorem \ref{mild-NLCF-N-Ya}, because the Dirichlet problem does not require the far-field condition.

\begin{thm}\thlabel{mild-NLCF-D-Lq}
Let $n,L\ge2$ and $n\le q\le\infty$. For any $u_0\in L^q_\si(\R^n_+)$, constant unit vector $d_*\in\R^L$, $d_0\in L^\infty(\R^n_+;\R^L)$ with $|d_0| = 1$,  $\nb d_0\in L^q(\R^n_+)$, and  $d_0|_{\Si} = d_*$,
there exist $T = T(n,q,u_0,d_0,\nb d_0)>0$ and a unique mild solution $(u,d)$ of the Dirichlet problem of NLCF, \eqref{eq-NLCF-D}-\eqref{eq-NLCF-D-compatible}, assuming the constant boundary condition \eqref{eq-NLCF-D-compatible}, in the class
\EQN{
\sup_{0<t<T} &\Big[\norm{u(t)}_{L^q} + \norm{d(t) - d_*}_{L^\infty} + \norm{\nb d(t)}_{L^q}\\
&\ + t^{\frac{n}{2q}}\bke{\norm{u(t)}_{L^\infty} + \norm{\nb d(t)}_{L^\infty} } + t^{\frac12} \bke{\norm{\nb u(t)}_{L^\infty} + \norm{\nb^2 d(t)}_{L^\infty} } \Big]\\
 \le&\, C_* \bke{ \norm{u_0}_{L^q} + \norm{d_0-d_*}_{L^\infty} + \norm{\nb d_0}_{L^q}}.
}
\end{thm}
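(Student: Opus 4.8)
\textbf{Proof strategy for Theorem \ref{mild-NLCF-D-Lq}.}
The plan is to run a standard Picard iteration / Banach fixed point argument in a suitable complete metric space, using the linear theories for the Stokes system and the heat equation already established in the references (\cite[Lemma 9.1]{Green} for $L^q$, and Lemmas \ref{lem-heat-est-Lq}\,--\,\ref{lem-heat-est-Lq-uloc} of this paper for the Dirichlet heat equation). Write $v = d - d_*$, so that $v$ solves a Dirichlet-heat equation with $v|_\Si = 0$ and initial data $v_0 = d_0 - d_*$, and the coupled system for $(u,v)$ has quadratic nonlinearity $\mathcal N(u,v) = \bigl(-u\cdot\nb u - \nb\cdot(\nb v\odot\nb v),\ -u\cdot\nb v + |\nb v|^2(v+d_*)\bigr)$. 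Define the solution space
\[
X_T = \Bigl\{ (u,v) : \ \|(u,v)\|_{X_T} := \sup_{0<t<T}\bigl[ \|u(t)\|_{L^q} + \|v(t)\|_{L^\infty} + \|\nb v(t)\|_{L^q} + t^{\frac{n}{2q}}(\|u(t)\|_{L^\infty}+\|\nb v(t)\|_{L^\infty}) + t^{\frac12}(\|\nb u(t)\|_{L^\infty}+\|\nb^2 v(t)\|_{L^\infty}) \bigr] < \infty \Bigr\},
\]
and let $\Phi(u,v)$ denote the right-hand side of \eqref{u-mild-NLCF-D}-\eqref{d-mild-NLCF-D} (with $v_\ell$ in place of $d_\ell - (d_*)_\ell$). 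The goal is to show $\Phi$ maps a ball in $X_T$ into itself and is a contraction for $T$ small.

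The first step is to bound the linear (initial-data) parts: by \cite[Lemma 9.1]{Green} the Stokes semigroup applied to $u_0 \in L^q_\si$ satisfies $\|e^{-tA}u_0\|_{L^q}\lec\|u_0\|_{L^q}$, $\|e^{-tA}u_0\|_{L^\infty}\lec t^{-\frac{n}{2q}}\|u_0\|_{L^q}$, $\|\nb e^{-tA}u_0\|_{L^\infty}\lec t^{-\frac12-\frac{n}{2q}}\|u_0\|_{L^q}$; similarly the Dirichlet-heat semigroup on $v_0$, together with $\nb v_0 \in L^q$, gives $\|v(t)\|_{L^\infty}\lec\|v_0\|_{L^\infty}$, $\|\nb v(t)\|_{L^q}\lec\|\nb v_0\|_{L^q}$, $\|\nb v(t)\|_{L^\infty}\lec t^{-\frac n{2q}}\|\nb v_0\|_{L^q}$, $\|\nb^2 v(t)\|_{L^\infty}\lec t^{-\frac12-\frac n{2q}}\|\nb v_0\|_{L^q}$ (these commute with $\nb$ since the boundary condition is homogeneous). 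The second step is the estimate of the Duhamel/bilinear terms. For the velocity equation, the nonlinearity is written in divergence form $\nb\cdot(u\otimes u + \nb v\odot\nb v)$ and one uses the $\pd_{y_k}G_{ij}$ kernel estimate from \cite[Lemma 9.1]{Green}, which for a forcing of the form $\nb\cdot F$ with $F$ controlled by $\|u(s)\|_{L^\infty}^2$ or $\|\nb v(s)\|_{L^\infty}^2$ produces the familiar $\int_0^t (t-s)^{-\frac12}(\cdots)(s)\,ds$ convolution; here the factors $s^{-n/2q}$ coming from $\|u(s)\|_{L^\infty}, \|\nb v(s)\|_{L^\infty} \lec s^{-n/2q}\|(u,v)\|_{X_T}$ are integrable against $(t-s)^{-1/2}$ precisely because $q\ge n$ (so $n/2q\le 1/2<1$), yielding the correct powers of $t$ after a Beta-function computation. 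For the $d$-equation one splits $|\nb v|^2 v$ and $|\nb v|^2 d_*$ and $u\cdot\nb v$; the term $|\nb v|^2 d_*$ is the only one that is merely in $L^\infty$ (not decaying), so one must be slightly careful that the $L^\infty$-norm of $v$ is controlled by a time-integral of $\|\nb v(s)\|_{L^\infty}^2 \lec s^{-n/q}\|(u,v)\|_{X_T}^2$, which is integrable on $(0,t)$ iff $q>n$; for the endpoint $q=n$ one argues as usual via smallness of $T$ together with a short-time smoothing estimate, or restricts to data for which $\|\nb v_0\|_{L^n}$ can be made small on small balls (a density argument).

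Once the ball-invariance and contraction estimates are in place — all of which are polynomial in $T$ with positive powers, so can be made $<1/2$ by taking $T = T(n,L,q,u_0,d_0,\nb d_0)$ small — existence and uniqueness of the fixed point $(u,v)\in X_T$ follow from the contraction mapping principle, and setting $d = v + d_*$ recovers the mild solution with the stated bounds. The far-field claim $d(x,t)-d_*\to 0$ as $|x|\to\infty$ (uniformly in $t$) is read off from the representation formula \eqref{d-mild-NLCF-D}: the linear term tends to zero because $G^D(\cdot,y,t)$ is an approximate identity and $v_0\to 0$ at infinity (this uses $q<\infty$; for $q=\infty$ the same conclusion requires the decay already built into $\|v(t)\|_{L^\infty}$ plus the structure of the nonlinearity, exactly as in Remark \ref{rem6.1}), and the Duhamel term tends to zero because its integrand is controlled by the decaying quantities $\|u(s)\|_{L^\infty}\|\nb v(s)\|_{L^\infty}$ and $\|\nb v(s)\|_{L^\infty}^2$ times a heat kernel whose spatial tail vanishes. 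I expect the main obstacle to be the endpoint case $q=n$ in the $\|v\|_{L^\infty}$ bound for the $|\nb v|^2 d_*$ term, where the naive time integral just fails to converge; this is handled exactly as in the $L^n$-theory for Navier--Stokes (Kato's scheme), using that the $L^\infty$-in-time norm of the solution can be taken small by choosing $T$ small after splitting $\nb v_0$ into a small $L^n$ part and a smoother part, together with the additional $t^{1/2}\|\nb^2 v(t)\|_{L^\infty}$ control in $X_T$.
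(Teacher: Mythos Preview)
Your overall strategy matches the paper's: set $\td d = d - d_*$, run Picard iteration in a time-weighted $L^q$-based space, handle $q>n$ directly and $q=n$ via a Kato-type auxiliary space (in the paper this is $Y_T$ with $t^{1/4}\|\cdot\|_{L^{2n}}$ weights together with a density argument). The paper's proof of the Dirichlet case is literally a one-line referral to the Neumann $L^q$ proof (Sections~\ref{sec:NLCF-N-Lq}--\ref{sec:NLCF-N-Ln}) with $G^N$ replaced by $G^D$ and the identity \eqref{eq-db-d-linear} used for $\nb e^{t\De^D}\td d_0$.

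There is, however, a genuine inconsistency in your $X_T$: you include $t^{1/2}\|\nb u(t)\|_{L^\infty}$ and $t^{1/2}\|\nb^2 v(t)\|_{L^\infty}$ in the norm, yet your own linear estimate gives $\|\nb e^{-tA}u_0\|_{L^\infty}\lec t^{-1/2-n/(2q)}\|u_0\|_{L^q}$. Hence $t^{1/2}\|\nb e^{-tA}u_0\|_{L^\infty}\lec t^{-n/(2q)}$, which blows up as $t\to 0$ for any $q<\infty$, so the linear evolution of the data does \emph{not} lie in your $X_T$. (The theorem statement itself appears to carry the same typo; the paper's actual working space, see \eqref{eq-banach-Lq}, uses $t^{1/2}\|\nb u\|_{L^q}$ and $t^{1/2}\|\nb^2\td d\|_{L^q}$, and all the nonlinear estimates in Sections~\ref{sec:NLCF-N-Lq}--\ref{sec:NLCF-N-Ln} are closed at that level.) Replace $L^\infty$ by $L^q$ in those two slots and your scheme goes through exactly as in the paper.

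Two smaller points. First, the far-field claim $d(x,t)-d_*\to 0$ you discuss is not part of this theorem: the Dirichlet problem carries no far-field condition, which is precisely why $q=\infty$ is permitted here but excluded in Theorem~\ref{mild-NLCF-N-Lq}. Second, for $q=n$ the paper does not rely on smallness of data but on the density of $C^\infty_c$ in $L^n$ to obtain $\|(e^{-tA}u_0,e^{t\De^D}\td d_0)\|_{Y_T}\to 0$ as $T\to 0$ (the analogue of \eqref{eq-Lq-mild-linear-critical-YT}); your description of the endpoint argument should specify which auxiliary norm carries the contraction.
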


Note that $q=\infty$ is allowed in Theorem \ref{mild-NLCF-D-Lq}, in contrast to Theorem \ref{mild-NLCF-N-Lq}, because the Dirichlet problem does not require the far-field condition.

\begin{thm}\thlabel{mild-NLCF-D-Lq-uloc}
Let $n,L\ge2$, $n\le q\le\infty$, $u_0\in L^q_{\uloc,\si}(\R^n_+)$, constant unit vector $d_*\in\R^L$, $d_0\in L^\infty(\R^n_+;\R^L)$ with $|d_0| = 1$,  $\nb d_0\in L^q_\uloc(\R^n_+)$, and  $d_0|_{\Si} = d_*$.

(a) If $n<q\le\infty$, then there exist $T=T(n,q,\norm{u_0}_{L^q_\uloc},\norm{d_0 - d_*}_{L^\infty},\norm{\nb d_0}_{L^q_\uloc})>0$ and a unique mild solution $(u,d)$ of the Dirichlet problem of NLCF, \eqref{eq-NLCF-D}-\eqref{eq-NLCF-D-compatible}, assuming the constant boundary condition \eqref{eq-NLCF-D-compatible}, in the class
\EQS{\label{eq-NLCF-D-uloc-class}
\sup_{0<t<T} &\Big[\norm{u(t)}_{L^q_\uloc} + \norm{d(t) - d_*}_{L^\infty} + \norm{\nb d(t)}_{L^q_\uloc}\\
&\ + t^{\frac{n}{2q}}\bke{\norm{u(t)}_{L^\infty} + \norm{\nb d(t)}_{L^\infty} } + t^{\frac12} \bke{\norm{\nb u(t)}_{L^q_\uloc} + \norm{\nb^2 d(t)}_{L^q_\uloc} } \Big]\\
 \le&\, C_* \bke{ \norm{u_0}_{L^q_\uloc} + \norm{d_0-d_*}_{L^\infty} + \norm{\nb d_0}_{L^q_\uloc}}.
}

(b) If $n=q$, for any $0<T<\infty$, there are $\ep(T), C_*(T)>0$ such that if $\norm{u_0}_{L^n_\uloc} + \norm{d_0 - d_*}_{L^\infty} + \norm{\nb d_0}_{L^n_\uloc} \le \ep(T)$, then there is a unique mild solution $(u,d)$ of the Dirichlet problem of NLCF, \eqref{eq-NLCF-D}-\eqref{eq-NLCF-D-compatible}, in the class \eqref{eq-NLCF-D-uloc-class}.
\end{thm}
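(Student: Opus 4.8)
The plan is to subtract the constant unit vector $d_*$, set $w:=d-d_*$ (so $\na w=\na d$, the compatibility condition \eqref{eq-NLCF-D-compatible} becomes $w_0|_\Si=0$, and $|\na d|^2d=|\na w|^2(w+d_*)$), use $\div u=0$ to write $(u\cdot\na)w_\ell=\na\cdot(u w_\ell)$, and recast \eqref{u-mild-NLCF-D}--\eqref{d-mild-NLCF-D} as the fixed-point system
\begin{align*}
u_i&=\sum_j\int_{\R^n_+}\breve G_{ij}(x,y,t)(u_0)_j\,dy+\sum_{j,k,\ell}\int_0^t\!\!\int_{\R^n_+}\pd_{y_k}G_{ij}(x,y,t-s)\bkt{u_ku_j+(\pd_k w_\ell)\pd_j w_\ell}\,dyds,\\
w_\ell&=\int_{\R^n_+}G^D(x,y,t)(w_0)_\ell\,dy+\int_0^t\!\!\int_{\R^n_+}G^D(x,y,t-s)\bkt{-\na\cdot(u w_\ell)+|\na w|^2(w_\ell+(d_*)_\ell)}\,dyds.
\end{align*}
Write $\Phi$ for the map carrying $(u,w)$ to the right-hand sides, and let $X_T$ be the space of pairs on $\R^n_+\times(0,T)$ whose norm — the bracketed expression in \eqref{eq-NLCF-D-uloc-class} with $d$ replaced by $w+d_*$ — is finite. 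I would solve by Banach contraction in a small closed ball of $X_T$ around the linear evolution of the data.

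The linear inputs are: for the $u$-slot, the $L^q_\uloc$ linear theory of the nonstationary Stokes system, \cite[Lemma 9.4]{Green}, bounding $\|u\|_{L^q_\uloc}$, $t^{\frac n{2q}}\|u\|_{L^\infty}$, $t^{\frac12}\|\na u\|_{L^q_\uloc}$ of the initial term and of the Duhamel term $\int_0^t\!\int\pd_y G(t-s)F(s)\,dyds$ by $\|u_0\|_{L^q_\uloc}$ and $\int_0^t(t-s)^{-\frac12}\|F(s)\|_{L^q_\uloc}\,ds$ (with the companion $L^\infty$- and $W^{1,q}_{\uloc,0}$-smoothing and continuity statements); for the $w$-slot, the analogous $L^q_\uloc$ estimates for the Dirichlet heat equation (Lemma \ref{lem-heat-est-Lq-uloc}), bounding $\|w\|_{L^\infty}$, $\|\na w\|_{L^q_\uloc}$, $t^{\frac n{2q}}\|\na w\|_{L^\infty}$, $t^{\frac12}\|\na^2 w\|_{L^q_\uloc}$ of $e^{t\Delta_D}w_0$ by $\|w_0\|_{L^\infty}+\|\na w_0\|_{L^q_\uloc}$, with an extra gained derivative when the heat source is in divergence form. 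One then estimates the nonlinearities pointwise in $s$ by H\"older — $\|u\otimes u\|_{L^q_\uloc}\le\|u\|_{L^\infty}\|u\|_{L^q_\uloc}$, $\||\na w|^2\|_{L^q_\uloc}\le\|\na w\|_{L^\infty}\|\na w\|_{L^q_\uloc}$, $\|u w\|_{L^q_\uloc}\le\|u\|_{L^q_\uloc}\|w\|_{L^\infty}$, $\||\na w|^2 w\|_{L^q_\uloc}\le\|\na w\|_{L^\infty}\|\na w\|_{L^q_\uloc}(1+\|w\|_{L^\infty})$ — combined with the interpolation $\|g\|_{L^r_\uloc}\lesssim\|g\|_{L^q_\uloc}^{q/r}\|g\|_{L^\infty}^{1-q/r}$, $q\le r\le\infty$, and all resulting time integrals collapse to Beta integrals $\int_0^t(t-s)^{-a}s^{-b}\,ds$ with $a,b<1$ precisely because $q\ge n$; for the genuinely non-divergence-form cubic term $|\na w|^2 w$ one additionally splits $\int_0^t=\int_0^{t/2}+\int_{t/2}^t$ and uses on the second piece the $t^{1/2}$-weighted control of $\na^2 w$ (so that norm feeds back into the iteration).

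These estimates yield $\|\Phi(u,w)\|_{X_T}\le C_0\,\mathcal D+C_1(T)\bke{\|(u,w)\|_{X_T}^2+\|(u,w)\|_{X_T}^3}$ and a matching Lipschitz bound for $\Phi(u,w)-\Phi(\td u,\td w)$, where $\mathcal D=\|u_0\|_{L^q_\uloc}+\|d_0-d_*\|_{L^\infty}+\|\na d_0\|_{L^q_\uloc}$. In case (a), $q>n$: each nonlinear time integral carries a strictly positive power of $t$ (surplus $\tfrac12-\tfrac n{2q}>0$ or $1-\tfrac n{2q}>0$), so $C_1(T)\to0$ as $T\to0$; choosing $T=T(n,q,\mathcal D)$ small makes $\Phi$ a contraction on the ball of radius $2C_0\mathcal D$, giving the unique mild solution, and the continuity and $W^{1,q}_{\uloc,0}$-regularity on $(0,T)$ come from the smoothing part of the linear theory. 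In case (b), $q=n$: those powers of $t$ vanish, so $C_1(T)$ is only finite; instead one imposes $\mathcal D\le\ep(T)$ and contracts on the ball of radius $2C_0\ep(T)$, where the quadratic and cubic terms $\lesssim C_1(T)\ep(T)^2$ are absorbed once $\ep(T)$ is small. Uniqueness in the class \eqref{eq-NLCF-D-uloc-class} follows in both cases by running the same bilinear/trilinear bounds on the difference of two solutions (shrinking $T$ in (a), using the a priori smallness in (b)).

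The main obstacle is the critical case $q=n$ of part (b): there the transport term $(u\cdot\na)d$ and, above all, the cubic term $|\na d|^2d$ produce only borderline-convergent time integrals, so the smoothing cost must be spread carefully — through the interpolation split between the $s\to0^+$ and $s\to t^-$ ends, and through the $\int_0^{t/2}+\int_{t/2}^t$ decomposition for the non-divergence-form cubic term — and, since no power of $T$ is available, the smallness of the data on the fixed interval $(0,T)$ is essential. A secondary subtlety, absent from the $L^q$ and $Y_a$ settings, is that the $\|d-d_*\|_{L^\infty}$-part of the norm must be controlled by an $L^\infty$-to-$L^\infty$ (non-gaining) bound for the Dirichlet heat Duhamel term, which is exactly why one first writes $|\na d|^2d=|\na w|^2w+|\na w|^2d_*$ and $(u\cdot\na)d=\na\cdot(uw)$.
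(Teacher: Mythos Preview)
Your overall strategy (fixed-point iteration in the space $X_T$, linear input from \cite[Lemma 9.4]{Green} for the Stokes part and Lemma~\ref{lem-heat-est-Lq-uloc} for the Dirichlet heat part, H\"older estimates on the nonlinearities) matches the paper's. Two points of comparison are worth noting.

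First, you put the transport term in divergence form $\nabla\cdot(uw_\ell)$ and integrate by parts against $G^D$; the paper instead keeps $(u\cdot\nabla)\td d_\ell$ and, when estimating $\nabla F_2$ or $\nabla^2 F_2$, shuttles $x$-derivatives to $y$-derivatives via $\pd_{x_j}G^N=-\pd_{y_j}G^N$ ($j<n$) and $\pd_{x_n}G^N=-\pd_{y_n}G^D$ (see \eqref{eq-db-d-linear}). Either route is fine.

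Second, for the critical case $q=n$ your description contains a gap. The ``$L^\infty$-to-$L^\infty$ non-gaining'' bound you invoke for the cubic source $|\nabla w|^2(w+d_*)$ yields $\int_0^t \|\nabla w(s)\|_{L^\infty}^2\,ds\lesssim\int_0^t s^{-1}\,ds$, which diverges; and your proposed remedy (the $\int_0^{t/2}+\int_{t/2}^t$ split together with ``$t^{1/2}$-weighted control of $\nabla^2 w$'') is not the right fix here, since $\nabla^2 w$ does not enter the cubic term and the split alone does not cure the $s\to0^+$ singularity. The paper's device is to introduce the auxiliary space
\[
Y_T:\quad \|(u,\td d)\|_{Y_T}=\sup_{0<t<T} t^{1/4}\bke{\|u(t)\|_{L^{2n}_\uloc}+\|\nabla\td d(t)\|_{L^{2n}_\uloc}},
\]
and to use $L^{2n}_\uloc\to L^\infty$ smoothing (cost $(t-s)^{-1/4}$) together with $\|\,|\nabla w|^2 d\,\|_{L^{2n}_\uloc}\le\|\nabla w\|_{L^\infty}\|\nabla w\|_{L^{2n}_\uloc}(1+\|w\|_{L^\infty})\lesssim s^{-3/4}\|(u,w)\|_{X_T}\|(u,w)\|_{Y_T}(1+\|(u,w)\|_{X_T})$, giving the convergent Beta integral $\int_0^t (t-s)^{-1/4}s^{-3/4}\,ds$. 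The same $L^{2n}_\uloc$ device handles all the other critical estimates ($\|F_1\|_{L^n_\uloc}$, $t^{1/2}\|\nabla F_1\|_{L^n_\uloc}$, $t^{1/2}\|\nabla^2 F_2\|_{L^n_\uloc}$, \ldots) with no time-splitting. Since $t^{1/4}\|g\|_{L^{2n}_\uloc}\le (t^{1/2}\|g\|_{L^\infty})^{1/2}\|g\|_{L^n_\uloc}^{1/2}$, one has $\|\cdot\|_{Y_T}\le\|\cdot\|_{X_T}$, so this is exactly the interpolation you mention --- but it must be applied to the \emph{source} in conjunction with the intermediate smoothing, not as an endpoint $L^\infty\to L^\infty$ estimate.
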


\begin{remark}
For $n=q$, by using the density argument, we can 
show a zero limit similar to \eqref{eq-Lq-mild-linear-critical-YT} and
establish the local-in-time existence of the mild solution without the smallness of the initial data if $u_0, \nb d_0\in E^n = \overline{C^\infty_c}^{L^n_\uloc}$, the closure of smooth functions of compact supports in $L^n_\uloc$-norm.
\end{remark}


\begin{remark}
Theorem \ref{mild-NLCF-D-Lq-uloc}, as well as
case $a=0$ of Theorem \ref{mild-NLCF-D-Ya}, are 
 for the Dirichlet problem and have no counter part for the Neumann problem. It is because the far-field condition \eqref{eq-NLCF-far-field-N} is not meaningful if $\td d(x,t) := d(x,t) - d_*$ does not decay at spatial infinity.
\end{remark}

The proofs of Theorems \ref{mild-NLCF-D-Ya}, \ref{mild-NLCF-D-Lq}, and \ref{mild-NLCF-D-Lq-uloc} are given in Section \ref{sec:mild-NLCF-D}.

\section{Preliminaries}\label{sec-prelim}

\subsection{Integral estimates}

We first recall a couple useful integral estimates: \cite[Lemma 2.1, Lemma 2.2]{Green}.

\begin{lem}\label{lemma2.1}
For positive $L,a,d$, and $k$ we have
\[\int_0^L\frac{r^{d-1}\,dr}{(r+a)^k}\lesssim\left\{\begin{array}{ll}L^d(a+L)^{-k}& \text{ if } k<d,\\[3pt] L^d(a+L)^{-d}(1+\log_+\frac{L}a) & \text{ if } k=d,\\[3pt] %
L^d(a+L)^{-d}a^{-(k-d)}
& \text{ if } k>d.\end{array}\right.\]
\end{lem}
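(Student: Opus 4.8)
The plan is to reduce the claim to elementary one-variable integrals by comparing the integration variable $r$ with the parameter $a$, using the pointwise comparison $r+a\approx\max(r,a)$, and to split the analysis into the two regimes $L\le a$ and $L>a$ so that the factor $a+L$ appearing in the stated bounds can be read off at the end.

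First I would dispose of the case $L\le a$. On the whole interval $[0,L]$ one has $r+a\approx a\approx a+L$, hence
\[
\int_0^L\frac{r^{d-1}}{(r+a)^k}\,dr\approx a^{-k}\int_0^L r^{d-1}\,dr\approx \frac{L^d}{d}\,a^{-k}\approx L^d(a+L)^{-k}.
\]
Since $L\le a$ forces $\log_+\frac{L}{a}=0$ and $a^{-(k-d)}(a+L)^{-d}\approx a^{-k}$, the right-hand sides of all three cases are comparable to $L^d a^{-k}$, so the estimate holds in this regime irrespective of the sign of $k-d$.

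Next, for $L>a$ I would split at $r=a$, writing $\int_0^L=\int_0^a+\int_a^L$. On $[0,a]$ use $r+a\approx a$ to get $\int_0^a\approx a^{-k}a^d=a^{d-k}$; on $[a,L]$ use $r+a\approx r$ to get $\int_a^L\approx\int_a^L r^{d-1-k}\,dr$, which I evaluate in the three cases: it is $\lesssim L^{d-k}$ when $k<d$ (and this term then dominates $a^{d-k}$, since $a<L$), it equals $\log\frac{L}{a}$ when $k=d$, and it is $\lesssim a^{d-k}$ when $k>d$. Adding the two pieces and using $a+L\approx L$ in this regime yields, respectively, $L^{d-k}\approx L^d(a+L)^{-k}$; then $a^{d-k}+\log\frac{L}{a}=1+\log_+\frac{L}{a}\approx L^d(a+L)^{-d}\bigl(1+\log_+\tfrac{L}{a}\bigr)$; and finally $a^{d-k}\approx L^d(a+L)^{-d}a^{-(k-d)}$. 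Together with the case $L\le a$, this gives the lemma.

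The only delicate point — more bookkeeping than a genuine obstacle — is the borderline case $k=d$: one must keep track of the $\int_0^a$ piece, which contributes the constant $1$ and is the source of the ``$1+$'' in $1+\log_+\frac{L}{a}$, and note that $\log_+\frac{L}{a}=\log\frac{L}{a}$ precisely because we are in the regime $L>a$. Everything else is the direct computation of $\int r^{d-1-k}\,dr$ together with the comparison $r+a\approx\max(r,a)$.
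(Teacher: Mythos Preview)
Your proof is correct. The paper does not actually prove this lemma: it is recalled verbatim from \cite[Lemma 2.1]{Green} without argument, so there is no ``paper's proof'' to compare against beyond the citation. Your approach --- comparing $r+a$ with $\max(r,a)$, treating $L\le a$ directly, and for $L>a$ splitting the integral at $r=a$ --- is the standard one and is presumably what the cited reference does as well.
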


\begin{lem}\label{lemma2.2}
Let $a>0$, $b>0$, $k\ge0$, $m\ge0$ and $k+m>d$. Let $0\not=x\in \R^d$ and
\[I:=\int_{\R^d}\frac{dz}{(|z|+a)^k(|z-x|+b)^m}.\]
Then, with $R=\max\{|x|,\,a,\,b\}\sim|x|+a+b$,
\EQN{I\lesssim R^{d-k-m} + \de_{kd} R^{-m} \log \frac Ra
+ \de_{md} R^{-k} \log \frac Rb
+ \mathbbm 1_{k>d} R^{-m}a^{d-k}
+ \mathbbm 1_{m>d} R^{-k}b^{d-m}.
}
In particular, for $k>1=d$ and $|x|=0_+$,
\EQ{\label{lemma2.2d1}
\int_0^\infty \frac{dz}{(z+A)^k(z+1)^m} \lec R^{-m}A^{1-k} + \de_{m1}R^{-k}\log R + \one_{m>1}R^{-k},\quad R=A+1.
}
\end{lem}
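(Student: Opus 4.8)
\emph{Proof sketch.} The plan is to cut $\R^d$ into finitely many regions, on each of which one of the two factors is comparable to a fixed power of $R$; after pulling that factor out, the remaining integral becomes radially symmetric (translating its centre to the origin when necessary) and is evaluated by Lemma \ref{lemma2.1}. I would also use throughout that the integrand is unchanged under the joint substitution $z\mapsto x-z$ together with $(a,k)\leftrightarrow(b,m)$, so that the two centres may be interchanged whenever convenient.

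I would first treat the easy regime $|x|\le 2\max(a,b)$; say $a=\max(a,b)$, so $R\sim a$. On $\{|z|\le C_0 a\}$ one has $|z|+a\sim R$, and after the shift $w=z-x$ (legitimate since $|x|\lesssim a$) the remaining integral is dominated by $\int_0^{cR}r^{d-1}(r+b)^{-m}\,dr$; feeding the three alternatives of Lemma \ref{lemma2.1} into this and multiplying by $R^{-k}$ produces exactly $R^{d-k-m}$, $\de_{md}R^{-k}\log(R/b)$ and $\one_{m>d}R^{-k}b^{d-m}$. On the complement $\{|z|>C_0 a\}$, taking $C_0$ large forces $|z-x|\sim|z|$ and $|z|+a\sim|z|\sim|z-x|+b$, so this part is $\lesssim\int_{C_0 a}^\infty r^{d-1-k-m}\,dr\sim R^{d-k-m}$, the convergence being exactly the hypothesis $k+m>d$.

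The substantive regime is $|x|>2\max(a,b)$, where $R\sim|x|$ and $a,b\ll|x|$. Here I would split $\R^d$ into $\Omega_0=\{|z|\le|x|/4\}$, $\Omega_x=\{|z-x|\le|x|/4\}$ (disjoint because $x\ne0$), and $\Omega_\infty$, the remainder. On $\Omega_0$ one has $|z-x|+b\sim|x|\sim R$, so the contribution is $\lesssim R^{-m}\int_0^{cR}r^{d-1}(r+a)^{-k}\,dr$, and Lemma \ref{lemma2.1} yields precisely $R^{d-k-m}+\de_{kd}R^{-m}\log(R/a)+\one_{k>d}R^{-m}a^{d-k}$; the region $\Omega_x$ is the mirror image under the symmetry above and gives $R^{d-k-m}+\de_{md}R^{-k}\log(R/b)+\one_{m>d}R^{-k}b^{d-m}$. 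On $\Omega_\infty$ both factors are $\gtrsim R$; I would split it into the bounded shell $\{|x|/4<|z|\le 2|x|\}$, of volume $\lesssim R^d$ and hence contributing $\lesssim R^{d}R^{-k}R^{-m}=R^{d-k-m}$, and the tail $\{|z|>2|x|\}$, where $|z-x|\sim|z|$ so the contribution is $\lesssim\int_{2|x|}^\infty r^{d-1-k-m}\,dr\sim R^{d-k-m}$, again using $k+m>d$. Summing the five regions gives the claimed bound.

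The only real obstacle is the unbounded tail: it is the one piece not controlled by a pointwise lower bound on a single factor, and it is exactly the condition $k+m>d$ that makes $\int^\infty r^{d-1-k-m}\,dr$ converge to something of size $R^{d-k-m}$; the rest is routine bookkeeping, matching the three cases of Lemma \ref{lemma2.1} to the three pairs of boundary terms. Finally, for the special case \eqref{lemma2.2d1} (where $d=1$, $k>1$, and $x\to 0$ so that $|z|=|z-x|$ and the integral runs over a half-line), I would simply let $x\to0$ in the main estimate with $d=1$, whose implicit constant is independent of $x$; since $k>1$ and $A\le R$, the generic term $R^{1-k-m}=R^{-m}R^{1-k}$ is dominated by $R^{-m}A^{1-k}$, leaving $R^{-m}A^{1-k}+\de_{m1}R^{-k}\log R+\one_{m>1}R^{-k}$ as asserted.
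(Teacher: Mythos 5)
Your proof is correct, but it takes a genuinely different route from the paper, which does not prove the main estimate at all: for $k,m>0$ the paper simply quotes \cite[Lemma 2.2]{Green}, for $k=0$ or $m=0$ it notes that the bound follows directly from Lemma \ref{lemma2.1}, and \eqref{lemma2.2d1} is asserted as an immediate specialization. You instead supply a self-contained argument, and it checks out: the easy regime $|x|\le 2\max(a,b)$ (where $R\sim\max(a,b)$ and a two-piece cut suffices), the main regime $|x|>2\max(a,b)$ with the regions near $0$, near $x$, and the far shell plus unbounded tail, and the symmetry $z\mapsto x-z$ with $(a,k)\leftrightarrow(b,m)$ are all legitimate; feeding each piece into Lemma \ref{lemma2.1} reproduces exactly the five terms of the stated bound (the resulting factors $1+\log_+(cR/a)$, $1+\log_+(cR/b)$ are $\lesssim 1+\log(R/a)$, $1+\log(R/b)$ because $R\ge a,b$, with the constants absorbed by $R^{d-k-m}$), and the hypothesis $k+m>d$ is indeed used only for convergence of the tail. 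Your deduction of \eqref{lemma2.2d1} -- take $d=1$, $b=1$, $k>1$, let $x\to0$ (the implicit constant is uniform in $x$), restrict to the half-line, and absorb $R^{1-k-m}\le R^{-m}A^{1-k}$ using $A\le R$ -- is precisely what the paper intends by ``in particular''. One cosmetic remark: when an exponent vanishes you invoke Lemma \ref{lemma2.1} with exponent $0$, which lies outside its literal hypotheses (``positive $k$''), though the needed bound is then just $\int_0^L r^{d-1}\,dr\lesssim L^d$; this is exactly why the paper treats $k=0$ or $m=0$ as a separate, trivial case. The paper's citation buys brevity; your version buys a complete, checkable proof that handles the endpoint exponents and the specialization \eqref{lemma2.2d1} in one stroke, and it is very likely the same decomposition used in the cited source.
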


If $k>0$ and $m>0$, Lemma \ref{lemma2.2} is the same as \cite[Lemma 2.2]{Green}. 
When $k=0$ or $m=0$, Lemma \ref{lemma2.2} follows directly from Lemma \ref{lemma2.1}.

\begin{lem}\thlabel{lem-logxt}
For $k\ge0$, $r>0$ and $t>0$,
\[
t^{-\frac{k}2} e^{-r^2/ct} \log(2+r) \lec \frac1{(r+\sqrt t)^k}\, \log(2+t).
\]
\end{lem}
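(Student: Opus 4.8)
The plan is to split into the two regimes $r^2 \le t$ and $r^2 > t$, since in the first regime the exponential factor is harmless and in the second regime it must be used to absorb the logarithm. In both cases we will also compare $\log(2+r)$ with $\log(2+t)$ appropriately. First I would observe that it suffices to show
\[
t^{-\frac k2} e^{-r^2/ct} \log(2+r) \lec \frac{\log(2+t)}{(r+\sqrt t)^k},
\]
and that the right-hand side is comparable (up to constants depending on $k$) to $\max\{r,\sqrt t\}^{-k}\log(2+t)$.

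In the regime $r \le \sqrt t$, we have $e^{-r^2/ct}\le 1$, $(r+\sqrt t)^k \sim t^{k/2}$, and $\log(2+r) \le \log(2+\sqrt t) \lec \log(2+t)$; multiplying through gives the claim immediately. In the regime $r > \sqrt t$, we have $(r+\sqrt t)^k \sim r^k$, so the target inequality becomes
\[
t^{-\frac k2} e^{-r^2/ct} \log(2+r) \lec \frac{\log(2+t)}{r^k}.
\]
Here I would use the elementary bound $e^{-s}\lec_N s^{-N}$ for any $N>0$: choosing $N = k/2 + 1$ (or more simply $N$ large), we get $e^{-r^2/ct}\lec (t/r^2)^{N} $, hence $t^{-k/2}e^{-r^2/ct}\lec t^{N-k/2} r^{-2N} = t r^{-k}\cdot (t/r^2)^{N-k/2-1}\cdot r^{-?}$; cleaner is to write $t^{-k/2}e^{-r^2/ct} \lec t^{-k/2}(t/r^2)^{k/2}(t/r^2) = t r^{-k-2} \le r^{-k}$, using $t \le r^2$ in the last step. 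Thus $t^{-k/2}e^{-r^2/ct}\lec r^{-k}$, and it remains to check $\log(2+r) \lec \log(2+t)$ in this regime — but this need \emph{not} hold when $r$ is large and $t$ is of order $1$.

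So the genuine point, and the step I expect to be the main (if minor) obstacle, is handling $\log(2+r)$ when $r \gg \sqrt t$: here we must extract extra decay from the exponential to pay for the logarithm. The fix is to keep one more power: in the regime $r>\sqrt t$, write
\[
t^{-\frac k2} e^{-r^2/ct}\log(2+r) = \Big(t^{-\frac k2} e^{-r^2/2ct}\Big)\Big(e^{-r^2/2ct}\log(2+r)\Big),
\]
bound the first factor by $\lec r^{-k}$ exactly as above (with $c$ replaced by $2c$, and using $t\le r^2$), and bound the second factor by an absolute constant, since $e^{-r^2/2ct}\log(2+r) \le \sup_{\rho\ge 0} e^{-\rho^2/2ct}\log(2+\rho)$ — wait, that still depends on $t$. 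Instead, since $r>\sqrt t \ge 1$ forces nothing, I would instead simply bound $\log(2+r) \lec 1 + \log_+(r/\sqrt t) + \log(2+\sqrt t)$ and absorb $\log_+(r/\sqrt t)$ into $e^{r^2/2ct}$ via $e^{-s}\log_+\!\sqrt{s}\lec 1$ with $s = r^2/t$, leaving $t^{-k/2}e^{-r^2/2ct}(1+\log(2+\sqrt t)) \lec r^{-k}\log(2+t)$, which closes the estimate. Assembling the two regimes yields the lemma; the only care needed is to track that all implied constants depend only on $k$ (and the fixed $c$).
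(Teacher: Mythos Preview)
Your approach is correct and is essentially the same as the paper's: both split $e^{-r^2/ct}$ into two halves, use one half to get $t^{-k/2}e^{-r^2/2ct}\lec (r+\sqrt t)^{-k}$, and reduce the rest to showing $e^{-r^2/2ct}\log(2+r)\lec\log(2+t)$, which is then handled by the same case split $r^2\le t$ versus $r^2>t$. The only difference is cosmetic: in the regime $r^2>t$ the paper writes $\log(2+r^2)=\log\frac{2+r^2}{2+t}+\log(2+t)$ and kills the first summand via $e^{-s}\lec s^{-1}$ and $\log(1+u)\le u$, whereas you decompose $\log(2+r)\lec 1+\log_+(r/\sqrt t)+\log(2+\sqrt t)$ and absorb the middle term via $e^{-s}\log s\lec 1$; both are the same ``pay for the log with exponential decay'' move.
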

\begin{proof}
As $t^{-\frac{k}2} e^{-r^2/2ct} \lec t^{-\frac{k}2}  (1+ \frac r{\sqrt t})^{-k} = \frac1{(r+\sqrt t)^k}$,
it suffices to show
\[
e^{-r^2/2ct} \log(2+r^2) \lec \log(2+t).
\]
It is clear when $r^2\le t$. When $r^2>t$,
\[
\text{LHS} = e^{-r^2/2ct} \bke{\log \frac{2+r^2}{2+t} + \log(2+t)}
\lec  \frac t{r^2+t} \frac{r^2}{2+t} + \log(2+t) \lec\log(2+t)  .\qedhere
\]
\end{proof}

\begin{lem}\thlabel{lem-heat-int-decay-est}
Denote by $\Ga_k$ the $k$-dimensional heat kernel, then for $x\in\R^k$
\EQ{\label{eq-lem3.4-ineq}
\int_{\R^k} \frac{\Ga_k(x-y,t)}{(|y|+1)^a}\, dy \lec \frac{ (\one_{a>k}\sqrt t + 1)^{a-k}}{(|x|+\sqrt t +1)^a} \bke{1 + \de_{ak}\log_+t}.
}
\end{lem}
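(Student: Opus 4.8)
\textbf{Proof plan for Lemma \ref{lem-heat-int-decay-est}.}

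The plan is to split the integral over the region $\{|y-x| \le \frac12(|x|+\sqrt t)\}$, where $|y|+1 \sim |x|+\sqrt t+1$ (so the weight may be pulled out), and its complement, where $|y|+1$ gives no help but the Gaussian decay of $\Ga_k(x-y,t)$ must be exploited. First I would treat the \emph{inner region}: on $\{|y-x|\le\frac12(|x|+\sqrt t)\}$ we have $|y| \ge |x|-|y-x| \ge \frac12|x| - \frac12\sqrt t$, and combined with $|y|+1 \ge 1$ one gets $|y|+1 \gtrsim |x|+\sqrt t+1$ after considering the two cases $|x|\ge\sqrt t$ and $|x|<\sqrt t$ separately (in the latter, use $|y|+1\ge1\gtrsim \sqrt t + 1 \sim |x|+\sqrt t+1$). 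Hence on this region $(|y|+1)^{-a} \lec (|x|+\sqrt t+1)^{-a}$, and since $\int_{\R^k}\Ga_k(x-y,t)\,dy = 1$ the inner contribution is bounded by $(|x|+\sqrt t+1)^{-a}$, which is dominated by the right-hand side.

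Next I would handle the \emph{outer region} $\{|y-x|> \frac12(|x|+\sqrt t)\}$. Here I bound $(|y|+1)^{-a}\le 1$ when $a\le k$ is not needed; more carefully, I split according to whether the integration variable's magnitude $|y|$ is comparable to $|x|$ or large. The cleanest route: on the outer region, $|y-x|>\frac12(|x|+\sqrt t) \ge \frac12\sqrt t$, so $\Ga_k(x-y,t) = (4\pi t)^{-k/2} e^{-|x-y|^2/4t}$ enjoys extra decay. Use the elementary bound $e^{-|x-y|^2/4t} \lec e^{-|x-y|^2/8t} e^{-c(|x|+\sqrt t)^2/t} \lec e^{-|x-y|^2/8t}\bke{1 + \frac{|x|}{\sqrt t}}^{-N}$ for any $N$, absorbing one Gaussian factor into a negative power of $(|x|+\sqrt t)/\sqrt t$, and keep the other Gaussian to run the remaining $y$-integral. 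Choosing $N = a$ (or $N=a$ plus a little), the factor $t^{-k/2}\bke{1+|x|/\sqrt t}^{-a}$ produced is $\sim \sqrt t^{-k}(\sqrt t/(|x|+\sqrt t))^{a}$; integrating the leftover $e^{-|x-y|^2/8t}(|y|+1)^{-a}$ in $y$ over $\R^k$ costs at most $\sqrt t^{\,k}$ times a logarithm in the borderline case (by Lemma \ref{lemma2.1} after a dyadic decomposition in $|y|$), and in fact gives $\lec \sqrt t^{\,k}(\one_{a<k} + \de_{ak}\log_+ t)$ when $\sqrt t\ge 1$ and is even better when $\sqrt t< 1$. Collecting these, the outer contribution is $\lec (\sqrt t/(|x|+\sqrt t))^a (\one_{a<k}+\de_{ak}\log_+t) \cdot (\text{adjust for } \sqrt t<1)$, and one checks this is bounded by the claimed right-hand side after comparing $(|x|+\sqrt t)$ with $(|x|+\sqrt t+1)$ in the two regimes $\sqrt t\ge 1$ and $\sqrt t<1$.

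The bookkeeping of the $\sqrt t < 1$ versus $\sqrt t \ge 1$ cases is what makes the exponents and the $(\one_{a>k}\sqrt t+1)^{a-k}$ prefactor come out exactly as stated, so I would organize the final combining step around those two regimes. When $\sqrt t\ge 1$: if $a<k$ the $y$-integral of $e^{-|x-y|^2/8t}(|y|+1)^{-a}$ over $\R^k$ is $\lec \sqrt t^{\,k-a}$ (not just $\sqrt t^{\,k}$, because the weight decays slower than integrably — this is exactly Lemma \ref{lemma2.1} with $k<d$ there), producing the $(\sqrt t)^{a-k}$-type growth matching $(\one_{a>k}\sqrt t+1)^{a-k}$ — wait, here $a<k$, so that prefactor is $\sim 1$ and indeed $\sqrt t^{\,k-a}\cdot\sqrt t^{\,-k}(\cdots)^a$ combines to $(|x|+\sqrt t+1)^{-a}$; if $a=k$ the log appears; if $a>k$ the prefactor $(\sqrt t)^{a-k}$ is genuinely present. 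When $\sqrt t<1$: all the $\sqrt t$ powers are harmless, $|x|+\sqrt t+1\sim |x|+1$, and the Gaussian integral is just $\lec 1$, giving the bound directly.

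\textbf{Main obstacle.} The genuinely delicate point is the outer region when $a > k$ (and the equality case $a=k$): one must extract precisely the right power $(\one_{a>k}\sqrt t+1)^{a-k}$ while not losing in $|x|$, which forces a careful split of the $y$-integral into the annulus near $x$ and the far annuli, applying Lemma \ref{lemma2.1} with the correct comparison of $k$ (the ambient dimension) against $a$ (the decay rate). Everything else — the inner region, the $\sqrt t<1$ regime — is routine once that estimate is in place.
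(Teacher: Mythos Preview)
Your inner-region argument contains a genuine gap. You claim that on $\{|y-x|\le\tfrac12(|x|+\sqrt t)\}$ one has $|y|+1\gtrsim |x|+\sqrt t+1$, appealing in the case $|x|<\sqrt t$ to ``$|y|+1\ge 1\gtrsim \sqrt t+1$''. But that last inequality is simply false when $\sqrt t$ is large. Concretely, if $|x|\le\sqrt t$ then $y=0$ lies in your inner region (since $|0-x|=|x|\le\tfrac12(|x|+\sqrt t)$), and there $|y|+1=1$ while $|x|+\sqrt t+1\ge\sqrt t+1$ can be arbitrarily large. So you cannot pull the weight $(|y|+1)^{-a}$ out of the inner integral in this regime. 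This is not a cosmetic issue: the regime $\sqrt t\gg 1$, $|x|\lesssim\sqrt t$ is exactly where the prefactor $(\one_{a>k}\sqrt t+1)^{a-k}$ and the $\de_{ak}\log_+t$ term originate, and they come from \emph{integrating} the weight over a ball of radius $\sim\sqrt t$ on which $\Ga_k\sim t^{-k/2}$, not from pulling the weight out.

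The paper's proof avoids this by organizing the case split differently: it first separates $|x|\ge\max(\sqrt t,2)$ from $\sqrt t\ge\max(|x|,2)$. In the former case it splits the $y$-integral at $|y|=|x|/2$ (so on the inner piece $|y|\gtrsim|x|\sim|x|+\sqrt t$ really does hold, and on the outer piece the Gaussian gives $e^{-|x|^2/ct}$ and one integrates the weight via Lemma~\ref{lemma2.1}). In the latter case it abandons the Gaussian structure, bounds $\Ga_k(x-y,t)\lesssim(|x-y|+\sqrt t)^{-k}$, and applies Lemma~\ref{lemma2.2} directly; this is what produces the correct $a<k$, $a=k$, $a>k$ trichotomy with the right powers of $\sqrt t$. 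Your decomposition based on $|y-x|$ versus $\tfrac12(|x|+\sqrt t)$ can be repaired along these lines --- in the problematic regime replace ``pull out the weight'' by ``$\Ga_k\sim t^{-k/2}$ on the inner region, then integrate $(|y|+1)^{-a}$ over $|y|\lesssim\sqrt t$ using Lemma~\ref{lemma2.1}'' --- but as written the argument does not go through.
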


\thref{lem-heat-int-decay-est} is a generalization of \cite[(9.13)]{Green} and \cite[Lemma 1]{MR191213}.

\begin{proof}[Proof of \thref{lem-heat-int-decay-est}]
Denote the integral as $u(x,t)$.
If $\max(|x|,\sqrt t) \le 2$, then
\[
u(x,t) \le 1 \lec (|x|+\sqrt{t}+1)^{-a}.
\]
It suffices to consider $\max(|x|,\sqrt t) > 2$.

If $|x| \ge \max(\sqrt t, 2)$, then
\EQS{\label{eq-lem3.4-pf-1}
u&(x,t) \le\int_{|y|<|x|/2}  c t^{-k/2} e^{-|x|^2/ct} (|y|+1)^{-a} dy +
\int_{|y|>|x|/2} \Ga(x-y,t) (|x| +1)^{-a}  dy
\\
&\le ct^{-k/2} e^{-|x|^2/ct} \int_0^{|x|/2} \frac{\rho^{k-1}}{(\rho + 1)^a}\, d\rho  + (|x| +1)^{-a}
= ct^{-k/2} e^{-|x|^2/ct}  I_0 + (|x| +1)^{-a},
}
where, by using Lemma \ref{lemma2.1},
\[
I_0 := \int_0^{|x|/2} \frac{\rho^{k-1}}{(\rho + 1)^a}\, d\rho
\lec
\begin{cases}
|x|^k(|x|+1)^{-a}&\quad \text{ if }a<k,\\
|x|^k(|x|+1)^{-k}\bke{1 + \log_+ |x|}&\quad \text{ if }a=k,\\
|x|^k(|x|+1)^{-k}&\quad \text{ if }a>k.
\end{cases}
\]
For $a\neq k$, we use the inequality $e^{-|x|^2/ct} \lec \bke{1+\frac{|x|^2}t}^{-\max(a,\,k)/2}$ to continue the computation in \eqref{eq-lem3.4-pf-1} and get
\EQS{\label{eq-lem3.4-pf-a-not-k}
u(x,t) &\lec \sqrt{t}^{\max(a,\,k)-k}(|x|+\sqrt{t})^{-\max(a,\,k)}\, \frac{|x|^k}{(|x|+1)^{\min(a,k)}}  + (|x| +1)^{-a}\\
&\lec \frac { (\one_{a>k} \sqrt t + 1)^{a-k} }  { ( |x| + \sqrt t + 1)^{a} }.
}
If $a=k$, we have $I_0 \lec 1+\log_+|x|$.
Thus, by \thref{lem-logxt},
\[
t^{-\frac{k}2} e^{-|x|^2/ct} I_0 \lec t^{-\frac{k}2} e^{-|x|^2/ct} (1+\log_+|x|)
\lec \frac1{(|x|+\sqrt t)^k}\, \log(2+t).
\]
It then follows from \eqref{eq-lem3.4-pf-1} that, for $a=k$,
\EQ{\label{eq-lem3.4-pf-a=k}
u(x,t)\lec t^{-\frac{k}2} e^{-|x|^2/ct} I_0 + (|x|+1)^{-k}
\lec \frac{\log(2+t)}{(|x|+\sqrt t)^k}\,  + (|x|+1)^{-k}
\lec \frac{1+\log_+t}{(|x|+\sqrt t+1)^k}.
}
Collecting \eqref{eq-lem3.4-pf-a-not-k} and \eqref{eq-lem3.4-pf-a=k}, we derive the estimate \eqref{eq-lem3.4-ineq} for $|x|\ge\max(\sqrt t,2)$.

If $\sqrt t \ge \max (|x|, 2)$, we discuss the two cases: $a<k$ and $a\ge k$.

For $a<k$, with $y=\sqrt t z$ and $x=\sqrt t \hat x$, we have $|\hat x|\le 1$ and
\EQS{\label{eq-lem3.4-pf-a<k}
u(x,t) &= \int_{\R^k}  c t^{-k/2}\exp\bke{- \frac {|\hat x - z|^2}{4}} \sqrt t ^{k-a} \bke{|z|+\sqrt t^{-1}}^{-a} dz\\
&\lec \sqrt t^{-a}\int_{\R^k} \frac1{(|\hat x-z| + 1)^{k+1}\bke{|z|+\sqrt t^{-1}}^a}\, dz\\
&\lec \sqrt t^{-a} \frac1{\bke{|\hat x|+1+\sqrt t^{-1}}^a}
= \frac1{(|x|+\sqrt t+1)^a}.
}
where we've used Lemma \ref{lemma2.2} in the last inequality.

If $a\ge k$, we directly use Lemma \ref{lemma2.2} to get
\EQS{\label{eq-lem3.4-pf-a>k}
u(x,t) &= \int_{\R^k}  \frac{\Ga_k(x-y,t)}{(|y|+1)^a}\, dy
\lec \int_{\R^k} \frac1{(|x-y|+\sqrt t)^k(|y|+1)^a}\, dy\\
&\lec \frac{\de_{ak}\log(|x|+\sqrt t+1) + \one_{a>k}}{(|x|+\sqrt t+1)^k} + \frac1{(|x|+\sqrt t+1)^a}\log\frac{|x|+\sqrt t+1}{\sqrt t}\\
&\lec \de_{ak}\, \frac{\log(2+t)}{(|x|+\sqrt t+1)^a} + \one_{a>k} \frac{(1+\sqrt t)^{a-k}}{(|x|+\sqrt t+1)^a} + \frac{\log(|x|+\sqrt t+1)}{(|x|+\sqrt t+1)^a}\\
&\lec \de_{ak}\, \frac{\log(2+t)}{(|x|+\sqrt t+1)^a} + \one_{a>k} \frac{(1+\sqrt t)^{a-k}}{(|x|+\sqrt t+1)^a} + \frac{\log(2+t)}{(|x|+\sqrt t+1)^a}.
}
Combining \eqref{eq-lem3.4-pf-a<k} and \eqref{eq-lem3.4-pf-a>k}, we establish \eqref{eq-lem3.4-ineq} for $\sqrt t\ge\max(|x|,2)$ and complete the proof of the lemma.
\end{proof}

\subsection{Linear and bilinear estimates for construction of mild solutions}

In this subsection, we establish linear and nonlinear estimates to be used to construct mild solutions. Recall
$G^N(x,y,t)$ and $G^D(x,y,t)$ are the Green functions of the heat equation in the half space for Neumann and Dirichlet boundary conditions, respectively, defined in \eqref{eq-GN-def} and \eqref{eq-GD-def}. 
We will also use the semigroup notation that
\EQ{\label{DeN.def}
e^{t\De^N} f (x)= \int _{\R^n_+}  G^N(x,y,t) f(y)\,dy,
\quad
e^{t\De^D} f (x)= \int _{\R^n_+}  G^D(x,y,t) f(y)\,dy.
}
We also let  $A=-\mathbb P\De$ be the Stokes operator, so that $e^{-tA}u_0$ is given by the first integral in \eqref{E1.3}. When $u_0$ is  
solenoidal, $e^{-tA}u_0$ is also given by the first integral in \eqref{eq-Stokes-sol-formula-divu0=0}.

\begin{lem}\thlabel{lem-heat-est-Lq}
Let $n\ge 2$, $1\le p\le q\le\infty$ and $q>1$.
If $f\in L^p(\R^n_+)$, $k,l\in\NN_0$. Then
\EQN{
\norm{\int _{\R^n_+} \pd_x^k\pd_y^l G^D(x,y,t) f(y)\,dy }_{L^q(\R^n_+)}
+ \norm{\int _{\R^n_+} \pd_x^k\pd_y^l G^N(x,y,t) f(y)\,dy }_{L^q(\R^n_+)}
\\
\le C t^{-\frac{k+l}2-\frac{n}2\bke{\frac1p-\frac1q}} \norm{f}_{L^p(\R^n_+)}.
}
\end{lem}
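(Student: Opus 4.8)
The plan is to reduce everything to the standard $L^p$–$L^q$ smoothing estimates for the heat semigroup on $\R^n$ together with the reflection structure of $G^N$ and $G^D$. First I would recall from \eqref{eq-GN-def}–\eqref{eq-GD-def} that $G^N(x,y,t) = \Ga(x-y,t)+\Ga(x-y^*,t)$ and $G^D(x,y,t) = \Ga(x-y,t)-\Ga(x-y^*,t)$, where $y^*=(y',-y_n)$ and $\Ga$ is the full-space heat kernel. Consequently each of the two operators in the statement is a sum of two pieces: one built from $\Ga(x-y,t)$ and one from $\Ga(x-y^*,t)$. The first piece is (the restriction to $\R^n_+$ of) the ordinary heat semigroup applied to the zero-extension of $f$, and derivatives $\pd_x^k\pd_y^l$ acting on $\Ga(x-y,t)$ all equal $\pm\pd_x^{k+l}\Ga(x-y,t)$ up to sign, so that piece is handled directly by the classical estimate
\[
\norm{\pd_x^{k+l}\Ga(\cdot,t)* g}_{L^q(\R^n)} \le C\,t^{-\frac{k+l}2-\frac n2(\frac1p-\frac1q)}\norm{g}_{L^p(\R^n)},
\]
which follows from $\norm{\pd^m\Ga(\cdot,t)}_{L^r(\R^n)} = C_m t^{-\frac m2-\frac n2(1-\frac1r)}$, Young's convolution inequality with $\frac1r = 1-\frac1p+\frac1q$, and the fact that $\norm{\cdot}_{L^q(\R^n_+)}\le\norm{\cdot}_{L^q(\R^n)}$ for the zero extension of $f$.

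For the reflected piece I would change variables: for $x,y\in\R^n_+$ one has $|x-y^*| = |x^*-y|$, and $\pd_{y_n}$ acting on $\Ga(x-y^*,t)$ produces $\mp(\pd_{x_n}\Ga)(x-y^*,t)$, so again every $\pd_x^k\pd_y^l$ derivative reduces to a single $(k+l)$-th order derivative of $\Ga$ evaluated at $x-y^*$. Writing $\tilde f(y) := f(y',-y_n)$ for the reflection of $f$ across $\Si$ (extended by zero to $\R^n$), the reflected term becomes $\pm(\pd^{k+l}\Ga(\cdot,t)*\tilde f)$ evaluated at points of $\R^n_+$; since $\norm{\tilde f}_{L^p(\R^n)} = \norm{f}_{L^p(\R^n_+)}$, the same classical heat estimate gives the bound $C t^{-\frac{k+l}2-\frac n2(\frac1p-\frac1q)}\norm{f}_{L^p(\R^n_+)}$. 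Adding the two pieces and the two operators gives the claim with a constant depending only on $n$, $p$, $q$, $k$, $l$.

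The only genuinely delicate point is bookkeeping at the endpoints $p=1$ and $q=\infty$: Young's inequality still applies at those endpoints (the intermediate exponent $r$ is then $1$ when $q=\infty$ arbitrary—more precisely $\frac1r=1-\frac1p+\frac1q\in[0,1]$ so $r\in[1,\infty]$), and $\norm{\pd^m\Ga(\cdot,t)}_{L^r(\R^n)}$ is finite for all $r\in[1,\infty]$, so no obstruction arises there; the hypothesis $q>1$ is not actually needed for this heat-kernel lemma (it is needed elsewhere for the Stokes/Green tensor because of the pressure term, but not here). I would simply remark that the estimate holds for all $1\le p\le q\le\infty$, which in particular covers the stated range. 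There is genuinely no hard step — this lemma is the easy, scalar, full-space-reducible companion to the Green tensor estimates, and the proof is a two-line reflection argument plus Young's inequality.
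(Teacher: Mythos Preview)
Your proof is correct and is essentially the same approach as the paper's: the paper observes the pointwise bound $|\pd_x^k\pd_y^l G^D|+|\pd_x^k\pd_y^l G^N|\lec (|x-y|^2+t)^{-(n+m)/2}$ (which already absorbs the reflected piece, since $|x-y^*|\ge|x-y|$ for $x,y\in\R^n_+$) and then applies Young's convolution inequality, whereas you keep the two terms $\Ga(x-y,t)$ and $\Ga(x-y^*,t)$ separate and handle the reflected one by reflecting $f$; both routes reduce to the same full-space heat-kernel Young estimate. Your remark that the hypothesis $q>1$ is not actually used here is also correct.
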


\begin{proof}
By the basic estimates of heat kernel,
\EQ{\label{eq-heat-Green-est}
|\pd_x^k\pd_y^l G^D(x,y,t)| + |\pd_x^k\pd_y^l G^N(x,y,t)| \lec \frac1{(|x-y|^2+t)^{\frac{n+m}2}},\qquad m=k+l.
}
\thref{lem-heat-est-Lq} then follows from the usual Young's convolution inequality.
\end{proof}

\begin{lem}\thlabel{lem-heat-est-Ya}
Let $n\ge 2$ and $0 \le a \le n$. %
For $f\in Y_a$,
\EQ{\label{eq-linear-Ya}
\norm{\int _{\R^n_+} G^D(x,y,t) f(y) dy }_{Y_a}
+ \norm{\int _{\R^n_+} G^N(x,y,t) f(y) dy }_{Y_a}
\le C (1 + \delta_{an} \log_+t)  \norm{f}_{Y_a}.
}
For $F\in Y_{2a}$ and $k,l\in\N_0$ with $k+l=1$, 
\EQ{\label{eq-bilinear-Ya}
\norm{\int _{\R^n_+} \pd_x^k\pd_{y_p}^l G^D(x,y,t) F_{pj}(y)\,dy }_{Y_a}
+ \norm{\int _{\R^n_+} \pd_x^k\pd_{y_p}^l G^N(x,y,t) F_{pj}(y)\,dy }_{Y_a}
\le C t^{-1/2} \norm{F}_{Y_{2a}}.
}
\end{lem}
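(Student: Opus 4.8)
The plan is to prove the two estimates in \thref{lem-heat-est-Ya} by combining the basic pointwise heat kernel bounds \eqref{eq-heat-Green-est} with the decay integral estimate of \thref{lem-heat-int-decay-est}, after a reduction to a clean convolution-type integral. For the linear estimate \eqref{eq-linear-Ya}, I would first record that $|G^D(x,y,t)|+|G^N(x,y,t)|\le \Ga(x-y,t)+\Ga(x-y^*,t)$, so that writing $f(y)=g(y)\bka{y}^{-a}$ with $\norm g_{L^\infty}=\norm f_{Y_a}$ reduces matters to bounding $\int_{\R^n_+}\bke{\Ga(x-y,t)+\Ga(x-y^*,t)}\bka{y}^{-a}\,dy$. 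Since $|x-y^*|\ge|x-y|$ fails in general, I would instead extend the integral over all of $\R^n$: for $y\in\R^n_+$ one has $\bka{y}=\bka{y^*}$, and after the reflection $y\mapsto y^*$ in the second term the two pieces together are controlled by $\int_{\R^n}\Ga_n(x-y,t)\bka{y}^{-a}\,dy$ (up to a harmless constant and using that $\bka{x}\le\bka{x'}$-type comparisons are not needed here, only $\bka{\cdot}\approx |\cdot|+1$). Then \thref{lem-heat-int-decay-est} with $k=n$ gives exactly the bound $\lec \bke{\one_{a>n}\sqrt t+1}^{a-n}\bka{x}^{-a}\bke{1+\de_{an}\log_+t}$; since $a\le n$ the factor $\bke{\one_{a>n}\sqrt t+1}^{a-n}$ is just $1$, yielding $\norm{\cdot}_{Y_a}\lec (1+\de_{an}\log_+ t)\norm f_{Y_a}$, as claimed.

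For the bilinear estimate \eqref{eq-bilinear-Ya}, the same strategy applies but with one derivative on the kernel and with weight exponent $2a$ on $F$. Using \eqref{eq-heat-Green-est} with $m=k+l=1$, I bound the derivative of $G^D$ or $G^N$ by $(|x-y|^2+t)^{-(n+1)/2}+(|x-y^*|^2+t)^{-(n+1)/2}$, write $F_{pj}(y)=H(y)\bka{y}^{-2a}$ with $\norm H_{L^\infty}=\norm F_{Y_{2a}}$, and again fold the $y^*$ term onto $\R^n$ via reflection. This reduces the problem to estimating $\int_{\R^n} (|x-y|+\sqrt t)^{-(n+1)}\bka{y}^{-2a}\,dy$. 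Here I would apply \thref{lemma2.2} (Lemma~\ref{lemma2.2}) with $d=n$, $k=n+1$, $m=2a$, $a_{\text{param}}=\sqrt t$, $b=1$; since $k=n+1>d=n$, the dominant contribution is $R^{-m}(\sqrt t)^{d-k}=R^{-2a}(\sqrt t)^{-1}$ with $R\approx |x|+\sqrt t+1\gtrsim\bka{x}$, giving the factor $t^{-1/2}\bka{x}^{-2a}\le t^{-1/2}\bka{x}^{-a}$ (using $a\ge0$ and $\bka{x}\ge1$), hence the desired $t^{-1/2}\norm F_{Y_{2a}}$ bound in $Y_a$.

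The reflection/extension step needs a small amount of care: after reflecting $y\mapsto y^*$ the kernel $\Ga(x-y^*,t)$ becomes $\Ga(x^*-y,t)$ — no, more precisely one keeps $x$ fixed and notes $\Ga(x-y^*,t)=\Ga(x^*-y,t)$ only up to the symmetry $|x-y^*|=|x^*-y|$; either way, after the substitution the second integral equals $\int_{\R^n_-}\Ga_n(x^*-y,t)\bka{y}^{-a}dy \le \int_{\R^n}\Ga_n(x^*-y,t)\bka{y}^{-a}dy$, and \thref{lem-heat-int-decay-est} applied at the point $x^*$ gives a bound in terms of $\bka{x^*}^{-a}=\bka{x}^{-a}$, which is what we want. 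So both the ``straight'' part and the ``reflected'' part produce the same majorant $\bka{x}^{-a}$, and summing is harmless. The main (mild) obstacle is therefore just this bookkeeping of reflections together with the verification that the parameter regimes in Lemmas~\ref{lemma2.1}, \ref{lemma2.2}, and \thref{lem-heat-int-decay-est} are the ones actually hit — in particular that for $\eqref{eq-linear-Ya}$ the borderline $a=n$ is precisely the case producing the $\log_+ t$ factor, and that for $\eqref{eq-bilinear-Ya}$ the extra derivative makes $k=n+1>n=d$ so that no logarithm appears and the clean power $t^{-1/2}$ is obtained. There is no genuinely hard analytic step; everything follows by plugging the heat kernel bounds into the integral lemmas already established.
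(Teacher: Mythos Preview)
Your proposal is correct and follows essentially the same route as the paper: bound $G^D,G^N$ and their derivatives by heat-kernel type majorants via \eqref{eq-heat-Green-est}, then feed these into \thref{lem-heat-int-decay-est} for \eqref{eq-linear-Ya} and into Lemma~\ref{lemma2.2} for \eqref{eq-bilinear-Ya}. One small correction: your claim that ``$|x-y^*|\ge|x-y|$ fails in general'' is wrong in the half-space---for $x,y\in\R^n_+$ one has $(x_n+y_n)^2\ge(x_n-y_n)^2$, hence $|x-y^*|\ge|x-y|$, and this is exactly why \eqref{eq-heat-Green-est} bounds the reflected kernel by the non-reflected one directly; the paper uses this and skips your reflection/extension step entirely. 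Your reflection argument is nonetheless valid, just slightly longer. For \eqref{eq-bilinear-Ya} you should also record that the subsidiary terms from Lemma~\ref{lemma2.2} (namely $\de_{(2a)n}R^{-n-1}\log R$ and $\one_{2a>n}R^{-n-1}$) are absorbed into $t^{-1/2}\bka{x}^{-a}$ using $a\le n$ and $R\ge\sqrt t+1$; the paper spells this out explicitly.
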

\begin{proof}
The linear estimate \eqref{eq-linear-Ya} follows from the classical estimate
\EQ{\label{eq-KLLT-9.13}
\norm{\int_{\R^n_+} \Ga(x-y,t) f(y)\, dy }_{Y_a} \lec (1+\de_{an}\log_+t) \norm{f}_{Y_a}.
}
See e.g. \cite[Lemma 1]{MR191213} for the case of $n=3$. Its statement corresponds to $1\le a\le n$ but its proof works for $0\le a<1$.

Consider now \eqref{eq-bilinear-Ya}. By \eqref{eq-heat-Green-est} we have
\EQ{\label{eq-heat-Green-est-q=1}
|\pd_x^k\pd_y^l G^D(x,y,t)| + |\pd_x^k\pd_y^l G^N(x,y,t)| \lec \frac1{(|x-y|^2+t)^{\frac{n+1}2}}.
}
It suffices to prove
\[
\int_{\R^n_+} \frac1{(|x-y|+\sqrt t)^{n+1}} \frac1{\bka{y}^{2a}}\, dy \lec t^{-\frac12} \frac1{\bka{x}^a}.
\]
Indeed, via Lemma \ref{lemma2.2}, we have
\EQN{
\int_{\R^n_+} \frac1{(|x-y|+\sqrt t)^{n+1}} \frac1{\bka{y}^{2a}}\, dy
&\lec \frac1{(|x|+\sqrt t+1)^{2a}\sqrt t}
+ \frac{\de_{(2a)n}\log(|x|+\sqrt t+1)+\one_{2a>n}}{(|x|+\sqrt t+ 1)^{n+1} } .
}
Thus, if $0\le a\le n$,
\[
\int_{\R^n_+} \frac1{(|x-y|+\sqrt t)^{n+1}} \frac1{\bka{y}^{2a}}\, dy \lec \frac1{(|x|+\sqrt t+1)^a\sqrt t},
\]
proving the lemma.
\end{proof}

\begin{lem}\thlabel{lem-heat-est-Za}
Let $n\ge 2$ and $0 \le a \le 1$. %
For $f\in Z_a$,
\EQ{\label{eq-linear-Za}
\norm{\int _{\R^n_+} G^D(x,y,t) f(y) dy }_{Z_a}
+ \norm{\int _{\R^n_+} G^N(x,y,t) f(y) dy }_{Z_a}
\le C (1 + \delta_{a1} \log_+t)  \norm{f}_{Z_a}.
}
For $F\in Z_{2a}$,
\EQ{\label{eq-bilinear-Za}
\norm{\int _{\R^n_+} \pd_{y_p} G^D(x,y,t) F_{pj}(y)\,dy }_{Z_a}
+ \norm{\int _{\R^n_+} \pd_{y_p} G^N(x,y,t) F_{pj}(y)\,dy }_{Z_a}
\le C t^{-1/2} \norm{F}_{Z_{2a}}.
}
\end{lem}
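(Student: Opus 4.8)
The plan is to mirror the proof of \thref{lem-heat-est-Ya} but with the weight $\bka{x_n}$ in place of $\bka{x}$. The key structural fact is that $G^D$ and $G^N$ are built from $\Ga(x-y,t)$ and $\Ga(x-y^*,t)$, and since $|x_n| \le |x-y|$ whenever $y_n = 0$ is replaced by comparing $x_n$ with $y_n$ via $|x_n - y_n| \le |x-y|$ and $x_n + y_n = |x_n^* - y_n|\le|x^*-y|$... more precisely, the one-dimensional nature of the weight $\bka{x_n}$ means we can integrate out the tangential variables $y'$ first and reduce everything to a one-dimensional heat-kernel convolution in $y_n$ against the weight $\bka{y_n}^{-a}$ (resp. $\bka{y_n}^{-2a}$). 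So first I would record the pointwise bound \eqref{eq-heat-Green-est} (with $m=0$ for the linear estimate, $m=1$ for the bilinear one), and observe that both $G^D$ and $G^N$ are dominated by $\Ga(x-y,t) + \Ga(x-y^*,t)$ and their derivatives by the corresponding heat-kernel derivative bounds; since $\bka{y_n} = \bka{|y_n|} = \bka{|{-y_n}|}$, the reflected term $\Ga(x-y^*,t)$ contributes the same after the substitution $y_n \mapsto -y_n$, so it suffices to treat $\Ga(x-y,t)$.

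For the linear estimate \eqref{eq-linear-Za}: I would integrate the $(n-1)$ tangential variables $y'$ against the $(n-1)$-dimensional heat kernel, which integrates to $1$, leaving
\[
\norm{f}_{Z_a}\int_{\R} \Ga_1(x_n - y_n, t)\, \bka{y_n}^{-a}\, dy_n
\]
up to the extension to all of $\R$ by the reflection trick. Then I would invoke \thref{lem-heat-int-decay-est} with $k=1$: for $0 \le a \le 1$ this gives a bound of order $\bka{x_n}^{-a}(1 + \de_{a1}\log_+ t)$ (the $\de_{ak}$ there becomes $\de_{a1}$), which is exactly \eqref{eq-linear-Za}. Actually one must be slightly careful that $y_n$ ranges only over $(0,\infty)$, but since $\bka{y_n}^{-a}$ is even this only helps, and \thref{lem-heat-int-decay-est} applies to the even extension.

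For the bilinear estimate \eqref{eq-bilinear-Za}: using \eqref{eq-heat-Green-est-q=1}, it suffices to prove
\[
\int_{\R^n_+} \frac{1}{(|x-y|+\sqrt t)^{n+1}}\, \frac{1}{\bka{y_n}^{2a}}\, dy \lec t^{-1/2}\,\frac{1}{\bka{x_n}^a}.
\]
Again integrating out $y'$ via Lemma \ref{lemma2.2} in dimension $n-1$ (with $k = n+1 > n-1 = d$, which lands in the $\one_{k>d}$ branch and produces a factor $\sqrt t^{\,(n-1)-(n+1)} = t^{-1}$ together with the one-dimensional kernel $(|x_n - y_n| + \sqrt t)^{-1}$... wait, one should track the exponents: the $y'$-integration of $(|x'-y'|^2 + (x_n-y_n)^2 + t)^{-(n+1)/2}$ over $\R^{n-1}$ yields $\lec ((x_n-y_n)^2 + t)^{-(n+1)/2 + (n-1)/2} = ((x_n - y_n)^2 + t)^{-1}$, hence $\lec (|x_n-y_n| + \sqrt t)^{-2}$). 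Then I reduce to the one-dimensional integral
\[
\int_{\R} \frac{1}{(|x_n - y_n| + \sqrt t)^{2}}\,\frac{1}{\bka{y_n}^{2a}}\, dy_n,
\]
and apply \eqref{lemma2.2d1} (or Lemma \ref{lemma2.2} with $d=1$, $k=2$, $m=2a$): since $2 > 1$ the $y_n$-integral near the singularity at $y_n = x_n$ contributes $\sqrt t^{\,1-2} = t^{-1/2}$ times the decay $\bka{x_n}^{-2a}$, and for $0 \le a \le 1$ one absorbs the remaining factors into $t^{-1/2}\bka{x_n}^{-a}$ using $0 \le a \le 1$ (so $\bka{x_n}^{-2a} \le \bka{x_n}^{-a}$ and $\sqrt t^{-1}\cdot\sqrt t^{-?}$... one must check no spurious positive power of $t$ appears; for small $t$ the factor $t^{-1/2}$ dominates, for large $t$ one uses that the integral is $\lec t^{-1/2}\bka{x_n}^{-a}$ directly as in \eqref{eq-bilinear-Ya}).

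The main obstacle, as in the $Y_a$ case, is the endpoint bookkeeping: making sure that in the bilinear estimate the various branches of Lemma \ref{lemma2.2} (the logarithmic $\de_{(2a)\cdot}$ term and the $\one_{>d}$ terms) all collapse into the clean bound $t^{-1/2}\bka{x_n}^{-a}$ for the whole range $0 \le a \le 1$ without leaving a stray $\log t$ or a positive power of $t$; this is exactly the kind of case-splitting on $a < 1/2$, $a = 1/2$, $a > 1/2$ versus $a \le 1$ that one does at the end of the proof of \thref{lem-heat-est-Ya}, and it is routine but must be done carefully. For the linear estimate the only subtlety is the borderline $a = 1$, which is why the $\de_{a1}\log_+ t$ factor is present and unavoidable — this is inherited directly from \thref{lem-heat-int-decay-est}.
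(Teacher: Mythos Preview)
Your proposal is correct and follows essentially the same route as the paper: integrate out the tangential variables $y'$ to reduce both estimates to one-dimensional heat-kernel integrals in $y_n$ against $\bka{y_n}^{-a}$ (resp.\ $\bka{y_n}^{-2a}$), then invoke the $k=1$ case of \thref{lem-heat-int-decay-est} for the linear part and the $d=1$ case of Lemma~\ref{lemma2.2} for the bilinear part. The paper's proof is slightly terser in that it cites the one-dimensional version of \eqref{eq-KLLT-9.13} directly and works on $(0,\infty)$ without the reflection trick, but the substance is identical, including the endpoint bookkeeping on $2a$ versus $1$ that you flag at the end.
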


\begin{proof}
We first prove \eqref{eq-linear-Za}.
Denoting $\Ga_k$ the $k$-dimensional heat kernel, we have for $|f(y)|\le\bka{y_n}^{-a}$ that
\EQN{
&\abs{\int_{\R^n_+} G^D(x,y,t) f(y) dy} + \abs{\int_{\R^n_+} G^N(x,y,t) f(y) dy}\\
&\qquad\qquad\qquad \lec \int_0^\infty \frac{\Ga_1(x_n-y_n,t)}{(y_n+1)^a} \int_\Si \Ga_{n-1}(x'-y',t)\, dy'dy_n\\
&\qquad\qquad\qquad \lec \int_0^\infty \frac{\Ga_1(x_n-y_n,t)}{(y_n+1)^a}\, dy_n
\lec (1+\de_{a=1}\log_+t)(x_n+1)^{-a},
}
where we used the one-dimensional version of \eqref{eq-KLLT-9.13} for the last inequality and $0<a\le1$.
This proves \eqref{eq-linear-Za}.

As for \eqref{eq-bilinear-Za}, in view of the estimate \eqref{eq-heat-Green-est-q=1}, it suffices to show
\[
\int_{\R^n_+} \frac1{(|x-y|+\sqrt t)^{n+1}} \frac1{\bka{y_n}^{2a}}\, dy \lec t^{-\frac12} \frac1{\bka{x_n}^a}.
\]
By Lemma \ref{lemma2.2}, we indeed have
\EQN{
\int_{\R^n_+} \frac1{(|x-y|+\sqrt t)^{n+1}} \frac1{\bka{y_n}^{2a}}\, dy
&\lec \int_0^\infty \frac1{(y_n+1)^{2a}} \int_\Si \frac1{(|x-y|+\sqrt t)^{n+1}}\, dy'dy_n\\
&\lec \int_0^\infty \frac1{(y_n+1)^{2a}(|x_n-y_n|+\sqrt t)^2}\, dy_n\\
&\lec R^{-1-2a} + \de_{2a=1} R^{-2}\log R + \one_{2a>1} R^{-2} + R^{-2a} t^{-\frac12}\\
&\lec t^{-\frac12} \frac1{\bka{x_n}^a},
}
where $R=x_n+\sqrt t+1$. We have used $a\le1$ to bound $\one_{2a>1}R^{-2}\lec t^{-1/2}\bka{x_n}^{-a}$.
This completes the proof of the lemma.
\end{proof}

\begin{lem}\thlabel{lem-heat-est-Lq-uloc}
Let $n\ge2$. Let $1\le p\le q\le\infty$.
For $f\in L^p_\uloc$, $k,l\in\N_0$. Then
\EQS{
\norm{\int _{\R^n_+} \pd_x^k\pd_y^l G^D(x,y,t) f(y)\,dy }_{L^q_\uloc(\R^n_+)}
&+ \norm{\int _{\R^n_+} \pd_x^k\pd_y^l G^N(x,y,t) f(y)\,dy }_{L^q_\uloc(\R^n_+)} \\
&\le C t^{-\frac{k+l}2}\bke{ 1+t^{-\frac{n}2\bke{\frac1p-\frac1q}} } \norm{f}_{L^p_\uloc(\R^n_+)}.
}
\end{lem}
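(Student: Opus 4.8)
The plan is to dominate the two operators in absolute value by a single convolution on $\R^n$ and then run the standard unit–cube decomposition for $L^p_\uloc\to L^q_\uloc$ estimates. Write $m=k+l$. Starting from the explicit formulas \eqref{eq-GN-def}, \eqref{eq-GD-def}, the Gaussian bounds $|\pd_z^m\Ga(z,t)|\lec t^{-(n+m)/2}e^{-|z|^2/ct}$ for the heat kernel and its derivatives, and the elementary inequality $|x-y^*|\ge|x-y|$ valid for $x,y\in\R^n_+$ (because $(x_n+y_n)^2\ge(x_n-y_n)^2$), one gets the clean pointwise bound
\[
|\pd_x^k\pd_y^l G^D(x,y,t)|+|\pd_x^k\pd_y^l G^N(x,y,t)|\lec t^{-\frac{n+m}2}e^{-|x-y|^2/ct}=:\Phi_t(x-y).
\]
Extending $|f|$ by zero to $\R^n$, the left side of the lemma is then $\lec\norm{\Phi_t*|f|}_{L^q_\uloc(\R^n)}$, so it suffices to prove the convolution estimate $\norm{\Phi_t*g}_{L^q_\uloc(\R^n)}\lec t^{-m/2}\bke{1+t^{-\frac n2(1/p-1/q)}}\norm{g}_{L^p_\uloc(\R^n)}$ for $g\ge0$.

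For the convolution estimate, set $1/r=1-1/p+1/q$, so $1\le r\le\infty$ since $p\le q$; this covers the full range $1\le p\le q\le\infty$, including $q=1$ and $q=\infty$. Cover $\R^n$ by the unit cubes $Q_j=j+[0,1)^n$, $j\in\ZZ^n$, and split $g=\sum_j g_j$ with $g_j=g\one_{Q_j}$. For any unit cube $Q=x_0+[0,1)^n$ and any $j$, the values of $\Phi_t$ relevant to $(\Phi_t*g_j)|_Q$ all lie in $Q-Q_j$, a translate of $(-1,1)^n$, so Young's inequality gives $\norm{\Phi_t*g_j}_{L^q(Q)}\le\norm{\Phi_t\one_{Q-Q_j}}_{L^r(\R^n)}\norm{g_j}_{L^p}\le\norm{\Phi_t\one_{Q-Q_j}}_{L^r(\R^n)}\norm{g}_{L^p_\uloc}$. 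Summing in $j$ yields $\norm{\Phi_t*g}_{L^q(Q)}\le S_t\,\norm{g}_{L^p_\uloc}$ with $S_t:=\sum_{j\in\ZZ^n}\norm{\Phi_t\one_{Q-Q_j}}_{L^r(\R^n)}$, and $S_t$ is bounded independently of $x_0$ because the cubes $Q-Q_j$ are centered on a fixed unit lattice translated by $x_0$.

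It remains to bound $S_t$. Split the sum into the $\le C_n$ indices $j$ with $Q-Q_j$ meeting $B_{3\sqrt n}(0)$, and the rest. For the near terms, scaling gives $\norm{\Phi_t\one_{Q-Q_j}}_{L^r(\R^n)}\le\norm{\Phi_t}_{L^r(\R^n)}=Ct^{-m/2-\frac n2(1/p-1/q)}$. For the far terms one has $\mathrm{dist}(0,Q-Q_j)\gec|j|$, hence $\sup_{Q-Q_j}\Phi_t\lec t^{-(n+m)/2}e^{-c|j|^2/t}$ and $\norm{\Phi_t\one_{Q-Q_j}}_{L^r(\R^n)}\lec t^{-(n+m)/2}e^{-c|j|^2/t}$; summing, $\sum_{|j|\gec1}t^{-(n+m)/2}e^{-c|j|^2/t}\lec t^{-(n+m)/2}(1+\sqrt t)^{\,n}$, which is $\lec t^{-m/2}$: for $t\ge1$ the factor $(1+\sqrt t)^n$ beats $t^{-n/2}$, and for $t\le 1$ the constraint $|j|\gec1$ makes the sum exponentially small in $1/t$, which dominates the power $t^{-(n+m)/2}$. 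Altogether $S_t\lec t^{-m/2-\frac n2(1/p-1/q)}+t^{-m/2}=t^{-m/2}\bke{1+t^{-\frac n2(1/p-1/q)}}$, which is the claim with $m=k+l$.

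The only delicate point is the off–diagonal sum, where $t\le1$ and $t\ge1$ must be handled separately to obtain the additive $1+$ rather than a pure power $t^{-\frac n2(1/p-1/q)}$; this reflects that $L^p_\uloc$ data such as constants are fixed by the heat semigroup, so no global $t\to\infty$ decay is available. Everything else is routine cube–decomposition bookkeeping, and the argument is the (strictly easier, since there is no pressure and the kernels are sub-Gaussian) heat analogue of \cite[Lemma 9.4]{Green}.
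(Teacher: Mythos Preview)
Your argument is correct. The pointwise Gaussian domination of $\pd_x^k\pd_y^lG^{D/N}$ by $\Phi_t(x-y)$ is valid (using $|x-y^*|\ge|x-y|$ on $\R^n_+$), and your cube decomposition plus Young's inequality gives exactly the stated bound, uniformly in the reference cube since $S_t$ depends only on the lattice $x_0+\ZZ^n$ and the radial $\Phi_t$.

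The route differs from the paper's. The paper bounds the kernels by the rational profile $H^0_t(x)=t^{-(n+m)/2}(|x/\sqrt t|^2+1)^{-(n+m)/2}$ and then invokes the Maekawa--Terasawa convolution estimate \cite[Theorem 3.1]{MaTe} as a black box, which immediately yields $t^{-m/2}\bke{t^{-\frac n2(1/p-1/q)}\norm{H^0_1}_{L^r}+\norm{H^0_1}_{L^1}}\norm{f}_{L^p_\uloc}$. Your approach instead reproves (a Gaussian-kernel special case of) that convolution estimate by hand. The payoff of your version is self-containment and a slightly cleaner endpoint: the Gaussian $e^{-|x|^2/c}$ is in $L^1\cap L^\infty$ for every $m\ge0$, whereas the paper's rational $H^0_1$ just barely fails $L^1$ when $m=k+l=0$, so your choice of majorant sidesteps a borderline issue. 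The paper's version is shorter once one is willing to cite \cite{MaTe}.
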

\begin{proof}
Denote $m=k+l$.
By the estimate \eqref{eq-heat-Green-est}, we get
\[
\abs{\int _{\R^n_+} \pd_x^k\pd_y^l G^D(x,y,t) f(y)\,dy } + \abs{\int _{\R^n_+} \pd_x^k\pd_y^l G^N(x,y,t) f(y)\,dy }
\lec \int_{\R^n} H_t^0(x-y) |f(y)|\, dy,
\]
where
\[
H_t^0(x) = t^{-\frac{n+m}2} H_1^0\bke{\frac{x}{\sqrt t}},\quad
H_1^0(x) = \frac1{(|x|^2+1)^{\frac{n+m}2}}\in L^1\cap L^\infty(\R^n).
\]
By the estimate of Maekawa-Terasawa \cite[Theorem 3.1]{MaTe} with $\frac1q=\frac1r+\frac1p-1$, we derive
\EQN{
&\norm{\int _{\R^n_+} \pd_x^k\pd_y^l G^D(x,y,t) f(y)\,dy }_{L^q_\uloc(\R^n_+)}
+ \norm{\int _{\R^n_+} \pd_x^k\pd_y^l G^N(x,y,t) f(y)\,dy }_{L^q_\uloc(\R^n_+)} \\
&\qquad\qquad\qquad\qquad\qquad \lec t^{-\frac{m}2}\bke{t^{-\frac{n}2\bke{\frac1p-\frac1q}}\norm{H_1^0}_{L^r(\R^n)} + \norm{H_1^0}_{L^1(\R^n)} } \norm{f}_{L^p_\uloc(\R^n_+)},
}
completing the proof of the lemma.
\end{proof}

Here we quote an improved version of \cite[Lemma 9.1(b)]{Green} shown in \cite{Green-errata}.

\begin{lem}\label{lem-9.1}
Let $n \ge 2$, $1 \le p \le q \le \infty$ and $1<q$.
Let $F\in L^p(\R^n_+)$, $a,b\in \N_0$ and $1\le a+b$.
Assume $b\ge1$ if $p=q=\infty$.
Then
\[ 
\norm{\int _{\R^n_+} \pd_{x}^a\pd_{y}^b  G_{ij}(x,y,t) F(y) dy }_{L^q(\R^n_+)} \le C t^{-\frac{a+b}{2}-\frac{n}2(\frac1p-\frac1q)} \norm{F}_{L^p(\R^n_+)}.
\]
\end{lem}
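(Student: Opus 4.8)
The plan is to reduce the bound to the pointwise kernel estimate \eqref{eq_Green_estimate}, split the resulting kernel into a convolution-type ``interior'' part and a ``boundary'' part involving $x^{*}=(x',-x_{n})$, treat the interior part by Young's convolution inequality, and treat the boundary part, after reflecting in the $y_{n}$-variable, by a mixed-norm Fubini argument (the $y'$-convolution first, then the $y_{n}$-direction) using the one-dimensional estimates in Lemmas~\ref{lemma2.1}--\ref{lemma2.2}.

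First I would apply \eqref{eq_Green_estimate} with $m=0$, choosing $l,k,q$ so that $l+k+q=a+b$ records which of the $a$ derivatives in $x$ and $b$ derivatives in $y$ are tangential and which are normal. This gives (with $k_{i}=(k-\de_{in})_{+}$, $q_{j}=(q-\de_{jn})_{+}$ as in \eqref{eq_Green_estimate})
\[
\abs{\pd_{x}^{a}\pd_{y}^{b}G_{ij}(x,y,t)}\lec\frac{1}{(|x-y|^{2}+t)^{\frac{n+a+b}{2}}}+\frac{\mathrm{Ln}}{(|x^{*}-y|^{2}+t)^{\frac{n+a+b-k_{i}-q_{j}}{2}}(x_{n}^{2}+t)^{\frac{k_{i}}{2}}(y_{n}^{2}+t)^{\frac{q_{j}}{2}},
\]
where $0\le k_{i}+q_{j}\le a+b$, and $\mathrm{Ln}\equiv1$ unless $n=2$, in which case every argument of $\mathrm{Ln}$ is $\lec|x^{*}-y|$, so $\mathrm{Ln}\lec\log\bke{2+|x^{*}-y|/\sqrt{t}}$. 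For the interior part, $(|x-y|^{2}+t)^{-(n+a+b)/2}=t^{-(n+a+b)/2}\Phi((x-y)/\sqrt{t})$ with $\Phi(w)=(|w|^{2}+1)^{-(n+a+b)/2}\in L^{r}(\R^{n})$ for every $1\le r\le\infty$ (using $a+b\ge1$ when $r=1$); Young's inequality with $\tfrac1r=1-\tfrac1p+\tfrac1q$ gives the contribution $\lec t^{-(n+a+b)/2+n/(2r)}\norm{F}_{L^{p}}=t^{-\frac{a+b}{2}-\frac n2(\frac1p-\frac1q)}\norm{F}_{L^{p}}$, which is exactly the claimed bound.

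For the boundary part I would use $|x^{*}-y|=|x-y^{*}|$ and the measure-preserving reflection $y\mapsto y^{*}=(y',-y_{n})$ of $\R^{n}_{+}$ onto $\R^{n}_{-}$ to rewrite the operator as integration of $G(z):=F(z^{*})$, with $\norm{G}_{L^{p}(\R^{n}_{-})}=\norm{F}_{L^{p}(\R^{n}_{+})}$, against $t^{-k_{i}/2}(z_{n}^{2}+t)^{-q_{j}/2}(|x-z|^{2}+t)^{-\frac{n+a+b-k_{i}-q_{j}}{2}}\mathrm{Ln}$. I would first absorb the $n=2$ logarithm by $\log(2+|x-z|/\sqrt{t})\le C_{\epsilon}\,((|x-z|^{2}+t)/t)^{\epsilon/2}$ for every $\epsilon>0$, which preserves the total $(\sqrt t,|x-z|)$-homogeneity (so no power of $\log t$ is produced) and is harmless for $\epsilon$ small. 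Then I would integrate by Fubini: the $z'$-integral over $\R^{n-1}$ is a convolution whose $L^{s}(\R^{n-1})$-norm is bounded by a negative power of $(x_{n}-z_{n})^{2}+t$ (by heat-kernel scaling, using that the surviving exponent exceeds $\tfrac{n-1}{s}$), and the remaining $z_{n}$-integral over $(-\infty,0)$ is controlled against the weight $(z_{n}^{2}+t)^{-q_{j}/2}$ by Lemma~\ref{lemma2.1} and \eqref{lemma2.2d1}. Chaining Young in $z'$, Young in $z_{n}$, and H\"older in $x_{n}$ against $(x_{n}^{2}+t)^{-k_{i}/2}$, and checking that the resulting power of $t$ is insensitive to the auxiliary exponents, again yields $\lec t^{-\frac{a+b}{2}-\frac n2(\frac1p-\frac1q)}\norm{F}_{L^{p}}$.

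The main obstacle is the boundary part at the endpoint $p=q=\infty$, especially for $n=2$ with all derivatives normal: there \eqref{eq_Green_estimate} gives only the boundary decay $(|x^{*}-y|^{2}+t)^{-n/2}$, which is not integrable in $y$ over $\R^{n}_{+}$, so the weight $(y_{n}^{2}+t)^{-q_{j}/2}$ cannot be dropped (as $t^{-q_{j}/2}$) but must be kept and the $y_{n}$-integral done by hand. The hypothesis $b\ge1$ is precisely what makes the $y$-integral converge there: any $y$-derivative yields a genuine gain in the boundary kernel --- it either raises the exponent of $|x^{*}-y|$ (via a tangential derivative, which enters $l$, or via $\pd_{y_{n}}$ on the $j=n$ component) or forces $q_{j}\ge1$, so that $(y_{n}^{2}+t)^{-q_{j}/2}$ itself supplies $y_{n}$-decay --- while the logarithm, having all arguments $\lec|x^{*}-y|$, is absorbed by an arbitrarily small power as above. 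Essentially the same borderline bookkeeping, with the margin now coming from $a+b\ge1$, covers $p=q<\infty$ (where $r=1$ and the plain Young arguments are also marginal).
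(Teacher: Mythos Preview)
The paper does not prove this lemma; it merely quotes it from \cite{Green-errata}, noting that the improvement over \cite[Lemma 9.1(b)]{Green} is precisely the inclusion of the case $p=q=\infty$ when $n=2$. So there is no in-text proof to compare against, and I can only assess your argument on its own merits.

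Your treatment of the interior term and of the boundary term for $p<q$ (where the Young exponent satisfies $r>1$) is correct: absorbing the logarithm by an $\epsilon$-power preserves homogeneity and leaves a genuine convolution kernel in $L^r$. Your identification of the obstruction at $p=q=\infty$, and of the role of $b\ge1$ in resolving it, is also correct.

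There is, however, a real gap for $1<p=q<\infty$ (so $r=1$) when $n=2$ and the borderline $k_i+q_j=a+b$ occurs, i.e.\ when all derivatives are normal and $i,j$ are tangential indices. Two problems arise. First, your sketch is internally inconsistent: you replace $(x_n^2+t)^{-k_i/2}$ by $t^{-k_i/2}$ when writing the reflected kernel, yet later invoke ``H\"older in $x_n$ against $(x_n^2+t)^{-k_i/2}$''. Second, and more substantively, ``Young in $z_n$'' is not available: after the $z'$-convolution, the one-dimensional kernel carries the weight $(|z_n|+\sqrt t)^{-q_j}$ and is not a convolution in $z_n$. If you discard that weight (bounding it by $t^{-q_j/2}$), the remaining kernel $(|x-z|+\sqrt t)^{-(M-\epsilon)}$ with $M=n$ is \emph{not} in $L^1(\R^n)$, so Young fails; your claim that ``the margin now comes from $a+b\ge1$'' is incorrect here, since the margin $a+b-k_i-q_j$ is exactly zero. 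One way to close the gap is a weighted Schur test: with $h(y)=(y_n+\sqrt t)^{-\gamma}$ for suitable $\gamma\in(0,1)$ one can verify $\int K_B(x,y)h(y)\,dy\lesssim t^{-(a+b)/2}h(x)$ and the dual inequality (the factor $(x_n+\sqrt t)^{-k_i}$, which you must \emph{not} discard, is what makes this work), yielding the $L^p\to L^p$ bound. Your sketch gestures in this direction but does not carry it out.
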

It improves \cite[Lemma 9.1(b)]{Green} by including the case $p=q=\infty$ and $n=2$.

\subsection{Improved boundary-vanishing estimates}\label{app-improved-bdry-vanish-est}

In this subsection, we prove the improved boundary-vanishing estimate
 \eqref{eq-improved-bdry-vanish-est-new}. It will follow from \eqref{eq_thm3_al_yn}  
  and the following estimate
\EQS{\label{eq-improved-bdry-vanish-est}
|\pd_{y_p}G_{ij}(x,y,t)|
&\lesssim\frac{x_n^\al}{(|x-y|+\sqrt t)^{n+1}(|x-y^*|+\sqrt t)^\al} \\
&\quad + \frac{x_n^\al{\rm Ln}}{\sqrt t^\al(|x-y^*|+\sqrt t)^{n+1-\de_{pn}}(y_n+\sqrt t)^{\de_{pn}}},
}
where
\[
{\rm Ln} = 1 + \de_{n2} \bkt{\log\bke{(1-\de_{pn})|x'-y'|+x_n+y_n + \sqrt t} - \log(\sqrt t) }.
\]



Note that \eqref{eq_thm3_al_yn} yields
\EQS{\label{eq_thm3_al_yn-new}
|\pd_{y_p}G_{ij}(x,y,t)|
&\lesssim\frac{x_n^\al}{(|x-y|+\sqrt t)^{n+1}(|x-y^*|+\sqrt t)^\al} \\
&\quad + \frac{x_n^\al{\rm LN}}{\sqrt t^\al(|x-y^*|+\sqrt t)^{n+1-\de_{pn}(1-\de_{jn})}(y_n+\sqrt t)^{\de_{pn}(1-\de_{jn})}}.
}
When $j=n$, among the four summands in $\LN$, only the term $\LN^{0n}_{ij1q}$ has a log factor when $n=2$, and its $\mu = \de_{i<n}$ no matter $q=0$ or $q=1$. Thus when $j=n$, 
\[
{\rm LN}\sim1+\de_{n2}(1-\de_{in})\bkt{\log\bke{(1-\de_{in})(1-\de_{pn})|x'-y'| + x_n + y_n+\sqrt t} - \log(\sqrt t)}.
\]

Therefore, using \eqref{eq_thm3_al_yn-new} for the case $j=n$, and using \eqref{eq-improved-bdry-vanish-est} for $j<n$, we obtain \eqref{eq-improved-bdry-vanish-est-new}.


\bigskip 

We now proceed to prove the improved boundary-vanishing estimates \eqref{eq-improved-bdry-vanish-est}.
The idea is to use the first estimates of Green tensor in \cite[Proposition 1.1]{Green}.
First of all, note that \cite[Proposition 1.1]{Green} gives
\EQ{\label{eq-pdy'G}
|\pd_{y'}G_{ij}(x,y,t)| \lec \frac1{(|x-y|+\sqrt t)^{n+1}},
}
\EQ{\label{eq-pdynG}
|\pd_{y_n}G_{ij}(x,y,t)| \lec \frac1{(|x-y|+\sqrt t)^{n+1}} + \frac1{(|x-y^*|+\sqrt t)^{n}(y_n+\sqrt t)},
}
\EQ{\label{eq-pdy'xnG}
|\pd_{y'}\pd_{x_n}G_{ij}(x,y,t)| \lec \frac1{(|x-y|+\sqrt t)^{n+2}} +  \frac{1+\de_{n2}\log\bke{\frac{|x-y^*|+\sqrt t}{\sqrt t}}}{(|x-y^*|+\sqrt t)^{n+1}(x_n+\sqrt t)},
}
and
\EQ{\label{eq-pdynxnG}
|\pd_{y_n}\pd_{x_n}G_{ij}(x,y,t)| \lec \frac1{(|x-y|+\sqrt t)^{n+2}} + \frac{1+\de_{n2}\log\bke{\frac{x_n+y_n+\sqrt t}{\sqrt t}}}{(|x-y^*|+\sqrt t)^n(x_n+\sqrt t)(y_n+\sqrt t)}.
}

\medskip

\noindent{\bf Boundary estimate of $\pd_{y'}G_{ij}$.}
It follows from \eqref{eq_thm3_al_yn} with $l=1$, $q=m=0$ that
\EQ{\label{eq-pdy'-est}
|\pd_{y'}G_{ij}(x,y,t)|
\lesssim\frac{x_n}{(|x-y|+\sqrt t)^{n+1}(|x-y^*|+\sqrt t)}  + \frac{x_n\bke{ 1+\de_{n2}\log\frac{|x-y^*|+\sqrt t}{\sqrt t} }}{\sqrt t(|x-y^*|+\sqrt t)^{n+1}}.
}

\medskip

\noindent{\bf Boundary estimate of $\pd_{y_n}G_{ij}$.}
By $\pd_{y_n}G_{ij}|_{x_n=0}=0$ and \eqref{eq-pdynxnG}, we have
\EQS{\label{eq_thm3_pf-yn}
&\abs{\pd_{y_n}G_{ij}(x,y,t)} 
\le \int_0^{x_n} \abs{\pd_{y_n}\pd_{x_n}G_{ij}(x',z_n,y,t)} dz_n\\
&\lec \! \int_0^{x_n}\!\! \bkt{ \frac1{(|x'-y'|+|z_n-y_n|+\sqrt t)^{n+2}} +  \frac{1+\de_{n2}\log\bke{\frac{z_n+y_n+\sqrt t}{\sqrt t}}}{(|x'-y'|+z_n+y_n+\sqrt t)^n(z_n+\sqrt t)(y_n+\sqrt t)} } dz_n\\
&\le \int_0^{x_n}\!\! \bkt{ \frac1{(|x'-y'|+|z_n-y_n|+\sqrt t)^{n+2}} +  \frac{1+\de_{n2}\log\bke{\frac{x_n+y_n+\sqrt t}{\sqrt t}}}{\sqrt t (|x'-y'|+z_n+y_n+\sqrt t)^n(y_n+\sqrt t) } } dz_n.
}

Case 1.
If $3x_n<y_n$, then $z_n+y_n>|z_n-y_n|>\frac12 (x_n + y_n)$ for $0<z_n<x_n$. Thus,  \eqref{eq_thm3_pf-yn} gives
\EQN{
\abs{\pd_{y_n}G_{ij}(x,y,t)} 
&\lesssim \frac{x_n}{(|x-y^*|+\sqrt t)^{n+2}} + \frac{x_n\bke{ 1+\de_{n2}\log\frac{x_n+y_n+\sqrt t}{\sqrt t} }}{\sqrt t(|x-y^*|+\sqrt t)^n(y_n+\sqrt t)}.
}

Case 2.
If $y_n<3x_n < \frac12 \bke{|x'-y'|+y_n+\sqrt{t}}$, then $x_n+y_n<\frac43(|x'-y'|+\sqrt t)$, which implies $|x-y^*|+\sqrt t\lec |x'-y'|+\sqrt t$.
We drop $|z_n-y_n|$ and $z_n+y_n$ in the integrand of \eqref{eq_thm3_pf-yn} to get
\EQN{
\abs{\pd_{y_n}G_{ij}(x,y,t)} 
&\lesssim\frac{x_n}{(|x'-y'|+\sqrt t)^{n+2}} + \frac{x_n\bke{ 1+\de_{n2}\log\frac{x_n+y_n+\sqrt t}{\sqrt t} }}{\sqrt t(|x'-y'|+\sqrt t)^n(y_n+\sqrt t)} \\
&\lesssim\frac{x_n}{(|x-y^*|+\sqrt t)^{n+2}} + \frac{x_n\bke{ 1+\de_{n2}\log\frac{x_n+y_n+\sqrt t}{\sqrt t} }}{\sqrt t(|x-y^*|+\sqrt t)^n(y_n+\sqrt t)}.
}

Case 3.
If $3 x_n > y_n > \frac12 \bke{|x'-y'|+y_n+\sqrt{t}}$ or $3 x_n > \frac12 \bke{|x'-y'|+y_n+\sqrt{t}} > y_n$, then $x_n\approx |x-y^*|+\sqrt{t}$. By \eqref{eq-pdynG},
\EQN{
|\pd_{y_n}G_{ij}(x,y,t)|
&\lesssim\frac1{(|x-y|+\sqrt t)^{n+1}} + \frac1{(|x-y^*|+\sqrt t)^n (y_n+\sqrt t)}\\
&\lesssim\frac{x_n}{(|x-y|+\sqrt t)^{n+1}(|x-y^*|+\sqrt t)} + \frac{x_n}{(|x-y^*|+\sqrt t)^{n+1} (y_n+\sqrt t)}.
}

Combing the above three cases, we obtain
\EQ{\label{eq-pdyn-est}
|\pd_{y_n}G_{ij}(x,y,t)|
\lesssim\frac{x_n}{(|x-y|+\sqrt t)^{n+1}(|x-y^*|+\sqrt t)}  + \frac{x_n\bke{ 1+\de_{n2}\log\frac{x_n+y_n+\sqrt t}{\sqrt t} }}{\sqrt t(|x-y^*|+\sqrt t)^n(y_n+\sqrt t)}.
}

The estimate \eqref{eq-improved-bdry-vanish-est} follows from \eqref{eq-pdy'-est} and \eqref{eq-pdyn-est} and an interpolation argument with \eqref{eq-pdy'G} and \eqref{eq-pdynG}.
\qed

\section{Applications to Stokes and Navier-Stokes equations}\label{sec:app-NSE}

In this section, we prove Theorems \ref{thm0.1-GMS}, \ref{thm-mix-decay}, and \ref{thm-mild-bdry}.

\subsection{$L^1$ gradient bounds for solutions to the Stokes system}\label{sec-L1-grad-est}

This subsection is devoted to the proof of Theorem \ref{thm0.1-GMS}.

\begin{proof}[Proof of Theorem \ref{thm0.1-GMS}]

By \eqref{E1.3} we have
\[
u_i(x,t) = \sum_{j=1}^n \int_{\R^n_+} G_{ij}(x,y,t) (u_0)_j(y)\, dy .
\]
We first estimate $u$ in $L^q$.
Applying the Minkowski integral inequality and using Green tensor estimates \eqref{eq_Green_estimate}, we obtain
\EQS{\label{eq-thm2.1-maremonti-u-est}
\norm{u(t)}_{L^q(\R^n_+)}
&\lec \bke{\int_{\R^n_+}\abs{\int_{\R^n_+} G_{ij}(x,y,t)u_0(y)\, dy}^q dx }^{\frac1q}\\
&\lec \int_{\R^n_+} \bke{ \int_{\R^n_+} |G_{ij}(x,y,t)|^q\, dx}^{\frac1q} |u_0(y)|\, dy\\
&\lec \int_{\R^n_+} \bke{ \int_{\R^n_+} \frac1{(|x-y|+\sqrt t)^{nq}}\, dx}^{1/q} |u_0(y)|\, dy\\
&\lec t^{-\frac{n}2\bke{1-\frac1q}} \norm{u_0}_{L^1(\R^n_+)}.
}
For $\nb u$, in case that $q>1$, the estimate in \cite[(9.3)]{Green} yields
\[
\norm{\nb u(t)}_{L^q(\R^n_+)} \lec t^{-\frac12-\frac{n}2\bke{1-\frac1q}} \norm{u_0}_{L^1(\R^n_+)}.
\]
It remains to consider the case that $q=1$.
Suppose that $n\ge3$.  Using Green tensor estimates \eqref{eq_Green_estimate}, we have
\EQN{
&\norm{\na u(t)}_{L^1(\R^n_+)}
\le \int_{\R^n_+} \int_{\R^n_+} |\pd_x G_{ij}(x,y,t)|\, dx\, |u_0(y)|\, dy\\
&\lec \int_{\R^n_+} \int_{\R^n_+} \bkt{\frac1{(|x-y|+\sqrt{t})^{n+1}} + \frac1{(|x^*-y|+\sqrt{t})^n (x_n+\sqrt{t})}} dx\, |u_0(y)|\, dy=I_1+I_2.
}
We have
\[
I_1 \lec \int_{\R^n_+} \int_0^\infty \frac1{(r+\sqrt{t})^{n+1}}\, r^{n-1}\,dr\, |u_0(y)|\, dy
\lec \int_{\R^n_+} \frac1{\sqrt{t}} \, |u_0(y)|\, dy
\]
and
\EQN{
I_2 &\lec  \int_{\R^n_+} \int_0^\infty \int_0^\infty \frac1{(r+x_n+y_n+\sqrt{t})^n}\, r^{n-2}\, dr\, \frac1{x_n+\sqrt{t}}\, dx_n  |u_0(y)|\, dy\\
&\lec \int_{\R^n_+}  \int_0^\infty \frac1{(x_n+y_n+\sqrt{t})(x_n+\sqrt t)}\, dx_n\, |u_0(y)|\, dy\\
&\le \int_{\R^n_+}  \int_0^\infty \frac1{(x_n+\sqrt t)^2}\, dx_n\, |u_0(y)|\, dy
\lec t^{-1/2} \norm{u_0}_{L^1(\R^n_+)} .
}
When $n=2$, the first term $I_1$ remains the same and the second term $I_2$ becomes
\EQN{
I_2 &= \int_{\R^2_+} \int_{\R^2_+} \frac{\log\bke{|x_1-y_1|+x_2+y_2+\sqrt{t}} - \log(\sqrt{t})}{(|x^*-y|+\sqrt{t})^2 (x_2+\sqrt{t})}\, dx\, |u_0(y)|\, dy\\
&\lec \int_{\R^2_+} \int_0^\infty \int_0^\infty \frac1{(r+x_2+y_2+\sqrt{t})^2} \bke{\frac{r+x_2+y_2}{\sqrt{t}}}^\al dr\, \frac1{x_2+\sqrt{t}}\, dx_2\, |u_0(y)|\, dy,\quad \al>0,\\
&\lec \frac1{\sqrt{t}^\al} \int_{\R^2_+} \int_0^\infty \int_0^\infty \frac1{(r+x_2+y_2+\sqrt{t})^{2-\al}}\, dr\, \frac1{x_2+\sqrt{t}}\, dx_2\, |u_0(y)|\, dy\\
&\lec \frac1{\sqrt{t}^\al} \int_{\R^2_+} \int_0^\infty \frac1{(x_2+y_2+\sqrt{t})^{1-\al}(x_2+\sqrt{t})}\, dx_2\, |u_0(y)|\, dy\\
&\le \frac1{\sqrt{t}^\al} \int_{\R^2_+} \int_0^\infty \frac1{(x_2+\sqrt{t})^{2-\al}}\, dx_2\, |u_0(y)|\, dy
\lec t^{-1/2} \norm{u_0}_{L^1(\R^n_+)}.
}

To estimate $\pd_tu$, we divide the proof into two cases: $n\ge3$ and $n=2$.
For $n\ge3$, following the same approach as in \eqref{eq-thm2.1-maremonti-u-est} using Minkowski integral inequality and using Green tensor estimates \eqref{eq_Green_estimate} gives
\EQN{
\norm{\pd_tu(t)}_{L^q(\R^n_+)}
&\lec \int_{\R^n_+} \bkt{ \int_{\R^n_+} \bke{\frac1{(|x-y|+\sqrt t)^{(n+2)q}} + \frac1{t^q(|x^*-y|+\sqrt t)^{nq}} } dx}^{1/q} |u_0(y)|\, dy\\
&\lec t^{-1} \int_{\R^n_+} \bke{\int_{\R^n_+} \frac1{(|x-y|+\sqrt t)^{nq}}\, dx}^{\frac1q} |u_0(y)|\, dy
\lec t^{-1-\frac{n}2\bke{1-\frac1q}} \norm{u_0}_{L^1(\R^n_+)}.
}
When $n=2$, due to the logarithmic term $\textup{LN}^{mn}_{ijkq}$ in the Green tensor estimate \eqref{eq_Green_estimate}, there is an additional term
\EQN{
\int_{\R^2_+} &\bke{\int_{\R^2_+} \frac{\log\bke{|x_1-y_1|+x_2+y_2+\sqrt t} - \log(\sqrt t)}{t^q(|x^*-y|+\sqrt t)^{2q}}\, dx }^{\frac1q} |u_0(y)|\, dy\\
&\quad\lec t^{-1} \int_{\R^n_+} \bke{\int_0^\infty \frac1{(r+\sqrt t)^{2q}} \bke{\frac{r}{\sqrt t} }^\al r\,dr }^{\frac1q} |u_0(y)|\, dy,\quad \al>0,\\
&\quad\lec t^{-1} \int_{\R^n_+} \bke{ \frac1{\sqrt t^{2q-2}} }^{\frac1q} |u_0(y)|\, dy
= t^{-1-\bke{1-\frac1q}} \norm{u_0}_{L^1(\R^2_+)}.
}
This completes the proof of Theorem \ref{thm0.1-GMS}.
\end{proof}

\subsection{Mild solutions of Navier-Stokes equations with mixed pointwise decay}\label{sec-mix-nse}

In this subsection we construct a mild solution of the Navier-Stokes equations in $Y_{a,b}$, the space of functions with mixed decay, proving \thref{thm-mix-decay}.
The idea of the proof is to use the estimates in $Y_{a,b}$ blow in Lemma \ref{lem-mix-decay} and a fixed point argument. We skip the proof of \thref{thm-mix-decay} and focus on proving the following lemma.

\begin{lem}\thlabel{lem-mix-decay}
Let $n\ge 2$, $a\ge0$, $0\le b\le 1$, $a+b\le n$. For $u_0\in Y_{a,b}$ with $\div u_0=0$ and $u_{0,n}|_\Si=0$, and $(a,b)\neq (n-1,1)$,
\EQ{\label{eq-mix-linear}
\norm{ \sum_{j=1}^n \int_{\R^n_+} \breve G_{ij}(x,y,t) u_{0,j}(y)\, dy }_{Y_{a,b}} \le C
\bkt{ 1+(\de_{b1}+\de_{(a+b)n} )\log_+t}
\norm{u_0}_{Y_{a,b}}.
}
For $F\in Y_{2a,2b}$,
\EQ{\label{eq-mix-bilinear}
\norm{ \int_{\R^n_+} \pd_{y_p} G_{ij}(x,y,t) F_{pj}(y)\, dy }_{Y_{a,b}} \le C t^{-1/2} \norm{F}_{Y_{2a,2b}}.
}
\end{lem}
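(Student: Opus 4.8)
The proof will rely on the Green tensor pointwise estimates recalled in the excerpt, together with the integral estimates of Lemmas \ref{lemma2.1}--\ref{lem-heat-int-decay-est}. For the linear estimate \eqref{eq-mix-linear}, since $u_0$ is solenoidal, the action of the Green tensor coincides with the restricted Green tensor of Solonnikov, so $\sum_j\int \breve G_{ij}(x,y,t)u_{0,j}(y)\,dy = \delta_{ij}\int\Gamma(x-y,t)u_{0,j}\,dy + \int G^*_{ij}(x,y,t)u_{0,j}\,dy$. The heat part is handled by splitting $\bka{y}^{-a}\bka{y_n}^{-b}$ and using a two-variable version of \eqref{eq-KLLT-9.13}/\thref{lem-heat-int-decay-est}: one factor $\bka{y}^{-a}$ controls the tangential-plus-full decay, the other factor $\bka{y_n}^{-b}$ is governed by the one-dimensional heat kernel in $y_n$; the two logarithmic losses $\delta_{b1}\log_+t$ (from the endpoint $b=1$ in the $y_n$-direction, which is $1$-dimensional) and $\delta_{(a+b)n}\log_+t$ (from the endpoint $a+b=n$ in the full $n$-dimensional decay) appear exactly at these two critical exponents. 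For the $G^*$ part one uses the Solonnikov bound \eqref{Solonnikov.est} with $l=k=q=m=0$, namely $|G^*_{ij}(x,y,t)|\lesssim e^{-cy_n^2/t}(|x^*-y|^2+t)^{-n/2}$; the Gaussian factor $e^{-cy_n^2/t}$ makes the $y_n$-integration trivial and even supplies extra decay, so the $G^*$ contribution is no worse (and contributes no new log).

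**The bilinear estimate.** For \eqref{eq-mix-bilinear} I would use the improved boundary-vanishing / gradient estimate \eqref{eq-improved-bdry-vanish-est-new} (with $\alpha=0$, which reduces to the plain gradient bound), i.e. $|\pd_{y_p}G_{ij}(x,y,t)|\lesssim (|x-y|+\sqrt t)^{-(n+1)} + \wt{\rm Ln}\cdot(|x-y^*|+\sqrt t)^{-(n+1-\delta_{pn}(1-\delta_{jn}))}(y_n+\sqrt t)^{-\delta_{pn}(1-\delta_{jn})}$. Feeding in $|F_{pj}(y)|\le \|F\|_{Y_{2a,2b}}\bka{y}^{-2a}\bka{y_n}^{-2b}$, it suffices to show
\[
\int_{\R^n_+}\frac{dy}{(|x-y|+\sqrt t)^{n+1}\,\bka{y}^{2a}\bka{y_n}^{2b}} \lesssim t^{-1/2}\,\frac1{\bka{x}^a\bka{x_n}^b},
\]
and the analogous estimate with $|x-y^*|$ replacing $|x-y|$ and with the split exponent. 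Here one integrates first in $y'$ using Lemma \ref{lemma2.2} (which gives a gain $(|x_n-y_n|+\sqrt t)^{-2}$ roughly, up to a logarithmic factor that is absorbed because $2a<n$ unless $a=n/2$, handled by hand), then in $y_n$ using Lemma \ref{lemma2.2d1}; the exponent $n+1$ is large enough that all the $\delta$-terms in Lemma \ref{lemma2.2} stay subcritical, so no logarithm survives, which is consistent with the clean $t^{-1/2}$ on the right-hand side of \eqref{eq-mix-bilinear}. The logarithmic factor $\wt{\rm Ln}$ (present only when $n=2$) is dominated using \thref{lem-logxt} together with the off-diagonal decay.

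**Main obstacle.** The delicate point is the joint weight $\bka{x}^a\bka{x_n}^b$, which — as the paper explicitly warns — is not comparable to $\bka{x'}^a\bka{x_n}^{a+b}$; one cannot simply separate variables. I expect the main work to be a careful case analysis according to the relative sizes of $|x'|$, $x_n$, and $\sqrt t$ (and of $|y'|$, $y_n$): in the regime $x_n\gtrsim|x'|$ the weight behaves like $\bka{x_n}^{a+b}$ and the ``full'' decay exponent $a+b\le n$ is what matters, whereas in the regime $|x'|\gtrsim x_n$ it behaves like $\bka{x'}^a\bka{x_n}^b$ and one needs the anisotropic structure of $G^*$ (whose $|x^*-y|$ argument and Gaussian-in-$y_n$ decay are exactly suited to this). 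The excluded case $(a,b)=(n-1,1)$ and the appearance of the two distinct logarithmic exponents should fall out of this case analysis as the places where a borderline integral in Lemma \ref{lemma2.1} or \ref{lemma2.2} becomes exactly critical; Remark \ref{rem4.3} (referenced in the text) presumably records why $(n-1,1)$ genuinely fails. Once \thref{lem-mix-decay} is in hand, \thref{thm-mix-decay} follows from a standard contraction-mapping argument in $L^\infty(0,T;Y_{a,b})$ exactly as in \cite[Theorem 1.8]{Green}, with the $\log_+ t$ factors absorbed for small $T$.
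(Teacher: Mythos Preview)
Your overall decomposition for \eqref{eq-mix-linear} is correct, but your assessment of the $G^*$ piece is the reverse of what actually happens: the $G^*$ contribution is \emph{not} trivial, and it is precisely the $G^*$ analysis that forces the exclusion $(a,b)\neq(n-1,1)$ and that generates the $\de_{b1}\log_+t$ term. After integrating the Solonnikov bound in $y'$ via Lemma~\ref{lemma2.2}, one is left (for $0<a<n-1$) with a $y_n$-integral of the form
\[
J_1=\int_0^\infty \frac{e^{-cy_n^2/t}}{(y_n+1)^b(x_n+y_n+\sqrt t)}\,dy_n,
\]
which for $b=1$ gives a factor $\log(x_n+\sqrt t)$ that must be beaten down to $\log(2+\sqrt t)$ by a separate argument in the regime $x_n\gg\sqrt t$. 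Worse, when $a=n-1$ Lemma~\ref{lemma2.2} produces an additional factor $\log\bigl(1+(|x|+\sqrt t)/(y_n+1)\bigr)$ inside the $y_n$-integral; this can be absorbed by a $(\cdot)^\e$ trick only when $b<1$, and fails outright when $b=1$. That is exactly the content of Remark~\ref{rem4.3}: the upper bound $J(x,t)$ for $|u^*|$ genuinely violates the desired decay at $(n-1,1)$. So the delicate log bookkeeping and the exclusion live in $G^*$, not in the heat part as you anticipated; the Gaussian in $y_n$ does not wash out the $(y_n+1)^{-b}$ weight in the way you suggest.

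For \eqref{eq-mix-bilinear}, the paper avoids any log altogether by using the first Green-tensor estimate \cite[Proposition 1.1]{Green} directly: since no $x_n$-derivative is taken, $k=0$ forces $\mu_{ik}^m=0$ and hence $\LN=1$, giving the clean bound
\[
|\pd_{y_p}G_{ij}(x,y,t)|\lesssim \frac1{(|x-y|+\sqrt t)^{n+1}}+\frac1{(|x-y^*|+\sqrt t)^{n}(y_n+\sqrt t)}.
\]
Your route through \eqref{eq-improved-bdry-vanish-est-new} with $\al=0$ introduces $\wt{\rm Ln}$ unnecessarily, and \thref{lem-logxt} does not help here (there is no Gaussian factor to pair with the log); you would instead need $\log(R/\sqrt t)\lesssim(R/\sqrt t)^\e$, which works but costs exponent and complicates the already involved $y_n$-integration. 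The paper's choice sidesteps this.
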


\begin{remark}
We use the restricted Green tensor $\breve G_{ij}$ in \eqref{eq-mix-linear}, and the unrestricted Green tensor $G_{ij}$ in \eqref{eq-mix-bilinear}. Estimate
\eqref{eq-mix-linear} holds for all $(a,b)$ in the trapezoid bounded by $a\ge 0$, $0\le b\le 1$, $a+b\le n$ including the four boundaries except the vertex $(a,b)=(n-1,1)$, while \eqref{eq-mix-bilinear} holds for all $(a,b)$ in the closure of the trapezoid. See Remark \ref{rem4.3} for the restrictions on $a,b$.
\end{remark}

\begin{proof}
If $a=0$ or $b=0$, the lemma follows from Lemma 9.2 and Lemma 9.3 of \cite{Green}. Thus, we consider $a,b>0$. We may suppose $\norm{u_0}_{Y_{a,b}} = 1$ without loss of generality. Write
\EQN{
\sum_{j=1}^n\int _{\R^n_+} \breve G_{ij}(x,y,t) u_{0,j}(y) dy &=\int _{\R^n_+} \Ga(x-y,t) u_{0,i}(y) dy + \int _{\R^n_+} G_{ij}^*(x,y,t) u_{0,j}(y) dy\\
& = :
u^{heat}_i(x,t) + u_i^*(x,t).
}

We first estimate $u^{heat}_i(x,t)$. If $\max (|x'|,x_n)\le 2$, then
\[
|u_i^{heat}(x,t)| \le 1 \lec (|x|+1)^{-a-b}.
\]

Consider the case $\max (|x'|,x_n)>2$. We compute
\EQN{
|u_i^{heat}(x,t)| \lec \int_{\R^n_+} \frac{\Ga(x-y,t)}{(|y|+1)^a (y_n+1)^b}\, dy = \int_{y_n>|y'|} \cdots\, dy + \int_{y_n<|y'|} \cdots\, dy
=: I + \II,
}
where, by using \thref{lem-heat-int-decay-est},
\[
I \lec \int_{\R^n_+} \frac{\Ga(x-y,t)}{(|y|+1)^{a+b}}\, dy \lec \frac{1}{(|x|+\sqrt t+1)^{a+b}}\, (1+\de_{(a+b)n} \log_+ t).
\]

If $x_n \ge \max(|x'|,2)$,
we decompose $\II$ for $y_n<|y'|$ into
\[
\II = \int_{|y'|<x_n/2} \int_0^{|y'|} \cdots\, dy_ndy' + \int_{|y'|>x_n/2} \int_0^{|y'|} \cdots\, dy_ndy'
=: \II_1 + \II_2.
\]
For $\II_1$ we have $0<y_n<|y'|<x_n/2$, so
\EQN{
\II_1 &\lec \int_{|y'|<x_n/2} \frac1{(|y'|+1)^a} \int_0^{|y'|} \frac{t^{-\frac n2} e^{-x_n^2/ct}}{(y_n+1)^b}\, dy_ndy'\\
&\lec t^{-\frac n2} e^{-x_n^2/ct} \int_{|y'|<x_n/2} \frac1{(|y'|+1)^{a+b-1}}\, (1+\de_{b1}\log_+ |y'|)\, dy'\\
&\lec t^{-\frac{n}2} e^{-x_n^2/ct} (1+\de_{b1}\log_+ x_n) \int_{|y'|<x_n/2} \frac1{(|y'|+1)^{a+b-1}}\, dy'\\
&\lec t^{-\frac{n}2} e^{-x_n^2/ct} (1+\de_{b1}\log_+ x_n)\, \frac{x_n^{n-1}}{(x_n+1)^{a+b-1}}\, (1+\de_{(a+b)n}\log_+ x_n)\\
&\lec \frac{(1+\de_{b1}\log_+ t)(1+\de_{(a+b)n}\log_+ t)}{x_n^{a+b}}
 \lec \frac{(1+\de_{b1}\log_+ t)(1+\de_{(a+b)n}\log_+ t)}{(|x|+1)^a (x_n+1)^b}.
}
where we have used \thref{lem-logxt} in the second last inequality.
For $\II_2$ we have
\EQN{
\II_2 &\lec \frac1{(x_n+1)^a} \int_{|y'|>x_n/2} \Ga_{n-1}(x'-y',t) \int_0^{|y'|} \frac{\Ga_1(x_n-y_n,t)}{(y_n+1)^b}\, dy_ndy'\\
&\le \frac1{(x_n+1)^a} \int_0^\infty \frac{\Ga_1(x_n-y_n,t)}{(y_n+1)^b}\, dy_n\\
&\lec \frac{(\one_{b>1}\sqrt t+1)^{b-1} (1+\de_{b1}\log_+ t)}{(x_n+1)^a (x_n+\sqrt t+1)^b }
\lec \frac{(\one_{b>1}\sqrt t+1)^{b-1} (1+\de_{b1}\log_+ t)}{(|x|+1)^a (x_n+1)^b}
}
where we have use \thref{lem-heat-int-decay-est} in the second last inequality.
Therefore, we have for $x_n>\max(|x'|,2)$, $b\le 1$, $a+b\le n$, that
\EQ{\label{mix-linear-heat-a}
|u^{heat}_i(x,t)| \lec \frac{(1+\de_{b1}\log_+ t)(1+\de_{(a+b)n}\log_+ t)}{(|x|+1)^a (x_n+1)^b}.
}

If $|x'| \ge \max(x_n, 2)$,
we decompose $\II$ for $y_n<|y'|$ into
\[
\II = \int_0^{|x'|/2} \int_{|y'|>y_n} \cdots\, dy'dy_n+ \int_{|x'|/2}^\infty \int_{|y'|>y_n} \cdots\, dy' dy_n
=: \II_1 + \II_2,
\]
where
\[
\II_2 \lec \frac1{(|x'|+1)^{a+b}} \int_{y_n>|x'|/2} \Ga(x-y,t)\, dy
\lec \frac1{(|x'|+1)^{a+b}}.
\]
Using Fubini's theorem, we further decompose $\II_1$ into
\EQN{
\II_1 = \int_{|y'|<|x'|/2} \int_0^{|y'|} \cdots\, dy_ndy' + \int_{|y'|>|x'|/2} \int_0^{|x'|/2} \cdots\, dy_ndy'
=: \II_{11} + \II_{12}.
}
For $\II_{11}$,
\EQN{
\II_{11}
&\lec t^{-\frac{n}2} e^{-|x'|^2/ct} \int_{|y'|<|x'|/2} \int_0^{|y'|} \frac1{(y_n+1)^b (|y'|+y_n+1)^a}\, dy_ndy'\\
&\lec t^{-\frac{n}2} e^{-|x'|^2/ct} \int_{|y'|<|x'|/2} \frac1{(|y'|+1)^{a+b-1}}\, (1+\de_{b1}\log_+|y'|)\, dy'\\
&\lec t^{-\frac{n}2} e^{-|x'|^2/ct} (1+\de_{b1}\log_+|x'|)\int_{|y'|<|x'|/2} \frac1{(|y'|+1)^{a+b-1}}\, dy'\\
&\lec t^{-\frac{n}2} e^{-|x'|^2/ct} (1+\de_{b1}\log_+ |x'|)\, \frac{|x'|^{n-1}}{(|x'|+1)^{a+b-1}}\, (1+\de_{(a+b)n}\log_+ |x'|)\\
&\lec \frac{(1+\de_{b1}\log_+ t)(1+\de_{(a+b)n}\log_+ t)}{|x'|^{a+b}}
 \lec \frac{(1+\de_{b1}\log_+ t)(1+\de_{(a+b)n}\log_+ t)}{(|x|+1)^a (x_n+1)^b},
}
where we have used \thref{lem-logxt} in the second last inequality.
For $\II_{12}$,
\EQN{
\II_{12} &\lec \frac1{(|x'|+1)^a} \int_{|y'|>|x'|/2} \Ga_{n-1}(x'-y',t) \int_0^{|x'|/2} \frac{\Ga(x_n-y_n,t)}{(y_n+1)^b}\, dy_ndy'\\
&\le \frac1{(|x'|+1)^a} \int_0^\infty \frac{\Ga(x_n-y_n,t)}{(y_n+1)^b}\, dy_n\\
&\lec \frac{(\one_{b>1}\sqrt t+1)^{b-1}(1+\de_{b1}\log_+ t)}{(|x'|+1)^a(x_n+\sqrt t+1)^b}
\lec \frac{(\one_{b>1}\sqrt t+1)^{b-1} (1+\de_{b1}\log_+ t)}{(|x|+1)^a (x_n+1)^b}
}
where we have use \thref{lem-heat-int-decay-est} in the second last inequality.
Therefore, we have for $|x'|>\max(x_n,2)$, $b\le 1$, $a+b\le n$, that
\EQ{\label{mix-linear-heat-b}
|u^{heat}_i(x,t)| \lec \frac{(1+\de_{b1}\log_+ t)(1+\de_{(a+b)n}\log_+ t)}{(|x|+1)^a (x_n+1)^b}.
}
Combing \eqref{mix-linear-heat-a}-\eqref{mix-linear-heat-b}, we obtain
\EQ{\label{mix-linear-heat-c}
\norm{u^{heat}_i}_{Y_{a,b}} \le (1+\de_{b1}\log_+ t)(1+\de_{(a+b)n}\log_+ t) \norm{u_0}_{Y_{a,b}},\quad b\le 1,\quad a+b\le n.
}

For $u^*$ with $|u_0(y)| \le \bka{y}^{-a} \bka{y_n}^{-b}$, by \eqref{Solonnikov.est}, (for both $n\ge 3$ and $n=2$), we get
\EQ{\label{J.def}
|u^*(x,t)| \lec J(x,t) = \int_{\R^n_+} \frac{e^{-\frac{cy_n^2}t}}{(|x^*-y|^2+t)^{\frac{n}2} \bka{y}^a \bka{y_n}^b}\, dy.
}
Suppose $0<a<n-1$. By Lemma \ref{lemma2.2}
\EQN{
J \lec \int_0^\infty \frac{e^{-\frac{cy_n^2}t}}{(y_n+1)^b}\,\frac1{(|x|+\sqrt t+y_n+1)^a}\, \frac1{x_n+y_n+\sqrt t}\, dy_n
\le \frac{J_1}{(|x|+\sqrt t+1)^a},
}
where
\EQN{
J_1&=\int_0^\infty \frac{e^{-\frac{cy_n^2}t}}{(y_n+1)^b}\, \frac1{x_n+y_n+\sqrt t}\, dy_n\\
&\le \frac1{x_n+\sqrt t} \int_0^{x_n+\sqrt t} \frac1{(y_n+1)^b}\, dy_n + \frac1{(x_n+\sqrt t+1)^b}\int_{x_n+\sqrt t}^\infty \frac{e^{-\frac{cy_n^2}t}}{y_n+\sqrt t}\, dy_n.
}
Using Lemma \ref{lemma2.1} to bound the first integral, we have
\EQN{
J_1&\lec \frac1{x_n+\sqrt{t}}\, \frac{(x_n+\sqrt{t})\bke{1 + \de_{b1} \log_+(x_n+\sqrt{t}) } }{(1+x_n+\sqrt{t})^{\min(b,1)}} + \frac1{(x_n+\sqrt{t}+1)^b} \int_0^\infty \frac{e^{-u^2}}{u+1}\, du\\
&\lec \frac{1 + \de_{b1} \log_+(x_n+\sqrt{t})}{(x_n+\sqrt{t}+1)^{\min(b,1)}}.
}

When $b=1$,
we want to improve the above numerator $1 + \de_{b1} \log_+(x_n+\sqrt{t})$ to a function of $t$ independent of $x_n$. It suffices to consider the case $x_n > 10+\sqrt t$. In this case,
\EQS{\label{eq4.7}
J_1&\lec \frac1{x_n} \int_{0}^\infty \frac{e^{-c\frac{y_n^2}t}}{y_n+1}\, dy_n
\lec  \frac1{x_n} \bke{ \int_{0}^{\sqrt t} \frac{1}{y_n+1}\, dy_n+  \int_{\sqrt t}^\infty  \frac{e^{-c\frac{y_n^2}t}}{y_n}\, dy_n}
\\
&\lec  \frac1{x_n} \log(\sqrt t +2).
}
We conclude when $b=1$, either $x_n > 10+\sqrt t$ or not,
\[
(|x|+\sqrt t + 1)^a J \lec  \frac {\log(2+\sqrt{t})}{x_n+\sqrt{t}+1}.
\]
Thus,
\EQN{
J&\lec \frac{1+\de_{b1}\log(2+\sqrt t)}{(|x|+\sqrt t+1)^a (x_n+\sqrt t+1)^{\min(b,\,1)}}.
}
This proves
\EQ{\label{mix-linear-u*-1}
\norm{u^*}_{Y_{a,b}} \lec (1+\de_{b1}\log_+ t)\norm{u_0}_{Y_{a,b}},\qquad 0<a<n-1,\ 0<b\le1.
}

If $a=n-1$, we have an additional term from Lemma \ref{lemma2.2},
\EQN{
\int_0^\infty& \frac{e^{-\frac{cy_n^2}t}}{(y_n+1)^b}\, \frac1{(|x|+\sqrt t+y_n+1)^n}\, \log\bke{1+\frac{|x|+\sqrt t}{y_n+1}}\, dy_n\\
&\le \frac1{(|x|+\sqrt t+1)^n} \int_0^\infty \frac{e^{-\frac{cy_n^2}t}}{(y_n+1)^b} \, \log\bke{1+\frac{|x|+\sqrt t}{y_n+1}}\, dy_n.
}

If $b<1$ then for $\ve\in(0,1-b)$
\EQN{
\int_0^\infty &\frac{e^{-\frac{cy_n^2}t}}{(y_n+1)^b}\, \log\bke{1+\frac{|x|+\sqrt t}{y_n+1}}\, dy_n
\lec \int_0^\infty \frac{e^{-\frac{cy_n^2}t}}{(y_n+1)^b} \bke{\frac{|x|+\sqrt t}{y_n+1}}^\ve dy_n\\
&\lec (|x|+\sqrt t)^\ve \int_0^{|x|+\sqrt t}\frac1{(y_n+1)^{b+\ve}}\, dy_n + \frac{(|x|+\sqrt t)^\ve}{(|x|+\sqrt t+1)^{b+\ve}}\int_{|x|+\sqrt t}^\infty e^{-\frac{cy_n^2}t}\, dy_n\\
&\lec (|x|+\sqrt t + 1)^{1-b} + \frac{\sqrt t}{(|x|+\sqrt t+1)^b}
\lec (|x|+\sqrt t + 1)^{1-b}.
}
So the additional term is bounded by $(|x|+\sqrt t + 1)^{1-b-n} = (|x|+\sqrt t + 1)^{-a-b}\le(|x|+\sqrt t + 1)^{-a}(x_n+\sqrt t + 1)^{-b}$. This proves \eqref{mix-linear-u*-1} for $a=n-1$, $b<1$.

If $n-1<a<n$, we have an additional term from Lemma \ref{lemma2.2},
\EQN{
\int_0^\infty& \frac{e^{-\frac{cy_n^2}t}}{(|x|+\sqrt t+y_n+1)^n (y_n+1)^{a+b-n+1}}\, dy_n\\
&\lec \frac1{(|x|+\sqrt t+1)^n} \bkt{\int_0^{|x|+\sqrt t} \frac1{(y_n+1)^{a+b-n+1}}\, dy_n + \int_{|x|+\sqrt t}^\infty \frac{e^{-\frac{cy_n^2}t}}{(|x|+\sqrt t+1)^{a+b-n+1}}\, dy_n }
}
The second integral is bounded by $(|x|+\sqrt t +1)^{-a-b+n}$. The first integral is bounded by $(|x|+\sqrt t +1)^{-a-b+n}$ if $a+b<n$, by $1+\log_+(|x|+\sqrt t)$ if $a+b=n$, and by $1$ if $a+b>n$. Thus, if $a+b<n$ the additional term is bounded by $(|x|+\sqrt t+1)^{-a-b}$. This proves \eqref{mix-linear-u*-1} for $n-1<a<n$, $a+b<n$.

If $n-1<a<n$, $a+b=n$, we have the same additional term from Lemma \ref{lemma2.2},
\EQN{
\int_0^\infty& \frac{e^{-\frac{cy_n^2}t}}{(|x|+\sqrt t+y_n+1)^n (y_n+1)^{a+b-n+1}}\, dy_n
= \int_0^\infty \frac{e^{-\frac{cy_n^2}t}}{(|x|+\sqrt t+y_n+1)^n (y_n+1)}\, dy_n \\
& \lec \frac{1}{(|x|+\sqrt{t}+1)^{n}}
 \int_0^\infty \frac{e^{-\frac{cy_n^2}t}}{y_n+1}\, dy_n
\lec \frac{1+\log_+ t}{(|x|+\sqrt t+1)^n}.
}
The last inequality is by the same integral estimate in \eqref{eq4.7}.

Therefore, for $b\le1$, $a+b\le n$, $(a,b)\neq(n-1,1)$,
\EQ{\label{mix-linear-u*-3}
\norm{u^*}_{Y_{a,b}} \lec \bkt{1+ (\de_{an} + \de_{b1} + \de_{(a+b)n})\log_+ t}\norm{u_0}_{Y_{a,b}}.
}
The estimate \eqref{eq-mix-linear} follows from \eqref{mix-linear-heat-c} and \eqref{mix-linear-u*-3}.

\medskip

We next consider \eqref{eq-mix-bilinear}. By the first estimates of Green tensor \cite[Proposition 1.1]{Green} with $k=0$ so that $\mu_{ik}^m=0$ and thus $\LN=1$,
\[
|\pd_{y_p} G_{ij}(x,y,t)| \lec \frac1{(|x-y|+\sqrt t)^{n+1}} + \frac1{(|x^*-y|+\sqrt t)^n (y_n+\sqrt t)}.
\]
(If we use the final
Green tensor estimate \eqref{eq_Green_estimate} we have an undesired $\LN$ factor when $n=2$.)
To show \eqref{eq-mix-bilinear},
It suffices to show
\[
I_1 + I_2 \lec t^{-1/2}\, \frac1{\bka{x}^a \bka{x_n}^b}
\]
where
\[
I_1 = \int_{\R^n_+} \frac1{(|x-y|+\sqrt t)^{n+1}}\, \frac1{\bka{y}^{2a}\bka{y_n}^{2b}}\, dy,
\]
\[
I_2 = \int_{\R^n_+} \frac1{(|x^*-y|+\sqrt t)^n (y_n+\sqrt t)}\, \frac1{\bka{y}^{2a} \bka{y_n}^{2b}}\, dy.
\]

For $I_1$, we consider the two cases: $0<a<n-1$ and $n-1\le a\le n$.

If $0<a<n-1$,
we drop $\bka{y}^{-a}$ and use Lemma \ref{lemma2.2} and that $x_n \le |x_n-y_n|+y_n$ to get
\EQN{
I_1 &\lec \int_0^\infty \int_\Si \frac1{(|x'-y'|+|x_n-y_n|+\sqrt t)^{n+1} (|y'|+y_n+1)^a}\, dy' \frac1{(y_n+1)^{2b}}\, dy_n\\
&\lec \int_0^\infty \frac1{(|x'|+|x_n-y_n|+y_n+\sqrt t+1)^a}\, \frac1{(|x_n-y_n|+\sqrt t)^2}\, \frac1{(y_n+1)^{2b}}\, dy_n\\
&\lec \frac1{(|x|+\sqrt t+1)^a}\, I_0,\quad \text{where}\
I_0 = \int_0^\infty \frac1{(|x_n-y_n|+\sqrt t)^2 (y_n+1)^{2b}}\, dy_n.
}
Using Lemma \ref{lemma2.2} again,
\begin{align}\nonumber
I_0 
&\lec  \frac{\de_{(2b)1}}{(x_n+\sqrt t+1)^2}\, \log(x_n+\sqrt t+1) +  \frac{\one_{2b>1}}{(x_n+\sqrt t+1)^2} + \frac1{(x_n+\sqrt t+1)^{2b}}\, \frac1{\sqrt t} \\
&\lec \frac1{(x_n+\sqrt t+1)^b \sqrt t}.
\label{eq-est-I0-yn}
\end{align}
So, for $0<a<n-1$,
$\displaystyle
I_1 \lec \frac1{(|x|+\sqrt t+1)^a(x_n+\sqrt t+1)^b \sqrt t}$.

If $n-1\le a\le n$, we have $n\le 2(n-1) \le 2a$ since $n\ge2$.
Note that $2a>n$ if $(a,n)\neq(1,2)$.
Then, for $(a,n)\neq(1,2)$, dropping $\bka{y_n}^{-2b}$ and applying Lemma \ref{lemma2.2} yield
\EQN{
I_1 &\lec \int_{\R^n} \frac1{(|x-y|+\sqrt t)^{n+1} (|y|+1)^{2a}}\, dy
\lec \frac1{(|x|+\sqrt t+1)^{n+1}} + \frac1{(|x|+\sqrt t+1)^{2a}\sqrt t}\\
&\lec \frac1{(|x|+\sqrt t+1)^n\sqrt t}
\lec \frac1{(|x|+\sqrt t+1)^{a+b}\sqrt t}.
}
If $a=1$ and $n=2$, we have
\EQN{
I_1 &\lec \int_0^\infty \int_{-\infty}^\infty \frac1{(|x_1-y_1|+|x_2-y_2|+\sqrt t)^3 (|y_1|+y_2+1)^2}\, dy_1 \frac1{(y_2+1)^{2b}}\, dy_2\\
&\lec \int_0^\infty \Bigg[ \frac1{(|x_1|+|x_2-y_2|+y_2+\sqrt t+1)^3(y_2+1)} \\
&\qquad\qquad +\frac1{(|x_1|+|x_2-y_2|+y_2+\sqrt t+1)^2}\, \frac1{(|x_2-y_2|+\sqrt t)^2}\Bigg] \frac1{(y_2+1)^{2b}}\, dy_2\\
&\lec \frac1{(|x|+\sqrt t+1)^3} \int_0^\infty \frac1{(y_2+1)^{1+2b}}\, dy_2 + \frac1{(|x|+\sqrt t+1)^2}\, I_0,
}
where $I_0$ is estimated in \eqref{eq-est-I0-yn}.
Hence,
\[
I_1\lec \frac1{(|x|+\sqrt t+1)(x_2+\sqrt t+1)^b\sqrt t} = \frac1{(|x|+\sqrt t+1)^a(x_2+\sqrt t+1)^b\sqrt t}.
\]

Thus, if $0<a\le n$, $0< b\le 1$ and $a+b\le n$, we have
\EQ{\label{mix-bilinear-I1}
I_1 \lec \frac1{(|x|+\sqrt t + 1)^a (x_n+\sqrt t+1)^b \sqrt t}.
}

For $I_2$, let $A=x_n+y_n+\sqrt t$. We have, by Lemma \ref{lemma2.2},
\EQN{
I_2 &\lec \int_0^\infty \bke{\int_\Si \frac1{(|x'-y'|+A)^n (|y'|+y_n+1)^{2a}}\, dy'} \frac1{(y_n+1)^{2b} (y_n+\sqrt t)}\, dy_n\\
&\lec \int_0^\infty \bke{R^{-2a}A^{-1} + R^{-n} \bke{\one_{2a=n-1}\log\frac{R}{y_n+1} + \frac{\one_{2a>n-1}}{(y_n+1)^{2a+1-n}}} }\frac{dy_n}{(y_n+1)^{2b} (y_n+\sqrt t)}\\
&= I_3 + I_4 + I_5,
}
where $R=|x|+y_n+\sqrt t+1$.
We have
\EQN{
I_3 
&\lec \frac1{(|x|+\sqrt t+1)^{2a}} \int_0^\infty \frac1{(y_n+1)^{2b} (y_n+\sqrt t) (x_n+y_n+\sqrt t)}\, dy_n.
}
If $x_n\le 1$, we have
\[
\int_0^\infty \frac1{(y_n+1)^{2b} (y_n+\sqrt t) (x_n+y_n+\sqrt t)}\, dy_n
\lec \int_0^1 \frac1{(y_n+\sqrt t)^2}\, dy_n + \int_1^\infty \frac1{y_n^{2b+1}\sqrt t}\, dy_n
\lec \frac1{\sqrt t}.
\]
If $x_n\ge 1$, using $0<b\le1$ we have
\EQN{
\int_0^\infty \frac1{(y_n+1)^{2b} (y_n+\sqrt t) (x_n+y_n+\sqrt t)}\, dy_n
&\lec \int_0^\infty \frac1{(y_n+1)^{2b}(\sqrt t)x_n^b(y_n+1)^{1-b}}\, dy_n\\
&= \frac1{x_n^b\sqrt t} \int_0^\infty \frac1{(y_n+1)^{1+b}}\, dy_n
= \frac{c}{x_n^b \sqrt t}.
}
Thus,
\[
I_3\lec \frac1{(|x|+\sqrt t+1)^{2a}}\, \frac1{(x_n+1)^b \sqrt t}.
\]

If $2a = n-1$ and $0\le b\le 1$, for any $\max(1-2b,0)<\ve<a-b+1$, we have $n-\ve>a+b$, $2b+\ve>1$, and
\EQN{
I_4 &\lec \int_0^\infty \frac1{(y_n+|x|+1+\sqrt t)^n (y_n+1)^{2b} \sqrt t}\, \frac{(|x|+\sqrt t)^\ve}{(y_n+1)^\ve}\, dy_n\\
&\lec \frac1{(|x|+1+\sqrt t)^{n-\ve}\sqrt t} \int_0^\infty \frac1{(y_n+1)^{2b+\ve}}\, dy_n
\lec \frac1{(|x|+1+\sqrt t)^{a+b} \sqrt t}.
}

If $\frac{n-1}2< a \le n$, we use the estimate \eqref{lemma2.2d1} in Lemma \ref{lemma2.2}
with $k=n$, $A=|x|+1+\sqrt t$ and $m=2a+2b+1-n>0$,
\EQN{
I_5 &\lec \frac1{\sqrt t} \int_0^\infty  \frac{dy_n}{(y_n+|x|+1+\sqrt t)^n (y_n+1)^{2a+2b+1-n}}\\
&\lec \frac1{\sqrt t} \bke{\frac1{(|x|+1+\sqrt t)^{2a+2b}} + \frac{\one_{2a+2b=n} \log(|x|+1+\sqrt t)}{(|x|+1+\sqrt t)^{n}} + \frac{\one_{2a+2b>n}}{(|x|+1+\sqrt t)^n} }\\
&\lec \frac1{\sqrt t}\, \frac1{(|x|+1+\sqrt t)^{a+b}}
}
provided $a+b\le n$.

Thus, if $0<a\le n$, $0\le b\le 1$, $a+b\le n$, we have
\EQ{\label{mix-bilinear-I2}
I_2 \lec I_3 + I_4 + I_5
\lec \frac1{(|x|+\sqrt t+1)^a (x_n+1)^b \sqrt t}.
}
This and the $I_1$ estimate \eqref{mix-bilinear-I1} show \eqref{eq-mix-bilinear}. This completes the proof of \thref{lem-mix-decay}.
\end{proof}

\begin{remark}\label{rem4.3}
For \eqref{eq-mix-linear} of Lemma \ref{lem-mix-decay}, we require $a+b\le n$,
$ b\le 1$, and $(a,b)\not=(n-1,1)$. It is optimal for our proof because the function $J(x,t)$ in our proof does not have the desired upper bound. Recall $J$ is defined in \eqref{J.def},
\[
 J(x,t) = \int_{\R^n_+} \frac{e^{-\frac{cy_n^2}t}}{(|x^*-y|^2+t)^{\frac{n}2} \bka{y}^a \bka{y_n}^b}\, dy.
\]
In the case $t\sim 1$ and $1 \ll x_n \sim |x'|$, we have
\EQN{
J &\gec \int_{{ |y'|< \tfrac 12 |x'|}}\int_0^1  \frac{ dy_n\,dy'}{|x|^n \bka{y}^a \bka{y_n}^b}
\gec  \frac{1}{ |x|^n} \int_{{ |y'|< \tfrac 12 |x'|}} \frac{dy'}{ \bka{y'}^a }
\gec\left\{ \begin{aligned}
|x| ^{-n}  \qquad \quad & a>n-1\\
|x| ^{-n} \ln |x|  \quad & a=n-1\\
|x| ^{-1-a}  \qquad & a<n-1
\end{aligned} \right. .
}
We do not have $J \lec |x|^{-a-b}$ when (i) $a+b>n$, (ii) $a+b\le n$ and $b>1$ (hence $a<n-1$), or
(iii) $a=n-1$ and $b=1$.
\end{remark}

\subsection{Mild solutions of Navier-Stokes equations with pointwise decay and boundary vanishing estimates}\label{sec-bdry-nse}

In this subsection we prove \thref{thm-mild-bdry}, constructing a mild solution of the Navier-Stokes equations in $Z_{a,\al}$, the space of functions with pointwise decay and boundary vanishing estimate defined by \eqref{eq-def-Zaal}.
It is standard to prove \thref{thm-mild-bdry} using the linear and bilinear estimates in $Z_{a,\al}$ below in Lemma \ref{lem-decay-bdry} and a fixed point argument. We omit the proof of \thref{thm-mild-bdry} and focus on the following lemma.

\begin{lem}\thlabel{lem-decay-bdry}
Let $n\ge 2$, $0\le a \le n$, and $\al\in[0,1]$. For $u_0\in Z_{a,\al}$ with $\div u_0=0$ and $u_{0,n}|_\Si=0$,
\EQ{\label{eq-bdry-linear}
\norm{ \sum_{j=1}^n \int_{\R^n_+} \breve G_{ij}(x,y,t) u_{0,j}(y)\, dy }_{Z_{a,\al}} \le C(1+\de_{an}\log_+ t) \norm{u_0}_{Z_{a,\al}}.
}
Let $F\in Z_{2a,2\al}$. Then 
\EQ{\label{eq-bdry-bilinear}
\norm{ \int_{\R^n_+} \pd_{y_p} G_{ij}(x,y,t) F_{pj}(y)\, dy }_{Z_{a,\al}} \le 
 C(t^{-\frac12} + t^{-\frac{1+\mu}2} ) 
\norm{F}_{Z_{2a,2\al}},
}
with $\mu=0$ if $n\ge3$ or if $n=2$ with $\al<1$, and $\mu>0$ arbitrarily small when $n=2$ with $\al=1$.
\end{lem}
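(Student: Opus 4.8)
The plan is to establish the two displayed estimates — the linear bound \eqref{eq-bdry-linear} and the bilinear bound \eqref{eq-bdry-bilinear} — which are exactly the ingredients needed to run the standard Picard iteration for \eqref{eq-mild-NS} in $\{u\in L^\infty(0,T;Z_{a,\al}):\sup_{0<t<T}\norm{u(t)}_{Z_{a,\al}}<\infty\}$, and hence to deduce \thref{thm-mild-bdry} by a contraction argument, which the paper omits as standard. Since $u_0$ is solenoidal, $\sum_j\int_{\R^n_+}\breve G_{ij}(x,y,t)u_{0,j}(y)\,dy=e^{-tA}u_0=\sum_j\int_{\R^n_+}G_{ij}(x,y,t)u_{0,j}(y)\,dy$ by \cite[Theorem~1.2]{Green}, so both estimates reduce to bounding $\int_{\R^n_+}|G_{ij}(x,y,t)|\,y_n^\al\bka{y}^{-a}\bka{y_n}^{-\al}\,dy$, respectively $\int_{\R^n_+}|\pd_{y_p}G_{ij}(x,y,t)|\,y_n^{2\al}\bka{y}^{-2a}\bka{y_n}^{-2\al}\,dy$, by $(1+\de_{an}\log_+t)$ times, respectively $(t^{-1/2}+t^{-(1+\mu)/2})$ times, $x_n^\al\bka{x}^{-a}\bka{x_n}^{-\al}$.

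For the linear estimate I would insert the boundary-vanishing estimate \eqref{eq_thm3_al_yn} with $l=q=m=0$; this carries the explicit prefactor $x_n^\al$, so the boundary vanishing is built in, and one is left to integrate the kernel $(|x-y|+\sqrt t)^{-n}(|x-y^*|+\sqrt t)^{-\al}+\LN\,\sqrt t^{-\al}(|x-y^*|+\sqrt t)^{-n}$ against $\bka{y}^{-a}$ (the factor $y_n^\al\bka{y_n}^{-\al}\le1$ being harmless here). The first summand is controlled by Lemma~\ref{lemma2.2} and \thref{lem-heat-int-decay-est}, yielding $\lesssim\bka{x}^{-a}(1+\de_{an}\log_+t)$. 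The dangerous part is the factor $\sqrt t^{-\al}$ in the second summand, which — there being no time integration here — must be absorbed pointwise: I would split into the boundary layer $x_n\lesssim\sqrt t$, where $x_n^\al\sqrt t^{-\al}\le1$ already kills it, and the complementary range, where one uses $|x-y^*|^2+t\gtrsim(y_n+\sqrt t)^2$ to pull out $(y_n+\sqrt t)^{-\al}$ and then the $y_n^\al$-vanishing of $u_0$ (via $y_n^\al(y_n+\sqrt t)^{-\al}\le1$) together with the Gaussian localization inside \thref{lem-heat-int-decay-est} to reduce the surviving power of $x_n$ back to exactly $\al$. For $n=2$, the extra logarithm in $\LN$ is — by inspection of the $m=0$ case — either free of $|x'-y'|$ or of the form $\log((|x'-y'|+x_n+y_n+\sqrt t)/\sqrt t)$; it is $O(1)$ in the boundary layer and is turned into $\log(2+t)$ by \thref{lem-logxt} elsewhere, and one checks this only survives when $a=n=2$, matching the $\de_{an}\log_+t$ on the right-hand side.

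For the bilinear estimate I would use the improved gradient estimate \eqref{eq-improved-bdry-vanish-est-new} rather than \eqref{eq_thm3_al_yn}: its key feature is that for $p=n$ there is no $|x'-y'|$ inside $\wt{\rm Ln}$, which is what makes the endpoint affordable. After pulling out the prefactor $x_n^\al$ and bounding $|F(y)|\le\norm{F}_{Z_{2a,2\al}}\,y_n^{2\al}\bka{y}^{-2a}\bka{y_n}^{-2\al}$, the remaining integral estimate is run exactly as the $I_1,I_2$ computations in the proof of \thref{lem-mix-decay}: split into $0<a<n-1$ and $n-1\le a\le n$, integrate first in $y'\in\Si$ with Lemma~\ref{lemma2.2}, then in $y_n$ with Lemma~\ref{lemma2.1} and \eqref{lemma2.2d1}, using $y_n^{2\al}(y_n+\sqrt t)^{-2\al}\le1$ and $y_n^{2\al}\bka{y_n}^{-2\al}\le\min(1,y_n^{2\al})$ to keep all exponents integrable and pairing the $\sqrt t^{-\al}$ with a $y_n$-power or a $\bka{x_n}^{-\al}$ so that only the net factor $t^{-1/2}$ survives. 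The sole exception is $n=2$, $\al=1$ (the case $a=n=2$ of \thref{thm-mild-bdry}), where $\wt{\rm Ln}$ fails to cancel at the critical exponent: there I would bound $\wt{\rm Ln}\lesssim_\mu((|x-y^*|+\sqrt t)/\sqrt t)^\mu$ and repeat the estimates, which costs the extra $t^{-\mu/2}$ and gives the stated $t^{-(1+\mu)/2}$.

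I expect the main obstacle to be precisely the uniform handling of the $\sqrt t^{-\al}$ weight coming from the boundary-vanishing Green tensor estimates — harmless in the bilinear term, where it is integrable against $\int_0^t(\cdot)\,ds$, but requiring a pointwise regime split in the linear term — together with the borderline logarithmic loss at $n=a=2$. The rest is bookkeeping with the integral estimates of Section~\ref{sec-prelim}, closely paralleling the cases already worked out in the proofs of Theorem~\ref{thm0.1-GMS} and \thref{lem-mix-decay}.
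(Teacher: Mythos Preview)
Your bilinear strategy is essentially the paper's: both use the improved boundary-vanishing gradient estimate \eqref{eq-improved-bdry-vanish-est-new} (the paper actually uses the slightly sharper \eqref{eq-improved-bdry-vanish-est}), both identify the absence of $|x'-y'|$ inside the log when $p=n$ as the key to the endpoint $n=a=2$, and both accept the $t^{-\mu/2}$ loss when $n=2$, $\al=1$. The paper organizes the computation by first splitting at $x_n=1$ (reducing $x_n\ge1$ to the already-proven $Y_{2a,0}$ bound in \thref{lem-mix-decay}) and then, for $x_n<1$, grouping the three pieces as $I_1+I_2+I_3$ with $I_2'\lesssim I_3'$; your outline is compatible with this.

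Your linear strategy, however, has a genuine gap. Passing to the unrestricted $G_{ij}$ and inserting \eqref{eq_thm3_al_yn} produces the second summand $x_n^\al\,t^{-\al/2}\,\LN\,(|x-y^*|^2+t)^{-n/2}$, and this $t^{-\al/2}$ cannot be absorbed the way you describe. In your boundary layer $x_n\lesssim\sqrt t$ the claim that the $n=2$ log is ``$O(1)$'' is false: the argument $|x'-y'|+x_n+y_n+\sqrt t$ grows with $|x'-y'|$, so absorbing $\LN$ costs an extra $t^{-\ve/2}$ that the statement does not allow. In your complementary range $x_n\gtrsim\sqrt t$ you invoke ``Gaussian localization inside \thref{lem-heat-int-decay-est}'', but there is no Gaussian in \eqref{eq_thm3_al_yn}: pulling out $(y_n+\sqrt t)^{-\al}$ from $(|x-y^*|+\sqrt t)^{-n}$ and pairing it with the $y_n^\al$ from $u_0$ uses up that factor but leaves $x_n^\al t^{-\al/2}$ standing, with no mechanism to turn it into $x_n^\al\bka{x_n}^{-\al}$.

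The paper avoids this by working with the restricted tensor $\breve G_{ij}$ directly and splitting at $x_n=1$ rather than at $x_n\sim\sqrt t$. For $x_n\ge1$ one has $x_n^\al\bka{x_n}^{-\al}\sim1$, so the $Z_{a,\al}$ bound reduces to the $Y_{a,0}=Z_{a,0}$ bound already proved in \thref{lem-mix-decay}. For $x_n<1$ the paper uses the Solonnikov decomposition $\breve G_{ij}=\de_{ij}(\Ga(x-y,t)-\Ga(x-y^*,t))+G'_{ij}$: the $G'_{ij}$ piece carries a genuine Gaussian $e^{-cy_n^2/t}$ from Solonnikov's pointwise estimate, and the identity $t^{-\al/2}y_n^\al e^{-cy_n^2/(2t)}\lesssim1$ is precisely what eats the bad $t^{-\al/2}$; the heat-kernel difference is handled by the explicit bound $|\Ga(x-y,t)-\Ga(x-y^*,t)|\lesssim t^{-n/2-\al}x_n^\al y_n^\al e^{x_ny_n/(2t)}e^{-(|x'-y'|^2+x_n^2+y_n^2)/(4t)}$ (from $e^a-e^{-a}\lesssim ae^a$), followed by a case split $y_n\le3x_n$ versus $y_n>3x_n$ and \thref{lem-heat-int-decay-est}. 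In short, the Gaussian factor you were hoping to find is not in \eqref{eq_thm3_al_yn} but in Solonnikov's estimate \eqref{Solonnikov.est} for the restricted remainder $G'_{ij}$, and that is what the paper exploits.
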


The case $\al=0$ overlaps Lemma \ref{lem-mix-decay} as $Z_{a,0}=Y_{a,0}$.
\begin{proof}
We may assume $\al>0$ as the case $\al=0$ is covered by Lemma \ref{lem-mix-decay}.

We first establish \eqref{eq-bdry-linear}.
By Lemma \ref{lem-mix-decay} we have
\EQS{\label{eq-bdry-linear-pf0}
&\bka{x}^a\abs{\int_{\R^n_+} \breve G_{ij}(x,y,t) u_{0,j}(y)\, dy} \lec (1+\de_{an}\log_+t)\norm{u_0}_{Y_{a,0}}\\
&\quad\approx (1+\de_{an}\log_+t)\norm{u_0}_{Z_{a,0}}
\lec (1+\de_{an}\log_+t)\norm{u_0}_{Z_{a,\al}}.
}
In particular, the estimate holds for $x_n\ge1$.

Suppose $x_n<1$.
Recall the decomposition
$\breve G_{ij}(x,y,t)=\de_{ij}(\Ga(x-y,t) - \Ga(x-y^*,t))+ G_{ij}'(x,y,t)$, where $G'_{ij}(x,y,t)$ is the second term on the right hand side of the formula of $G_{ij}^*$ in \eqref{E1.6}.
By Solonnikov's estimate \cite[(2.34)]{Solonnikov-RMS2003},
\[
|\pd_{x',y'}^l\pd_{x_n}^k\pd_{y_n}^q\pd_t^m G_{ij}'(x,y,t)| \lec
\frac{e^{-\frac{cy_n^2}t}}{t^{m+\frac{q}2}(|x-y^*|^2+t)^{\frac{l+n}2}(x_n^2+t)^{\frac{k}2}}.
\]
Since $G'_{ij}(x,y,t)|_{x_n=0}=0$, the same argument as in the proof of \cite[Theorem 2.8.1]{lai-thesis2021} (using the mean value theorem and $k=1$ estimate)
yields the boundary-vanishing estimate
\[
|\pd_{x',y'}^l\pd_{y_n}^q\pd_t^m G_{ij}'(x,y,t)| \lec
\frac{x_n^\al e^{-\frac{cy_n^2}t}}{t^{m+\frac{q+\al}2}(|x-y^*|^2+t)^{\frac{l+n}2}},\qquad 0\le\al\le1.
\]
Therefore, (we may and shall assume $\norm{u_0}_{Z_{a,\al}} \le 1$)
\EQS{
\abs{\int_{\R^n_+} \breve G_{ij}(x,y,t) u_{0,j}(y)\, dy}
\lec &\int_{\R^n_+} | \Ga(x-y,t) - \Ga(x-y^*,t) | \frac{y_n^\al}{\bka{y}^a}\, dy
\\
&+ \int_{\R^n_+} \frac{x_n^\al e^{-\frac{cy_n^2}t}}{t^{\frac{\al}2}(|x-y^*|+\sqrt t)^n}\, \frac{y_n^\al}{\bka{y}^a}\, dy
=: I + \II.
}

For $\II$, using $t^{-\frac\al 2} y_n^{\al} e^{-\frac{cy_n^2}{2t}}\lec 1$,
\[
\II
\lec x_n^\al \int_{\R^n_+} \frac{e^{-\frac{cy_n^2}{2t} }}{(|x-y^*|+\sqrt t)^n \bka{y}^a}\, dy
=: x_n^\al J.
\]
The integral $J$ is estimated in the proof of \cite[Lemma 9.2]{Green} by
\EQ{\label{eq-bdry-linear-pf1}
J \lec  \frac{1 + \de_{an} \log_+ t}{(|x|+\sqrt t + 1)^a},\quad 0\le a \le n.
}

We decompose $I$ it into
\[
I = \int_{y_n\le 3x_n} \ldots  + \int_{y_n> 3x_n} \ldots =: J_1 + J_2.
\]
For $J_1$, by \thref{lem-heat-int-decay-est},
\EQ{\label{eq-bdry-linear-pf2}
J_1 \lec x_n^\al \int_{\R^n} \Ga(x-y,t) \frac1{\bka{y}^a}\, dy
\lec x_n^\al \frac{ (\one_{a>n}\sqrt t + 1)^{a-n}}{(|x|+\sqrt t +1)^a} (1 + \de_{an}\log_+t).
}
For $J_2$, note that for $x_n,y_n\ge0$
\EQN{
\abs{\Ga(x-y,t) - \Ga(x-y^*,t)}
&\lec t^{-\frac{n}2} \bke{e^{\frac{x_ny_n}{2t}} - e^{-\frac{x_ny_n}{2t}}} e^{-\frac{|x'-y'|^2 + x_n^2 + y_n^2}{4t}}\\
&\lec t^{-\frac{n}2 - \al} x_n^\al y_n^\al e^{\frac{x_ny_n}{2t}} e^{-\frac{|x'-y'|^2 + x_n^2 + y_n^2}{4t}}
}
where we've used $e^a - e^{-a} \lec ae^a$.
Moreover, since $y_n>3x_n$, we have $e^{\frac{x_ny_n}{2t}} e^{-\frac{y_n^2}{4t}} < e^{-\frac{y_n^2}{12t}}$.
Then
\[
J_2 \lec t^{-\frac{n}2 - \al} x_n^\al \int_{\R^n_+} y_n^\al e^{-\frac{y_n^2}{16t}} e^{-\frac{|x'-y'|^2 + x_n^2}{4t}}\, \frac{y_n^\al}{\bka{y}^a}\, dy.
\]
Using the fact that $y_n^{2\al} e^{-\frac{y_n^2}{16t}} \lec y_n^{2\al} \bke{1+\frac{y_n^2}{32t}}^{-\al} e^{-\frac{y_n^2}{32t}} \lec t^\al e^{-\frac{y_n^2}{32t}}$,
\EQS{\label{eq-bdry-linear-pf3}
J_2 &\lec t^{-\frac{n}2} x_n^\al \int_{\R^n_+} e^{-\frac{y_n^2}{32t}} e^{-\frac{|x'-y'|^2 + x_n^2}{4t}}\, \frac1{\bka{y}^a}\, dy
\lec x_n^\al \int_{\R^n} \Ga(x-y,8t) \frac1{\bka{y}^a}\, dy \\
&\lec x_n^\al \frac{ (\one_{a>n}\sqrt t + 1)^{a-n}}{(|x|+\sqrt t +1)^a} (1 + \de_{an}\log_+t)
}
by \thref{lem-heat-int-decay-est} again.
Combining \eqref{eq-bdry-linear-pf1}-\eqref{eq-bdry-linear-pf3}, we obtain
\EQ{\label{eq-bdry-linear-pf4}
\abs{\int_{\R^n_+} \breve G_{ij}(x,y,t) u_{0,j}(y)\, dy} \lec (1 + \de_{an} \log_+ t)\, \frac{x_n^\al}{(|x|+\sqrt t + 1)^a},\quad 0\le a \le n.
}
Using \eqref{eq-bdry-linear-pf0} for $x_n<1$ and \eqref{eq-bdry-linear-pf4} for $x_n\ge1$, the estimate \eqref{eq-bdry-linear} follows.

\bigskip

Next, we prove \eqref{eq-bdry-bilinear}.
By Lemma \ref{lem-mix-decay} we have
\EQS{\label{eq-bdry-bilinear-pf0}
\bka{x}^a\abs{\int_{\R^n_+} \pd_{y_p} G_{ij}(x,y,t) F_{pj}(y)\, dy} &\lec t^{-\frac12} \norm{F}_{Y_{2a,0}}\\
&\approx t^{-\frac12} \norm{F}_{Z_{2a,0}}
\lec t^{-\frac12}\norm{F}_{Z_{2a,2\al}}.
}
In particular, the estimate holds for $x_n\ge1$.

Suppose now $0<x_n<1$.
By the improved boundary vanishing estimate \eqref{eq-improved-bdry-vanish-est}, 
it suffices to show (we may and shall assume $\norm{F}_{Z_{2a,2\al}} \le 1$)
\EQ{\label{I1+I2+I3}
I_1 + I_2 + I_3  \le   C(t^{-\frac12} + t^{-\frac{1+\mu}2} )  \frac{x_n^\al}{\bka{x}^a},
}
where
\EQN{ 
I_1 &= \int_{\R^n_+} \frac{x_n^\al}{(|x-y|+\sqrt t)^{n+1}(|x^*-y|+\sqrt t)^\al} \frac{y_n^{2\al}}{\bka{y}^{2a}\bka{y_n}^{2\al}}\,  dy,
}
\EQN{
I_2 &= \one_{p<n} \int_{\R^n_+} \frac{x_n^\al\bke{1+\de_{n2}\log\frac{|x-y^*|+\sqrt t}{\sqrt t}}}{t^{\frac{\al}2} (|x-y^*|+\sqrt t)^{n+1}} \frac{y_n^{2\al}}{\bka{y}^{2a}\bka{y_n}^{2\al}}\, dy\\
&\lec \frac{x_n^\al}{\sqrt t^{\al+\ve}}  \int_{\R^n_+} \frac1{(|x-y^*|+\sqrt t)^{n+1-\ve}} \frac{y_n^{2\al}}{\bka{y}^{2a}\bka{y_n}^{2\al}}\, dy=:I_2',
}
and
\EQN{
I_3 &= \de_{pn} \int_{\R^n_+} \frac{x_n^\al\bke{1+\de_{n2}\log\frac{x_n+y_n+\sqrt t}{\sqrt t} }}{t^{\frac{\al}2}(|x-y^*|+\sqrt t)^n (y_n+\sqrt t)} \frac{y_n^{2\al}}{\bka{y}^{2a}\bka{y_n}^{2\al}}\, dy
\\
&\lec \frac{x_n^\al}{\sqrt t^{\al+\ve}} \int_{\R^n_+} \frac{(x_n+y_n+\sqrt t)^{\ve}}{(|x-y^*|+\sqrt t)^n (y_n+\sqrt t)} \frac{y_n^{2\al}}{\bka{y}^{2a}\bka{y_n}^{2\al}}\, dy=:I_3',
}
with $\ve=0$ if $n\ge3$, and $\ve>0$ can be chosen arbitrarily small when $n=2$.
The term $I_2$ is present when $p<n$. The term $I_3$ is present when $p=n$. 
Observe that $I_2' \lec I_3'$. Hence we only need to estimate $I_1$ and $I_3'$ when $x_n<1$.

For $I_1$, using $\frac {y_n^{2\al}} {\bka{y_n}^{2\al}}\le y_n^\al$
and \cite[(9.15)]{Green},
\EQ{\label{eq-I1-est}
I_1 
\lec x_n^\al \int_{\R^n_+} \frac1{(|x-y|+\sqrt t)^{n+1}} \frac1{\bka{y}^{2a}}\, dy
\lec x_n^\al \frac1{(|x|+\sqrt t+1)^a\sqrt t} .
}

We next estimate $I_3'$. Using $(x_n+y_n+\sqrt t)^{\ve} \lec 1 + (y_n+\sqrt t)^{\ve}$ for $x_n<1$,
\EQS{\label{I45.def}
I_3'&\lec  K(0)+K(\e),
\\
K(\be)&:=  \frac{x_n^{\al}}{\sqrt t^{\al+\ve}} \int_{\R^n_+} \frac1{(|x-y^*|+\sqrt t)^n (y_n+\sqrt t)^{1-\be}} \frac{y_n^{2\al}}{\bka{y}^{2a}\bka{y_n}^{2\al}}\, dy .
}

We will estimate $K(\be)$ for $0 \le \be\le\e<1$ simultaneously.
Let $A=x_n+y_n+\sqrt t$ and $R=|x'|+A+(y_n+1) \sim |x|+y_n + 1+\sqrt t$. Assume for the moment $2a\not=n-1$. By Lemma \ref{lemma2.2},
\EQS{\label{Kbe.dec}
K &\lec \frac{x_n^\al}{\sqrt t^{\al+\ve}} \int_0^\infty \bke{\int_\Si \frac1{(|x'-y'|+A)^n(|y'|+y_n+1)^{2a}}\, dy' } \frac{y_n^{2\al}\,dy_n}{(y_n+\sqrt t)^{1-\be}(y_n+1)^{2\al}} \\
&\lec \frac{x_n^\al}{\sqrt t^{\al+\ve}} \int_0^\infty \bket{R^{-2a}A^{-1} + R^{-n} \frac{\one_{2a>n-1}}{(y_n+1)^{2a-n+1}} } \frac{y_n^{2\al}\, dy_n}{(y_n+\sqrt t)^{1-\be}(y_n+1)^{2\al}} \\
&=: K_{1}(\be) + K_{2}(\be).
}
For $K_{1}$,
\[
K_{1} (\be)\lec \frac{x_n^\al}{\sqrt t^{\al+\ve}} \int_0^\infty \frac{y_n^{2\al}\, dy_n}{(y_n+|x|+1+\sqrt t)^{2a}(y_n+\sqrt t)^{2-\be}(y_n+1)^{2\al}}.
\]
If $0<\al<1$, we have for $\ve<1-\al$ that
\EQN{
K_{1} &\lec \frac{x_n^\al}{\sqrt t^{\al+\ve}} \int_0^\infty \frac{dy_n}{(y_n+|x|+1+\sqrt t)^{2a}(y_n+\sqrt t)^{2-\ve-\al}(y_n+1)^{\al+\e-\be}}
\\
&\lec \frac{x_n^\al}{\sqrt t^{\al+\ve} (|x|+1)^{2a}} \int_0^\infty \frac{dy_n}{(y_n+\sqrt t)^{2-\ve-\al}}
\lec \frac{x_n^\al}{\sqrt t (|x|+1)^{2a}}.
}
In the first inequality we arranged the exponent of $(y_n+\sqrt t)$ to be the same for all $\beta$, so that we have the same exponent of $\sqrt t$ in the end.
If $\al=1$, then
\EQN{
K_{1} &\lec \frac{x_n}{\sqrt t^{1+\ve}} \int_0^\infty \frac{y_n^2\, dy_n}{(y_n+|x|+1+\sqrt t)^{2a}(y_n+\sqrt t)^{2-\be}(y_n+1)^2}\\
&\lec \frac{x_n}{\sqrt t^{1+\ve}(|x|+1)^{2a}} \int_0^\infty \frac{dy_n}{(y_n+1)^{2-\be}}
\lec \frac{x_n}{\sqrt t^{1+\ve}(|x|+1)^{2a}}.
}
Thus, we have 
for $0< \al \le 1$, $0\le a\le n$ and $0 \le \be \le \e$,
\EQ{\label{eq-I321-est}
K_{1} (\be)\lec \frac{x_n^\al}{\sqrt t^{1+\de_{\al1}\ve} (|x|+1)^{2a}}.
}

Consider now the case $2a=n-1$. Denote $K(\be)=K(\be,a)$. By the definition of $K(\be)$, we have for $s\in (0,a/2)$,
\EQ{\label{eq-I322-est}
K(\be,a=\frac{n-1}2) \le K(\be,\frac{n-1}2-s) \lec \frac{x_n^\al}{\sqrt t^{1+\de_{\al1}\ve} (|x|+1)^{2a-2s}}
}
using \eqref{Kbe.dec} and \eqref{eq-I321-est}. Since $2a-2s \ge a$, it has enough spatial decay.

Suppose $\frac{n-1}2<a\le n$ so $K_{2}(\be)$ is also present. Recall 
\[
K_{2}(\be) = \frac{x_n^\al}{\sqrt t^{\al+\ve}} \int_0^\infty \frac{y_n^{2\al}\, dy_n}{(y_n+|x|+1+\sqrt t)^n (y_n+\sqrt t)^{1-\be} (y_n+1)^{2a-n+1+2\al}}.
\]
If $0<\al<1$, we have for $\ve<1-\al$ and $m=2a-n+1+\al+ \e-\be$ that
\EQN{
K_{2} &\lec \frac{x_n^\al}{\sqrt t^{\al+\ve}} \int_0^\infty \frac{dy_n}{(y_n+|x|+1+\sqrt t)^n (y_n+\sqrt t)^{1-\ve-\al}(y_n+1)^{m}}\\
&\lec \frac{x_n^\al}{\sqrt t^{\al+\ve}} \frac1{\sqrt t^{1-\ve-\al}} \int_0^\infty \frac{dy_n}{(y_n+|x|+1+\sqrt t)^n (y_n+1)^{m}}.
}
By  \eqref{lemma2.2d1} of Lemma \ref{lemma2.2},
\[
K_2 
\lec \frac{x_n^\al}{\sqrt t} \bkt{ \frac1{(|x|+1+\sqrt t)^{2a+\al}} + \de_{m1} \frac{\log(|x|+1+\sqrt t)}{(|x|+1+\sqrt t)^n} + \frac{\one_{m>1}}{(|x|+1+\sqrt t)^n} }.
\]
The second term above appears only when $m=1$, i.e., $n=2a+\al+\e-\be$. Since $\al>0$ and $\e\ge \be$, we have $n>a$. Thus, $K_{2}(\be) \lec \frac{x_n^\al}{\sqrt t(|x|+1)^a}$ for $0<\al<1$.
If $\al=1$, then
\EQN{
K_{2} &= \frac{x_n}{\sqrt t^{1+\ve}} \int_0^\infty \frac{y_n^2\, dy_n}{(y_n+|x|+1+\sqrt t)^n (y_n+\sqrt t)^{1-\be}(y_n+1)^{2a-n+3}}\\
&\lec \frac{x_n}{\sqrt t^{1+\ve}} \int_0^\infty \frac{dy_n}{(y_n+|x|+1+\sqrt t)^n (y_n+1)^{2a-n+2-\be}}\\
&\lec \frac{x_n}{\sqrt t^{1+\ve}} \bkt{ \frac1{(|x|+1+\sqrt t)^{2a+1-\be}} + \de_{(2a-n+2-\be)1} \frac{\log(|x|+1+\sqrt t)}{(|x|+1+\sqrt t)^n} + \frac{\one_{2a-n+2-\be>1}}{(|x|+1+\sqrt t)^n} },
}
where we've used \eqref{lemma2.2d1} of Lemma \ref{lemma2.2} in the last inequality.
The second term above appears only when $n=2a+1-\be>a$ if $\be<1$. Thus, $K_{2}\lec\frac{x_n}{\sqrt t^{1+\ve}(|x|+1)^a}$ when $\al=1$.
Therefore,
\EQ{\label{eq-I323-est}
K_{2}(\be) \lec \frac{x_n^\al}{\sqrt t^{1+\de_{\al1}\ve} (|x|+1)^a}.
}
Combining \eqref{eq-I321-est}, \eqref{eq-I322-est}, \eqref{eq-I323-est}, for $0< \al \le 1$, $0\le a\le n$ and $0 \le \be \le \e$,
\EQ{\label{eq-I32-est}
K(\be) \lec \frac{x_n^\al}{\sqrt t^{1+\de_{\al1}\ve} (|x|+1)^a}.
}
By \eqref{I45.def} and \eqref{Kbe.dec}, $I_3'$ has the same estimate. Together with the estimate \eqref{eq-I1-est} for $I_1$, we have shown \eqref{I1+I2+I3} for $x_n<1$,
with $\mu= \de_{\al 1}\e$, i.e., $\mu=0$ if $n\ge3$ or if $n=2$ with $\al<1$, and $\mu>0$ arbitrarily small when $n=2$ with $\al=1$.
Using \eqref{eq-bdry-bilinear-pf0} for $x_n\ge1$ and \eqref{I1+I2+I3} for $x_n<1$ yields
\eqref{eq-bdry-bilinear}.
\end{proof}

\begin{remark} 
For the proof of \eqref{eq-bdry-bilinear} when $0<x_n<1$, if one tries to bound $I_3\lec I_4+I_5$ in \eqref{I45.def} together by
\EQ{\label{rem4.5}
I_3
\lec \frac{x_n^\al}{\sqrt t^{\al+\ve}} \int_{\R^n_+} \frac{1}{(|x-y^*|+\sqrt t)^{n-\ve} (y_n+\sqrt t)} \frac{y_n^{2\al}}{\bka{y}^{2a}\bka{y_n}^{2\al}}\, dy,
}
one cannot get enough spatial decay $C(t)\frac{x_n^\al}{\bka{x}^a}$ when $n=2=a$ (and hence $\e>0$). If the log argument of $I_3$ contains $|x'-y'|$ in addition to $x_n+y_n+\sqrt t$, as what we get if we use 
 \eqref{eq_thm3_al_yn} instead of the improved \eqref{eq-improved-bdry-vanish-est}, then the above \eqref{rem4.5} is the only possible estimate of $I_3$, and we miss the case $n=a=2$. This was the motivation for proving \eqref{eq-improved-bdry-vanish-est}.
\end{remark}

%
%
\section{Mild solutions of MHD equations and F-M system of MHD type}\label{sec:mild-MHD-Ya}

In this section we prove Theorems \ref{mild-MHD-Lq}\,--\,
\ref{mild-MHD-Lq-uloc} for the viscous resistive MHD equations \eqref{eq-MHD}-\eqref{eq-MHD-bBC}, and Theorems \ref{mild-mMHD-Lq}\,--\,
\ref{mild-mMHD-Lq-uloc} for the F-M system of MHD type \eqref{eq-mMHD}-\eqref{eq-mMHD-BC}.

\subsection{A solution formula of the MHD equations in the half space}\label{sec-MHD-sol-formula}

Consider the vector heat equation
\EQ{\label{eq-b-equation}
\pd_t b - \De b = g,\ \text{ in }\R^n_+\times(0,\infty),
}
subject to the initial condition
\[
b(\cdot,0) = b_0,
\]
and the slip boundary condition \eqref{eq-MHD-bBC-explicit}.
So $b_i$ satisfies homogeneous Neumann boundary condition for $i=1,\ldots,n-1$, and $b_n$ satisfies homogeneous Dirichlet boundary condition on $\Si$.
Hence
\EQ{\label{b1.formula}
b_i(x,t) = \int_{ \R^n_+} 	G^N(x,y,t) (b_0)_i(y)\,dy + \int_0^t \!\int_{ \R^n_+} G^N(x,y,t-\tau) g_i(y,\tau)\,dy\,d\tau,\quad (i<n),
}
\EQ{\label{b3.formula}
b_n(x,t) = \int_{ \R^n_+} 	G^D(x,y,t) (b_0)_n(y)\,dy + \int_0^t \!\int_{ \R^n_+} G^D(x,y,t-\tau) g_n(y,\tau)\,dy\,d\tau,
}
where $G^N(x,y,t)$ and $G^D(x,y,t)$ are defined in \eqref{eq-GN-def} and \eqref{eq-GD-def}, respectively.

\medskip

For applications to the viscous resistive MHD equations in the half space, \eqref{eq-MHD}-\eqref{eq-MHD-bBC}, the right hand side of \eqref{eq-b-equation} takes the form $g = -u\cdot\nb b + b\cdot\nb u$.

\begin{lem}\label{th5.1}
Suppose both $u$ and $b$ are solenoidal vector fields in $\R^n_+$, i.e., they satisfy \eqref{solenoidal}.
Then $-u\cdot\nb b + b\cdot\nb u$ is solenoidal.
\end{lem}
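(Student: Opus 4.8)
The statement is purely algebraic — it is about divergences, not boundary values — so the proof splits into two parts: (i) show $\div(-u\cdot\nb b + b\cdot\nb u) = 0$ in $\R^n_+$, and (ii) show the normal component vanishes on $\Si$, i.e. $(-u\cdot\nb b + b\cdot\nb u)_n|_\Si = 0$. For part (i), write the $i$-th component as $(-u\cdot\nb b + b\cdot\nb u)_i = -u_k\pd_k b_i + b_k \pd_k u_i = \pd_k(-u_k b_i + b_k u_i)$, where the last equality uses $\div u = \div b = 0$ (so $b_i\pd_k u_k = 0$ and $u_i\pd_k b_k = 0$ can be added freely). Hence $-u\cdot\nb b + b\cdot\nb u = \nb\cdot(u\otimes b - b\otimes u)$ as a divergence of a tensor. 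Taking the divergence gives $\pd_i\pd_k(-u_k b_i + b_k u_i)$, which is $-\pd_i\pd_k(u_k b_i) + \pd_i\pd_k(b_k u_i)$; since $k$ and $i$ are both summed and $\pd_i\pd_k$ is symmetric in $i,k$, relabeling shows these two terms cancel. So the field is divergence-free in the interior.

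For part (ii), the normal component on $\Si$ is $(-u\cdot\nb b + b\cdot\nb u)_n = -u_k\pd_k b_n + b_k\pd_k u_n$ evaluated at $x_n=0$. Split the sum over $k$ into tangential indices $k=1,\ldots,n-1$ and $k=n$. On $\Si$ we have $u_n = 0$ and $b_n = 0$ (these are the conditions in \eqref{solenoidal}), so their tangential derivatives $\pd_k u_n = \pd_k b_n = 0$ for $k<n$ also vanish on $\Si$. This kills the $k<n$ part of both terms. For $k=n$, the remaining contribution is $-u_n\pd_n b_n + b_n\pd_n u_n$, and each term contains a factor $u_n|_\Si = 0$ or $b_n|_\Si = 0$. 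Hence $(-u\cdot\nb b + b\cdot\nb u)_n|_\Si = 0$, completing part (ii), and together with part (i) this is exactly the definition of solenoidal in \eqref{solenoidal}.

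There is no real obstacle here: the only subtlety is being careful that in part (i) the manipulation $-u_k\pd_k b_i + b_k\pd_k u_i = \pd_k(b_k u_i - u_k b_i)$ genuinely requires both $\div u = 0$ \emph{and} $\div b = 0$, and that in part (ii) one uses that tangential derivatives of a function vanishing identically on $\Si$ also vanish on $\Si$ — a trivial point but worth stating explicitly since the full slip condition \eqref{eq-MHD-bBC} (with $\pd_n b_i = 0$) is \emph{not} needed for this lemma, only $u_n|_\Si = b_n|_\Si = 0$. I would phrase the interior computation in coordinates with the summation convention, note the antisymmetry of the tensor $b\otimes u - u\otimes b$ under swapping the two slots combined with the symmetry of $\pd_i\pd_k$, and handle the boundary in two short lines as above.
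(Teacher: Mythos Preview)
Your proof is correct and essentially matches the paper's: for the interior part the paper expands $\div(-u\cdot\nb b + b\cdot\nb u)$ directly via the product rule (getting the cancelling cross terms $-\nb u:(\nb b)^\top + \nb b:(\nb u)^\top$ plus terms that vanish by $\div u=\div b=0$), which is the same computation as your antisymmetric-tensor argument in a different order, and the boundary argument is identical. Your observation that only $u_n|_\Si = b_n|_\Si = 0$ (not the full slip condition) is used here is accurate and worth keeping.
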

\begin{proof}
We compute directly that
\[
\div\bke{-u\cdot\nb b + b\cdot\nb u}
= -\nb u:(\nb b)^\top - u\cdot\nb(\div b) + \nb b:(\nb u)^\top + b\cdot\nb(\div u) = 0,
\]
where $A:B = \sum_{i,j=1}^n A_{ij}B_{ij}$ for $A = (A_{ij})_{i,j=1}^n$, $B = (B_{ij})_{i,j=1}^n$. Note that we used $\div u = \div b = 0$ in the last equation above.
Moreover,
\[
\bke{-u\cdot\nb b + b\cdot\nb u}_n \big|_\Si = \sum_{i=1}^{n-1} (-u_i\pd_ib_n + b_i\pd_iu_n) \Big|_\Si + (-u_n\pd_nb_n + b_n\pd_nu_n) \Big|_\Si = 0,
\]
since $b_n|_\Si = u_n|_\Si= 0$. This shows $-u\cdot\nb b + b\cdot\nb u$ is solenoidal.
\end{proof}

In order to apply \eqref{b1.formula}-\eqref{b3.formula} to the viscous resistive MHD equations \eqref{eq-MHD}-\eqref{eq-MHD-bBC}, we need to check that the vector field $b$ given by the formulas satisfy $\div b = 0$ and $b_n|_\Si = 0$ if $b_0$ and $g$ are solenoidal.

\begin{lem}\thlabel{lem-b-solenoidal}
Suppose $b_0$ and $g$ are solenoidal vector fields, i.e., they satisfy \eqref{solenoidal}.
Let $b$ be the vector field given by \eqref{b1.formula}-\eqref{b3.formula}.
Then $b$ is also solenoidal.
\end{lem}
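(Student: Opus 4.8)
The plan is to verify directly that the vector field $b$ defined by \eqref{b1.formula}--\eqref{b3.formula} satisfies the two conditions in \eqref{solenoidal}, namely $b_n|_\Si=0$ and $\div b=0$ on $\R^n_+\times(0,\infty)$. Both follow after an integration by parts that transfers the $x$-derivatives falling on the heat kernels onto $y$-derivatives of the data, at which point the hypotheses $\div b_0=\div g=0$ and $(b_0)_n|_\Si=g_n|_\Si=0$ do the work. The boundary condition is the easy half: when $x_n=0$ one has $x-y=(x'-y',-y_n)$ and $x-y^*=(x'-y',y_n)$, so by evenness of $\Ga$ in each coordinate $G^D((x',0),y,t)=\Ga(x'-y',-y_n,t)-\Ga(x'-y',y_n,t)=0$; hence both integrals defining $b_n$ in \eqref{b3.formula} vanish on $\Si$.

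For the divergence I would first record two kernel identities obtained by differentiating \eqref{eq-GN-def}--\eqref{eq-GD-def}. For tangential indices $1\le i\le n-1$, since $(x-y^*)_i=x_i-y_i$, derivatives commute through the reflection and $\pd_{x_i}G^N(x,y,t)=-\pd_{y_i}G^N(x,y,t)$ (and likewise for $G^D$). For the normal direction, $(x-y^*)_n=x_n+y_n$ gives $\pd_{x_n}\Ga(x-y^*,t)=\pd_{y_n}\Ga(x-y^*,t)$, which combined with $\pd_{x_n}\Ga(x-y,t)=-\pd_{y_n}\Ga(x-y,t)$ yields the ``Dirichlet--Neumann exchange'' $\pd_{x_n}G^D(x,y,t)=-\pd_{y_n}G^N(x,y,t)$.

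Using these, I would differentiate \eqref{b1.formula}--\eqref{b3.formula} under the integral sign (justified by the Gaussian decay of the kernels together with the regularity and decay of $b_0$ and $g$ tacitly assumed for these representation formulas) and integrate by parts in $y$. For $i<n$ the tangential term becomes $\int_{\R^n_+}G^N(x,y,t)\,\pd_{y_i}(b_0)_i(y)\,dy$ with no boundary contribution, since integration is over all $y'\in\R^{n-1}$. For $i=n$, integrating by parts in $y_n\in(0,\infty)$ produces a surface term at $y_n=0$ proportional to $(b_0)_n|_\Si$, which vanishes by hypothesis, leaving $\int_{\R^n_+}G^N(x,y,t)\,\pd_{y_n}(b_0)_n(y)\,dy$. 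Summing over $i$, the initial-data part of $\div b$ equals $\int_{\R^n_+}G^N(x,y,t)\,(\div b_0)(y)\,dy=0$; running the same computation with $g(\cdot,\tau)$ in place of $b_0$ for each $\tau$ and then integrating in $\tau$ (again differentiating under the integral), the Duhamel part equals $\int_0^t\!\int_{\R^n_+}G^N(x,y,t-\tau)\,(\div g)(y,\tau)\,dy\,d\tau=0$. Hence $\div b=0$.

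The only genuinely delicate point is the bookkeeping of the boundary term in the $y_n$-integration by parts for the $b_n$ component: this is exactly where the slip boundary condition \eqref{eq-MHD-bBC} enters, because the kernel $G^D$ governing $b_n$ turns into $G^N$ under $\pd_{x_n}$, and the resulting integral over $\Si$ is annihilated precisely by $(b_0)_n|_\Si=g_n|_\Si=0$. Everything else is routine once the identities $\pd_{x_i}G^N=-\pd_{y_i}G^N$ for $i<n$ and $\pd_{x_n}G^D=-\pd_{y_n}G^N$ are in hand, with the interchange of differentiation and integration supplied by the smoothness and decay hypotheses implicit in the context.
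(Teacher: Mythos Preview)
Your proof is correct and follows essentially the same route as the paper: verify $b_n|_\Si=0$ from $G^D|_{x_n=0}=0$, then convert the $x$-derivatives in $\div b$ to $y$-derivatives on the kernels (your identity $\pd_{x_n}G^D=-\pd_{y_n}G^N$ together with $\pd_{x_i}G^N=-\pd_{y_i}G^N$ for $i<n$ is exactly the paper's observation that all terms become $-\pd_{y_i}[\Ga(x-y,t)+\Ga(x-y^*,t)]=-\pd_{y_i}G^N$), integrate by parts in $y$ using $(b_0)_n|_\Si=g_n|_\Si=0$ to kill the boundary term, and conclude from $\div b_0=\div g=0$.
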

\begin{proof}
First we note that $b_n|_\Si = 0$ by the definition of Dirichlet Green function $G^D(x,y,t)$.

To prove $\div b = 0$, we first consider the case $g=0$ and compute
\EQN{
\div b &= \sum_{i=1}^{n-1} \int_{\R^n_+} \pd_{x_i} G^N(x,y,t) (b_0)_i(y)\, dy + \int_{\R^n_+} \pd_{x_n} G^D(x,y,t) (b_0)_n(y)\, dy\\
&= \sum_{i=1}^{n} \int_{\R^n_+} \bkt{\pd_{x_i}\Ga(x-y,t) +\e_i  \pd_{x_i}\Ga(x-y^*,t)} (b_0)_i(y)\, dy 
\quad (\ep_i = 1 -2 \de_{in})
\\
&= \sum_{i=1}^{n} \int_{\R^n_+} \bkt{ - \pd_{y_i}\Ga(x-y,t) - \pd_{y_i}\Ga(x-y^*,t)} (b_0)_i(y)\, dy .
}
Integrating by parts and using $(b_0)_n|_\Si = 0$ and $\div b_0 = 0$, we obtain
\[
\div b = \sum_{i=1}^{n} \int_{\R^n_+} \bkt{ \Ga(x-y,t) + \Ga(x-y^*,t)} \pd_i(b_0)_i(y)\, dy = 0.
\]
The general case $g\neq0$ follows the same computation above since $g$ is solenoidal by Lemma \ref{th5.1} and
the time integrals do not play a role.
\end{proof}

\begin{proof}[Proof of \thref{prop-MHD-sol-formula}]
Using the solution formulas
\eqref{eq-Stokes-sol-formula-divu0=0} for $u$  with $f=0$ and $F = -u\otimes u + b\otimes b$, and \eqref{b1.formula}-\eqref{b3.formula} for $b$ with $g=-u\cdot\nb b + b\cdot\nb u$,
thanks to \thref{lem-b-solenoidal}, we derive the following representation formula for the viscous resistive MHD equations \eqref{eq-MHD}-\eqref{eq-MHD-bBC}:
\EQS{
u_i(x,t) &= \sum_{j=1}^n \int_{\R^n_+} \breve G_{ij}(x,y,t) (u_0)_j(y)\, dy \\
&\quad + \sum_{j,k=1}^n \int_0^t \int_{\R^n_+} \pd_{y_k}G_{ij}(x,y,t-s)\bke{u_ku_j - b_kb_j}(y,s)\, dyds,\ i=1,\ldots,n,
}
and
\EQS{\label{b-mild_1-pre}
b_i(x,t) &= \int_{\R^n_+} G^N(x,y,t) (b_0)_i(y)\, dy \\
&\quad - \sum_{k=1}^n \int_0^t \int_{\R^n_+} G^N(x,y,t-s)\bke{u_k\pd_kb_i - b_k\pd_ku_i}(y,s)\, dyds,
}
for $i=1,\ldots,n-1$, and
\EQS{\label{b-mild_n-pre}
b_n(x,t) &= \int_{\R^n_+} G^D(x,y,t) (b_0)_n(y)\, dy \\
&\quad - \sum_{k=1}^n \int_0^t \int_{\R^n_+} G^D(x,y,t-s)\bke{u_k\pd_kb_n-b_k\pd_ku_n}(y,s)\, dyds.
}
Integrating the second integrals in \eqref{b-mild_1-pre} and \eqref{b-mild_n-pre} by parts and using the fact that $u$ and $b$ are solenoidal, we get
\EQN{
b_i(x,t) &= \int_{\R^n_+} G^N(x,y,t) (b_0)_i(y)\, dy \\
&\quad + \sum_{k=1}^n \int_0^t \int_{\R^n_+} \pd_{y_k} G^N(x,y,t-s)\bke{u_kb_i - b_ku_i}(y,s)\, dyds,\ (i<n),\\
b_n(x,t) &= \int_{\R^n_+} G^D(x,y,t) (b_0)_n(y)\, dy \\
&\quad + \sum_{k=1}^n \int_0^t \int_{\R^n_+} \pd_{y_k}G^D(x,y,t-s)\bke{u_kb_n-b_ku_n}(y,s)\, dyds.
}
This proves \thref{prop-MHD-sol-formula}.
\end{proof}

\subsection{Construction of mild solutions of the viscous resistive MHD equations}
Here we consider the proofs of Theorems \ref{mild-MHD-Lq}, \ref{mild-MHD-Ya}, \ref{mild-MHD-Za}, and \ref{mild-MHD-Lq-uloc}.
For the viscous resistive MHD equations in the half space, \eqref{eq-MHD}-\eqref{eq-MHD-bBC},
we write \eqref{u-mild}-\eqref{b-mild_n} in the form
\EQ{\label{eq-MHD-abstract}
(u(t),b(t)) = (e^{-tA}u_0, e^{t\De^*}b_0) + B((u,b),(u,b)),
}
where $A=-\mathbb P\De$ is the Stokes operator and $\De^* = (\De^N,\ldots,\De^N,\De^D)$ defined in \eqref{DeN.def}  so that
\EQ{\label{eq-semigp-formula}
(e^{-tA}u_0)_i(x) = \sum_{j=1}^n \int_{\R^n_+} \breve G_{ij}(x,y,t) (u_0)_j(y)\, dy,
}
\[
(e^{t\De^*}b_0)_i(x,t) = \int_{\R^n_+} G^N(x,y,t) (b_0)_i(y)\, dy,\ i=1,\ldots,n-1,
\]
\[
(e^{t\De^*}b_0)_n(x,t) = \int_{\R^n_+} G^D(x,y,t) (b_0)_n(y)\, dy,
\]
and $B=(B_1,B_2)$ is a bilinear form in which $B_1$ and $B_2$ are given by
\[
(B_1((u,b),(v,c)))_i = \sum_{j,k=1}^n \int_0^t \int_{\R^n_+} \pd_{y_k}G_{ij}(x,y,t-s)\bke{u_kv_j - b_kc_j}(y,s)\, dyds,\ i=1,\ldots n,
\]
\[
(B_2((u,b),(v,c)))_i = \sum_{k=1}^n \int_0^t \int_{\R^n_+} \pd_{y_k} G^N(x,y,t-s)\bke{u_kc_i - b_kv_i}(y,\tau)\, dyds,\ i=1,\ldots,n-1,
\]
and
\[
(B_2((u,b),(v,c)))_n = \sum_{k=1}^n \int_0^t \int_{\R^n_+} \pd_{y_k}G^D(x,y,t-s)\bke{u_kc_n-b_kv_n}(y,\tau)\, dyds.
\]

Applying the classical Picard contraction theorem (see e.g.~\cite[Theorem 13.2]{LR-book2002}) to the abstract integral equation \eqref{eq-MHD-abstract},
it is straightforward to construct a unique mild solution of the MHD equations \eqref{eq-MHD}-\eqref{eq-MHD-bBC} in $L^q$ (\thref{mild-MHD-Lq}) with the estimates \thref{lem-heat-est-Lq} and \cite[Lemma 9.1]{Green}, in $Y_a$ (\thref{mild-MHD-Ya}) with the estimates \thref{lem-heat-est-Ya} and \cite[Lemma 9.2]{Green}, in $Z_a$ (\thref{mild-MHD-Za}) with the estimates \thref{lem-heat-est-Za} and \cite[Lemma 9.3]{Green}, in $L^q_\uloc$ (\thref{mild-MHD-Lq-uloc}) with the estimates \thref{lem-heat-est-Lq-uloc} and \cite[Lemma 9.4]{Green}.
When $n=2$ and $q=\infty$ (or $a=0$), we also use Lemma \ref{lem-9.1}.
We omit the details of the proofs.

\subsection{Construction of mild solutions of the F-M system of MHD type}
Here we consider the proofs of Theorems \ref{mild-mMHD-Lq}, \ref{mild-mMHD-Ya}, \ref{mild-mMHD-Za}, and \ref{mild-mMHD-Lq-uloc}.
For the F-M system of MHD type in the half space, \eqref{eq-mMHD}-\eqref{eq-mMHD-BC},
we write \eqref{u-mild-m}-\eqref{b-mild-m} in the form
\EQ{\label{eq-mMHD-abstract}
(u(t),b(t)) = (e^{-tA}u_0, e^{-tA}b_0) + \mathcal B((u,b),(u,b)),
}
where $e^{-tA}$ is given in \eqref{eq-semigp-formula}, and $\mathcal B=(\mathcal B_1,\mathcal B_2)$ in which $\mathcal B_1$ and $\mathcal B_2$ are bilinear forms defined by
\[
(\mathcal B_1((u,b),(v,c)))_i = \sum_{j,k=1}^n \int_0^t \int_{\R^n_+} \pd_{y_k}G_{ij}(x,y,t-s)\bke{u_kv_j - b_kc_j}(y,s)\, dyds,\ i=1,\ldots,n,
\]
and
\[
(\mathcal B_2((u,b),(v,c)))_i = \sum_{j,k=1}^n \int_0^t \int_{\R^n_+} \pd_{y_k}G_{ij}(x,y,t-s)\bke{u_kc_j - b_kv_j}(y,s)\, dyds,\ i=1,\ldots,n.
\]
Similarly, carrying out the Picard iteration argument to the abstract equation \eqref{eq-mMHD-abstract}, we obtain a unique solution of the F-M system of MHD type \eqref{eq-mMHD}-\eqref{eq-mMHD-BC} in the half space  in $L^q$ (\thref{mild-mMHD-Lq}) with the estimates in \cite[Lemma 9.1]{Green}, in $Y_a$ (\thref{mild-mMHD-Ya}) with the estimates in \cite[Lemma 9.2]{Green}, in $Z_a$ (\thref{mild-mMHD-Za}) with the estimates in \cite[Lemma 9.3]{Green}, in $L^q_\uloc$ (\thref{mild-mMHD-Lq-uloc}) with the estimates in \cite[Lemma 9.4]{Green}.
When $n=2$ and $q=\infty$ (or $a=0$), we also use Lemma \ref{lem-9.1}.
We skip the details.

\section{Mild solutions of nematic liquid crystal flows}\label{sec:mild-NLCF}

\subsection{Solution formulas of Neumann and Dirichlet problems of NLCF}\label{sec-NLCF-sol-formula}

In this subsection, we prove the solution formulas in \thref{prop-NLCF-sol-formula-N} and \thref{prop-NLCF-sol-formula-D} for the nematic liquid crystal flow with Neumann and Dirichlet boundary condition for orientation field.

We first consider the Neumann problem \eqref{eq-NLCF-N}-\eqref{eq-NLCF-far-field-N} and prove \thref{prop-NLCF-sol-formula-N}.

\begin{proof}[Proof of \thref{prop-NLCF-sol-formula-N}]
It is obvious to see that \eqref{u-mild-NLCF-N} follows from the solution formula
\eqref{eq-Stokes-sol-formula-divu0=0}.
To prove \eqref{d-mild-NLCF-N}, let $\td d(x,t) = d(x,t) - d_\infty$. Then $\td d$ solves $(\pd_t-\De)\td d = -u\cdot\nb d + |\nb d|^2 d$ in $\R^n_+\times(0,\infty)$ with initial condition $\td d(x,0) = d_0(x) - d_\infty$, boundary condition $\pd_n\td d(x',0,t) = 0$ on $\Si\times(0,\infty)$, and far-field condition $\td d(x,t) \to 0$ as $|x|\to\infty$.
Applying the solution formula for the Neumann problem of inhomogeneous heat equation, we have
\EQS{\label{td-d-mild-NLCF-N}
\td d_\ell(x,t) &= \int_{\R^n_+} G^N(x,y,t) \bkt{(d_0)_\ell(y) - (d_\infty)_\ell} dy \\
&\quad + \int_0^t \int_{\R^n_+} G^N(x,y,t-s)\bkt{ -(u\cdot\nb)d_\ell + |\nb d|^2 d_\ell}(y,s)\, dyds,\ \ell=1,\ldots,L,
}
which is equivalent to \eqref{d-mild-NLCF-N}.

It remains to show $|d| = 1$. Consider $w = |d|^2 - 1$. Then $w$ satisfies
\[
\pd_t w - \De w = -u\cdot\nb w + 2|\nb d|^2 w\ \text{ in }\ \R^n_+\times(0,\infty),
\]
with initial condition $w|_{t=0}=0$, boundary condition $\pd_nw|_\Si = 0$, and $w\to0$ as $|x|\to\infty$.
Since $u$ and $\nb d$ are bounded, the parabolic maximum principle (see Theorem 5 in \cite[Section 3.4]{PW-book1967}) implies that the maximum of $w$ must occur either at $t=0$ or on $\Si$.
But the Hopf lemma (see Theorem 6 in \cite[Section 3.4]{PW-book1967}) indicates that if the maximum of $w$ occur on $\Si$ then $\pd_nw|_\Si>0$, violating the Neumann boundary condition.
Thus, $w\le0$ for all $t$.
Applying the same reasoning to $-w$, we find $w\ge0$. Therefore $w=0$  for all $t$.
This completes the proof of the proposition.
\end{proof}

We now consider the Dirichlet problem \eqref{eq-NLCF-D}-\eqref{eq-NLCF-D-compatible} and prove \thref{prop-NLCF-sol-formula-D}.

\begin{proof}[Proof of \thref{prop-NLCF-sol-formula-D}]
It is obvious to see that \eqref{u-mild-NLCF-D} follows from the solution formula
\eqref{eq-Stokes-sol-formula-divu0=0}.
To prove \eqref{d-mild-NLCF-D}, let $\td d(x,t) = d(x,t) - d_*$. Then $\td d$ solves $(\pd_t-\De)\td d = -u\cdot\nb d + |\nb d|^2 d$ in $\R^n_+\times(0,\infty)$ with initial condition $\td d(x,0) = d_0(x) - d_*$, and boundary condition $\td d(x',0,t) = 0$ on $\Si\times(0,\infty)$.
Applying the representation formula for 
the Dirichlet problem of inhomogeneous heat equation, we have for $\ell=1,\ldots,L$ that
\EQS{\label{td-d-mild-NLCF-D}
\td d_\ell(x,t) &= \int_{\R^n_+} G^D(x,y,t) \bkt{(d_0)_\ell(y) - (d_*)_\ell} dy\\
&\quad + \int_0^t \int_{\R^n_+} G^D(x,y,t-s)\bkt{ -(u\cdot\nb)d_\ell + |\nb d|^2 d_\ell }(y,s)\, dyds,
}
which is equivalent to \eqref{d-mild-NLCF-D}.

It remains to show $|d| = 1$. Consider $w = |d|^2 - 1$. Then $w$ is a bounded function that satisfies
\[
\pd_t w - \De w = -u\cdot\nb w + 2|\nb d|^2 w\ \text{ in }\ \R^n_+\times(0,\infty),
\]
with initial condition $w|_{t=0}=0$, boundary condition $w|_\Si = 0$.
Since $u$ and $\nb d$ are bounded, the parabolic maximum principle (see Theorem 5 in \cite[Section 3.4]{PW-book1967})
then implies that $w=0$ for all $t$.
This completes the proof of the proposition.
\end{proof}

\subsection{Construction of mild solutions for the Neumann problem}\label{sec:mild-NLCF-N}

In this subsection, we consider the nematic liquid crystal flow with Neumann boundary condition for orientation field, \eqref{eq-NLCF-N}-\eqref{eq-NLCF-far-field-N}.
We prove Theorems \ref{mild-NLCF-N-Ya} and \ref{mild-NLCF-N-Lq} by constructing mild solutions in the corresponding function spaces.

Let $\td d(x,t) = d(x,t) - d_\infty$ and $\td d_0(x) = d_0(x) - d_\infty$. By \eqref{d-mild-NLCF-N}, $\td d$ satisfies
\EQN{
\td d_\ell(x,t) &= \int_{\R^n_+} G^N(x,y,t)(\td d_0)_\ell(y)\, dy\\
&\quad + \int_0^t \int_{\R^n_+} G^N(x,y,t-s)\bkt{ -(u\cdot\nb)\td d_\ell + |\nb \td d|^2 (\td d + d_\infty)_\ell}(y,s)\, dyds,\ \ell=1,\ldots,L.
}

\subsubsection{Construction of mild solution in $Y_a$, $0<a\le n$}
We start by considering the problem in the $Y_a$ framework and prove \thref{mild-NLCF-N-Ya}.

For $0< a\le n$,
define the Banach space
\[
X = \bket{(u,\td d)
:\R^n_+ \to \R^n \times S^{L-1} \ \big| \ 
u\in Y_a,\ \td d\in L^\infty,\, \nb\td d\in Y_a},
\]
with the norm
\[
\norm{(u,\td d)}_X = \norm{u}_{Y_a} + \norm{\td d}_{L^\infty} + \norm{\nb\td d}_{Y_a}.
\]
For a small $0<T\le1$ to be determined, denote $L^\infty_TX=L^\infty(0,T;X)$.
For a given $(u_0,\td d_0)\in X$ define inductively a sequence $(u^m,\td d^m)$ by $(u^0,\td d^0) = (0,0)$,
\EQ{\label{eq-NLCF-iteration}
(u^{m+1},\td d^{m+1})(t) = (e^{-tA}u_0, e^{t\De^N}\td d_0) + F(u^m,\td d^m)(t),\ m\ge0,
}
where $A=-\mathbb P\De$ is the Stokes operator, $F = (F_1,F_2)$ and for $i=1,\ldots,n$,
\EQ{\label{eq-F1-def}
(F_1(u,\td d))_i(x,t) =  \sum_{j,k=1}^n \sum_{\ell=1}^L \int_0^t\int_{\R^n_+} \pd_{y_k}G_{ij}(x,y,t-s)\bke{u_ku_j + \pd_k\td d_\ell \pd_j\td d_\ell}(y,s)\, dyds,
}
and for $\ell=1,\ldots,L$,
\EQ{\label{eq-F2-def}
(F_2(u,\td d))_\ell(x,t) = \int_0^t\int_{\R^n_+} G^N(x,y,t-s)\bkt{ -(u\cdot\nb)\td d_\ell + |\nb \td d|^2 (\td d + d_\infty)_\ell}(y,s)\, dyds.
}

By \cite[Lemma 9.2]{Green} and $T\le1$, we have, for $t\in(0,T)$, that
\EQ{\label{eq-NLCF-N-Ya-linear-1}
\norm{e^{-tA}u_0}_{Y_a} \lec \norm{u_0}_{Y_a},
}
and
\EQS{\label{eq-NLCF-N-Ya-nonlinear-1}
\norm{F_1(u,\td d)(t)}_{Y_a} 
&\lec \int_0^t (t-\tau)^{-\frac12} \norm{ \bke{u\otimes u + \nb\td d\odot\nb\td d}(\tau)}_{Y_{2a}} d\tau\\
&\lec t^{\frac12} \sup_{0\le \tau\le T} \bke{ \norm{u(\tau)}_{Y_a}^2 + \norm{\nb\td d(\tau)}_{Y_a}^2 }
\le CT^{\frac12} \norm{(u,\td d)}_{L^\infty_TX}^2.
}
Moreover, by \thref{lem-heat-est-Ya}, we have, for $t\in(0,T)$, that
\EQ{\label{eq-NLCF-N-Ya-linear-2.1}
\norm{e^{t\De^N}\td d_0}_{L^\infty} \lec \norm{\td d_0}_{L^\infty}.
}
To bound $\nb e^{t\De^N}\td d_0$ in $Y_a$, note that
\begin{align}
\nonumber
\pd_{x_j} e^{t\De^N}\td d_0(x)
&= \int_{\R^n_+} \pd_{x_j} 
\bket{\Ga(x-y,t) + \Ga(x-y^*,t)} \td d_0(y)\, dy\\ \nonumber
&= \int_{\R^n_+} \bket{- \pd_{y_j}\Ga(x-y,t) -\ep_j \pd_{y_j}\Ga(x-y^*,t)} \td d_0(y)\, dy\\
&= \int_{\R^n_+} \bket{\Ga(x-y,t) + \ep_j \Ga(x-y^*,t)} \pd_j\td d_0(y)\, dy,
\label{eq-nb-d-linear}
\end{align}
using the cancellation of boundary terms on $\Si$ when $j=n$.
Note that $\Ga(x-y,t) + \ep_j \Ga(x-y^*,t)$ is $G^N(x,y,t)$ for $j=1,\ldots,n-1$, and $\Ga(x-y,t) + \ep_n \Ga(x-y^*,t) = G^D(x,y,t)$.
Thus, by \eqref{eq-linear-Ya} and $T\le1$,
\EQ{\label{eq-NLCF-N-Ya-linear-2.3}
\norm{\nb e^{t\De^N}\td d_0}_{Y_a} \lec \norm{\nb\td d_0}_{Y_a}.
}
By \thref{lem-heat-est-Ya} with $a=0$, we have, for $t\in(0,T)$, that
\EQS{\label{eq-NLCF-N-Ya-nonlinear-2.1}
\norm{F_2(u,\td d)(t)}_{L^\infty} 
& \lec \int_0^t \norm{\bke{u\cdot\nb\td d + |\nb\td d|^2(\td d + d_\infty)}(\tau)}_{L^\infty} d\tau\\
& \le t \sup_{0\le \tau\le T} \bkt{\norm{u(\tau)}_{Y_a}\norm{\nb\td d(\tau)}_{Y_a} + \norm{\nb\td d(\tau)}_{Y_a}^2\bke{\norm{\td d(\tau)}_{L^\infty} + 1} }\\
&\lec T \norm{(u,\td d)}_{L^\infty_TX}^2 \bke{\norm{(u,\td d)}_{L^\infty_TX} + 1}.
}
By \thref{lem-heat-est-Ya} again, we have that
\EQS{\label{eq-NLCF-N-Ya-nonlinear-2.3}
\norm{\nb F_2(u,\td d)(t)}_{Y_a}
&\lec \int_0^t (t-\tau)^{-\frac12} \norm{\bke{u\cdot\nb\td d + |\nb\td d|^2(\td d + d_\infty)}(\tau)}_{Y_{2a}} d\tau\\
&\lec t^{\frac12} \sup_{0\le t\le T} \bke{\norm{u(t)}_{Y_a}\norm{\nb\td d}_{Y_a} + \norm{\nb\td d}_{Y_a}^2\norm{\td d + d_\infty}_{L^\infty}}\\
&\lec T^{\frac12} \norm{(u,\td d)}_{L^\infty_TX}^2 \bke{1 + \norm{(u,\td d)}_{L^\infty_TX}}.
}
It follows from \eqref{eq-NLCF-N-Ya-linear-1}-\eqref{eq-NLCF-N-Ya-nonlinear-2.3} that, by $T\le 1$,
\EQS{\label{eq-NLCF-N-Ya-induction}
&\norm{(u^{m+1},\td d^{m+1})(t)}_X \le \norm{(e^{-tA}u_0, e^{t\De^N}\td d_0)}_X + \norm{F(u^m,\td d^m)(t)}_X\\
&\quad \le C_0 \norm{(u_0,\td d_0)}_X+ C_1T^{1/2} \norm{(u^m,\td d^m)}_{L^\infty_TX}^2 \bke{1 + \norm{(u^m,\td d^m)}_{L^\infty_TX}}.
}
Suppose $\norm{(u^m,\td d^m)}_{L^\infty_TX}\le 2\e_T$, where
\[
\e_T := C_0 \norm{(u_0,\td d_0)}_X.
\]
Then, by \eqref{eq-NLCF-N-Ya-induction} and the induction hypothesis, 
\[
\norm{(u^{m+1},\td d^{m+1})}_{L^\infty_TX} \le \e_T + 4C_1T^{\frac12}\e_T^2(1+2\e_T)
\le 2\e_T
\]
provided $T<\bke{1+4C_1\e_T(1+2\e_T)}^{-2}$, which can be achieved by taking $T$ sufficiently small.
Therefore, we have $\norm{(u^m,\td d^m)}_{L^\infty_TX}\le 2\e_T$ for all $m$.

By a similar computation as in \eqref{eq-NLCF-N-Ya-nonlinear-1} and in \eqref{eq-NLCF-N-Ya-nonlinear-2.1}-\eqref{eq-NLCF-N-Ya-nonlinear-2.3}, we get
\EQS{\label{eq-NLCF-N-Ya-nonlinear-diff-1}
&\norm{F_1(u,\td d)(t) - F_1(v,\td e)(t)}_{Y_a} \\
&\quad\lec T^{\frac12} \bke{\norm{(u,\td d)}_{L^\infty_TX} + \norm{(v,\td e)}_{L^\infty_TX}} \norm{(u,\td d) - (v,\td e)}_{L^\infty_TX},
}
and
\EQS{\label{eq-NLCF-N-Ya-nonlinear-diff-2}
&\norm{F_2(u,\td d)(t)-F_2(v,\td e)(t)}_{L^\infty}
 + \norm{\nb F_2(u,\td d)(t)-\nb F_2(v,\td e)(t)}_{Y_a}\\
&\quad\lec T^{\frac12} \bke{\norm{(u,\td d)}_{L^\infty_TX} + \norm{(v,\td e)}_{L^\infty_TX}}\\
&\quad\qquad \times\bke{1 + \norm{(u,\td d)}_{L^\infty_TX} + \norm{(v,\td e)}_{L^\infty_TX}}\norm{(u,\td d) - (v,\td e)}_{L^\infty_TX}.
}
Hence,
\EQN{
&\norm{(u^{m+1},\td d^{m+1})(t) - (u^m,\td d^m)(t)}_X
= \norm{F(u^m,\td d^m)(t) - F(u^{m-1},\td d^{m-1})(t)}_X\\
&\qquad\le C_2T^{\frac12}(4\e_T)(1+4\e_T)\norm{(u^m,\td d^m) - (u^{m-1},\td d^{m-1})}_{L^\infty_TX}.
}
By choosing $T$ even smaller such that $4C_2T^{1/2}\e_T(1+4\e_T)<1$, we find that $(u^m,\td d^m)$ converges to a limit $(u,\td d)\in L^\infty_TX$, which is the required solution. For the uniform-in-$t$ limit $\td d(x,t)  \to 0$ as $|x|\to \infty$, see Remark \ref{rem6.1}.
This proves \thref{mild-NLCF-N-Ya}. \qed

\begin{remark}\label{rem6.1}
To show $\td d(x,t)  \to 0$ as $|x|\to \infty$, denote 
$\td d_\ell(x,t)=I(x,t)+J(x,t)$ where $I$ and $J$ are
the two integrals in \eqref{d-mild-NLCF-N}.  For any small $\e>0$, there is $R>1$ such that $|\td d_0(y)|<\e$ for $|y|>R$. Then by \eqref{eq-heat-Green-est}, for $|x|>2R$,
\[
|I(x,t)| \le \int_{|y|<R} G^N(x,y,t)2dy +  \int_{|y|>R} G^N(x,y,t)\e dy \le 
 \int_{|y|<R} \frac{Cdy}{|x-y|^n} + 2 \e \le \frac {CR^n}{|x|^n}  + 2\e,
\]
which is less than $3\e$ for $|x|$ sufficiently large. For $J$, by $(u,\td d)\in L^\infty_T X$ and Lemma \ref{lem-heat-int-decay-est},
\[
|J(x,t)|\lec \int_0^t \int_{\R^n} \Ga(x-y,t-s) {\bka{y}^{-2a}} \, dyds \le C_T\bka{x}^{-2a}.
\]
Hence $\td d(x,t)  \to 0$ as $|x|\to \infty$, uniformly in $t \in (0,T)$.\hfill \qed
\end{remark}

\subsubsection{Construction of mild solution in $L^q$ for $q>n$}\label{sec:NLCF-N-Lq}
We now consider the problem in the $L^q$ framework and prove \thref{mild-NLCF-N-Lq}.
We first consider the subcritical case $q>n$. Let $\td d= d - d_\infty$. Define the Banach space
\EQS{\label{eq-banach-Lq}
X_T &= \Big\{(u,\td d)\in L^\infty(0,T;L^q\times L^\infty): 
\nb\td d \in L^\infty(0,T;L^q),\\ 
&\qquad t^{\frac{n}{2q}}u,\,t^{\frac{n}{2q}}\nb\td d \in L^\infty(0,T;L^\infty),\ 
t^{\frac12} \nb u,\, t^{\frac12}\nb^2\td d\in L^\infty(0,T;L^q)
\Big\} ,
}
with the norm
\EQN{
&\norm{(u,\td d)}_{X_T} 
:= \sup_{0\le t\le T} \bke{\norm{u(\cdot,t)}_{L^q} + \norm{\td d(\cdot,t)}_{L^\infty} + \norm{\nb \td d(\cdot,t)}_{L^q} }\\
&\qquad + \sup_{0\le t\le T}
\bket{ t^{\frac{n}{2q}} \bke{\norm{u(\cdot,t)}_{L^\infty} + \norm{\nb\td d(\cdot,t)}_{L^\infty}  } +  t^{\frac12} \bke{\norm{\nb u(\cdot,t)}_{L^q} + \norm{\nb^2 \td d(\cdot,t)}_{L^q} }}.
}

Applying \cite[Lemma 9.1]{Green} and Lemma \ref{lem-heat-est-Lq}, we derive
\EQN{
\norm{e^{-tA}u_0}_{L^q} \lec \norm{u_0}_{L^q},\quad
&\norm{e^{-tA}u_0}_{L^\infty} \lec t^{-\frac{n}{2q}} \norm{u_0}_{L^q},\quad
\norm{\nb e^{-tA}u_0}_{L^q} \lec t^{-\frac12} \norm{u_0}_{L^q},\\
\norm{e^{t\De^N}\td d_0}_{L^\infty} \lec \norm{\td d_0}_{L^\infty},\
&\norm{\nb e^{t\De^N}\td d_0}_{L^q} \lec \norm{\nb\td d_0}_{L^q},\
\norm{\nb e^{t\De^N}\td d_0}_{L^\infty} \lec t^{-\frac{n}{2q}} \norm{\nb\td d_0}_{L^q},\\
&\norm{\nb^2 e^{t\De^N}\td d_0}_{L^q} \lec t^{-\frac12} \norm{\nb\td d_0}_{L^q}.
}
where the formula \eqref{eq-nb-d-linear} for $\nb e^{t\De^N}\td d_0$ has been used.
These estimates yield
\EQ{\label{eq-Lq-mild-linear}
\norm{(e^{-tA}u_0, e^{t\De^N}\td d_0)}_{X_T} \lec \norm{u_0}_{L^q} + \norm{\td d_0}_{L^\infty} + \norm{\nb d_0}_{L^q}.
}

Let $F = (F_1,F_2)$, where $F_1$ and $F_2$ are defined as in \eqref{eq-F1-def} and \eqref{eq-F2-def}.
Note that
\EQN{
(F_1(u,\td d))_i(x,t) &=  \int_0^t\int_{\R^n_+} \pd_{y_k}G_{ij}(x,y,t-s)\bke{u_ku_j + \pd_k\td d_\ell \pd_j\td d_\ell}(y,s)\, dyds\\
&= -\int_0^t\int_{\R^n_+} G_{ij}(x,y,t-s) \pd_k\bke{u_ku_j + \pd_k\td d_\ell \pd_j\td d_\ell}(y,s)\, dyds,
}
implying
\[
\nb_x (F_1(u,\td d))_i(x,t) = -\int_0^t\int_{\R^n_+} \nb_xG_{ij}(x,y,t-s) \pd_k\bke{u_ku_j + \pd_k\td d_\ell \pd_j\td d_\ell}(y,s)\, dyds.
\]
Moreover, as $\pd_{x_j}G^N(x,y,t) = -\pd_{y_j}G^N(x,y,t)$ for $j<n$ and $\pd_{x_n}G^N(x,y,t) = -\pd_{y_n}G^D(x,y,t)$, one has
\EQN{
\pd_{x_j}(F_2&(u,\td d))_\ell(x,t) 
= \int_0^t\int_{\R^n_+} \pd_{x_j}G^N(x,y,t-s)\bkt{ -(u\cdot\nb)\td d_\ell + |\nb \td d|^2 (\td d + d_\infty)_\ell}(y,s)\, dyds\\
&= 
\begin{cases} 
\displaystyle
\int_0^t\int_{\R^n_+} G^N(x,y,t-s)\, \pd_j\bkt{ -(u\cdot\nb)\td d_\ell + |\nb \td d|^2 (\td d + d_\infty)_\ell}(y,s)\, dyds,\ j<n,\\
\displaystyle
\int_0^t\int_{\R^n_+} G^D(x,y,t-s)\, \pd_j\bkt{ -(u\cdot\nb)\td d_\ell + |\nb \td d|^2 (\td d + d_\infty)_\ell}(y,s)\, dyds,\ j=n.
\end{cases}
}
Applying \cite[Lemma 9.1]{Green} and Lemma \ref{lem-heat-est-Lq} again, we derive the following estimates
\EQN{
\norm{F_1(u,\td d)(t)}_{L^q} 
&\lec \int_0^t (t-\tau)^{-\frac12-\frac{n}{2q}} \norm{ \bke{u\otimes u + \nb\td d\odot\nb\td d}(\tau)}_{L^{\frac{q}2}} d\tau\\
&\le \int_0^t (t-\tau)^{-\frac12-\frac{n}{2q}} \bkt{ \norm{u(\tau)}_{L^q}^2 + \norm{\nb\td d(\tau)}_{L^q}^2 } d\tau
\lec t^{\frac12-\frac{n}{2q}} \norm{(u,\td d)}_{X_T}^2,
}
\EQN{
&\norm{F_1  (u,\td d)(t)}_{L^\infty} 
\lec \int_0^t (t-\tau)^{-\frac12-\frac{n}{2q}} \norm{ \bke{u\otimes u + \nb\td d\odot\nb\td d}(\tau)}_{L^q} d\tau\\
&\quad \le \int_0^t (t-\tau)^{-\frac12-\frac{n}{2q}} \tau^{-\frac{n}{2q}} \bkt{ \tau^{\frac{n}{2q}} \norm{u(\tau)}_{L^\infty} \norm{u(\tau)}_{L^q} + \tau^{\frac{n}{2q}} \norm{\nb\td d(\tau)}_{L^\infty} \norm{{\nb\td d}(\tau)}_{L^q} } d\tau\\
&\quad \lec t^{\frac12-\frac{n}q} \norm{(u,\td d)}_{X_T}^2,
}
\EQN{
&\norm{\nb F_1(u,\td d)(t)}_{L^q} 
\lec \int_0^t (t-\tau)^{-\frac12} \bkt{ \norm{u\cdot\nb u}_{L^q} + \norm{\De\td d\cdot\nb\td d}_{L^q} + \norm{\nb\td d\cdot\nb^2\td d}_{L^q} }(\tau)\, d\tau\\
& \quad \lec \int_0^t (t-\tau)^{-\frac12} \tau^{-\frac12-\frac{n}{2q}} \Big[
\tau^{\frac{n}{2q}}\norm{u(\tau)}_{L^\infty} \tau^{\frac12} \norm{\nb u(\tau)}_{L^q} +
\tau^{\frac{n}{2q}} \norm{\nb\td d(\tau)}_{L^\infty} \tau^{\frac12}  \norm{\nb^2\td d(\tau)}_{L^q} \Big] d\tau \\
&\quad \lec t^{-\frac{n}{2q}} \norm{(u,\td d)}_{X_T}^2
}
for $F_1$. For $F_2$,
\begin{align}
\nonumber
\norm{F_2(u,\td d)(t)}_{L^\infty} 
&\lec \int_0^t (t-\tau)^{-\frac{n}q} \norm{\bke{u\cdot\nb\td d+|\nb\td d|^2(\td d + d_\infty)}(\tau)}_{L^{\frac{q}2}} d\tau\\ \nonumber
&\le \int_0^t (t-\tau)^{-\frac{n}q} \bkt{\norm{u(\tau)}_{L^q}\norm{\nb\td d(\tau)}_{L^q} + \norm{\nb\td d(\tau)}_{L^q}^2\bke{\norm{\td d(\tau)}_{L^\infty} + 1} } d\tau\\
&\lec t^{1-\frac{n}q} \norm{(u,\td d)}_{X_T}^2 \bke{\norm{(u,\td d)}_{X_T} + 1},
\label{F2-Loo}
\end{align}
\begin{align}
\nonumber
\norm{\nb F_2(u,\td d)(t)}_{L^q}
&\lec \int_0^t (t-\tau)^{-\frac12-\frac{n}{2q}} \norm{\bke{u\cdot\nb\td d+|\nb\td d|^2(\td d + d_\infty)}(\tau)}_{L^{\frac{q}2}} d\tau\\ \nonumber
&\lec \int_0^t (t-\tau)^{-\frac12-\frac{n}{2q}} \bkt{\norm{u(\tau)}_{L^q}\norm{\nb\td d(\tau)}_{L^q} + \norm{\nb\td d(\tau)}_{L^q}^2\bke{\norm{\td d(\tau)}_{L^\infty} + 1} } d\tau\\
&\le t^{\frac12-\frac{n}{2q}} \norm{(u,\td d)}_{X_T}^2 \bke{\norm{(u,\td d)}_{X_T} + 1},
\label{F2-Lq}
\end{align}
\EQN{
&\norm{\nb F_2(u,\td d)(t)}_{L^\infty}
\lec \int_0^t (t-\tau)^{-\frac12-\frac{n}{2q}} \norm{\bke{u\cdot\nb\td d+|\nb\td d|^2(\td d + d_\infty)}(\tau)}_{L^q} d\tau\\
&\ \lec \int_0^t (t-\tau)^{-\frac12-\frac{n}{2q}} \tau^{-\frac{n}{2q}} \bke{\tau^{\frac{n}{2q}}\norm{\nb\td d(\tau)}_{L^\infty}} \bkt{\norm{u(\tau)}_{L^q}  + \norm{\nb\td d(\tau)}_{L^q}  \bke{\norm{\td d(\tau)}_{L^\infty} + 1} } d\tau\\
&\ \lec t^{\frac12-\frac{n}{q}} \norm{(u,\td d)}_{X_T}^2 \bke{\norm{(u,\td d)}_{X_T} + 1},
}
and, by using 
$
\nb\bkt{ -(u\cdot\nb)\td d + |\nb \td d|^2 (\td d + d_\infty)} \sim \nb u\cdot\nb\td d + u\cdot\nb^2\td d + \nb\td d\cdot\nb^2\td d(\td d+d_\infty) + |\nb\td d|^2\nb\td d$,
we have
\EQN{
&\norm{\nb^2 F_2(u,\td d)(t)}_{L^q}\\
&\ \lec \int_0^t (t-\tau)^{-\frac12} \bkt{\norm{\nb u\cdot\nb\td d}_{L^q} + \norm{u\cdot\nb^2\td d}_{L^q} + \norm{(\nb\td d\cdot\nb^2\td d)(\td d + d_\infty)}_{L^q} + \norm{|\nb\td d|^3}_{L^q} }(\tau)\, d\tau\\
&\ \le \int_0^t (t-\tau)^{-\frac12} \tau^{-\frac12-\frac{n}{2q}} \Bigg[ \tau^{\frac12}\norm{\nb u(\tau)}_{L^q} \tau^{\frac{n}{2q}} \norm{\nb\td d(\tau)}_{L^\infty} + \tau^{\frac{n}{2q}}\norm{u(\tau)}_{L^\infty} \tau^{\frac12} \norm{\nb^2\td d(\tau)}_{L^\infty} \\
&\qquad\qquad\qquad\qquad\qquad\quad  + \tau^{\frac{n}{2q}}\norm{\nb\td d(\tau)}_{L^\infty} \tau^{\frac12}\norm{\nb^2\td d(\tau)}_{L^q} \bke{\norm{\td d(\tau)}_{L^\infty} + 1} \Bigg]\, d\tau\\
&\qquad + \int_0^t (t-\tau)^{-\frac12} \tau^{-\frac{n}q} \bke{\tau^{\frac{n}{2q}} \norm{\nb\td d(\tau)}_{L^\infty}}^2 \norm{\nb\td d(\tau)}_{L^q}\, d\tau\\
&\ \lec \bke{t^{-\frac{n}{2q}} + t^{\frac12-\frac{n}{q}}} \norm{(u,\td d)}_{X_T}^2 \bke{\norm{(u,\td d)}_{X_T} + 1}.
} 
Note that we've used the assumption $q>n$.
The above estimates imply
\EQ{\label{eq-Lq-mild-nonlinear}
\norm{F(u,\td d)}_{X_T} \lec \bke{T^{\frac12-\frac{n}{2q}} + T^{1-\frac{n}{q}} } \norm{(u,\td d)}_{X_T}^2 \bke{1 + \norm{(u,\td d)}_{X_T}}.
}

Let $\{(u^m,\td d^m)\}_m$ be the sequence defined by the iteration scheme \eqref{eq-NLCF-iteration}.
It follows from \eqref{eq-Lq-mild-linear} and \eqref{eq-Lq-mild-nonlinear} that
\EQS{\label{eq-NLCF-N-Lq-induction}
\norm{(u^{m+1},\td d^{m+1})}_{X_T} &\le \norm{(e^{-tA}u_0, e^{t\De^N}\td d_0)}_{X_T} + \norm{F(u^m,\td d^m)}_{X_T}\\
& \le C_0 \bke{ \norm{u_0}_{L^q} + \norm{\td d_0}_{L^\infty} + \norm{\nb d_0}_{L^q} }\\
&\quad + C_1\bke{T^{\frac12-\frac{n}{2q}} + T^{1-\frac{n}{q}} } \norm{(u^m,\td d^m)}_{X_T}^2 \bke{1 + \norm{(u^m,\td d^m)}_{X_T}}.
}
Suppose $\norm{(u^m,\td d^m)}_{X_T}\le 2\e$, where 
\EQ{\label{eq-NLCF-data-size}
\e := C_0\bke{ \norm{u_0}_{L^q} + \norm{\td d_0}_{L^\infty} + \norm{\nb d_0}_{L^q} }.
}
Then, by \eqref{eq-NLCF-N-Lq-induction} and the induction hypothesis, 
\[
\norm{(u^{m+1},\td d^{m+1})}_{X_T} \le \e + 4C_1T^{\frac12-\frac{n}{2q}}\e^2(1+2\e)
\le 2\e
\]
provided $T^{\frac12-\frac{n}{2q}}<\min(1,4C_1\e(1+2\e))$.
Therefore, we have $\norm{(u^m,\td d^m)}_{X_T}\le 2\e$ for all $m$.

By a similar computation as above, we obtain the difference estimate
\EQ{\label{eq-diff-XT}
\norm{F(u,\td d) - F(v,\td e)}_{X_T}
 \lec T^{\frac12-\frac{n}{2q}}  \bke{1 + \norm{(u,\td d)}_{X_T} + \norm{(v,\td e)}_{X_T}} ^2
\norm{(u,\td d) - (v,\td e)}_{X_T}.
}
Hence,
\EQN{
&\norm{(u^{m+1},\td d^{m+1}) - (u^m,\td d^m)}_{X_T} = \norm{F(u^m,\td d^m) - F(u^{m-1},\td d^{m-1})}_{X_T}\\
&\quad \le C_2 T^{\frac12-\frac{n}{2q}} \bke{1 + \norm{(u^m,\td d^m)}_{X_T} + \norm{(u^{m-1},\td d^{m-1})}_{X_T}} ^2
\norm{(u^m,\td d^m) - (u^{m-1},\td d^{m-1})}_{X_T}\\
&\quad \le C_2 T^{\frac12-\frac{n}{2q}} (1+4\ve)^2 \norm{(u^m,\td d^m) - (u^{m-1},\td d^{m-1})}_{X_T}.
}
By choosing $T$ even smaller such that $C_2 T^{\frac12-\frac{n}{2q}} (1+4\ve)^2 <1$, we find that $(u^m,\td d^m)$ converges to a limit $(u,\td d)\in X_T$, which is the desired solution.
For the uniform-in-$t$ limit $\td d(x,t)  \to 0$ as $|x|\to \infty$, see Remark \ref{rem6.3}.
This completes the proof of \thref{mild-NLCF-N-Lq} for $q>n$.

\subsubsection{Construction of mild solutions in $L^n$}\label{sec:NLCF-N-Ln}
We now consider the critical case $q=n$ of \thref{mild-NLCF-N-Lq}.
Define 
\EQN{
X_T &= \Big\{(u,\td d)\in L^\infty(0,T;L^n\times L^\infty): 
\nb\td d \in L^\infty(0,T;L^n),\\ 
&\qquad t^{\frac12}u,\,t^{\frac12}\nb\td d \in L^\infty(0,T;L^\infty),\ 
t^{\frac12} \nb u,\, t^{\frac12}\nb^2\td d\in L^\infty(0,T;L^n)
\Big\},\\
Y_T &= \Big\{(u,\td d): 
\td d \in L^\infty(0,T;L^\infty),\, t^{\frac14}u,\, t^{\frac14}\nb\td d\in L^\infty(0,T;L^{2n})
\Big\}
}
with the norm
\EQN{
\norm{(u,\td d)}_{X_T} 
&:= \sup_{0\le t\le T} \bke{\norm{u(\cdot,t)}_{L^n} + \norm{\td d(\cdot,t)}_{L^\infty} + \norm{\nb d(\cdot,t)}_{L^n} }\\
&\quad + \sup_{0\le t\le T} t^{\frac12} \bke{\norm{u(\cdot,t)}_{L^\infty} + \norm{\nb\td d(\cdot,t)}_{L^\infty}  + \norm{\nb u(\cdot,t)}_{L^n } + \norm{\nb^2 d(\cdot,t)}_{L^n} },\\
\norm{(u,\td d)}_{Y_T} &:= \sup_{0\le t\le T} t^{\frac14} \bke{\norm{u(\cdot,t)}_{L^{2n}} + \norm{\nb d(\cdot,t)}_{L^{2n}} }.
}
By $t^{1/4}\norm{f}_{L^{2n}} \le (t^{1/2}\norm{f}_{L^\infty})^{1/2} \norm{f}_{L^n}^{1/2}$,
it is readily seen that $X_T\subset Y_T$ and $\norm{(u,\td d)}_{Y_T}\le \norm{(u,\td d)}_{X_T}$.

Similar to \eqref{eq-Lq-mild-linear}, we have
 \EQ{\label{eq-Lq-mild-linear-critical-XT}
\norm{(e^{-tA}u_0, e^{t\De^N}\td d_0)}_{X_T} \lec \norm{u_0}_{L^n} + \norm{\td d_0}_{L^\infty} + \norm{\nb d_0}_{L^n}.
}
By $X_T\subset Y_T$, the $Y_T$-norm of $(e^{-tA}u_0, e^{t\De^N}\td d_0)$ is bounded by the same right side.
Since $C^\infty_c$ is dense in $L^n$, 
by a density argument, one can show that
\EQ{\label{eq-Lq-mild-linear-critical-YT}
\norm{(e^{-tA}u_0, e^{t\De^N}\td d_0)}_{Y_T} = \sup_{0\le t\le T} t^{\frac14} \bke{\norm{e^{-tA}u_0}_{L^{2n}} + \norm{\nb e^{t\De^N}\td d_0}_{L^{2n}} } \to 0,\ \text{ as }T\to0.
}

Recall $F = (F_1,F_2)$, where $F_1$ and $F_2$ are defined as in \eqref{eq-F1-def} and \eqref{eq-F2-def}.
Using \cite[Lemma 9.1]{Green} and Lemma \ref{lem-heat-est-Lq}, we have for $F_1$
\EQN{
\norm{F_1(u,\td d)(t)}_{L^n} 
&\lec \int_0^t (t-\tau)^{-\frac12-\frac14} \norm{\bke{u\otimes u+\nb\td d\odot\nb\td d}(\tau)}_{L^{\frac{2n}3}} d\tau\\
&\le \int_0^t (t-\tau)^{-\frac34} \tau^{-\frac14} \bkt{\tau^{\frac14}\norm{u(\tau)}_{L^{2n}}\norm{u(\tau)}_{L^n} + \tau^{\frac14}\norm{\nb\td d(\tau)}_{L^{2n}}\norm{\nb\td d(\tau)}_{L^n} } d\tau\\
&\lec  \norm{(u,\td d)}_{X_T} \norm{(u,\td d)}_{Y_T},
}
\EQN{
&\norm{F_1(u,\td d)(t)}_{L^\infty} 
\lec \int_0^t (t-\tau)^{-\frac12-\frac14} \norm{\bke{u\otimes u+\nb\td d\odot\nb\td d}(\tau)}_{L^{2n}} d\tau\\
&\quad\le \int_0^t (t-\tau)^{-\frac34} \tau^{-\frac34} \bkt{ \tau^{\frac12}\norm{u(\tau)}_{L^\infty} \tau^{\frac14}\norm{u(\tau)}_{L^{2n}}  + \tau^{\frac12}\norm{\nb\td d(\tau)}_{L^\infty}\tau^{\frac14}\norm{\nb\td d(\tau)}_{L^{2n}}  }  d\tau\\
&\quad\lec t^{-\frac12} \norm{(u,\td d)}_{X_T} \norm{(u,\td d)}_{Y_T},
}
\EQN{
&\norm{\nb F_1(u,\td d)(t)}_{L^n} 
\lec \int_0^t (t-\tau)^{-\frac12-\frac14} \norm{\bke{u\cdot\nb u+\nb\td d\cdot\nb^2\td d}(\tau)}_{L^{\frac{2n}3}} d\tau\\
&\quad\le \int_0^t (t-\tau)^{-\frac34} \tau^{-\frac34} \bkt{\tau^{\frac14}\norm{u(\tau)}_{L^{2n}}{\tau^{\frac12}\norm{\nb u(\tau)}_{L^n} } + {\tau^{\frac14}\norm{\nb\td d(\tau)}_{L^{2n}}} {\tau^{\frac12}\norm{\nb^2\td d(\tau)}_{L^n} } }  d\tau\\
&\quad\lec t^{-\frac12} \norm{(u,\td d)}_{X_T} \norm{(u,\td d)}_{Y_T},
}
and we have for $F_2$
\begin{align}
\nonumber
&\norm{F_2(u,\td d)(t)}_{L^\infty} 
\lec \int_0^t (t-\tau)^{-\frac14} \norm{ {u\cdot\nb\td d + |\nb\td d|^2(\td d+d_\infty)}(\tau)}_{L^{2n}} d\tau\\ \nonumber
&\ \le \int_0^t (t-\tau)^{-\frac14} \tau^{-\frac34} 
{\tau^{\frac12}\norm{\nb\td d(\tau)}_{L^\infty} } 
\bkt{  {\tau^{\frac14}\norm{u(\tau)}_{L^{2n}}}   +  {\tau^{\frac14}\norm{\nb\td d(\tau)}_{L^{2n}}}   \bke{\norm{\td d(\tau)}_{L^\infty} + 1} } d\tau\\
&\ \lec \norm{(u,\td d)}_{X_T} \norm{(u,\td d)}_{Y_T}  \bke{ \norm{(u,\td d)}_{X_T} + 1},
\label{F2-Loo-q=n}
\end{align}
\begin{align}
\nonumber
&\norm{\nb F_2(u,\td d)(t)}_{L^n} 
\lec \int_0^t (t-\tau)^{-\frac12-\frac14} \norm{ {u\cdot\nb\td d + |\nb\td d|^2(\td d+d_\infty)}(\tau)}_{L^{\frac{2n}3}} d\tau\\ \nonumber
&\ \le \int_0^t (t-\tau)^{-\frac34} \tau^{-\frac14} 
{\norm{\nb\td d(\tau)}_{L^n} }
\bkt{  {\tau^\frac14\norm{u(\tau)}_{L^{2n}}} +  {\tau^\frac14\norm{\nb\td d(\tau)}_{L^{2n}}}  \bke{\norm{\td d(\tau)}_{L^\infty} + 1} } d\tau\\ 
&\ \lec \norm{(u,\td d)}_{X_T} \norm{(u,\td d)}_{Y_T}  \bke{ \norm{(u,\td d)}_{X_T} + 1},
\label{F2-Ln}
\end{align}
\EQN{
&\norm{\nb F_2(u,\td d)(t)}_{L^\infty} 
\lec \int_0^t (t-\tau)^{-\frac34} \norm{ {u\cdot\nb\td d + |\nb\td d|^2(\td d+d_\infty)}(\tau)}_{L^{2n}} d\tau\\
&\ \le \int_0^t (t-\tau)^{-\frac34} \tau^{-\frac34} 
{\tau^{\frac14}\norm{\nb\td d(\tau)}_{L^{2n}}}
\bkt{  {\tau^{\frac12}\norm{u(\tau)}_{L^\infty}}   +  {\tau^{\frac12}\norm{\nb\td d(\tau)}_{L^\infty}}    \bke{\norm{\td d(\tau)}_{L^\infty} + 1} } d\tau\\
&\ \lec t^{-\frac12} \norm{(u,\td d)}_{X_T} \norm{(u,\td d)}_{Y_T}  \bke{ \norm{(u,\td d)}_{X_T} + 1},
}
and using $\nb\bkt{ -(u\cdot\nb)\td d + |\nb \td d|^2 (\td d + d_\infty)} \sim \nb u\cdot\nb\td d + u\cdot\nb^2\td d + \nb\td d\cdot\nb^2\td d(\td d+d_\infty) + |\nb\td d|^2\nb\td d$,
\EQN{
&\norm{\nb^2 F_2(u,\td d)(t)}_{L^n} \lec \int_0^t  (t-\tau)^{-\frac34} \cdot
\\
&\hspace{12mm}\cdot \bkt{\norm{\nb u\cdot\nb\td d}_{L^{\frac{2n}3}} + \norm{u\cdot\nb^2\td d}_{L^{\frac{2n}3}} + \norm{(\nb\td d\cdot\nb^2\td d)(\td d + d_\infty)}_{L^{\frac{2n}3}} + \norm{|\nb\td d|^3}_{L^{\frac{2n}3}} }(\tau)\, d\tau\\
&\ \le \int_0^t (t-\tau)^{-\frac34} \tau^{-\frac34} \Bigg[  {\tau^{\frac12}\norm{\nb u(\tau)}_{L^n}}  {\tau^{\frac14}\norm{\nb\td d(\tau)}_{L^{2n}}} + 
 {\tau^{\frac14}\norm{u(\tau)}_{L^{2n}}}  {\tau^{\frac12}\norm{\nb^2\td d(\tau)}_{L^n}}\\
&\qquad\qquad\qquad\qquad +
 {\tau^{\frac14}\norm{\nb\td d(\tau)}_{L^{2n}}}  {\tau^{\frac12}\norm{\nb^2\td d(\tau)}_{L^n} }  \bke{\norm{\td d(\tau)}_{L^\infty} + 1} +
 \bke{\tau^{\frac14}\norm{\nb\td d}_{L^{2n}}}^3 \Bigg] d\tau\\
&\lec t^{-\frac12} \norm{(u,\td d)}_{X_T} \norm{(u,\td d)}_{Y_T}  \bke{ \norm{(u,\td d)}_{X_T} + 1},
}
using $\norm{(u,\td d)}_{Y_T}\le \norm{(u,\td d)}_{X_T}$.
The above estimates yield
\EQ{\label{eq-Lq-mild-nonlinear-critical-XT}
\norm{F(u,\td d)}_{X_T} \lec \norm{(u,\td d)}_{X_T} \norm{(u,\td d)}_{Y_T} \bke{\norm{(u,\td d)}_{X_T} + 1}.
}

Using \cite[Lemma 9.1]{Green} and Lemma \ref{lem-heat-est-Lq} again,
\EQN{
\norm{F_1(u,\td d)(t)}_{L^{2n}} 
&\lec \int_0^t (t-\tau)^{-\frac12-\frac14} \norm{\bke{u\otimes u+\nb\td d\odot\nb\td d}(\tau)}_{L^n} d\tau\\
&\le \int_0^t (t-\tau)^{-\frac34} \tau^{-\frac12} \bkt{ \bke{\tau^{\frac14}\norm{u(\tau)}_{L^{2n}}}^2 + \bke{\tau^{\frac14}\norm{\nb\td d(\tau)}_{L^{2n}}}^2} d\tau\\
&\lec t^{-\frac14} \norm{(u,\td d)}_{Y_T}^2,
}
and
\EQN{
&\norm{\nb F_2(u,\td d)(t)}_{L^{2n}}
\lec \int_0^t (t-\tau)^{-\frac12-\frac14} \norm{\bke{u\cdot\nb\td d + |\nb\td d|^2 (\td d + d_\infty)}(\tau)}_{L^n} d\tau\\
&\ \le \int_0^t (t-\tau)^{-\frac34} \tau^{-\frac12} \bkt{ {\tau^{\frac14} \norm{u(\tau)}_{L^{2n}}} {\tau^{\frac14} \norm{\nb\td d(\tau)}_{L^{2n}}} + \bke{\tau^{\frac14} \norm{\nb\td d(\tau)}_{L^{2n}} }^2 \bke{\norm{\td d(\tau)}_{L^\infty} + 1 } } d\tau\\
&\ \lec t^{-\frac14} \norm{(u,\td d)}_{Y_T}^2 \bke{\norm{(u,\td d)}_{X_T} + 1}.
}
The above two estimates imply
\EQ{\label{eq-Lq-mild-nonlinear-critical-YT}
\norm{F(u,\td d)}_{Y_T} \lec \norm{(u,\td d)}_{Y_T}^2 \bke{\norm{(u,\td d)}_{X_T} + 1}.
}

Recall that $(u^m,\td d^m)$ is defined in \eqref{eq-NLCF-iteration}.
Using the same induction argument below \eqref{eq-NLCF-N-Lq-induction} with the estimates \eqref{eq-Lq-mild-linear-critical-XT}, \eqref{eq-Lq-mild-linear-critical-YT}, \eqref{eq-Lq-mild-nonlinear-critical-XT}, \eqref{eq-Lq-mild-nonlinear-critical-YT}, we have that for sufficiently small $T$ we have
\[
\norm{(u^m,\td d^m)}_{X_T} \le 2\ve,\quad
\norm{(u^m,\td d^m)}_{Y_T} \le 2\ve,
\]
where $\ve$ is given in \eqref{eq-NLCF-data-size}.
More importantly, due to the convergence \eqref{eq-Lq-mild-linear-critical-YT} of the $Y_T$ norm, as $T\to0$, the above can be improved as
\[
\norm{(u^m,\td d^m)}_{Y_T} \le \ve_1(T),
\]
where $\ve_1(T)\to0$ as $T\to0$.

By a similar computation yielding \eqref{eq-Lq-mild-nonlinear-critical-YT}, we obtain the difference estimate
\EQN{
&\norm{F(u,\td d) - F(v,\td e)}_{Y_T}
\\
&\quad\lec \bke{\norm{(u,\td d)}_{Y_T} + \norm{(v,\td e)}_{Y_T} } \bke{\norm{(u,\td d)}_{X_T} + \norm{(v,\td e)}_{X_T} + 1} \norm{(u,\td d) - (v,\td e)}_{Y_T}.
}
Hence,
\EQN{
&\norm{(u^{m+1},\td d^{m+1}) - (u^m,\td d^m)}_{Y_T}
 = \norm{F(u^m,\td d^m) - F(u^{m-1},\td d^{m-1})}_{Y_T}\\
&\quad \le C_2 \bke{\norm{(u^m,\td d^m)}_{Y_T} + \norm{(u^{m-1},\td d^{m-1})}_{Y_T}} \bke{\norm{(u^m,\td d^m)}_{X_T} + \norm{(u^{m-1},\td d^{m-1})}_{X_T} + 1} \\
&\qquad\cdot \norm{(u^m,\td d^m) - (u^{m-1},\td d^{m-1})}_{Y_T}\\
&\quad \le C_2  (4\ve_1(T))(4\ve + 1) \norm{(u^m,\td d^m) - (u^{m-1},\td d^{m-1})}_{Y_T}.
}
Therefore, for by choosing sufficiently small $T$ such that $4C_2\ve_1(T)(4\ve+1)<1$, we get that $(u^m,\td d^m)$ converges to a limit $(u,\td d)\in Y_T$, which is the desired solution. As $(u^m,\td d^m)$ are uniformly bounded in $X_T$, $(u,\td d)$ has the same bound in $X_T$.
For the uniform-in-$t$ limit $\td d(x,t)  \to 0$ as $|x|\to \infty$, see Remark \ref{rem6.3}.
This completes the proof of \thref{mild-NLCF-N-Lq} for $q=n$.
\qed

\begin{remark}\label{XT-converge}
In the proof of \thref{mild-NLCF-N-Lq} for $q=n$ above, the sequence $\{(u^m,\td d^m)\}$ also converges to $(u,\td d)$ in $X_T$.
In fact, a similar calculation deriving \eqref{eq-Lq-mild-nonlinear-critical-XT} yields the difference estimate in $X_T$:
\EQN{
&\norm{F(u,\td d) - F(v,\td e)}_{X_T}\\
&\qquad\lec \bke{\norm{(u,\td d)}_{X_T} + \norm{(v,\td e)}_{X_T} } \bke{\norm{(u,\td d)}_{X_T} + \norm{(v,\td e)}_{X_T} + 1} \norm{(u,\td d) - (v,\td e)}_{Y_T}.
}
Thus,
\EQN{
&\norm{(u^{m+1},\td d^{m+1}) - (u^m,\td d^m)}_{X_T} 
= \norm{F(u^m,\td d^m) - F(u^{m-1},\td d^{m-1}) }_{X_T}\\
&\lec \bke{\norm{(u^m,\td d^m)}_{X_T} + \norm{(u^{m-1},\td d^{m-1})}_{X_T} + 1}^2 \cdot \norm{(u^m,\td d^m) - (u^{m-1},\td d^{m-1})}_{Y_T}\\
&\lec (4\ve+1)^2 \norm{(u^m,\td d^m) - (u^{m-1},\td d^{m-1})}_{Y_T}.
}
This implies that $\{(u^m,\td d^m)\}$ is a Cauchy sequence in $X_T$ and hence converges to $(u,\td d)$ in $X_T$.
This fact is useful for Remark \ref{rem6.3}.\hfill \qed
\end{remark}

\begin{remark}\label{rem6.3}
To show $\td d(x,t)  \to 0$ as $|x|\to \infty$ for mild solutions in $L^q$, denote 
$\td d_\ell(x,t)=I(x,t)+J(x,t)$ where $I$ and $J$ are
the two integrals in \eqref{d-mild-NLCF-N}. The same argument of Remark \ref{rem6.1} shows $I(x,t)  \to 0$ as $|x|\to \infty$, uniformly in $t\in(0,T)$. Consider now
\[
J(x,t)=\int_0^t \int_{\R^n_+} G^N(x,y,t-s)f_\ell(u,\nb \td d)(y,s)\, dyds, \quad
f=
\bkt{ -(u\cdot\nb)\td d_\ell + |\nb \td d|^2 ({d_\infty}+\td d_\ell)}.
\]
Let $\{(u^m,\td d^m)\}_m$ be the sequence defined by the iteration scheme \eqref{eq-NLCF-iteration}. We will show by induction
\begin{equation}\label{LooLq-decay}
\lim_{R \to \infty} \sup_{0<t<T} \int_{|x|>R} | {\nb \td d}^m(x,t)|^q \,dx=0.
\end{equation}
For $m=1$, $\td d^1(t)=e^{t\De^N}\td d_0$, by \eqref{eq-nb-d-linear},
\[
\pd_j \td d^1(x,t)=\int_{\R^n_+} \bket{\Ga(x-y,t) + \ep_j \Ga(x-y^*,t)} \pd_j\td d_0(y)\, dy = \int_{|y|<L} + \int_{|y|>L} =I_1+I_2 .
\]
Let $1 \ll L < |x|/2$.
We have pointwise bound for $I_1$,
\[
|I_1| \le \int_{|y|<L} C |x|^{-n} |\nb \td d_0|(y)dy \le C(L) \norm{\nb\td d_0}_{L^q} |x|^{-n}.
\]
By Lemma \ref{lem-heat-est-Lq} with $f(y) = \pd_j\td d_0(y) \one_{|y|>L}(y)$, 
\[
\norm{I_2}_{L^q} \le C  \norm{\nb\td d_0}_{L^q(|y|>L)} \to 0,\quad \text{as}\quad L \to \infty.
\]
For any small $\e>0$ we first choose $L$ to make $\norm{I_2}_{L^\infty_T  L^q}\le \e$ and then take $R>2L$ sufficiently large to make  $\norm{I_1}_{L^\infty_T L^q(|x|>R)}\le \e$. This shows  \eqref{LooLq-decay} for $m=1$.

Suppose \eqref{LooLq-decay} is valid for $\nb{\td d}^{m}(x,t)$ and we now consider $m+1$.  Denote $f^m=f(u^m,\nb \td d^m)$. We have
\[
\pd_j (\td d^{m+1}-\td d^1)  (x,t) 
=\bke{\int_0^t \!\!\int_{|y|<L} + \int_0^t\!\!\int_{|y|>L}} \pd_{x_j}G^N(x,y,t-s) f^m(y,s) \, dyds =:I_3+I_4 .
\]
Let $1 \ll L < |x|/2$.
We have pointwise bound for $I_3$,
\[
|I_3| \le \int_0^t \!\!\int_{|y|<L} C |x|^{-n-1} |f^m|(y,s)dy ds\le C(L) T\norm{f^m}_{L^\infty_T L^{q/2}} |x|^{-n-1}.
\]
By \eqref{F2-Lq} for $q>n$ and \eqref{F2-Ln} for $q=n$, 
\EQN{
\norm{I_4}_{L^\infty_T L^q} &\le C_T  \norm{f^m}_{L^\infty_T L^{q/2}(|y|>L)} \to 0,\quad \text{as}\quad L \to \infty,\ \text{for $q>n$, and }\\
\norm{I_4}_{L^\infty_T L^n} & \le C_T  \norm{f^m}_{L^\infty_T L^{2n/3}(|y|>L)} \to 0,\quad \text{as}\quad L \to \infty,
}
thanks to \eqref{LooLq-decay}. For any small $\e>0$ we first choose $L$ to make $\norm{I_4}_{L^\infty_T L^q}\le \e$ and then take $R>2L$ sufficiently large to make  $\norm{I_3}_{L^\infty_T L^q(|x|>R)}\le \e$. This shows  \eqref{LooLq-decay} for general $m\in \NN$.

As $\td d^m$ converges to $\td d$ in $L^\infty_{t,x}$ see Remark \ref{XT-converge} for $q=n$ case),
for any small $\e>0$ we can choose $m$ so that $\norm{\td d^m-\td d}_{L^\infty_{t,x}} \le \e$.
Decompose the $J$ function for $(u^m,\td d^m)$ as
\[
J^m(x,t)=\bke{\int_0^t \!\!\int_{|y|<L} + \int_0^t\!\!\int_{|y|>L}} G^N(x-y,t-s) f^m(y,s) \, dyds =:I_5+I_6 .
\]
By \eqref{F2-Loo} for $q>n$, \eqref{F2-Loo-q=n} for $q=n$, and  \eqref{LooLq-decay}, we can choose $L$ so that $|I_6|\le \e$. We then take $|x|$ sufficiently large to make $|I_5|\le \e$. 
This proves that $J(x,t)\to0$ as $|x|\to\infty$, and hence that $\td d(x,t)\to0$ as $|x|\to\infty$, uniformly in $t\in(0,T)$. \hfill \qed
\end{remark}

\subsection{Construction of mild solutions for the Dirichlet problem}\label{sec:mild-NLCF-D}

In this subsection, we prove Theorems \ref{mild-NLCF-D-Ya}\,--\,\ref{mild-NLCF-D-Lq-uloc}.
We consider the nematic liquid crystal flow with Dirichlet boundary condition for orientation field, \eqref{eq-NLCF-D}-\eqref{eq-NLCF-D-compatible}.
Assuming the constant boundary condition for the orientation field, \eqref{eq-NLCF-D-compatible}, we construct mild solutions in the corresponding frameworks.

Let $\td d(x,t) = d(x,t) - d_*$ and $\td d_0(x) = d(x,0) - d_*$.
By \eqref{d-mild-NLCF-D}, we have, for $\ell=1,\ldots,L$,
\EQN{
\td d_\ell(x,t) &= \int_{\R^n_+} G^D(x,y,t) (\td d_0)_\ell(y)\, dy + \int_0^t \!\! \int_{\R^n_+} G^D(x,y,t-s)\!\bkt{ |\nb \td d|^2 {d_\ell}-(u\cdot\nb)\td d_\ell }\! (y,s)\, dyds.
}

\subsubsection{Construction of mild solutions in $Y_a$}
For $0\le a\le n$,
define the Banach space
\[
X = \bket{(u,\td d) :\R^n_+ \to \R^n \times S^{L-1} \ \big| \ 
u\in Y_a,\ \td d\in L^\infty,\, \nb\td d\in Y_a},
\]
with the norm
\[
\norm{(u,\td d)}_X = \norm{u}_{Y_a} + \norm{\td d}_{L^\infty} + \norm{\nb\td d}_{Y_a}.
\]
For a given $(u_0,\td d_0)\in X$ define inductively a sequence $(u^m,\td d^m)$ by
\[
(u^{m+1},\td d^{m+1})(t) = (e^{-tA}u_0, e^{t\De^D}\td d_0) + F(u^m,\td d^m)(t),\quad m \in \NN_0,
\]
where $A=-\mathbb P\De$ is the Stokes operator, $F = (F_1,F_2)$ and for $i=1,\ldots,n$,
\EQ{\label{eq-F1-db}
(F_1(u,\td d))_i(x,t) =  \sum_{j,k=1}^n \sum_{\ell=1}^L \int_0^t\int_{\R^n_+} \pd_{y_k}G_{ij}(x,y,t-s)\bke{u_ku_j + \pd_k\td d_\ell \pd_j\td d_\ell}(y,s)\, dyds,
}
and for $\ell = 1,\ldots,L$,
\EQ{\label{eq-F2-db}
(F_2(u,\td d))_\ell(x,t) = \int_0^t \int_{\R^n_+} G^D(x,y,t-s) \bkt{ -(u\cdot\nb)\td d_\ell + |\nb \td d|^2 d_\ell}(y,s)\, dyds.
}
The main difference from \eqref{eq-F1-def}--\eqref{eq-F2-def} is that $G^N$ in \eqref{eq-F2-def} is replaced by $G^D$ in \eqref{eq-F2-db}.

Let $T>0$ small and to be determined.
By \cite[Lemma 9.2]{Green}, we have, for $t\in(0,T)$, that
\EQ{\label{eq-NLCF-D-Ya-linear-1}
\norm{e^{-tA}u_0}_{Y_a} \lec (1+\de_{an}\log_+t)\norm{u_0}_{Y_a},
}
and (when $n=2$ and $a=0$, we also use Lemma \ref{lem-9.1}) 
\EQ{\label{eq-NLCF-D-Ya-nonlinear-1}
\norm{F_1(u,\td d)(t)}_{Y_a} \lec t^{1/2} \sup_{0\le t\le T} \bke{ \norm{u(t)}_{Y_a}^2 + \norm{\nb\td d(t)}_{Y_a}^2 }
\lec T^{1/2} \norm{(u,\td d)}_{L^\infty_TX}^2.
}
Moreover, by \thref{lem-heat-est-Ya}, we have, for $t\in(0,T)$, that
\EQ{\label{eq-NLCF-D-Ya-linear-2.1}
\norm{e^{t\De^D}\td d_0}_{L^\infty} \lec \norm{\td d_0}_{L^\infty}.
}
To bound $\nb e^{t\De^D}\td d_0$ in $Y_a$, note that
\EQS{\label{eq-db-d-linear}
\pd_{x_j} &e^{t\De^D}\td d_0(x,t)
= \int_{\R^n_+} \bke{\pd_j\Ga(x-y,t) - \pd_j\Ga(x-y^*,t)} \td d_0(y)\, dy\\
&= \int_{\R^n_+} \bke{- \pd_{y_j}\Ga(x-y,t) +\ep_j \pd_{y_j}\Ga(x-y^*,t)} \td d_0(y)\, dy\\
&= \int_{\R^n_+} \bke{\Ga(x-y,t) - \ep_j \Ga(x-y^*,t)} \pd_j\td d_0(y)\, dy + 2\de_{jn}\int_\Si \Ga(x-y',t) \td d_0(y',0)\, dy'\\
&= \int_{\R^n_+} \bke{\Ga(x-y,t) - \ep_j \Ga(x-y^*,t)} \pd_j\td d_0(y)\, dy,
}
where we've used the compatibility condition \eqref{eq-NLCF-D-compatible} that  $\td d_0(y',0) = d_0(y',0) - d_* = 0$.
By \eqref{eq-linear-Ya}, $\norm{ \int  (\Ga(x-y,t) - \ep_j\Ga(x-y^*,t)) f(y) dy }_{Y_a} \lec (1+\de_{an}\log_+ t) \norm{f}_{Y_a}$.
So,
\EQ{\label{eq-NLCF-D-Ya-linear-2.3}
\norm{\nb e^{t\De^D}\td d_0}_{Y_a} \lec (1+\de_{an}\log_+t)\norm{\nb\td d_0}_{Y_a}.
}
The rest of the proof of \thref{mild-NLCF-D-Ya} is exactly the same as what follows \eqref{eq-NLCF-N-Ya-linear-2.3} in the proof of \thref{mild-NLCF-N-Ya} for the Neumann problem because the nonlinear terms $F_1$ and $F_2$ for the Dirichlet problem have the same estimate as in \eqref{eq-NLCF-N-Ya-nonlinear-2.1}-\eqref{eq-NLCF-N-Ya-nonlinear-diff-2}.
It is worth mentioning that $a=0$ is allowed (unlike \thref{mild-NLCF-N-Ya}) since the formula \eqref{d-mild-NLCF-D} in Proposition \ref{prop-NLCF-sol-formula-D} for the Dirichlet problem does not require the spatial decay assumption in Proposition \ref{prop-NLCF-sol-formula-N}.
We leave out the details.

\subsubsection{Construction of mild solution in $L^q$}
The proof of Theorem \ref{mild-NLCF-D-Lq} follows a similar argument as presented in Sections \ref{sec:NLCF-N-Lq} and \ref{sec:NLCF-N-Ln}.
To establish this, we redefine $\td d$ as $\td d = d-d_*$, replace $\De^N$ with $\De^D$, and employ the formula \eqref{eq-db-d-linear} for $\nb e^{t\De^D}\td d_0$.
Note that, since the solution formula \eqref{d-mild-NLCF-D} for the Dirichlet problem does not require the spatial decay assumption in Proposition \ref{prop-NLCF-sol-formula-N}, the case $q=\infty$ is allowed.
For brevity, we omit the detailed steps of the proof.

\subsubsection{Construction of mild solution in $L^q_\uloc$}
Now, we turn to the $L^q_\uloc$ setting and prove Theorem \ref{mild-NLCF-D-Lq-uloc}.
Let $\td d= d - d_*$ and
\EQS{\label{eq-banach-Lq-uloc}
X_T &= \Big\{(u,\td d)\in L^\infty(0,T;L^q_\uloc\times L^\infty): 
\nb\td d \in L^\infty(0,T;L^q_\uloc),\\ 
&\qquad t^{\frac{n}{2q}}u,\,t^{\frac{n}{2q}}\nb\td d \in L^\infty(0,T;L^\infty),\ 
t^{\frac12} \nb u,\, t^{\frac12}\nb^2\td d\in L^\infty(0,T;L^q_\uloc)
\Big\} ,
}
with the norm
\EQN{
&\norm{(u,\td d)}_{X_T} 
:= \sup_{0\le t\le T} \bke{\norm{u(\cdot,t)}_{L^q_\uloc} + \norm{\td d(\cdot,t)}_{L^\infty} + \norm{\nb d(\cdot,t)}_{L^q_\uloc} }\\
&\quad + \sup_{0\le t\le T} \bket{ t^{\frac{n}{2q}} \bke{\norm{u(\cdot,t)}_{L^\infty} + \norm{\nb \td d(\cdot,t)}_{L^\infty}  } 
+t^{\frac12} \bke{\norm{\nb u(\cdot,t)}_{L^q_\uloc} + \norm{\nb^2 d(\cdot,t)}_{L^q_\uloc} }}.
}
Applying \cite[Lemma 9.4]{Green}, \cite[Proposition 5.3 (5-2)]{MMP1},
 and Lemma \ref{lem-heat-est-Lq-uloc}, we derive the following estimates%
\EQN{
\norm{e^{-tA}u_0}_{L^q_\uloc} &\lec \norm{u_0}_{L^q_\uloc},\quad
\norm{e^{-tA}u_0}_{L^\infty} \lec (1+t^{-\frac{n}{2q}}) \norm{u_0}_{L^q_\uloc},\quad
\\
\norm{\nb e^{-tA}u_0}_{L^q_\uloc} &\lec t^{-\frac12} \norm{u_0}_{L^q_\uloc},\\
\norm{e^{t\De^D}\td d_0}_{L^\infty} &\lec \norm{\td d_0}_{L^\infty},\quad
\norm{\nb e^{t\De^D}\td d_0}_{L^q_\uloc} \lec \norm{\nb\td d_0}_{L^q_\uloc},\\
\norm{\nb e^{t\De^D}\td d_0}_{L^\infty} &\lec (1+t^{-\frac{n}{2q}}) \norm{\nb\td d_0}_{L^q_\uloc},\quad
\norm{\nb^2 e^{t\De^D}\td d_0}_{L^q_\uloc} \lec t^{-\frac12} \norm{\nb\td d_0}_{L^q_\uloc}.
}
Note that the estimate  $\norm{\nb e^{-tA}u_0}_{L^q_\uloc} \lec t^{-\frac12} \norm{u_0}_{L^q_\uloc}$ for $1<q\le \infty$ is shown in \cite[Proposition 5.3 (5-2)]{MMP1}. It also
follows from \cite[Lemma 9.4, (9.20)]{Green} for $1\le q<\infty$ including $q=1$, and  \cite[(1.8)]{BJ2012} for $q=\infty$. Also note that we've used the formula \eqref{eq-db-d-linear} for $\nb e^{t\De^D}\td d_0$.

These estimates imply
\EQ{\label{eq-Lq-uloc-mild-linear}
\norm{(e^{-tA}u_0, e^{t\De^D}\td d_0)}_{X_T} \lec \bke{ 1+T^{\frac{n}{2q}} } \bke{\norm{u_0}_{L^q_\uloc} + \norm{\td d_0}_{L^\infty} + \norm{\nb d_0}_{L^q_\uloc} }.
}

Denote $F_1$ and $F_2$ the nonlinear terms defined as in \eqref{eq-F1-db} and \eqref{eq-F2-db}, and let $F = (F_1,F_2)$.
Using the estimates in \cite[Lemma 9.4]{Green} and Lemmas \ref{lem-heat-est-Lq-uloc} and \ref{lem-9.1}, we follow a similar procedure deriving \eqref{eq-Lq-mild-nonlinear} and \eqref{eq-diff-XT} to get
\EQS{\label{eq-Lq-uloc-mild-nonlinear}
\norm{F(u,\td d)}_{X_T} 
&\lec \bkt{ \bke{T^{\frac12} + T^{\frac12-\frac{n}{2q}} } + \bke{ T+ T^{1-\frac{n}{q}} } } \norm{(u,\td d)}_{X_T}^2 \bke{1 + \norm{(u,\td d)}_{X_T}}\\
&\lec \bke{ T+ T^{1-\frac{n}{q}} }\norm{(u,\td d)}_{X_T}^2 \bke{1 + \norm{(u,\td d)}_{X_T}},\quad q>n.
}
and 
\EQ{
\norm{F(u,\td d) - F(v,\td e)}_{X_T}
 \lec T^{1-\frac{n}{q}}  \bke{1 + \norm{(u,\td d)}_{X_T} + \norm{(v,\td e)}_{X_T}} ^2
\norm{(u,\td d) - (v,\td e)}_{X_T}.
}

With the above estimates, it readily follows the same procedure as in the proof of \thref{mild-NLCF-N-Lq} to construct a mild solution $(u,\td d)\in X_T$ for $q>n$, which proves Part (a) of Theorem \ref{mild-NLCF-D-Lq-uloc}.

For the critical case $q=n$, we use the auxiliary space
\[
Y_T = \bket{(u,\td d): 
\td d \in L^\infty(0,T;L^\infty),\, t^{\frac14}u,\, t^{\frac14}\nb\td d\in L^\infty(0,T;L^{2n}_\uloc)}
\]
with the norm 
\[
\norm{(u,\td d)}_{Y_T} := \sup_{0\le t\le T} t^{\frac14} \bke{\norm{u(\cdot,t)}_{L^{2n}_\uloc} + \norm{\nb d(\cdot,t)}_{L^{2n}_\uloc} }.
\]
Following the same argument as in the second part of the proof of \thref{mild-NLCF-N-Lq} for $q=n$,
we obtain a mild solution $(u,\td d)\in X_T$ provided $\norm{u_0}_{L^n_\uloc} + \norm{\td d_0}_{L^\infty} + \norm{\nb d_0}_{L^n_\uloc}$ is sufficiently small.
The smallness of initial data is required in the $L^n_\uloc$ framework since, in contrast to \eqref{eq-Lq-mild-linear-critical-YT}, the convergence $\lim_{T\to0} \norm{(e^{-tA}u_0, e^{t\De^N}\td d_0)}_{Y_T}=0$ may not hold true since $C^\infty_c$ is not dense in $L^n_\uloc$. (It will be fine if we further assume that
$u_0$ and $\nb d_0$ are in the closure of smooth functions of compact supports in $L^n_\uloc$-norm.)
This completes the proof of Part (b) of Theorem \ref{mild-NLCF-D-Lq-uloc}.
\qed

\section*{Acknowledgments}
We thank Yifu Zhou for pointing out useful references and critical discussions on nematic liquid crystal flow.
The research of KK was partially supported by NRF-2019R1A2C1084685.
The research of BL was partially supported by NSFC-12371202 and Hunan provincial NSF-
2022jj10032,22A0057.
The research of CL is partially supported by Simons Foundation Math + X Investigator Award \#376319 (Michael I. Weinstein).
The research of TT was partially supported by the NSERC grant 
 RGPIN-2023-04534.

\addcontentsline{toc}{section}{\protect\numberline{}{References}}
\small

\def\cprime{$'$}

\end{document}